\newtheorem{thm}{Theorem}[section]
\newtheorem{cor}[thm]{Corollary}
\newtheorem{lem}[thm]{Lemma}
\newtheorem{prop}[thm]{Proposition}
\newtheorem{thmintro}{Theorem}
\theoremstyle{definition}
\newtheorem{defn}[thm]{Definition}
\newtheorem{ex}[thm]{Example}
\newcommand{\N}{\mathbb N}
\newcommand{\Z}{\mathbb Z}
\newcommand{\Q}{\mathbb Q}
\newcommand{\R}{\mathbb R}
\newcommand{\C}{\mathbb C}
\newcommand{\mf}{\mathfrak}
\newcommand{\mc}{\mathcal}
\newcommand{\mb}{\mathbf}
\newcommand{\mh}{\mathbb}
\def\Irr{{\rm Irr}}
\newcommand{\mr}{\mathrm}
\newcommand{\ind}{\mathrm{ind}}
\newcommand{\enuma}[1]{\begin{enumerate}[\textup{(}a\textup{)}] {#1} \end{enumerate}}
\newcommand{\Res}{\mathrm{Res}}
\newcommand{\af}{\mathrm{aff}}
\newcommand{\der}{\mathrm{der}}
\newcommand{\matje}[4]{\left(\begin{smallmatrix} #1 & #2 \\ 
#3 & #4 \end{smallmatrix}\right)}
\newcommand{\inp}[2]{\langle #1 , #2 \rangle}
\newcommand{\Mod}{\mathrm{Mod}}
\newcommand{\Hom}{\mathrm{Hom}}
\newcommand{\End}{\mathrm{End}}
\newcommand{\un}{\mathrm{un}}
\newcommand{\isom}{\xrightarrow{\sim}}
\newcommand{\triv}{\mathrm{triv}}
\newcommand{\ds}{\displaystyle}
\begin{document}

\title{Affine Hecke algebras and their representations}
\date{\today}
\thanks{Keywords: Hecke algebras, Weyl groups, representation theory, discrete series\\
The author is supported by a NWO Vidi grant "A Hecke algebra approach to the 
local Langlands correspondence" (nr. 639.032.528).}
\subjclass[2010]{20C08}
\maketitle

\begin{center}
{\Large Maarten Solleveld} \\[1mm]
IMAPP, Radboud Universiteit Nijmegen\\
Heyendaalseweg 135, 6525AJ Nijmegen, the Netherlands \\
email: m.solleveld@science.ru.nl 
\end{center}
\vspace{1cm}

\begin{abstract}
This is a survey paper about affine Hecke algebras. We start from scratch and discuss some
algebraic aspects of their representation theory, referring to the literature for proofs.
We aim in particular at the classification of irreducible representations.

Only at the end we establish a new result: a natural bijection between the set of
irreducible representations of an affine Hecke algebra with parameters in $\R_{\geq 1}$,
and the set of irreducible representations of the affine Weyl group underlying the
algebra. This can be regarded as a generalized Springer correspondence with
affine Hecke algebras. 
\end{abstract}
\vspace{1cm}

\tableofcontents

\section*{Introduction}

Affine Hecke algebras typically arise in two ways. Firstly, they are deformations of the group
algebra of a Coxeter system $(W,S)$ of affine type. Namely, keep the braid relations 
of $W$, but replace every quadratic relation $s^2 = 1 \; (s \in S)$ by
\begin{equation}\label{eq:0.1}
(s - q_s)(s + 1) = 0 ,
\end{equation}
where $q_s$ is a parameter in some field (usually we take the field $\C$).
That gives rise to an associative algebra $\mc H (W,q)$.

Secondly, affine Hecke algebras occur in the representation theory of reductive groups $G$
over $p$-adic fields. They can be isomorphic to the algebra of $G$-endo\-mor\-phisms of a
suitable $G$-representation. The classical example is the convolution algebra of compactly 
supported functions on $G$ that are bi-invariant with respect to an Iwahori subgroup. In
this way the representation theory of affine Hecke algebras is related to that of reductive
$p$-adic groups. The interpretation of (affine) Hecke algebras as deformations of group 
algebras links them with quantum groups, knot theory and noncommutative geometry. 
Further, their relation with reductive groups makes them highly relevant in the 
representation theory of such groups and in the local Langlands program.

Both views of affine Hecke algebras build upon simpler objects: the Hecke algebras of finite
Weyl groups. On the one hand such a finite dimensional Hecke algebra is a deformation of 
a group algebra, again with relations \eqref{eq:0.1}. On the other hand it appears naturally
in the representation theory of reductive groups over finite fields. However, there
is a crucial difference between the finite and affine cases: most aspects of finite 
dimensional Hecke algebras are easily described in terms of a Coxeter group, but that is far 
from true for affine Hecke algebras.\\

Of course there exists an extensicve body of literature on affine Hecke algebras and their 
representations, see the references to this paper for a part of it. The theory is in a good
state, and (in our opinion) most of the important questions that one can ask about affine
Hecke algebras have been answered.

Unfortunately, the accessibility of this literature is rather limited. A first difficulty is
that several slightly different algebras are involved, with several presentations, and
sometimes their connection is not clear. Further, a plethora of techniques has been applied
to affine Hecke algebras: algebraic, analytic, geometric or combinatoric to various degrees.
Finally, to the best of our knowledge no textbook treats affine Hecke algebras any further
than their presentations.

With this survey paper we intend to fill a part of that gap. Our aim is an introduction to
affine Hecke algebras and their representations, of which the larger part is readable for 
mathematicians without previous experience with the subject. To ease the presentation,
we will hardly prove anything, and we will provide many examples. As a consequence, our
treatment is almost entirely algebraic - the deep analytic or geometric arguments behind 
some important results are largely suppressed.\\

Let us discuss the contents in a nutshell. In the first section we work out the various 
presentations of (affine) Hecke algebras over $\C$, and we compare them. Section 2 consists
of explicit examples: we look at the most frequent affine Hecke algebras and provide an
overview of their irreducible representations.

The third section is the core of the paper, here we build up the abstract representation
theory of an affine Hecke algebra $\mc H$. This is done in the spirit of Harish-Chandra's
analytic approach to representations of reductive groups: we put the emphasis on parabolic
induction, the discrete series and the large commutative subalgebra of $\mc H$. To make
this work well, we need to assume that the parameters $q_s$ of $\mc H$ lie in $\R_{>0}$.
With these techniques one can divide the set of irreducible representations $\Irr (\mc H)$
into L-packets, like in the local Langlands program. 
To achieve more, it pays off to reduce from affine Hecke algebras to a simpler kind of
algebras called graded Hecke algebras. This is like reducing questions about a Lie group
to its Lie algebra. 

More advanced techniques to classify $\Irr (\mc H)$ are discussed in Section 4. In principle
this achieves a complete classification, but in practice some computations remain to
be done in examples. 

In Section 5 we report on algebro-geometric approaches to affine Hecke algebras and
graded Hecke algebras, largely due to Lusztig. In many cases, these methods yield beautiful
constructions and parametrizations of all irreducible representations. Although we treat
the involved (co)homology theories as a black box, we do provide a comparison between
these constructions and the setup inspired by Harish-Chandra. 

The final section of the paper is quite different from the rest, here we do actually prove
some new results. The topic is the relation between an affine Hecke algebra $\mc H$ with
parameters $q_s \in \R_{\geq 1}$ and its version with parameters $q_s = 1$. The latter 
algebra is of the form $\C [X \rtimes W]$, where $X$ is a lattice and $W$ is the (finite)
Weyl group of a root system $R$ in $X$. We note that $X \rtimes W$ contains the affine
Weyl/Coxeter group $\Z R \rtimes W$. There is a natural way to regard any finite dimensional
$\mc H$-representation $\pi$ as a representation of $\C [W]$, we call that the $W$-type
of $\pi$. 

\begin{thmintro} \label{thm:A} (see Theorem \ref{thm:6.12}) \\
Let $\mc H$ be an affine Hecke algebra with parameters in $\R_{\geq 1}$
(and a mild condition when $R$ has components of type $F_4$).
There exists a natural bijection
\[
\zeta'_{\mc H} : \Irr (\mc H) \longrightarrow \Irr (X \rtimes W)
\]
such that the restriction of $\zeta'_{\mc H}(\pi)$ to $W$ is always a constituent
of the $W$-type of $\pi$.
\end{thmintro}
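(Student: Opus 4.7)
\medskip

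\noindent\textbf{Proof plan.}
The plan is to construct $\zeta'_{\mc H}$ by a scaling/deformation argument, reducing first to graded Hecke algebras and then transporting irreducibles from $q_s \geq 1$ down to $q_s = 1$ along a continuous one-parameter family. Concretely, I would interpolate between $\mc H$ and $\C[X \rtimes W]$ via the family $\mc H_\epsilon$ ($\epsilon \in [0,1]$) whose parameters are $q_s^\epsilon$, and argue that $\Irr(\mc H_\epsilon)$ can be put in natural bijection with $\Irr(\mc H_1) = \Irr(\mc H)$ on one end and with $\Irr(\mc H_0) = \Irr(X \rtimes W)$ on the other, with compatible behavior of the $W$-type under this interpolation.

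The first step is to decompose $\Irr(\mc H)$ according to central characters. Recall that the center of $\mc H$ is $\C[X]^W$, so $\Irr(\mc H)$ fibers over $W$-orbits $Wt \subset T$. For each such orbit, Lusztig's reduction theorem (referenced in Section 3) identifies the block of $\mc H$-modules with central character $Wt$ with a module category of a graded Hecke algebra $\mh H(Wt)$ attached to the root subsystem of roots vanishing (or taking a specific value) at $t$. The analogous reduction applies to $\C[X \rtimes W]$, identifying modules with central character $Wt$ with modules over the degenerate graded algebra $\C[W_t] \ltimes S(\mf t)$ for the isotropy group $W_t$. Thus one is reduced to producing, for each graded Hecke algebra $\mh H$ with parameters $k_\alpha \geq 0$, a $W$-compatible bijection between $\Irr(\mh H)$ and $\Irr(W \ltimes S(\mf t))$.

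The second step is the scaling argument at the graded level. For $r \in [0,1]$, consider the graded Hecke algebra $\mh H_r$ with parameters $r k_\alpha$; these algebras all share the same underlying vector space, and representations fit into flat families. Following Lusztig's scaling construction (and its extensions, discussed in Sections 4--5), for a tempered irreducible $\pi$ at parameter $r=1$ one obtains a uniquely determined tempered irreducible at $r = 0$ by transporting through the family of Langlands parameters, using the analytic R-group structure to match components. Non-tempered irreducibles are handled by first applying the Langlands classification to reduce to tempered ones on parabolic subalgebras, and then using compatibility of the Langlands decomposition with scaling. That $\zeta'_{\mc H}(\pi)|_W$ is a constituent of $\pi|_W$ then follows because the finite Weyl group $W$ is present in every $\mh H_r$ in the same way, and the deformation fixes the action of $\C[W]$ on the underlying vector space; thus the $W$-type can only break up into constituents along the family, never acquire new ones.

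The final step is to assemble the fibered bijections into the global map $\zeta'_{\mc H}$ and check naturality. The hardest part, and the source of the mild hypothesis for components of type $F_4$, is the second step: when parameters $k_\alpha$ are genuinely unequal, the scaling bijection is no longer automatic, because the tempered/discrete series picture may degenerate (in particular some residual points of the $c$-function may move across walls as $r$ varies), so one must carefully match tempered representations across the one-parameter family using explicit knowledge of the residual cosets. For types $A$--$E$ and dihedral factors this is known, but in $F_4$ with certain unequal parameter configurations the combinatorics of residual points requires a case analysis, which is where the additional condition enters. Once that compatibility is established, naturality with respect to the $\C[W]$-action, and the $W$-type compatibility claimed in the theorem, drop out automatically from the scaling construction.
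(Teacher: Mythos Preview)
Your overall architecture---reduce to graded Hecke algebras via Lusztig's reduction theorems, handle the tempered case first, then extend to all irreducibles via the Langlands classification---matches the paper's structure (Corollary~\ref{cor:3.27}, Theorem~\ref{thm:6.2}, Proposition~\ref{prop:6.10}). The reduction step and the use of Langlands data are fine.

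The genuine gap is in your treatment of the tempered case. You propose to scale the parameter $r$ continuously from $1$ to $0$ and ``transport'' tempered irreducibles through this family. But at $r=0$ the graded Hecke algebra degenerates to $\mc O(\mf t)\rtimes W$, which has \emph{no} discrete series representations (for nonempty $R$). So the induction data $(P,\delta,\lambda)$ that organize the tempered dual for $r>0$ collapse: the discrete series $\delta$ of $\mh H_P$ simply disappear at $r=0$, and there is nothing obvious for them to transport to. Your assertion that ``the deformation fixes the action of $\C[W]$ on the underlying vector space'' presupposes that the underlying vector space of an irreducible varies continuously through $r=0$, which is precisely what fails. Deforming within the \emph{generic} locus (where residual points stay distinct and discrete series persist) does work and is used in the paper (Lemma~\ref{lem:6.4}), but the passage to $r=0$ requires something else.

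What the paper actually does for the tempered bijection $\Irr_0(\mh H)\to\Irr(W)$ is to invoke the geometric construction of graded Hecke algebras from cuspidal local systems (Paragraph~\ref{par:homology}). There the standard modules $\widetilde E_{y,0,r,\rho}$ arise as specializations of a module that is \emph{projective} over $H^*_{M(y)^\circ}(\{y\})$, so their $W$-type is literally independent of $r$ (see \eqref{eq:5.12}); and Theorem~\ref{thm:5.4}.d supplies a triangularity property governed by $\dim\mc C_y^G$, which yields the unipotent upper-triangular matrix of Theorem~\ref{thm:6.2}.b. This is the missing ingredient: a uniform construction of standard modules valid at $r=0$ as well, together with an ordering that forces uniqueness of the bijection. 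For non-geometric parameters (type $B_n$ with $k_2/k_1$ not in the geometric list) the paper deforms to a nearby geometric parameter \emph{within the generic locus} (Lemma~\ref{lem:6.4}) and then appeals to the geometric case. The $F_4$ restriction arises not from residual points crossing walls, but because the triangularity check for non-geometric $F_4$ parameters was deemed too laborious; $G_2$ is handled by a direct case analysis from \cite{Slo1}.
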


In the case where all the parameters $q_s$ are equal, Theorem \ref{thm:A} is closely related
to the Kazhdan--Lusztig parametrization of $\Irr (\mc H)$ \cite{KaLu}, and shown already
in \cite{LusCells}.

It is interesting to restrict Theorem \ref{thm:A} to irreducible $X \rtimes W$-representations on 
which $X$ acts trivially (those can clearly be identified with irreducible $W$-represen\-tations).
The inverse image of these under $\zeta'_{\mc H}$ is the set $\Irr_0 (\mc H)$ of irreducible
tempered $\mc H$-representations ``with real central character", see Paragraph \ref{par:Wtypes}.
When $\mc H$ is of geometric origin, $\Irr_0 (\mc H)$ is naturally parametrized by data as in
Lusztig's generalization of the Springer correspondence with intersection homology, 
see Paragraph \ref{par:homology}. In that case the bijection
\[
\zeta'_{\mc H} : \Irr_0 (\mc H) \to \Irr (W)
\]
becomes an instance of the generalized Springer correspondence. Moreover, for such geometric
affine Hecke algebras the whole of $\Irr (\mc H)$ admits a natural parametrization in terms of 
data that are variations on Kazhdan--Lusztig parameters, see Paragraph \ref{par:cusp}. Via this
parametrization, $\zeta'_{\mc H}$ becomes a generalized Springer correspondence for the (extended) 
affine Weyl group $X \rtimes W$.

With that in mind we can regard Theorem \ref{thm:A}, for any eligible $\mc H$, as a
``generalization of the Springer correspondence with affine Hecke algebras". This applies both
to the finite Weyl group $W$ and the (extended) affine Weyl group $X \rtimes W$.

In the final paragraph we use (the proof of) Theorem \ref{thm:A} to derive some properties of
affine Hecke algebras $\mc H$ of type $B_n / C_n$ with three independent positive $q$-parameters. 
When these parameters are generic, we provide an explicit, effective classification of 
$\Irr (\mc H)$.\\

Of course the selection of topics in any survey is to a considerable extent the taste of the author.
To preserve a reasonable size, we felt forced to omit many interesting aspects of affine Hecke
algebras: the Kazhdan--Lusztig basis \cite{KaLu1}, asymptotic Hecke algebras \cite{LusCells,Lus7}, 
unitary representations \cite{BaMo1,Ciu}, the Schwartz and $C^*$-completions of affine Hecke 
algebras \cite{Opd-Sp,DeOp1}, homological algebra \cite{OpSo1,SolHomAHA}, formal degrees of 
representations \cite{OpSo,CKK}, spectral transfer morphisms \cite{Opd3,Opd4} and so on. 
We apologize for these and other omissions and refer the reader to the literature.\\

\textbf{Acknowledgement.}\\
We thank George Lusztig for some useful clarifications of his work and the referee for his
helpful suggestions.

\section{Definitions and first properties}
\label{sec:def}
\subsection{Finite dimensional Hecke algebras} \

Let $(W,S)$ be a finite Coxeter system -- so $W$ is a finite group
generated by a set $S$ of elements of order 2. Moreover, $W$ has a presentation
\[
W = \langle S \mid (s s' )^{m(s,s')} = e \; \forall s,s' \in S \rangle ,
\]
where $m(s,s') \in \Z_{\geq 1}$ is the order of $s s'$ in $W$. The equalities
$s^2 = e \; (s \in S)$ are called the quadratic relations, while $(s s')^{m(s,s')} = e$,
or equivalently
\[
\underbrace{s \, s' \, s \, s' \cdots }_{m(s,s') \text{ terms}} = 
\underbrace{s' \, s \, s' \, s \cdots }_{m(s,s') \text{ terms}}
\]
is known as a braid relation. Examples to keep in mind are
\begin{itemize}
\item $W = S_n, S = \{ (12), (23), \ldots, (n\!-\!1 \, n) \}$ -- type $A_{n-1}$;
\item $W = S_n \ltimes \{ \pm 1 \}^n, S = \{ (12), (23), \ldots, (n\!-\!1 \, n), 
(\mr{id}, (1,\ldots,1,-1)) \}$ -- type $B_n$ or $C_n$. 
\end{itemize}
In the group algebra $\C [W]$ the quadratic relations are equivalent with 
\begin{equation}\label{eq:1.20}
(s+1) (s-1) = 0 \qquad s \in S .
\end{equation}
Now we choose, for every $s \in S$, a complex number $q_s$, such that
\begin{equation}\label{eq:1.1}
q_s = q_{s'} \text{ if } s \text{ and } s' \text{ are conjugate in } W.
\end{equation}
Let $q : S \to \C$ be the function $s \mapsto q_s$. We define a new $\C$-algebra
$\mc H (W,q)$ which has a vector space basis $\{ T_w : w \in W\}$. Here
$T_e$ is the unit element and there are quadratic relations
\[
(T_s + 1)(T_s - q_s) = 0 \qquad s \in S
\]
and braid relations
\begin{equation}\label{eq:1.2}
\underbrace{T_s \, T_{s'} \, T_{s} \cdots }_{m(s,s') \text{ terms}} = 
\underbrace{T_{s'} \, T_{s} \, T_{s'} \cdots }_{m(s,s') \text{ terms}} .
\end{equation}
Equivalent versions of these quadratic relations are
\begin{equation}\label{eq:1.3}
T_s^2 = (q_s - 1) T_s + q_s T_e \quad \text{and} \quad
T_s^{-1} = q_s^{-1} T_s + (q_s^{-1} - 1) T_e 
\end{equation}
(the latter only when $q_s \neq 0$). For $q=1$, the relations \eqref{eq:1.2}
become the defining relations of the Coxeter system $(W,S)$, so $\mc H (W,1) = \C[W]$.
We say that $\mc H (W,q)$ has equal parameters if $q_s = q_{s'}$ for all
$s,s' \in S$.

The condition \eqref{eq:1.1} is necessary and sufficient for the existence of
an associative unital algebra $\mc H(W,q)$ with these properties
\cite[\S 7.1--7.3]{Hum1}. It is known as a generic algebra or a Hecke algebra.
Such algebras appear for instance in the representation theory of reductive 
groups over finite fields \cite{Iwa1,HoLe1,HoLe2}, in knot theory \cite{Jon}
and in combinatorics \cite{HiTh,HST}.

Further $\mc H (W,q)$ has the structure of a symmetric algebra: it carries a
trace $\tau (T_w) = \delta_{w,e}$, an involution $T_w^* = T_{w^{-1}}$ and
a bilinear form $(x,y) = \tau (x^* y)$. These also give rise to interesting
properties \cite{GePf}, which however fall outside the scope of this survey.\\

Without the trace $\tau$, the representation theory of finite dimensional
Hecke algebras is quite easy. To explain this, we consider a more general situation.

Let $G$ be any finite group. By Maschke's theorem the 
group algebra $\C [G]$ is semisimple. Let 
$\{T_g : g \in G\}$ be its canonical basis, and 
$\mb k = \mh C [x_1 ,\ldots, x_r ]$ a polynomial ring
over $\mh C$. Let $A$ be a $\mb k$-algebra whose 
underlying $\mb k$-module is $\mb k [G]$ and whose 
multiplication is defined by 
\begin{equation}\label{eq:1.4}
T_g \cdot T_h = \sum\nolimits_{w \in G} a_{g,h,w} T_w
\end{equation}
for certain $a_{g,h,w} \in \mb k$. For any point 
$q \in \C^r$ we can endow the vector space $\C [G]$
with the structure of an associative algebra by
\begin{equation}\label{eq:1.5}
T_g \cdot_q T_h = \sum\nolimits_{w \in G} a_{g,h,w}(q) T_w
\end{equation}
We denote the resulting algebra by $\mc H (G,q)$. It is 
isomorphic to the tensor product $A \otimes_{\mb k} \C$
where $\C$ has the $\mb k$-module structure obtained 
from evaluating at $q$. Assume moreover that there exists a
$q^0 \in \C^r$ such that 
\[
\mc H (G ,q^0 ) = \mh C [G]
\]
We express the rigidity of finite dimensional semisimple
algebras by the following special case of Tits' deformation theorem, 
see \cite[p. 357 - 359]{Car1} or \cite[Appendix]{Iwa2}.

\begin{thm}\label{thm:1.1}
There exists a polynomial $P \in \mb k$ such that the 
following are equivalent :
\begin{itemize}
\item $P(q) \neq 0$,
\item $\mc H (G,q)$ is semisimple,
\item $\mc H (G,q) \cong \mh C [G]$.
\end{itemize}
\end{thm}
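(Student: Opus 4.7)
My plan is to take $P$ to be the discriminant of the natural trace form on $A$, deduce (i)$\Leftrightarrow$(ii) from a standard semisimplicity criterion, and promote semisimplicity to the isomorphism statement (iii) by a Wedderburn-theoretic deformation argument. Concretely, I would let $\tau : A \to \mb{k}$ be the trace of the left regular representation: since $T_w T_g = \sum_x a_{w,g,x} T_x$ with all $a_{w,g,x} \in \mb{k}$, each $\tau(T_g T_h)$ is an element of $\mb{k}$, and I set
\[
P := \det\bigl(\tau(T_g T_h)\bigr)_{g,h \in G} \;\in\; \mb{k},
\]
so that $P(q)$ is exactly the discriminant of the trace form on $\mc H(G,q)$. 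The equivalence (i)$\Leftrightarrow$(ii) then follows from the classical fact that a finite dimensional associative $\C$-algebra is semisimple iff its trace form is nondegenerate (the Jacobson radical is nilpotent and hence trace-annihilated; conversely, on a Wedderburn product of matrix algebras the trace form is patently nondegenerate). Since $\mc H(G,q^0) = \C[G]$ is semisimple by Maschke, $P(q^0) \neq 0$, so $P$ is a nonzero polynomial and the locus $\{P \neq 0\}$ is Zariski dense in $\C^r$. Moreover (iii)$\Rightarrow$(ii) is trivial.

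For the remaining implication (ii)$\Rightarrow$(iii), I would pass to $K = \mr{Frac}(\mb{k})$ and its algebraic closure $\bar K$: since $P \neq 0$ in $\bar K$, the base change $A \otimes_{\mb{k}} \bar K$ has nondegenerate trace form, is semisimple, and hence splits as $\prod_i M_{d_i}(\bar K)$. Primitive central idempotents witnessing this decomposition can be chosen over a localization $\mb{k}'[1/P]$ for some finite extension $\mb{k}'/\mb{k}$, so that at any $\C$-point $q$ with $P(q) \neq 0$ they specialize to central idempotents of $\mc H(G,q)$ whose blocks are matrix algebras over $\C$ of the same sizes $d_i$. Specializing at $q^0$ identifies the multiset $\{d_i\}$ with that of the dimensions of the irreducible $\C[G]$-modules, so at every good $q$ one obtains $\mc H(G,q) \cong \C[G]$.

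The main obstacle I anticipate is making the descent from $\bar K$ back to $\C$ at a specific specialization rigorous: the finite extension needed to split the centre of $A \otimes_{\mb{k}} K$ can permute Galois-conjugate matrix blocks when evaluated at a $\C$-point, and one must check this does not corrupt the Wedderburn multiset $\{d_i\}$. I would handle this either by averaging the idempotents over their Galois orbits before specializing, or by observing that the combinatorial invariants that pin down the shape, such as the rank of the centre of $\mc H(G,q)$, the identity $\sum_i d_i^{2} = |G|$, and the multiset of block dimensions modulo Galois action, are locally constant on the semisimple locus; together with the known value at $q^0$ these force the Wedderburn data to match throughout.
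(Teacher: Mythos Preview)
The paper does not supply its own proof of this theorem: it is stated as a special case of Tits' deformation theorem and simply referred to \cite{Car1} and \cite{Iwa2}. So there is nothing in the paper to compare your argument against line by line.

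That said, your sketch is essentially the classical Tits deformation argument as found in those references. Taking $P$ to be the discriminant of the regular trace form is exactly the standard choice, and your equivalence (i)$\Leftrightarrow$(ii) via nondegeneracy of the trace form is correct. Your identification of the only real subtlety --- matching the Wedderburn shape at an arbitrary semisimple specialisation with the shape at $q^0$ --- is accurate, and both of your proposed fixes are viable. In the references the standard route is closer to your second option: one works with the characters of the simple $A \otimes_{\mb k} \bar K$-modules, which have values in an integral extension of $\mb k$, specialises them at any $q$ with $P(q)\neq 0$, and checks (via orthogonality or via the trace form) that the specialised characters remain linearly independent and hence exhaust $\Irr(\mc H(G,q))$; this bypasses the idempotent descent entirely. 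Your idempotent approach works too, but you should be explicit that after specialising at a $\C$-point the Galois action becomes trivial (since $\C$ is algebraically closed), so conjugate blocks over $\bar K$ can only collapse in a way that preserves the total multiset of dimensions.
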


In other words: when $\mc H (G,q)$ is semisimple it is isomorphic to
$\C [G]$, and otherwise the algebra $\mc H (W,q)$ has nilpotent ideals
and looks very different from $\mh C [G]$. The simplest case where the latter
occurs is already $G = \{e,s\}$. Namely, for $q = -1$ we get the Hecke algebra
\begin{equation}\label{eq:1.25}
\mc H (G,-1) = \C [T_s + 1] / (T_s + 1)^2.
\end{equation}
Let us discuss how it works out for an arbitrary finite Coxeter system $(W,S)$
and a parameter function $q$ as before. For an element $w \in W$ with
a reduced expression $s_1 s_2 \cdots s_r$ we put
\begin{equation}\label{eq:1.10}
q(w) = q_{s_1} q_{s_2} \cdots q_{s_r}.
\end{equation}
This is well-defined by the presentation of $W$ and condition \eqref{eq:1.1}.
We want to know under which conditions the algebra $\mc H (W,q)$ is semisimple.
For such groups the polynomials $P(q)$ of Theorem \ref{thm:1.1} have been 
determined explicitly. If we are in the equal label case 
$q(s) = q \; \forall s \in S$ then we may take
\begin{equation}\label{eq:1.24}
P(q) = q \sum\nolimits_{w \in W} q^{\ell (w)} 
\end{equation}
except that we must omit the factor $q$ if $W$ is of type 
$(A_1 )^n$, see \cite{GyUn}. More generally, suppose
that if $(W,S)$ is irreducible and $S$ consists 
of two conjugacy classes, with parameters $q_1$ and $q_2$.
Gyoja \cite[p. 569]{Gyo} showed that in most of these cases we may take
\begin{equation}\label{eq:1.6}
\begin{array}{ccl}
P(q_1 ,q_2 ) & = & q_1^{|W|} q_2 W(q_1 ,q_2 ) W(q_1^{-1},q_2 )\\
W(q_1 ,q_2 ) & = & \sum_{w \in W} q(w)
\end{array}
\end{equation}
So generically there is an isomorphism 
\begin{equation}\label{eq:1.7}
\mc H (W,q) \cong \mh C [W] .
\end{equation}
In particular, for almost all $q$ the representation theory of
$\mc H(W,q)$ is just that of $W$ (over $\C$). 

When $q_1 q_2 \neq 0$ and $W(q_1,q_2) = 0$, the subgroup of $\C^\times$
generated by $q_1$ and $q_2$ contains a root of unity different from 1.
Hence the non-semisimple algebras $\mc H(W,q)$ are those with (at least)
one $q_s$ equal to 0 and some for which $q$ involves nontrivial roots
of unity. Although these algebras can have an interesting combinatorial
structure, their behaviour is quite different from that of affine Hecke algebras.

\subsection{Iwahori--Hecke algebras} \
\label{par:defIHA}

Let $(W,S)$ be any Coxeter system. As in the previous paragraph, we can assign
complex numbers $q_s$ to the elements of $S$. When \eqref{eq:1.1} is fulfilled,
we can construct an algebra $\mc H (W,q)$ exactly as before. For $q=1$ this is 
a group algebra of $W$, and hence, for $q \neq 1$, $\mc H (W,q)$ can be regarded
as some deformation of $\C [W]$. However, the situation is much more complicated
than when $W$ is finite. Tits' deformation theorem does not work here, and in general
many different $q$'s can lead to mutually non-isomorphic algebras \cite{Yan}.

To get some grip on the situation, we restrict our scope from general Coxeter groups
to affine Weyl groups. By that we mean Coxeter systems $(W_\af,S_\af)$ such that every
irreducible component $S_i$ of $S_\af$ generates a Coxeter group $W_i$ of affine type. 
The affineness condition is equivalent to: the Cartan matrix of $(W_i,S_i)$ is positive
semidefinite but not positive definite \cite[\S 2.5,\S 4.7, \S 6.5]{Hum1}. Thus
irreducible affine Weyl groups are classified by the Dynkin diagrams
$\tilde A_n, \tilde B_n, \tilde C_n, \tilde D_n, \tilde E_6, \tilde E_7,
\tilde E_8, \tilde F_4, \tilde G_2$.

In the simplest case $\tilde A_1$, $W_\af$ is an infinite dihedral group,
freely generated by two elements of order 2.

\begin{defn}\label{def:1.2}
An Iwahori--Hecke algebra is an algebra of the form $\mc H (W,q)$, where $(W,S)$ is
any Coxeter system. We say that $\mc H (W,q)$ is of affine type if $(W,S)$ is an
affine Weyl group.
\end{defn}

Iwahori--Hecke algebras of affine type where discovered first by Matsumoto and 
Iwahori \cite{Mat1,IwMa,Iwa2}, in the context of reductive $p$-adic groups. For
instance, let $G$ be a split, simply connected, semisimple group over $\Q_p$ and let
$I$ be an Iwahori subgroup of $G$. Then the convolution algebra $C_c (I \backslash G / I)$
is isomorphic to $\mc H(W_\af,q)$, where $(W_\af,S_\af)$ is derived from $G$ and $q_s = p$ 
for all $s \in S$. This is called the Iwahori-spherical Hecke algebra of $G$, because
its modules classify $G$-representations with $I$-fixed vectors.

The basic structure of an affine Weyl group $W_\af$ is described in \cite[\S 4.2]{Hum1}
and \cite[\S VI.2]{Bou}. The set of elements whose conjugacy class is finite forms a finite
index normal subgroup of $W_\af$, isomorphic to a lattice. That lattice is
spanned by an integral root system $R$, and $W_\af$ is the semidirect product of $\Z R$ 
and the Weyl group of $R$. In particular $\mc H (W_\af,q)$ contains $\mc H (W(R),q)$ as a 
subalgebra. However, the embedding of $W(R)$ in $W_\af$ is in general not unique.

\subsection{Affine Hecke algebras} \
\label{par:defAHA}

Affine Hecke algebras generalize Iwahori--Hecke algebras of affine type. Instead of 
affine Weyl groups, we allow more general groups which are semidirect products
of lattices and finite Weyl groups. The best way to do the bookkeeping is with
root data, for which our standard references are \cite{Bou,Hum1}. 

Consider a quadruple $\mc R = (X,R,Y,R^\vee)$, where
\begin{itemize}
\item $X$ and $Y$ are lattices of finite rank, with a perfect pairing 
$\langle \cdot, \cdot \rangle : X \times Y \to \Z$,
\item $R$ is a root system in $X$,
\item $R^\vee \subset Y$ is the dual root system, and a bijection $R \to R^\vee,
\alpha \mapsto \alpha^\vee$ with $\langle \alpha, \alpha^\vee \rangle = 2$ is given,
\item for every $\alpha \in R$, the reflection
\[
s_\alpha : X \to X ,\; s_\alpha (x) = x - \inp{x}{\alpha^\vee} \alpha 
\]
stabilizes $R$,
\item for every $\alpha^\vee \in R^\vee$, the reflection
\[
s^\vee_\alpha : Y \to Y ,\; s^\vee_\alpha (y) = y - \inp{\alpha}{y} \alpha^\vee 
\]
stabilizes $R^\vee$.
\end{itemize}
If all these conditions are met, we call $\mc R$ are root datum. It comes with a
finite Weyl group $W = W(R)$ and an infinite group $W(\mc R) = X \rtimes W(R)$,
the extended affine Weyl group of $\mc R$. Often we will add a base of $R$ to
$\mc R$, and speak of a based root datum. 

\begin{ex}
Take $X = Y = \Z, R = \{\pm 1\}$ and $R^\vee = \{ \pm 2\}$. Then
\begin{equation}\label{eq:1.8}
W(\mc R) = \Z \rtimes S_2, \text{ an infinite dihedral group.}
\end{equation}
We stress that we do not require $R$ to span the vector space $X \otimes_\Z \R$.
We say that $\mc R$ is semisimple if $R$ does span $X \otimes_\Z \R$.
For non-semisimple root data $R$ and $R^\vee$ may even be empty. For instance, root 
datum $\mc R = (\Z^n,\emptyset, \Z^n,\emptyset)$ has infinite group $W(\mc R) = \Z^n$,
but no reflections.
\end{ex}

More examples of root data (actually all) come from reductive groups, see \cite{Spr}.
Suppose that $\mc G$ is a reductive algebraic group, $\mc T$ is a maximal torus
in $\mc G$ and $R(\mc G,\mc T)$ is the associated root system. Denote the character 
lattice of $\mc T$ by $X^* (\mc T)$ and its cocharacter lattice by $X_* (\mc T)$. Then
\begin{equation}\label{eq:1.9}
\mc R (\mc G,\mc T) := 
\big( X^* (\mc T), R(\mc G,\mc T), X_* (\mc T), R(\mc G,\mc T)^\vee \big).
\end{equation}
is a root datum. We note that $\mc R (\mc G, \mc T)$ is semisimple if and only if
$\mc G$ is semisimple. \\

The group $W(\mc R) = X \rtimes W$ acts naturally on the vector space $X \otimes_\Z \R$,
$X$ by translations and $W$ by linear extension of its action on $X$. The collection of
hyperplanes 
\[
H_{\alpha, n} = \{ x \in X \otimes_\Z \R : \alpha^\vee (x) = n \} 
\qquad  \alpha \in R, n \in \Z 
\]
is $W(\mc R)$-stable and divides $X \otimes_\Z \R$ in open subsets called alcoves. 
Let $W_{\mr{aff}}$ be the subgroup of $W(\mc R)$ generated by the (affine) reflections in
the hyperplanes $H_{\alpha, n}$. This is an affine Weyl group, and $X \otimes_\Z \R$
with this hyperplane arrangement is its Coxeter complex.

To construct Hecke algebras from root data, we need to specify a set of Coxeter
generators of $W_\af$. In this setting they will be affine reflections.
Let $\Delta$ be a base of $R$. As is well-known, it yields a set of simple reflections 
$S = \{ s_\alpha : \alpha \in \Delta \}$, and $(W,S)$ is a finite Coxeter system.
(For a root datum of the form $\mc R (\mc G,\mc T)$ as in \eqref{eq:1.8}, the choice of $\Delta$
is equivalent to the choice of a Borel subgroup $\mc B \subset \mc G$ containing $\mc T$.)

The base $\Delta$ determines a ``fundamental alcove" $A_0$ in $X \otimes_\Z \R$, namely
the unique alcove contained in the positive Weyl chamber (with respect to $\Delta$),
such that $0 \in \overline{A_0}$. The reflections in those walls of $A_0$ that contain 0
constitute precisely $S$. The set $S_\af$ of (affine) reflections with respect to all walls 
of $A_0$ forms the required collection of Coxeter generators of $W_\af$.

\begin{ex}
A part of the hyperplane arrangement for an affine Weyl group of type $\tilde{A_2}$, 
with $A = A_0$, $\Delta = \{\alpha,\beta\}$ and $S_\af = \{s_\alpha,s_\beta,s_\gamma\}$.

\includegraphics[width=9cm]{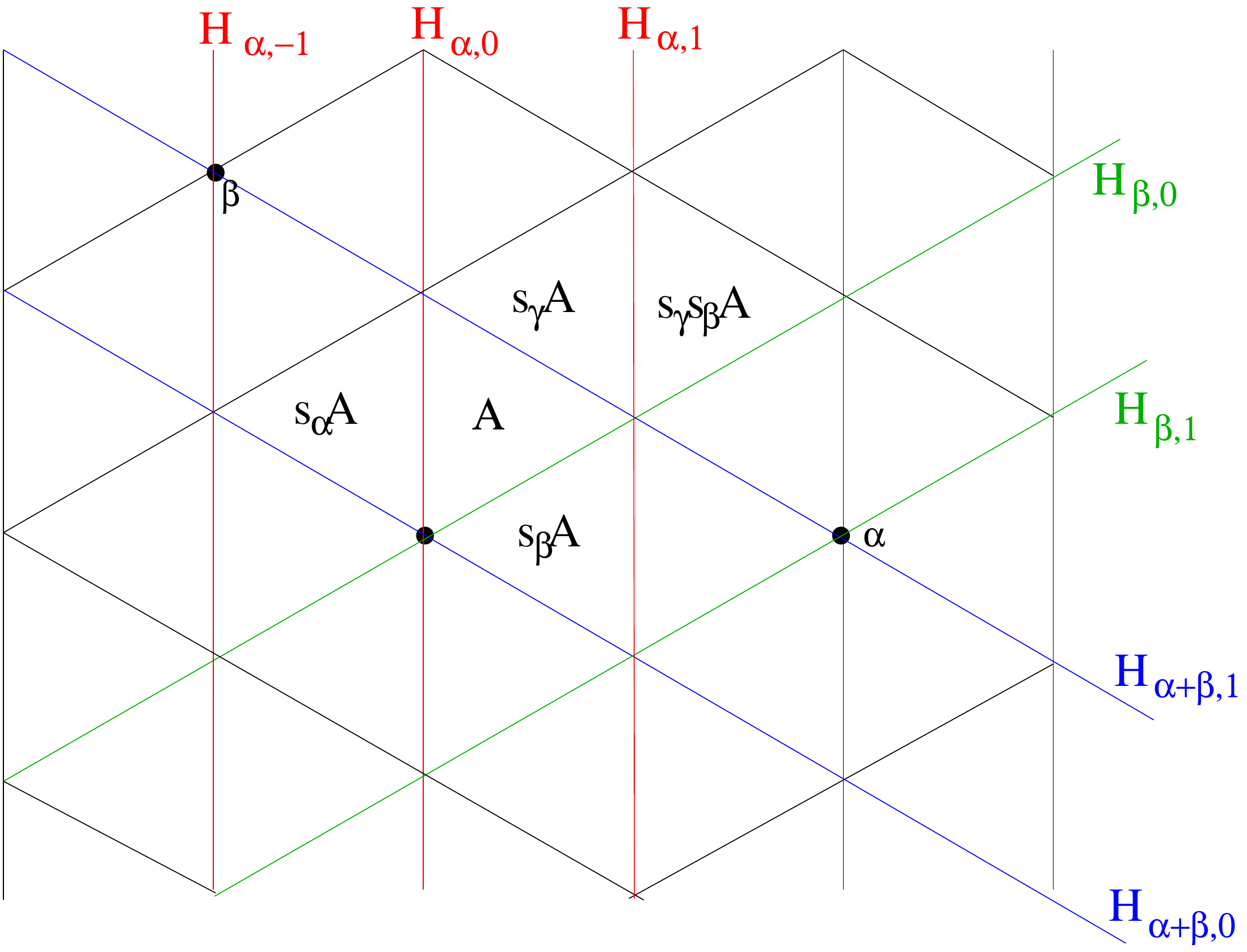}
\end{ex}

We can make $S_\af$ more explicit. Let $R^\vee_{\mr{max}}$ be the set of maximal elements of
$R^\vee$, with respect to the base $\Delta^\vee$. It contains one element for every irreducible
component of $R^\vee$. For $\alpha^\vee \in R^\vee_{\mr{max}}$, define
\[
s'_\alpha : X \to X ,\; s'_\alpha (x) = x + \alpha - \inp{x}{\alpha^\vee} \alpha .
\]
This is the reflection of $X \otimes_\Z \R$ in the hyperplane $H_{\alpha,1}$, a wall
of $A_0$. Then
\[
S_\af = S \cup \{ s'_\alpha : \alpha^\vee \in R^\vee_{\mr{max}} \}.
\]
We denote the based root datum $(X,R,Y,R^\vee,\Delta)$ also by $\mc R$. Thus the sets
$S, S_\af$ and the subgroup $W_\af \subset W(\mc R)$ are determined by $\mc R$.
\begin{ex}
A couple of important instances, coming from the reductive groups $PGL_2$ and $GL_n$:
\begin{itemize}
\item $X = Y = \Z, R = \{ \pm 2\}, R^\vee = \{ \pm 1\}, \Delta = \{ \alpha = 2\}$. Here 
$S_\af = \{ s_\alpha, s'_\alpha : x \mapsto 2 - x \}$ and $W_\af = 2 \Z \rtimes S_2$,
a proper subgroup of $W(\mc R) = \Z \rtimes S_2$. This is the same $W(\mc R)$ as in
\eqref{eq:1.8}, but with a different set of simple affine reflections.
\item $X = Y = \Z^n, R = R^\vee = A_{n-1} = \{ e_i - e_j : 1 \leq i ,j \leq n, i \neq j \}$,
$\Delta = \{ e_i - e_{i+1} : i = 1,\ldots,n-1\}$. In this case
\begin{align} \nonumber
S_\af = \{ s_i = s_{e_i - e_{i+1}} & : i = 1,\ldots,n-1\} \cup
\{ s_0 : x \mapsto x + (1 - \inp{x}{e_1 - e_n}) (e_1 - e_n) \} \\
& \label{eq:1.12} W_\af = \{ (x_1,\ldots,x_n) \in \Z^n : x_1 + \cdots + x_n = 0\} \rtimes S_n
\end{align}
\end{itemize}
\end{ex}

Like Iwahori--Hecke algebras, affine Hecke algebras involve $q$-parameters. Fix
$\mb{q} \in \R_{>1}$ and let $\lambda, \lambda^* : R \to \C$ be functions such that
\begin{itemize}
\item if $\alpha, \beta \in R$ are $W$-associate, then $\lambda (\alpha) = \lambda (\beta)$
and $\lambda^* (\alpha) = \lambda^* (\beta)$,
\item if $\alpha^\vee \notin 2 Y$, then $\lambda^* (\alpha) = \lambda (\alpha)$.
\end{itemize}
We note that $\alpha^\vee \in 2Y$ is only posssible for short roots $\alpha$ in 
a type $B$ component of $R$. For $\alpha \in R$ we write
\begin{equation}\label{eq:1.28}
q_{s_\alpha} = \mb{q}^{\lambda (\alpha)} \quad \text{and (if } 
\alpha^\vee \in R^\vee_{\mr{max}} ) \quad q_{s'_\alpha} = \mb{q}^{\lambda^* (\alpha)} .
\end{equation}
Recall that $\mc H (W,q)$ is the Iwahori--Hecke algebra of $W = W(R)$ and let
$\{ \theta_x : x \in X\}$ be the standard basis of $\C [X]$.

\begin{defn}\label{def:1.3}
The affine Hecke algebra $\mc H (\mc R,\lambda,\lambda^*,\mb{q})$ is the vector space\\
$\C [X] \otimes_\C \mc H (W,q)$ with the multiplication rules:
\begin{itemize}
\item $\C[X]$ and $\mc H (W,q)$ are embedded as subalgebras,
\item for $\alpha \in \Delta$ and $x \in X$:
\[
\theta_x T_{s_\alpha} - T_{s_\alpha} \theta_{s_\alpha (x)} =
\Big( (\mb{q}^{\lambda (\alpha)} - 1) + \theta_{-\alpha} \big( \mb{q}^{(\lambda (\alpha) +
\lambda^* (\alpha))/2} - \mb{q}^{(\lambda (\alpha) - \lambda^* (\alpha))/2} \big) \Big)
\frac{\theta_x - \theta_{s_\alpha (x)}}{\theta_0 - \theta_{-2 \alpha}} .
\]
\end{itemize}
\end{defn}
When $\alpha^\vee \notin 2 Y$, the cross relation simplifies to
\[
\theta_x T_{s_\alpha} - T_{s_\alpha} \theta_{s_\alpha (x)} =
(\mb{q}^{\lambda (\alpha)} - 1)(\theta_x - \theta_{s_\alpha (x)})(\theta_0 - \theta_{-\alpha})^{-1} .
\]
Notice that here the right hand side lies in $\C[X]$ because 
\[
\theta_x - \theta_{s_\alpha (x)} = \theta_x - \theta_{x - \inp{x}{\alpha^\vee} \alpha} =
\theta_x (\theta_0 - \theta_{- \inp{x}{\alpha^\vee} \alpha})
\]
is divisible by $\theta_0 - \theta_{-\alpha}$.
It follows from \cite[\S 3]{Lus-Gr} that $\mc H (\mc R,\lambda,\lambda^*,\mb{q})$ is
really an associative algebra with unit element $\theta_0 \otimes T_e$, and that the 
multiplication map
\begin{equation}\label{eq:1.19}
\begin{array}{ccc}
\mc H (W,q) \otimes_\C \C [X] & \to &  \mc H (\mc R, \lambda,\lambda^*,\mb{q}) \\
h \otimes f & \mapsto & h \cdot f
\end{array}
\end{equation}
is bijective. We say that $\mc H (\mc R,\lambda,\lambda^*,\mb{q})$ has equal parameters if 
\[
\lambda (\alpha) = \lambda (\beta) = \lambda^* (\alpha) = \lambda^* (\beta)
\quad \text{for all } \alpha, \beta \in R.
\]
When $\lambda = \lambda^* = 1$, we omit them from the notation and write simply $\mc H (\mc R, \mb{q})$. 
In that setting we will often allow $\mb q$ to be any element of $\C^\times$. We note that for 
$\lambda = \lambda^* = 0$ or $\mb q = 1$ we recover the group algebra $\C[X \rtimes W]$.
In particular, the only affine Hecke algebra associated to $(X,\emptyset,Y,\emptyset)$ is $\C[X]$.

Affine Hecke algebras appear foremostly in the representation theory of reductive $p$-adic
groups, see \cite{BuKu1,Mor1,Roc,ABPSaroundLLC,SolEnd}. They also have strong ties to
orthogonal polynomials \cite{Kir,McD}, which often run via double affine Hecke algebras 
\cite{Che2}. This has lead to a whole family of Hecke algebras, with adjectives like
degenerate, cyclotomic, rational, graded and (double) affine. We will only discuss one further 
member of this family, in paragraph \ref{par:defGHA}.\\

As already explained, Tits' deformation theorem does not apply to affine Hecke algebras. But 
there is a substitute for the semisimplicity part of Theorem \ref{thm:1.1}. Recall that a
finite dimensional algebra $A$ is semisimple (i.e. a direct sum of simple algebras) if and only if
its Jacobson radical Jac$(A)$ (the intersection of the kernels of all simple modules) is zero. 
In that sense Theorem \ref{thm:1.1} admits a partial generalization, see
\cite[Lemma 3.4]{SolThe} and \cite[(3.4.5)]{Mat2}:

\begin{lem}\label{lem:1.4}
The Jacobson radical of an affine Hecke algebra $\mc H (\mc R,\lambda,\lambda^*,\mb{q})$ is zero.
\end{lem}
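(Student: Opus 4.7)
The plan is to exhibit a Zariski-dense family of irreducible $\mc H$-modules and use them to separate elements of $\mc H$.

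First I would form, for each character $\chi : X \to \C^\times$ (equivalently $\chi \in T := \mr{Hom}_\Z(X,\C^\times)$), the principal series
\[
M_\chi \;=\; \mc H \otimes_{\C[X]} \C_\chi.
\]
By the Bernstein-style PBW isomorphism \eqref{eq:1.19}, the vectors $\{T_w \otimes 1\}_{w \in W}$ form a $\C$-basis of $M_\chi$, so $M_\chi$ is a finite-dimensional $\mc H$-module of dimension $|W|$. Writing an element $h \in \mc H$ uniquely as $h = \sum_{w \in W} T_w f_w$ with $f_w \in \C[X]$, a direct computation in $M_\chi$ yields
\[
h \cdot (1 \otimes 1) \;=\; \sum_{w \in W} \chi(f_w)\, T_w \otimes 1 ,
\]
identifying the ``first column'' of the action of $h$ with the values at $\chi$ of the polynomials $f_w$.

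Next I would invoke the classical result (cf.\ \cite{Lus-Gr}) that $M_\chi$ is irreducible for $\chi$ ranging over a non-empty Zariski-open subset $U$ of $T$. For such $\chi$, $M_\chi$ is a simple $\mc H$-module, so any $h \in \mr{Jac}(\mc H)$ annihilates $M_\chi$ by the very definition of the Jacobson radical; in particular $h \cdot (1 \otimes 1) = 0$, and the displayed formula forces $\chi(f_w) = 0$ for every $w \in W$ and every $\chi \in U$. Since each $f_w$ is a regular function on the algebraic torus $T$ and $U$ is Zariski-dense, we conclude $f_w = 0$ and hence $h = 0$.

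The main obstacle is the generic irreducibility input. Its proof rests on the analysis of intertwining operators between the $M_\chi$, a genuine structural result about affine Hecke algebras not contained in the material recalled so far. If one only had generic \emph{semisimplicity} of $M_\chi$---a softer statement obtainable, for instance, from a trace-discriminant argument on $\mc H$ viewed as a finite module over its center $\C[X]^W$---the same argument still applies, since $\mr{Jac}(\mc H)$ annihilates every simple summand of a semisimple $\mc H$-module.
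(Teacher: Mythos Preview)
Your argument is correct. The paper does not actually prove this lemma—being a survey, it simply cites \cite[Lemma 3.4]{SolThe} and \cite[(3.4.5)]{Mat2}—so there is no in-text proof to compare against. Your approach via generic irreducibility of principal series is a standard one, and the needed input is in fact available later in the paper as Theorem~\ref{thm:3.29} (Kato's criterion), which shows that $\ind_{\C[X]}^{\mc H}(\C_t)$ is irreducible on a Zariski-open dense subset of $T$. Your fallback observation that generic \emph{semisimplicity} (obtainable from the trace-discriminant of $\mc H$ as a finite module over its centre $\C[X]^W$, cf.\ \eqref{eq:1.14}) already suffices is also correct and arguably more self-contained, since it sidesteps the intertwining-operator analysis entirely.
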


\subsection{Presentations of affine Hecke algebras} \

We will make the relations between the algebras in paragraphs \ref{par:defIHA} and 
\ref{par:defAHA} explicit. We start with the Bernstein presentation of an 
Iwahori--Hecke algebra of affine type.

Let $W_\af$ be an affine Weyl group with Coxeter generators $S_\af$, and write it as
$W_\af = \Z R \rtimes W(R)$. Put 
\[
\mc R = \big( \Z R, R, \Hom_\Z (\Z R,\Z),R^\vee,\Delta \big) , 
\]
where $\Delta = \{ \alpha \in R : s_\alpha \in S_\af \}$. Consider an Iwahori--Hecke
algebra $\mc H (W_\af,q)$ with parameters $q_s \in \C$.

\begin{thm}\label{thm:1.5}(Bernstein, see \cite[\S 3]{Lus-Gr}) \\
Suppose that $q_s \neq 0$ for all $s \in S_\af$. Pick $\lambda (\alpha), \lambda^* (\alpha)$
such that $q_{s_\alpha} = \mb{q}^{\lambda (\alpha)}$ for all $\alpha \in R$ and 
$q_{s'_\alpha} = \mb{q}^{\lambda^* (\alpha)}$ when $\alpha^\vee \in R^\vee_{\mr{max}}$.
Then there exists a unique algebra isomorphism $\mc H (W_\af,q) \to \mc H (\mc R,\lambda,
\lambda^*, \mb{q})$ such that:
\begin{itemize}
\item it is the identity on $\mc H (W,q)$,
\item for $x \in \Z R$ with $\inp{x}{\alpha^\vee} \geq 0$ for all $\alpha \in \Delta$,
it sends $q(x)^{-1/2} T_x$ to $\theta_x$.
\end{itemize}
Here $q(x)^{1/2}$ is defined via \eqref{eq:1.10} and $q_{s_\alpha}^{1/2} = 
\mb{q}^{\lambda (\alpha)/2}, q_{s'_\alpha}^{1/2} = \mb{q}^{\lambda^* (\alpha)/2}$. 
\end{thm}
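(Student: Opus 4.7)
The plan is to produce the inverse of the claimed isomorphism: first construct an algebra homomorphism $\Phi : \mc H(\mc R,\lambda,\lambda^*,\mb q) \to \mc H(W_\af,q)$ which is the identity on $\mc H(W,q)$ and sends $\theta_x$ to a suitable element $\Theta_x$ of $\mc H(W_\af,q)$; then show $\Phi$ is bijective, so that $\Phi^{-1}$ is the desired map. For $x \in \Z R$ dominant with respect to $\Delta$, set $\Theta_x := q(x)^{-1/2} T_x$. The length identity $\ell(xy) = \ell(x) + \ell(y)$ for dominant $x,y \in \Z R \subset W_\af$, combined with the braid relations \eqref{eq:1.2}, gives $T_x T_y = T_{xy}$ and hence $\Theta_x \Theta_y = \Theta_{x+y}$. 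Since every $q_s$ is nonzero, each $T_s$ is invertible by \eqref{eq:1.3}, so we may extend $\Theta$ to all of $\Z R$ by $\Theta_{x_1 - x_2} := \Theta_{x_1} \Theta_{x_2}^{-1}$ for $x_1, x_2$ dominant. Commutativity on the dominant cone makes this independent of the decomposition and yields a subalgebra of $\mc H(W_\af,q)$ isomorphic to $\C[\Z R]$.

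The core of the proof is to verify that, for each $\alpha \in \Delta$ and each $x \in \Z R$, the elements $\Theta_x$ and $T_{s_\alpha}$ of $\mc H(W_\af,q)$ satisfy the Bernstein cross relation of Definition \ref{def:1.3}. By multiplicativity in $x$ it is enough to check the identity on a finite generating set of $\Z R$, which reduces the computation to the parabolic subalgebra generated by $T_{s_\alpha}$, by $T_{s'_\alpha}$ when $\alpha^\vee \in R^\vee_{\mr{max}}$, and by $\Theta_y$ for $y$ in the corresponding rank-one sublattice. There, the relation follows from the quadratic relation $(T_{s_\alpha}+1)(T_{s_\alpha}-q_{s_\alpha}) = 0$, together with the analogous relation for $T_{s'_\alpha}$ when it is present. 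The main obstacle is this rank-one bookkeeping: one must track precisely how the two independent parameters $\lambda(\alpha)$ and $\lambda^*(\alpha)$ combine, after normalization by the scalars $q(x)^{1/2}$, to produce exactly the two terms $\mb q^{(\lambda(\alpha) \pm \lambda^*(\alpha))/2}$ on the right hand side of the cross relation in the case $\alpha^\vee \in 2Y$. When $\alpha^\vee \notin 2Y$ one has $\lambda = \lambda^*$ and the formula collapses to the simpler displayed version, recovering the original Bernstein--Lusztig identity.

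Once the cross relation is established, the presentation of Definition \ref{def:1.3} yields a unital algebra homomorphism $\Phi : \mc H(\mc R,\lambda,\lambda^*,\mb q) \to \mc H(W_\af,q)$ with the required properties. Surjectivity follows from $W_\af = \Z R \rtimes W$: every basis element $T_w$ can be written as a product of an element of $\mc H(W,q)$ with some $\Theta_x$, so lies in the image of $\Phi$. For injectivity one compares the PBW-type basis $\{h \cdot \theta_x : h \in \mc H(W,q),\; x \in \Z R\}$ of the source, furnished by the bijectivity of \eqref{eq:1.19}, with the basis $\{T_w : w \in W_\af\}$ of the target; the map $\Phi$ sends the former to a rescaling of the latter by the nonzero scalars $q(x)^{\pm 1/2}$, hence is a bijection. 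The inverse $\Phi^{-1}$ is then the isomorphism claimed by the theorem. Uniqueness is immediate, since $\mc H(W,q)$ together with the elements $\theta_x$ for dominant $x$ already generate $\mc H(\mc R,\lambda,\lambda^*,\mb q)$, and the values of the isomorphism on these generators are prescribed.
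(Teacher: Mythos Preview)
The paper does not prove Theorem~\ref{thm:1.5}; it is stated with a reference to \cite[\S 3]{Lus-Gr} and no argument is given, in keeping with the survey nature of the paper. Your outline is essentially the standard Bernstein--Lusztig approach carried out in that reference, so there is nothing to compare on the level of strategy.

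There is, however, a genuine gap in your injectivity argument. You claim that $\Phi$ sends the PBW basis $\{T_w \theta_x : w \in W,\; x \in \Z R\}$ to nonzero scalar multiples of the basis elements $\{T_v : v \in W_\af\}$. This is only true when $x$ is dominant: then $\Theta_x = q(x)^{-1/2} T_{t_x}$ and the length additivity $\ell(w t_x) = \ell(w) + \ell(t_x)$ gives $\Phi(T_w \theta_x) = q(x)^{-1/2} T_{w t_x}$. For general $x$ you defined $\Theta_x = \Theta_{x_1} \Theta_{x_2}^{-1}$ with $x_1, x_2$ dominant, and $T_{t_{x_2}}^{-1}$ is \emph{not} a single basis vector---by \eqref{eq:1.3} each $T_s^{-1}$ already involves two basis vectors, and a reduced expression for $t_{x_2}$ produces many terms. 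So $\Phi$ does not send basis to rescaled basis, and your bijectivity conclusion does not follow.

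The standard repair is a triangularity argument: one shows that for every $x \in \Z R$,
\[
\Theta_x \;\in\; c_x\, T_{t_x} \;+\; \mathrm{span}\{\, T_v : \ell(v) < \ell(t_x)\,\}
\]
with $c_x \neq 0$, by induction on $\ell(t_{x_2})$ using \eqref{eq:1.3}. Combined with the length additivity for $w \in W$ and $t_x$, this yields that $\{T_w \Theta_x : w \in W,\; x \in \Z R\}$ is related to $\{T_v : v \in W_\af\}$ by a unitriangular change of basis (up to nonzero scalars), hence is itself a basis. That gives injectivity and completes the proof.
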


Not every affine Hecke algebra is isomorphic to an Iwahori--Hecke algebra, for instance
$\C [X]$ is not. To be precise, an isomorphism as in Theorem \ref{thm:1.5} exists if and 
only if the root datum $\mc R$ is that of an adjoint semisimple group.
To compensate for this difference in scope, we take another look at the structure of 
$W(\mc R)$ for an arbitrary based root datum $\mc R$.

We know from \cite[\S 4.4]{Hum1} that the length function $\ell$ of $(W_\af,S_\af)$ satisfies
\begin{equation}\label{eq:1.11}
\ell (w) = \text{ number of hyperplanes } H_{\alpha,n} \text{ that separate }
w(A_0) \text{ from } A_0 .
\end{equation}
We extend $\ell$ to a function $W(\mc R) \to \Z_{\geq 0}$, by decreeing that \eqref{eq:1.11}
is valid for all $w \in W(\mc R)$. Then
\[
\Omega := \{ w \in W(\mc R) : \ell (w) = 0 \} = \text{ stabilizer of } A_0 \text{ in } W(\mc R)
\]
is a subgroup of $W(\mc R)$. The group $\Omega$ acts by conjugation on $W_\af$ and that
action stabilizes $S_\af$ (the set of reflections with respect to the walls of $A_0$). 
Moreover, since $W_\af$ acts simply transitively on the set of alcoves in $X \otimes_\Z \R$:
\[
W(\mc R) = W_\af \rtimes \Omega.
\]
\begin{ex}
Let $\mc R$ be of type $GL_n$, as in \eqref{eq:1.12}. Then $\Omega$ is
isomorphic to $\Z$, generated by $\omega = e_1 (1\,2\cdots n)$. The action of $\omega$ on
$S_\af = \{s_0,s_1,\ldots,s_{n-1}\}$ is $\omega s_i \omega^{-1} = s_{i+1}$ (where $s_n$
means $s_0$).
\end{ex}

Let $q : S_\af \to \C^\times$ be the parameter function determined by $\mb{q},\lambda,
\lambda^*$. The conditions on $\lambda$ and $\lambda^*$ (before Definition \ref{def:1.3})
ensure that $q$ is $\Omega$-invariant. Hence the formula
\[
\omega (T_w) = T_{\omega w \omega^{-1}} 
\]
defines an algebra automorphism of $\mc H (W_\af,q)$. This gives a group action of
$\Omega$ on $\mc H (W_\af,q)$. Recall that the crossed product
algebra $\mc H (W_\af,q) \rtimes \Omega$ is the vector space $\mc H (W_\af,q) 
\otimes_\C \C [\Omega]$ with multiplication defined by
\[
\omega \cdot T_w \cdot \omega^{-1} = \omega (T_w) .
\]
For $w \in W_\af, \omega \in \Omega$ we write $T_{w \omega} = T_w \omega \in \mc H (W_\af,q) 
\rtimes \Omega$. The multiplication relations in $\mc H (W_\af,q) \rtimes \Omega$ become
\begin{equation}\label{eq:1.13}
\begin{array}{ll}
T_v T_w = T_{vw} & \text{if } \ell (vw) = \ell (v) + \ell (w) , \\
(T_s + 1)(T_s - q_s) = 0 & \text{for } s \in S_\af.
\end{array}
\end{equation}
Now we can formulate the counterpart of Theorem \ref{thm:1.5}.

\begin{thm}\label{thm:1.6} (Bernstein, see \cite[\S 3]{Lus-Gr}) \\
There is a unique algebra isomorphism $\mc H (\mc R,\lambda,\lambda^*,\mb{q}) \to
\mc H (W_\af,q) \rtimes \Omega$ such that
\begin{itemize}
\item it is the identity on $\mc H (W,q)$,
\item for $x \in \Z R$ with $\inp{x}{\alpha^\vee} \geq 0$ for all $\alpha \in \Delta$,
it sends $\theta_x$ to $q(x)^{-1/2} T_x$.
\end{itemize}
\end{thm}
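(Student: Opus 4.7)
The plan is to deduce Theorem \ref{thm:1.6} from Theorem \ref{thm:1.5} by realizing $\mc H(\mc R, \lambda, \lambda^*, \mb q)$ as a crossed product $\mc H(W_\af, q) \rtimes \Omega$. First I would consider the auxiliary based root datum $\mc R' = (\Z R, R, \Hom_\Z(\Z R, \Z), R^\vee, \Delta)$ obtained by replacing $X$ with $\Z R$. Its extended Weyl group is exactly $W_\af$, so $\Omega' = \{e\}$, and Theorem \ref{thm:1.5} supplies an isomorphism $\Phi': \mc H(W_\af, q) \isom \mc H(\mc R', \lambda, \lambda^*, \mb q)$ with the claimed normalization. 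The inclusion $\Z R \hookrightarrow X$, together with the observation that the Bernstein cross relations for $\mc R'$ are the restriction to $x \in \Z R$ of those for $\mc R$, yields an embedding $\mc H(\mc R', \lambda, \lambda^*, \mb q) \hookrightarrow \mc H(\mc R, \lambda, \lambda^*, \mb q)$; by the PBW decomposition \eqref{eq:1.19}, the latter is free as a left module over the former with basis indexed by $\Omega \cong X/\Z R$.

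Next I would construct elements implementing $\Omega$. For each $\omega \in \Omega \subset W(\mc R) = X \rtimes W$, decompose $\omega = t_{x_\omega} w_\omega$ with $x_\omega \in X$ and $w_\omega \in W$, and set
\[
\mc T_\omega := q(w_\omega)^{-1/2}\, \theta_{x_\omega} T_{w_\omega} \in \mc H(\mc R, \lambda, \lambda^*, \mb q).
\]
A direct calculation with the Bernstein cross relations shows that $\mc T_\omega \mc T_{\omega'} = \mc T_{\omega \omega'}$, that conjugation by $\mc T_\omega$ preserves the subalgebra $\mc H(\mc R', \lambda, \lambda^*, \mb q)$, and that this conjugation acts on $\theta_y$ ($y \in \Z R$) by $y \mapsto \omega(y)$ and on $T_s$ ($s \in S_\af$) by the diagram automorphism of the extended Dynkin diagram corresponding to $\omega$. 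The $\Omega$-invariance of the parameter function $q$ --- built into the conditions on $\lambda, \lambda^*$ before Definition \ref{def:1.3} --- ensures that this conjugation is an algebra automorphism. Consequently $\mc H(\mc R, \lambda, \lambda^*, \mb q) = \bigoplus_{\omega \in \Omega} \mc H(\mc R', \lambda, \lambda^*, \mb q) \cdot \mc T_\omega$ realizes the source as $\mc H(\mc R', \lambda, \lambda^*, \mb q) \rtimes \Omega$, and transporting along $\Phi'$ on the left factor yields the desired isomorphism with $\mc H(W_\af, q) \rtimes \Omega$.

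Uniqueness follows because $\mc H(W, q)$ together with $\{\theta_x : x \in \Z R \text{ dominant}\}$ generates $\mc H(\mc R', \lambda, \lambda^*, \mb q)$, so the specified conditions pin the isomorphism down on that subalgebra; its extension to the $\mc T_\omega$ is then forced by the crossed-product structure and by the requirement that the map be an algebra isomorphism onto the given target. The main obstacle is the conjugation computation: verifying that $\mc T_\omega T_{s'_\alpha} \mc T_\omega^{-1}$ matches the diagram-automorphism image of $T_{s'_\alpha}$ for $s'_\alpha \in S_\af \setminus S$ requires unravelling the Bernstein presentation of $T_{s'_\alpha}$ as a combination involving $\theta_\alpha$ and $T_{s_\alpha}$, and then using length-additivity of the $W(\mc R)$-length function \eqref{eq:1.11} to recognise the various products in terms of standard $W_\af \rtimes \Omega$ basis elements.
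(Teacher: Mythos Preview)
The paper gives no proof of this theorem; being a survey, it simply attributes the result to Bernstein and cites \cite[\S 3]{Lus-Gr}. So there is nothing in the paper itself to compare against. Your strategy---apply Theorem \ref{thm:1.5} to the adjoint datum $\mc R' = (\Z R, R, \Hom_\Z(\Z R,\Z), R^\vee, \Delta)$, then exhibit $\mc H(\mc R,\lambda,\lambda^*,\mb q)$ as a crossed product of $\mc H(\mc R',\lambda,\lambda^*,\mb q)$ by $\Omega$ using explicit units $\mc T_\omega$---is the standard route and is essentially how the result is established in Lusztig's paper.

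There is one genuine gap, in the uniqueness argument. The stated hypotheses fix the map on $\mc H(W,q)$ and on $\theta_x$ for dominant $x \in \Z R$, hence on the subalgebra $\mc H(\mc R',\lambda,\lambda^*,\mb q)$; but an algebra isomorphism of crossed products that is prescribed on the base can still be twisted by any character $\chi : \Omega \to \C^\times$ (replace $\mc T_\omega \mapsto \chi(\omega)\,T_\omega$), so ``forced by the crossed-product structure'' is not enough. What actually pins down the extension is the normalisation $\theta_x \mapsto q(x)^{-1/2} T_x$ for dominant $x \in X$, not merely $x \in \Z R$: writing such $x$ in $W(\mc R) = W_\af \rtimes \Omega$ as $x' \omega$ with $x' \in W_\af$ and $\omega \in \Omega$, the identity $\theta_x = \theta_{x'}\,\mc T_\omega$ (up to the appropriate $q$-factors) then determines the image of each $\mc T_\omega$. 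The occurrence of $\Z R$ in the theorem statement appears to be copied verbatim from Theorem \ref{thm:1.5}, where indeed $X = \Z R$; for general $\mc R$ one should read $X$, and your uniqueness argument should invoke that stronger normalisation.
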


The algebra $\mc H (W_\af,q) \rtimes \Omega$, with the multiplication rules \eqref{eq:1.13},
is called the Iwahori--Matsumoto presentation of $\mc H (\mc R,\lambda,\lambda^*,\mb{q})$.

We conclude this paragraph with yet another, more geometric, presentation of affine Hecke algebras.
Let $T$ be the complex algebraic torus $\Hom_\Z (X,\C^\times)$. By duality $\Hom (T,\C^\times)
= X$, and the ring of regular functions $\mc O (T)$ is the group algebra $\C [X]$. The group
$W$ acts naturally on $X$, and that induces actions on $\C [X]$ and on $T$.

\begin{defn}\label{def:1.7}
The algebra $\mc H (T,\lambda,\lambda^*,\mb{q})$ is the vector space $\mc O (T) \otimes_\C
\mc H (W,q)$ with the multiplication rules
\begin{itemize}
\item $\mc O (T)$ and $\mc H (W,q)$ are embedded as subalgebras,
\item for $\alpha \in \Delta$ and $f \in \mc O (T)$:
\[
f T_{s_\alpha} - T_{s_\alpha} (s_\alpha \cdot f) = 
\Big( (\mb{q}^{\lambda (\alpha)} - 1) + \theta_{-\alpha} \big( \mb{q}^{(\lambda (\alpha) +
\lambda^* (\alpha))/2} - \mb{q}^{(\lambda (\alpha) - \lambda^* (\alpha))/2} \big) \Big)
\frac{f - s_\alpha (f)}{\theta_0 - \theta_{-2 \alpha}} .
\]
\end{itemize}
\end{defn}
Clearly the identification $\mc O (T) \cong \C [X]$ induces an algebra isomorphism
\[
\mc H (T,\lambda,\lambda^*,\mb{q}) \cong \mc H (\mc R,\lambda,\lambda^*,\mb{q}) .
\]
From the above presentation it is easy to find the centre of these algebras \cite{Lus-Gr}:
\begin{equation}\label{eq:1.14}
Z \big( \mc H (\mc R,\lambda,\lambda^*,\mb{q}) \big) \cong 
Z \big( \mc H (T,\lambda,\lambda^*,\mb{q}) \big) = \mc O (T)^W = \mc O (T/W) \cong \C[X]^W.
\end{equation}
An advantage of Definition \ref{def:1.7} is that this presentation can also be used if $T$
is just known as an algebraic variety, without a group structure. In that situation
$\mc H (T,\lambda,\lambda^*,\mb{q})$ can be studied without fixing a basepoint of $T$, it
suffices to have the $W$-action and the elements $\theta_{-\alpha} \in \mc O (T)^\times$ 
for $\alpha \in \Delta$.
This is particularly handy for affine Hecke algebras arising from Bernstein components and
types for reductive $p$-adic groups.

Even more flexibly, the above presentation applies when $\mc O (T)$ is replaced by a 
reasonable algebra of differentiable functions on $T$, like rational functions, analytic
functions or smooth functions. \\

We already observed that Tits' deformation theorem fails for affine Hecke algebras.
Nevertheless, apart from Lemma \ref{lem:1.4} there is another analogue of Theorem 
\ref{thm:1.1}, obtained by replacing the centre of an affine Hecke algebra by its
quotient field.

Let $\C (X)$ be the quotient field of $\C [X]$, that is, the field of rational functions
on the complex algebraic variety $T$. The action of $W$ on $T$ gives rise to the crossed
product algebra $\C (X) \rtimes W$. The quotient field of 
$Z \big( \mc H (\mc R,\lambda,\lambda^*,\mb{q}) \big) \cong \C [X]^W$ is $\C (X)^W$,
which is also the centre of $\C (X) \rtimes W$. We construct the algebra
\begin{equation}\label{eq:1.21}
\C (X)^W \otimes_{\C [X]^W} \mc H (T,\lambda,\lambda^*,\mb{q}) \cong
\C (X) \otimes_{\C [X]} \mc H (T,\lambda,\lambda^*,\mb{q}) \cong
\C (X) \otimes_\C \mc H (W,q) .
\end{equation}
Here the multiplication comes from the description on the left, and it is the same algebra
as obtained from Definition \ref{def:1.7} by substituting $\C (X)$ for $\mc O (T)$.

For $\alpha \in \Delta$ we define an element $\imath^\circ_{s_\alpha}$ of \eqref{eq:1.21} by
\[
\imath^\circ_{s_\alpha} + 1 =
\frac{\mb{q}^{-\lambda (\alpha)} (\theta_\alpha - 1)(\theta_\alpha + 1)}{
(\theta_\alpha - \mb{q}^{(\lambda (\alpha) + \lambda^* (\alpha))/ 2} )
(\theta_\alpha + \mb{q}^{(\lambda (\alpha) - \lambda^* (\alpha)) / 2})} (1 + T_{s_\alpha}) .
\]
\begin{prop}\label{prop:1.9}
\textup{\cite[\S 5]{Lus-Gr}}
\enuma{
\item The map $s_\alpha \mapsto \imath^\circ_{s_\alpha}$ extends to a group homomorphism
\[
W \to \big( \C (X)^W \otimes_{\C [X]^W} \mc H (T,\lambda,\lambda^*,\mb{q}) \big)^\times :
w \mapsto \imath^\circ_w .
\]
\item Using the description \eqref{eq:1.21} we define a map
\[
\C (X) \rtimes W \to \C (X)^W \otimes_{\C [X]^W} \mc H (T,\lambda,\lambda^*,\mb{q}) :
f \otimes w \mapsto f \imath^\circ_w .
\]
This is an algebra isomorphism, and in particular 
\[
\imath^\circ_w f \imath^\circ_{w^{-1}} = w(f) \qquad f \in \C (X), w \in W .
\]
}
\end{prop}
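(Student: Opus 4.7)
The plan is to verify, in order, (i) the intertwining identity $\imath^\circ_{s_\alpha} f = s_\alpha(f) \imath^\circ_{s_\alpha}$ for $f \in \C(X)$, (ii) the Coxeter relations among the elements $\imath^\circ_{s_\alpha}$, and finally bijectivity of the map in part (b). Throughout set
\[
c_\alpha = \frac{\mb{q}^{-\lambda(\alpha)}(\theta_\alpha - 1)(\theta_\alpha + 1)}{(\theta_\alpha - \mb{q}^{(\lambda(\alpha)+\lambda^*(\alpha))/2})(\theta_\alpha + \mb{q}^{(\lambda(\alpha)-\lambda^*(\alpha))/2})},
\]
so that $\imath^\circ_{s_\alpha} = c_\alpha (1 + T_{s_\alpha}) - 1$. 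Both numerator and denominator of $c_\alpha$ are nonzero elements of $\C[X]$, hence $c_\alpha \in \C(X)^\times$; this invertibility is exactly what forces passing to the localisation in \eqref{eq:1.21}.

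For (i), by $\C$-linearity and multiplicativity it suffices to take $f = \theta_x$ with $x \in X$. The cross relation of Definition \ref{def:1.7} expresses $\theta_x T_{s_\alpha}$ as $T_{s_\alpha}\theta_{s_\alpha(x)}$ plus an explicit $\C[X]$-multiple of $\theta_x - \theta_{s_\alpha(x)}$, divided by $\theta_0 - \theta_{-2\alpha}$. The specific rational function $c_\alpha$ is chosen so that after left-multiplication by $c_\alpha$ this correction telescopes to yield $c_\alpha(1 + T_{s_\alpha})\theta_x - \theta_x = \theta_{s_\alpha(x)}\bigl( c_\alpha (1 + T_{s_\alpha}) - 1\bigr)$, i.e.\ the desired identity $\imath^\circ_{s_\alpha}\theta_x = \theta_{s_\alpha(x)}\imath^\circ_{s_\alpha}$. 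This is the motivating property of the normalisation and amounts to a finite rational-function check.

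For (ii), the quadratic relation $(\imath^\circ_{s_\alpha})^2 = 1$ is a direct computation: expand, use (i) to commute the inner $c_\alpha$ past $(1 + T_{s_\alpha})$, and apply the Hecke relation $(T_{s_\alpha}+1)(T_{s_\alpha} - \mb{q}^{\lambda(\alpha)}) = 0$; the remaining product $c_\alpha \cdot s_\alpha(c_\alpha)$ simplifies, via the explicit formula for $c_\alpha$, to exactly the rational function needed to produce $1$. The main obstacle is the braid relations
\[
\underbrace{\imath^\circ_{s_\alpha}\imath^\circ_{s_\beta}\imath^\circ_{s_\alpha}\cdots}_{m(\alpha,\beta) \text{ terms}} = \underbrace{\imath^\circ_{s_\beta}\imath^\circ_{s_\alpha}\imath^\circ_{s_\beta}\cdots}_{m(\alpha,\beta) \text{ terms}}.
\]
I would handle these by noting that, by iterating (i), both sides induce the same automorphism of $\C(X)$, namely the action of the longest element of the dihedral subgroup $\langle s_\alpha, s_\beta\rangle$; so their ratio centralises $\C(X)$. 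Since the centraliser of $\C(X)$ in the algebra on the right of \eqref{eq:1.21} is $\C(X)$ itself, the ratio is a single element of $\C(X)$, which is then pinned down to $1$ by reducing to the rank-two sub-root-datum generated by $\alpha, \beta$ and checking case by case in $A_1 \times A_1, A_2, B_2, G_2$.

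For part (b), the intertwining identity extends to $\imath^\circ_w f \imath^\circ_{w^{-1}} = w(f)$ for all $w \in W$ by induction on length, which is exactly the compatibility needed for $f \otimes w \mapsto f \imath^\circ_w$ to respect the multiplication in $\C(X) \rtimes W$. For bijectivity, the image contains $\C(X)$ and, via $T_{s_\alpha} = c_\alpha^{-1}(\imath^\circ_{s_\alpha} + 1) - 1$, every $T_{s_\alpha}$ as well, hence all of $\C(X) \otimes_\C \mc H(W,q)$; and by \eqref{eq:1.19} both sides are free left $\C(X)$-modules of rank $|W|$, so surjectivity forces injectivity.
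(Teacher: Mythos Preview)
The paper does not actually give a proof of this proposition; it is simply quoted from \cite[\S 5]{Lus-Gr} and left without argument, in keeping with the survey nature of the article. So there is no ``paper's own proof'' to compare against.

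Your outline is essentially the standard Lusztig argument and is correct in structure. The intertwining identity in step~(i) is indeed the defining property of the normalisation $c_\alpha$, and your description of how the cross relation collapses is accurate. The quadratic relation follows as you say from $(T_{s_\alpha}+1)(T_{s_\alpha}-\mb q^{\lambda(\alpha)})=0$ together with the explicit form of $c_\alpha \cdot s_\alpha(c_\alpha)$. Your argument that the centraliser of $\C(X)$ in $\C(X)\otimes_\C \mc H(W,q)$ is $\C(X)$ itself is correct (a leading-term argument in the $T_w$-basis does it), and the bijectivity step in part~(b) is clean.

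The one place where you are genuinely deferring work rather than doing it is the braid relations: ``reduce to rank two and check case by case in $A_1\times A_1, A_2, B_2, G_2$'' is exactly what Lusztig does, but the checks themselves (especially $B_2$ and $G_2$, where the two $c$-functions differ and the products are longer) are nontrivial rational-function identities. Your reduction to a single scalar in $\C(X)$ is fine; what remains is to show that scalar is $1$, and that is where the content lies. If this were a self-contained proof rather than a sketch, those four verifications would need to be written out or a uniform argument supplied.
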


\subsection{Graded Hecke algebras} \
\label{par:defGHA}

Graded (affine) Hecke algebras were discovered in \cite{Dri,Lus-Gr}. They are simplified
versions of affine Hecke algebras, more or less in the same way that a Lie algebra is
a simplification of a Lie group.

Let $\mf a$ be a finite dimensional Euclidean space and let $W$ be a finite Coxeter group
action isometrically on $\mf a$ (and hence also on $\mf a^*$). Let $R \subset \mf a^*$
be a reduced root system, stable under the action of $W$, such that the reflections 
$s_\alpha$ with $\alpha \in R$ generate $W$. These conditions imply that $W$ acts trivially
on the orthogonal complement of $\R R$ in $\mf a^*$.

In contrast with the previous paragraphs, $R$ does not have to be integral and $W$ does not
have to be crystallographic. Thus we are dealing with root systems as in \cite[\S 1.2]{Hum1}:
just reduced and $W$-stable, no further condition. In particular the upcoming construction 
applies equally well to Coxeter groups of type $H_3, H_4$ or $I_2^{(m)}$.

Write $\mf t = \mf a \otimes_\R \C$ and let $S(\mf t^*) = \mc O (\mf t)$ be the algebra
of polynomial functions on $\mf t$. We choose a $W$-invariant parameter function
$k : R \to \C$ and we let $\mb{r}$ be a formal variable. We also fix a base $\Delta$ of $R$.

\begin{defn}\label{def:1.8}
The graded Hecke algebra $\mh H (\mf t,W,k,\mb r)$ is the vector space\\
$\C [W] \otimes_\C S(\mf t^*) \otimes_\C \C [\mb{r}]$ with the multiplication rules
\begin{itemize}
\item $\C [W]$ and $S(\mf t^*) \otimes_\C \C [\mb{r}] \cong 
\mc O (\mf t \oplus \C)$ are embedded as subalgebras,
\item $\C [\mb{r}]$ is central,
\item the cross relation for $\alpha \in \Delta$ and $\xi \in S(\mf t^*)$:
\[
\xi \cdot s_\alpha - s_\alpha \cdot s_\alpha (\xi) = 
k(\alpha) \mb{r} \frac{\xi - s_\alpha (\xi)}{\alpha} .
\]
\end{itemize}
The grading is given by $\deg (w) = 0$ for $w \in W$ and $\deg (x) = \deg (\mb{r}) = 2$ 
for $x \in \mf t^* \setminus \{0\}$.
\end{defn}

The grading is motivated by a construction of graded Hecke algebras with equivariant (co)homology,
which we will discuss in Paragraph \ref{par:homology}.
Notice that for $k = 0$ Definition \ref{def:1.8} yields the crossed product algebra
\[
\mh H (\mf t, W, 0,\mb{r}) = \C[\mb{r}] \otimes_\C S(\mf t^*) \rtimes W.
\]
Let $R^\vee \subset \mf a$ be the coroot system of $R$, so with $\inp{\alpha}{\alpha^\vee} = 2$
for all $\alpha \in R$. For $x \in \mf t^*$ the following relation holds in 
$\mh H (\mf t,W,k,\mb{r})$:
\begin{equation}\label{eq:1.15}
s_\alpha \cdot x - s_\alpha (x) \cdot s_\alpha = k(\alpha) \mb{r} \inp{x}{\alpha^\vee} .
\end{equation}
In fact \eqref{eq:1.15} can be substituted for the cross relation in Definition \ref{def:1.8},
that suffices to determine the algebra structure uniquely.

With such a simple presentation, it is no surprise that the graded Hecke algebras
$\mh H (\mf t,W,k)$ have more diverse applications than affine Hecke algebras. They
appear in the representation theory of reductive groups over local fields, both in the
$p$-adic case \cite{BaMo1,BaMo2,LusUni1} and in the real case \cite{CiTr1,CiTr2}.
Further, these graded Hecke algebras can be realized with Dunkl operators \cite{Che1,Opd1},
which enables them to act on many interesting function spaces.\\

The above construction can be modified a little, and still produce the same algebra. Namely,
fix $\alpha \in R$ and $\epsilon \in \R_{>0}$. For all $w \in W$ we replace $k(w \alpha)$ by
$\epsilon k(w \alpha)$ and $w \alpha$ by $\epsilon w \alpha$--that again gives a root system
in the sense of \cite[\S 1.2]{Hum1}. This operation preserves the cross relation in
Definition \ref{def:1.8}, so does not change the algebra. 
Further, we can allow $R$ to be non-reduced, as long as we impose in addition that 
$k(\epsilon \alpha) = \epsilon k (\alpha)$ whenever $\epsilon > 0$ and 
$\alpha, \epsilon \alpha \in R$ -- that still gives the same graded Hecke algebras.

Similarly, we can scale all parameters $k(\alpha)$ simultaneously. Namely, scalar 
multiplication with $z \in \C^\times$ defines a bijection $m_z : \mf t^* \to \mf t^*$,
which clearly extends to an algebra automorphism of $S(\mf t^*)$. From Definition 
\ref{def:1.8} we see that it extends even further, to an algebra isomorphism
\begin{equation}\label{eq:1.16}
m_z : \mh H (\mf t, W, zk, \mb{r}) \to \mh H (\mf t,W,k,\mb{r}) 
\end{equation}
which is the identity on $\C [W] \otimes_\C \C [\mb{r}]$. Notice that for $z=0$ the
map $m_z$ is well-defined, but no longer bijective. It is the canonical surjection
\[
\mh H (\mf t,W,0,\mb{r}) \to \C [W] \otimes_\C \C [\mb{r}] .
\]
Algebras like $\mh H (\mf t,W,k,\mb{r})$ are degenerations of affine Hecke 
algebras (the version where $\mb{q}$ is a formal variable) and arise in the study of
cuspidal local systems on unipotent orbits in complex reductive Lie algebras 
\cite{LusCusp1,LusCusp2,LusCusp3,AMS2}.

More often one encounters versions of $\mh H (\mf t,W,k,\mb{r})$ with $\mb{r}$
specialized to a nonzero complex number. In view of \eqref{eq:1.16} it hardly
matters which specialization, so it suffices to look at $\mb r \mapsto 1$. The
resulting algebra $\mh H (\mf t,W,k)$ has underlying vector space
$\C [W] \otimes_\C S(\mf t^*)$ and cross relations
\begin{equation}\label{eq:1.17} 
\xi \cdot s_\alpha - s_\alpha \cdot s_\alpha (\xi) = 
k(\alpha) (\xi - s_\alpha (\xi)) / \alpha \qquad \alpha \in \Delta, \xi \in S(\mf t^*) .
\end{equation}
Like for affine Hecke algebras, we see from \eqref{eq:1.17} that the centre of
$\mh H (\mf t,W,k)$ is
\begin{equation}\label{eq:1.18}
Z(\mh H (\mf t,W,k)) = S(\mf t^*)^W = \mc O (\mf t / W) .
\end{equation}
As a vector space, $\mh H(\mf t,W,k)$ is still graded by $\deg (w) = 0$ for $w \in W$
and $\deg (x) = 2$ for $x \in \mf t^* \setminus \{0\}$. However, it is not a graded
algebra any more, because \eqref{eq:1.17} is not homogeneous in the case $\xi = \alpha$.
Instead, the above grading merely makes $\mh H (\mf t,W,k)$ into a filtered algebra.

The graded algebra obtained from this filtration is obtained by setting the right hand
side of \eqref{eq:1.17} equal to 0. In other words, the associated graded of
$\mh H (\mf t,W,k)$ is the crossed product algebra
\[
\mh H (\mf t,W,0) = S(\mf t^*) \rtimes W.
\]
The algebras $\mh H (\mf t,W,0)$ and $\mh H (\mf t,W,k)$ with $k \neq 0$ are usually
not isomorphic. But there is an analogue of Tits' deformation theorem, similar to
Proposition \ref{prop:1.9}. Let $Q(S(\mf t^*))$ be the quotient field of $S(\mf t^*)$,
that is, the field of rational functions on $\mf t$. It admits a natural $W$-action,
and the centre of the crossed product $Q(S(\mf t^*)) \rtimes W$ is $Q(S(\mf t^*))^W$.
Using \eqref{eq:1.18} we construct the algebra $Q(S(\mf t^*))^W \otimes_{S(\mf t^*)^W} 
\mh H (\mf t,W,k)$, which as vector space equals
\begin{equation}\label{eq:1.29}
Q(S(\mf t^*)) \otimes_{S(\mf t^*)} \mh H (\mf t,W,k) = Q(S(\mf t^*)) \otimes_\C \C [W] .
\end{equation}
In there we have elements
\[
\tilde{\imath}_{s_\alpha} = \frac{\alpha}{\alpha + k (\alpha)} (1 + s_\alpha) - 1
= \frac{\alpha}{\alpha + k (\alpha)} s_\alpha - \frac{k(\alpha)}{\alpha + k (\alpha)}
\qquad \alpha \in \Delta .
\]
\begin{prop}\label{prop:1.10}
\textup{\cite[\S 5]{Lus-Gr}}
\enuma{
\item The map $s_\alpha \mapsto \tilde{\imath}_{s_\alpha}$ extends to a group homomorphism
\[
W \to \big( Q(S(\mf t^*))^W \otimes_{S(\mf t^*)^W} \mh H (\mf t,W,k) \big)^\times :
w \mapsto \tilde{\imath}_w .
\]
\item With the description \eqref{eq:1.29} we define a map
\[
Q(S(\mf t^*)) \rtimes W \to Q(S(\mf t^*))^W \otimes_{S(\mf t^*)^W} \mh H (\mf t,W,k) :
f \otimes w \mapsto f \tilde{\imath}_w .
\]
This is an algebra isomorphism, and in particular 
\[
\tilde{\imath}_w f \tilde{\imath}_{w^{-1}} = w(f) \qquad f \in Q(S(\mf t^*)), w \in W .
\]
}
\end{prop}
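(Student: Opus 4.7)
The plan is to verify the two rank-one identities for $\tilde{\imath}_{s_\alpha}$, and then to bootstrap them into the full statement via a length-filtration argument.

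First I would check the intertwining property $\tilde{\imath}_{s_\alpha}\cdot f = s_\alpha(f)\cdot \tilde{\imath}_{s_\alpha}$ for $f \in Q(S(\mf t^*))$. For $f = \xi \in \mf t^*$ this is a one-line calculation: substituting $s_\alpha \xi = s_\alpha(\xi) s_\alpha + k(\alpha)(\xi - s_\alpha(\xi))/\alpha$ into $(\alpha s_\alpha - k(\alpha))\xi/(\alpha + k(\alpha))$, the correction cancels exactly against the $-k(\alpha)\xi/(\alpha + k(\alpha))$ piece and leaves $s_\alpha(\xi)\tilde{\imath}_{s_\alpha}$; multiplicativity in $\xi$ and localization extend this to all of $Q(S(\mf t^*))$. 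Next I would verify $\tilde{\imath}_{s_\alpha}^2 = 1$: the instance $s_\alpha \alpha = -\alpha s_\alpha + 2k(\alpha)$ of the cross relation yields $(\alpha s_\alpha - k(\alpha))^2 = k(\alpha)^2 - \alpha^2$, and combining this with the intertwining identity to move one copy of $(\alpha + k(\alpha))^{-1}$ across $\alpha s_\alpha - k(\alpha)$ collapses the entire expression to $1$.

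For part (a), Matsumoto's theorem reduces the construction of the group homomorphism $w \mapsto \tilde{\imath}_w$ to verifying the braid relations
\[
\underbrace{\tilde{\imath}_{s_\alpha}\tilde{\imath}_{s_\beta}\cdots}_{m(\alpha,\beta)\text{ terms}} = \underbrace{\tilde{\imath}_{s_\beta}\tilde{\imath}_{s_\alpha}\cdots}_{m(\alpha,\beta)\text{ terms}} .
\]
Let $m = m(\alpha,\beta)$ and let $w_0$ be the longest element of $W' := \langle s_\alpha,s_\beta\rangle$. Iterating the intertwining shows that both alternating products $X$ satisfy $Xf = w_0(f)X$ for all $f \in Q(S(\mf t^*))$. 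I would then prove a uniqueness lemma: using the PBW basis $\{w : w \in W\}$ of $\mh H(\mf t,W,k)$ over $S(\mf t^*)$ (which survives the localization at the centre), any $X = \sum_w p_w w$ in $Q(S(\mf t^*))^W \otimes_{S(\mf t^*)^W} \mh H(\mf t,W,k)$ satisfying $Xf = w_0(f)X$ is supported in lengths $\le m$, contributes at the top length only through $w_0$, and is determined by its $w_0$-coefficient. This uses that $wf \equiv w(f)w$ modulo strictly shorter terms and that $W$ acts faithfully on $Q(S(\mf t^*))$. The braid relation thus reduces to equality of $w_0$-coefficients; pushing all rational factors to the left via the intertwining, both products yield $\prod_{\gamma \in (R')^+}\gamma/(\gamma + k(\gamma))$, where $(R')^+$ denotes the positive roots of the dihedral subsystem, because the roots $s_{i_1}\cdots s_{i_{j-1}}(\alpha_{i_j})$ attached to either reduced expression of $w_0$ exhaust $(R')^+$ and $Q(S(\mf t^*))$ is commutative.

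Part (b) is then essentially formal. The map $\Phi(f\otimes w) := f\tilde{\imath}_w$ is well defined by part (a), and the crossed-product multiplication $(f_1\otimes w_1)(f_2\otimes w_2) = f_1 w_1(f_2)\otimes w_1w_2$ is matched on the target by the intertwining combined with $\tilde{\imath}_{w_1}\tilde{\imath}_{w_2} = \tilde{\imath}_{w_1w_2}$, so $\Phi$ is an algebra homomorphism. Both source and target are free $Q(S(\mf t^*))$-modules of rank $|W|$, and the same leading-term analysis shows $\{\tilde{\imath}_w\}_{w \in W}$ is a $Q(S(\mf t^*))$-basis of the target, giving bijectivity; the formula $\tilde{\imath}_w f\tilde{\imath}_{w^{-1}} = w(f)$ is the iterated intertwining. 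The main obstacle is the uniqueness lemma for intertwiners used in the braid step: one must set up a PBW basis of $\mh H(\mf t,W,k)$ that survives the localization of the centre and then run a clean length-filtration argument in the localized algebra. Once that structural input is in place, everything else is bookkeeping at the generators; without it one is forced into explicit dihedral computations for each value of $m(\alpha,\beta)$, which become genuinely tedious for $m \ge 4$.
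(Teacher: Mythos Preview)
The paper does not give its own proof of this proposition; it simply records the statement and cites \cite[\S 5]{Lus-Gr}. Your argument is correct and is essentially Lusztig's original one: verify the rank-one intertwining identity and the involution property, then deduce the braid relations by a uniqueness argument for $w_0$-intertwiners in the localized algebra, using that the leading $w_0$-coefficient is $\prod_{\gamma \in (R')^+}\gamma/(\gamma+k(\gamma))$ for either reduced expression (here $W$-invariance of $k$ ensures $k(\alpha_{i_j})=k(\gamma_j)$). One small remark: when you invoke Matsumoto's theorem you also need the quadratic relations $\tilde{\imath}_{s_\alpha}^2=1$ to descend from the braid group to $W$, but you have already checked those, so the argument is complete.
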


Graded Hecke algebras can be decomposed like root systems and reductive Lie algebras.
Let $R_1, \ldots, R_d$ be the irreducible components of $R$. Write
$\mf a_i^* = \mr{span}(R_i) \subset \mf a^*$, $\mf t_i = \Hom_\R (\mf a_i^*,\C)$
and $\mf z = R^\perp \subset \mf t$. Then
\[
\mf t = \mf t_1 \oplus \cdots \oplus \mf t_d \oplus \mf z .
\]
The inclusions $W(R_i) \to W(R), \mf t_i^* \to \mf t^*$ and $\mf z^* \to \mf t^*$
induce an algebra isomorphism
\begin{equation}\label{eq:1.22}
\mh H (\mf t_1, W(R_1), k) \otimes_\C \cdots \otimes_\C \mh H (\mf t_d, W(R_d),k) 
\otimes_\C \mc O (\mf z) \; \longrightarrow \; \mh H (\mf t, W, k) .
\end{equation}
Hence the representation theory of $\mh H (\mf t, W, k)$ is more or less the product
of the representation theories of the tensor factors in \eqref{eq:1.22}. The commutative
algebra $\mc O (\mf z) \cong S(\mf z^*)$ is of course very simple, so the study of
graded Hecke algebra can be reduced to the case where the root system $R$ is irreducible.

\section{Irreducible representations in special cases}
\label{sec:special}

The most elementary instance of an affine Hecke algebra is with empty root system $R$.
The algebra associated to the root datum $(X,\emptyset,Y,\emptyset)$ is just the group
algebra $\C [X]$ of the lattice $X$. Its space of irreducible complex representations is
\begin{equation}\label{eq:2.1}
\Irr (\C [X]) = \Irr (X) = \Hom_\Z (X,\C^\times) = T .
\end{equation}
Since $\C [X]$ is a subalgebra of $\mc H ((X,R,Y,R^\vee),\lambda,\lambda^*,\mb{q})$
for any additional data $R,\lambda,\lambda^*$ and $\mb{q}$, \eqref{eq:2.1} is a good 
starting point for the representation theory of any affine Hecke algebra. 

We will discuss the affine Hecke algebras that appear most often, and we construct and 
classify all their irreducible representations.

\subsection{Affine Hecke algebras with $q = 1$} \
\label{par:q=1}

Let $\mc R = (X,R,Y,R^\vee,\Delta)$ be an arbitrary based root datum and take the
parameters $\lambda = \lambda^* = 0$. Then $q_s = 1$ for all $s \in S_\af$, and 
\[
\mc H (\mc R,0,0,\mb{q}) = \mc H (\mc R,1) = \C [X] \rtimes W .
\]
We denote the onedimensional representation of $\C[X]$ associated to $t \in T$ by $\C_t$.
Then $\ind_{\C[X]}^{\C [X] \rtimes W} (\C_t)$ is a $|W|$-dimensional representation of
$\C [X] \rtimes W$. Its restriction to $\C [X]$ is
\[
\Res^{\C[X] \rtimes W}_{\C [X]} \ind_{\C[X]}^{\C [X] \rtimes W} (\C_t) =
\bigoplus_{w \in W} w \C_t \cong \bigoplus_{w \in W} \C_{w(t)} .
\]
More generally, consider any $\C[X] \rtimes W$-representation $(\pi,V)$ that is generated
by the subspace
\[
V_t := \{ v \in V : \pi (\theta_x) v = x(t) v \; \forall x \in X \} .
\]
Then $V = \sum_{w \in W} \pi (w) V_t$ and $\pi (w) V_t = V_{w(t)}$. 
As $V_t \cap V_{t'} = \{0\}$ for $t \neq t'$,
\[
V = \ind_{\C [X] \rtimes W_t}^{\C [X] \rtimes W} (V_t), \quad \text{where }
W_t = \{ w \in W : w(t) = t\}.
\]
By Frobenius reciprocity
\begin{multline*}
\End_{\C[X] \rtimes W}(V) \cong \Hom_{\C[X] \rtimes W_t} (V_t ,V) = \\
\Hom_{\C[X] \rtimes W_t} \big( V_t ,\sum\nolimits_{w \in W / W_t} V_{w(t)} \big) =
\End_{\C[X] \rtimes W_t} (V_t) .
\end{multline*}

\begin{cor}\label{cor:2.1}
The functor $\ind_{\C [X] \rtimes W_t}^{\C [X] \rtimes W}$ induces an equivalence
between the following categories:
\begin{itemize}
\item $\C [X] \rtimes W_t$-representations on which $\C[X]$ acts via the character $t$,
\item $\C [X] \rtimes W$-representations $V$ that are generated by $V_t$.
\end{itemize}
\end{cor}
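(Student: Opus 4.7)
My plan is to exhibit an explicit quasi-inverse to the induction functor and check that the two functors are mutually inverse on objects and bijective on morphisms.

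First I would make the two functors precise. In one direction, induction $F(U) = \mc H \otimes_{\mc H_t} U$, where $\mc H = \C[X] \rtimes W$ and $\mc H_t = \C[X] \rtimes W_t$. In the other direction, set $G(V) = V_t$; this is a $\mc H_t$-submodule of $V$ on which $\C[X]$ acts by $t$, because $w \in W_t$ sends $V_t$ to $V_{w(t)} = V_t$, while $\theta_x \in \C[X]$ acts on $V_t$ by the scalar $x(t)$ by definition. So both functors have the correct target categories, provided we verify that $F(U)$ is indeed generated by $F(U)_t$ and that $F(U)_t$ can be identified with $U$; this uses that $\mc H$ is a free $\mc H_t$-module with basis given by coset representatives $\{w : w \in W/W_t\}$, so that $F(U) = \bigoplus_{w \in W/W_t} w \otimes U$ decomposes into generalized weight spaces $(F(U))_{w(t)} = w \otimes U$, and the $t$-weight space is exactly $1 \otimes U \cong U$.

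Next I would construct the unit and counit. The unit $U \to G F(U)$ sends $u \mapsto 1 \otimes u$; by the weight space analysis above this is a $\mc H_t$-module isomorphism. The counit $F G(V) \to V$ is the canonical map $h \otimes v \mapsto h \cdot v$; its surjectivity is the hypothesis that $V$ is generated by $V_t$, and its injectivity follows from the direct sum decomposition $V = \bigoplus_{w \in W/W_t} \pi(w) V_t$ established in the excerpt, together with $\pi(w) V_t = V_{w(t)}$ and the disjointness of weight spaces for distinct characters of $\C[X]$. So on objects the two functors are mutually quasi-inverse.

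For morphisms, Frobenius reciprocity gives
\[
\Hom_{\mc H}(F(U), V) \cong \Hom_{\mc H_t}(U, V)
\]
for any $\mc H$-module $V$; when $V$ lies in the second category, any $\mc H_t$-map $U \to V$ automatically lands in $V_t = G(V)$ because $U$ has $\C[X]$-weight $t$ and $V_t$ is precisely the $t$-weight subspace of $V$. This yields the claimed natural bijection $\Hom_{\mc H}(F(U), V) \cong \Hom_{\mc H_t}(U, G(V))$, which together with the object-level equivalence proves the corollary. The main (very mild) technical point to be careful about is ensuring that all identifications are natural in $U$ and $V$ and that the restriction of scalars from $\mc H$ to $\mc H_t$ really preserves the weight-$t$ condition — both of which follow from the fact that $W_t$ stabilizes $V_t$ set-theoretically.
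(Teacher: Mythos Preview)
Your proof is correct and follows essentially the same approach as the paper: both use the weight-space decomposition $V = \bigoplus_{w \in W/W_t} \pi(w) V_t$ together with Frobenius reciprocity. You are simply more explicit about packaging this as a pair of mutually inverse functors with unit and counit, whereas the paper gives a terser argument showing $V \cong \ind(V_t)$ and that the endomorphism algebras match. One minor terminological slip: in this context $V_t$ denotes the honest eigenspace, not the generalized weight space (the latter convention appears only later in the paper), though this does not affect your argument since $\C[X]$ acts diagonalizably on all the modules involved.
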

The first category in Corollary \ref{cor:2.1} is naturally equivalent with the category of
$W_t$-representations. We conclude that, for every irreducible $W_t$-representation 
$(\rho,V_\rho)$, the $\C[X] \rtimes W$-representation
\[
\pi (t,\rho) := \ind_{\C[X] \rtimes W_t}^{\C [X] \rtimes W} (\C_t \otimes V_\rho)
\]
is irreducible. For comparison with later results we point out that $\pi (t,\rho)$ is a
direct summand of the induced representation
\[
\ind_{\C[X]}^{\C[X] \rtimes W} (\C_t) = 
\ind_{\C[X] \rtimes W_t}^{\C [X] \rtimes W} (\C_t \otimes \C [W_t]) 
\]
and that 
\[
\Res_{\C[X]}^{\C[X] \rtimes W} \pi (t,\rho) = 
\bigoplus\nolimits_{w \in W / W_t} \C_{w(t)}^{\dim (V_\rho)} .
\]
The next result goes back to Frobenius and Clifford, see \cite[Appendix]{RaRa} for a
modern account.

\begin{thm}\label{thm:2.2}
Every irreducible $\C[X] \rtimes W$-representation is of the form $\pi (t,\rho)$
for a $t \in T$ and a $\rho \in \Irr (W_t)$. Two such representations $\pi (t,\rho)$
and $\pi (t',\rho')$ are equivalent if and only if there exists a $w \in W$ with
$t' = w(t)$ and $\rho' = w \cdot \rho$.
\end{thm}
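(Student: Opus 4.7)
The plan is to build everything on Corollary \ref{cor:2.1}, using the $\C[X]$-weight space decomposition of an irreducible representation.

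First I would reduce to finite-dimensional representations. The algebra $\C[X]\rtimes W$ is a free module of rank $|W|$ over its subalgebra $\C[X]$, which in turn is a finite free module over the invariants $\C[X]^W$; hence $\C[X]\rtimes W$ is finitely generated as a module over its center $\C[X]^W$. By Dixmier's version of Schur's lemma, $\C[X]^W$ acts by scalars on any irreducible representation $(\pi,V)$, so $V$ is a module over the finite-dimensional quotient $\C[X]\rtimes W/(Z-\chi)$ and is therefore finite-dimensional. Since $\C[X]$ is a finitely generated commutative subalgebra acting on the finite-dimensional space $V$, we obtain a weight space decomposition $V=\bigoplus_{t\in T} V_t$ with at least one $V_t\neq 0$. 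Fixing such a $t$, irreducibility of $V$ forces $V$ to be generated by $V_t$ as a $\C[X]\rtimes W$-module.

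Next I would invoke Corollary \ref{cor:2.1}: $V\cong \ind_{\C[X]\rtimes W_t}^{\C[X]\rtimes W}(V_t)$ and the functor is an equivalence onto its image, so irreducibility of $V$ translates into irreducibility of $V_t$ as a $\C[X]\rtimes W_t$-module. But $\C[X]$ acts on $V_t$ by the character $t$, so a $\C[X]\rtimes W_t$-module structure on $V_t$ in which $\C[X]$ acts by $t$ is exactly the datum of a $W_t$-representation $(\rho,V_\rho)$, and it is irreducible iff $\rho$ is. This gives $V\cong \pi(t,\rho)$ for some $\rho\in\Irr(W_t)$.

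For the equivalence criterion, I would read off the set of $\C[X]$-weights of $\pi(t,\rho)$ from the formula
\[
\Res^{\C[X]\rtimes W}_{\C[X]}\pi(t,\rho)=\bigoplus_{w\in W/W_t}\C_{w(t)}^{\dim V_\rho},
\]
which is stated just before the theorem. An isomorphism $\pi(t,\rho)\cong\pi(t',\rho')$ forces equality of these weight sets, hence $t'=w(t)$ for some $w\in W$, so after replacing $(t',\rho')$ by its $w$-translate we may assume $t'=t$ and $W_{t'}=W_t$. Then the two representations share the same $t$-weight space as a $\C[X]\rtimes W_t$-submodule, and the equivalence of categories in Corollary \ref{cor:2.1} (via Frobenius reciprocity, $\Hom_{\C[X]\rtimes W}(\pi(t,\rho),\pi(t,\rho'))\cong\Hom_{W_t}(V_\rho,V_{\rho'})$) yields $\rho\cong\rho'$.

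The main obstacle is the first step, guaranteeing that $V_t\neq 0$ and that the category-theoretic machinery of Corollary \ref{cor:2.1} applies: this rests on the finite-dimensionality of irreducibles, which requires the module-finiteness of $\C[X]\rtimes W$ over its center rather than anything special about $W$. Once finite-dimensionality and the existence of a weight are in hand, everything reduces to the Frobenius/Clifford-style bookkeeping already packaged in Corollary \ref{cor:2.1}.
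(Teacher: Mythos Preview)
The paper does not actually prove Theorem~\ref{thm:2.2}: it states the result and attributes it to Frobenius and Clifford, referring to \cite[Appendix]{RaRa} for a modern account. Your argument is a correct direct proof, and it is precisely the argument the paper's preceding discussion (the computation of $\End_{\C[X]\rtimes W}(V)$ and Corollary~\ref{cor:2.1}) is designed to enable.

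Two small points worth tightening. First, the decomposition $V=\bigoplus_{t\in T}V_t$ into honest eigenspaces does not follow just from $\C[X]$ being commutative and $V$ finite-dimensional; a priori you only get generalized weight spaces. What you actually use is the weaker fact that some $V_t\neq 0$ (a common eigenvector exists), and that suffices: irreducibility then forces $V$ to be generated by $V_t$, Corollary~\ref{cor:2.1} applies, and the restriction formula $\Res\,\pi(t,\rho)=\bigoplus_{w\in W/W_t}\C_{w(t)}^{\dim V_\rho}$ shows \emph{a posteriori} that $\C[X]$ acts diagonalizably. Second, in the equivalence criterion you invoke the ``if'' direction (that $\pi(t',\rho')\cong\pi(w(t'),w\cdot\rho')$) when you replace $(t',\rho')$ by its $w$-translate; this is immediate from the standard isomorphism $\ind_{\C[X]\rtimes W_{t'}}^{\C[X]\rtimes W}\cong \ind_{\C[X]\rtimes W_{w(t')}}^{\C[X]\rtimes W}\circ\,\mr{Ad}(w)$, but it deserves a sentence.
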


Here $w \cdot \rho = \rho \circ \mr{Ad}(w)^{-1} : w W_t w^{-1} \to \mr{Aut}_\C (V_\rho)$.
Theorem \ref{thm:2.2} involves a group action of $W$ on the set 
\[
\tilde T = \{ (t,\rho) : t \in T, \rho \in \Irr (W_t) \} .
\]
We call 
\[
T /\!/ W := \tilde T / W
\]
the extended quotient of $T$ by $W$. Thus Theorem \ref{thm:2.2} gives a canonical bijection
\begin{equation}\label{eq:2.2}
T /\!/ W \longleftrightarrow \Irr (\C [X] \rtimes W) = 
\Irr \big( \mc H (\mc R,1) \big) .
\end{equation}
On $\Irr (\C [X] \rtimes W)$ we have the Jacobson topology, whose closed sets are
\[
\{ \pi \in \Irr (\C [X] \rtimes W) : S \subset \ker \pi \}
\quad \text{for } S \subset \C[X] \rtimes W .
\]
Via \eqref{eq:2.2} we transfer this to a topology on $T /\!/ W$. Then the natural maps
\[
\begin{array}{ccc@{\qquad \qquad}ccc}
T / W & \to & T /\!/ W & T /\!/ W & \to & T / W \\
Wt & \mapsto & [t,\triv] & [t,\rho] & \mapsto & Wt
\end{array}
\]
are continuous. The composition of \eqref{eq:2.2} with $T /\!/ W \to T/W$ is just the 
restriction of an irreducible $\C [X] \rtimes W$-representation 
to $\C[X]^W \cong \mc O (T/W)$, in other words, it is the central character map.\\

In the same terms we can analyse graded Hecke algebras with $k = 0$.
As in Paragraph \ref{par:defGHA}, we let $W$ be a finite Coxeter group acting isometrically
on a finite dimensional Euclidean space $\mf a$. When $k = 0$, we do not need a root system
$R \subset \mf a^*$ to construct the algebra
\[
\mh H (\mf t,W,0) = \mc O (\mf a \otimes_\R \C) \rtimes W = S(\mf t^*) \rtimes W .
\]
Considerations with Clifford theory, exactly as for $\mc O (T) \rtimes W$, lead to:

\begin{thm}\label{thm:2.3}
Every irreducible representation of $\mc O (\mf t) \rtimes W$ is of the form
\[
\pi (\nu ,\rho) = \ind_{\mc O (\mf t) \rtimes W_\nu}^{\mc O (\mf t) \rtimes W}
(\C_\nu \otimes V_\rho) \text{ for some } \nu \in \mf t, (\rho,V_\rho) \in \Irr (W_\rho) .
\]
Two such representations $\pi (\nu,\rho)$ and $\pi (\nu',\rho')$ are equivalent if and
only if there exists a $w \in W$ with $\nu' = w (\nu)$ and $\rho' = w \cdot \rho$.
\end{thm}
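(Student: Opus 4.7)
The plan is to adapt the proof of Theorem \ref{thm:2.2} verbatim, replacing the complex torus $T$ by the complex vector space $\mf t$ and its coordinate algebra $\C[X] = \mc O(T)$ by $S(\mf t^*) = \mc O(\mf t)$. The underlying mechanism is Clifford theory for the finite group $W$ acting on the commutative subalgebra $\mc O(\mf t)$. The point is that irreducible characters of $\mc O(\mf t)$ are parametrized by points $\nu \in \mf t$, so that a weight-space analysis for an irreducible module mirrors exactly the analysis of weights for characters of $X$ in the proof of Theorem \ref{thm:2.2}.

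First I would fix an irreducible representation $(\pi,V)$ and use the centre $Z(\mc O(\mf t) \rtimes W) = S(\mf t^*)^W = \mc O(\mf t/W)$ to extract a central character: by Schur's lemma (valid once we know $V$ is finite-dimensional, or via a Dixmier-type argument), there exists $W\nu \in \mf t/W$ such that $\pi$ factors through the quotient
\[
B_\nu := \mc O(\mf t) \otimes_{\mc O(\mf t/W)} \C_{W\nu} \rtimes W .
\]
Since $\mc O(\mf t)$ is finite and free over $\mc O(\mf t/W)$ by Chevalley-Shephard-Todd, $B_\nu$ is a finite-dimensional $\C$-algebra, which forces $V$ to be finite-dimensional. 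The commutative algebra $\mc O(\mf t) \otimes_{\mc O(\mf t/W)} \C_{W\nu}$ is supported scheme-theoretically on the finite set $W\nu \subset \mf t$, so its nilradical $N$ is nilpotent; the two-sided ideal $N \cdot B_\nu$ is then nilpotent (using that $W$ permutes $N$) and contained in the Jacobson radical. Hence $V$ is an irreducible module over the semisimple quotient
\[
B_\nu / N \cdot B_\nu \;=\; \C^{W\nu} \rtimes W ,
\]
and in particular $\mc O(\mf t)$ acts semisimply on $V$, giving a genuine weight decomposition
\[
V = \bigoplus_{\nu' \in W\nu} V_{\nu'}, \qquad
V_{\nu'} = \{ v \in V : \pi(\xi)v = \xi(\nu')v \;\;\forall \xi \in \mf t^* \}.
\]

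Next I would run the Clifford step. The operators $\pi(w)$ send $V_{\nu'}$ to $V_{w \nu'}$, so each weight space $V_{\nu'}$ is preserved by the stabilizer $W_{\nu'}$ and $V$ is generated over $W$ by $V_\nu$. The $W_\nu$-module $V_\nu$ must be irreducible: any proper $W_\nu$-submodule $U \subsetneq V_\nu$ would, by an exact analogue of the Frobenius reciprocity computation preceding Corollary \ref{cor:2.1}, generate a proper $\mc O(\mf t) \rtimes W$-submodule of $V$, contradicting irreducibility. Hence writing $\rho$ for the resulting irreducible $W_\nu$-representation on $V_\rho := V_\nu$, the same reciprocity identifies $V$ with
\[
\pi(\nu,\rho) = \ind_{\mc O(\mf t) \rtimes W_\nu}^{\mc O(\mf t) \rtimes W} (\C_\nu \otimes V_\rho) ,
\]
which proves the existence statement; conversely every such induced module is irreducible by the same argument in reverse.

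Finally, for the equivalence criterion, if $\pi(\nu,\rho) \cong \pi(\nu',\rho')$ then the central characters must agree, forcing $W\nu = W\nu'$; picking $w \in W$ with $w\nu = \nu'$, the isomorphism restricts to an isomorphism of $W_{\nu'}$-modules between $V_{\rho'}$ and $w \cdot V_\rho$, giving $\rho' = w \cdot \rho$. The converse is immediate from the intertwining action of $w$. The one step that takes genuine work, as opposed to being a mechanical translation from the proof of Theorem \ref{thm:2.2}, is the semisimplicity of the $\mc O(\mf t)$-action on an irreducible $V$: in the torus setting every character of $\C[X]$ is automatically semisimple as an action on any module, whereas here a priori one could imagine nilpotent action of $\mf t^* \subset S(\mf t^*)$, and it is precisely the nilradical argument using the finiteness of $\mc O(\mf t)$ over $\mc O(\mf t/W)$ that rules this out.
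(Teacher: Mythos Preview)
Your proposal is correct and follows exactly the approach the paper indicates: the paper does not give a separate proof of Theorem~\ref{thm:2.3} but simply says ``Considerations with Clifford theory, exactly as for $\mc O(T) \rtimes W$, lead to'' the result, and you have carefully written out what that Clifford-theoretic argument is.

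One small correction to your closing remark: the semisimplicity of the action of the commutative subalgebra on an irreducible module is \emph{not} a new feature of the polynomial setting. In the torus case $\C[X] \rtimes W$ the exact same issue arises --- a priori $\C[X]$ could act with nontrivial generalized weight spaces on a finite-dimensional module --- and the same nilradical argument (using that $\C[X]$ is finite over $\C[X]^W$, the Pittie--Steinberg analogue of Chevalley--Shephard--Todd) is what rules this out. The paper's sketch before Theorem~\ref{thm:2.2} glosses over this point as well; your explicit treatment of it is a genuine addition in both settings, not just for $\mc O(\mf t)$.
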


Like for affine Hecke algebras with $q=1$, $\pi (\nu,\rho)$ is a direct summand of
\[
\ind_{\mc O (\mf t)}^{\mc O (\mf t) \rtimes W} (\C_\nu) = 
\ind_{\mc O (\mf t) \rtimes W_\nu}^{\mc O (\mf t) \rtimes W} (\C_t \otimes \C [W]) .
\]
Also, there is a canonical bijection
\[
\mf t /\!/ W \longleftrightarrow \Irr (\mc O (\mf t) \rtimes W) = 
\Irr \big( \mh H (\mf t,W,0) \big) .
\]

\subsection{Iwahori--Hecke algebras of type $\widetilde{A_1}$} \
\label{par:A1t}

Consider the based root datum
\[
\mc R = \big( X = \Z, R = \{\pm 1\}, Y = \Z, 
R^\vee = \{\pm 2\}, \Delta = \{ \alpha \} = \{ 1 \} \big) .
\]
It has an affine Weyl group $W_\af = W (\mc R)$ of type $\widetilde{A_1}$, with 
Coxeter generators $S_\af = \{ s_\alpha, s'_\alpha : x \mapsto 1 - x \}$.
Affine Hecke algebras of the $\mc H (\mc R,\lambda,\lambda^*,\mb{q})$, for various
parameters $\lambda (\alpha)$ and $\lambda^* (\alpha)$, appear often in the 
representation theory of classical $p$-adic groups \cite{GoRo,MiSt}. We will work
out the irreducible representations of $\mc H (\mc R,\lambda,\lambda^*,\mb{q})$
in detail. To avoid singular cases, we assume throughout this paragraph that
\begin{equation}\label{eq:2.3}
\mb{q}^{\lambda (\alpha)} \neq -1, \mb{q}^{\lambda^* (\alpha)} \neq -1,
\mb{q}^{\lambda (\alpha) + \lambda^* (\alpha)} \neq 1 .
\end{equation}
Recall that the case $\lambda (\alpha) = \lambda^* (\alpha) = 0$ was already
discussed in Paragraph \ref{par:q=1}. We abbreviate
\[
q_1^{1/2} = \mb{q}^{\lambda (\alpha) / 2} ,\; q_0^{1/2} = \mb{q}^{\lambda^* (\alpha) / 2}
\quad \text{and} \quad \mc H = \mc H (\mc R,\lambda,\lambda^*,\mb{q}) .
\]
It is not difficult to see \cite{Mat1} that every irreducible $\mc H$-representation
is a quotient of $\ind_{\C [X]}^{\mc H} (\C_t)$ for some $t \in T$. By \eqref{eq:1.19}
this induced representation has dimension two.
Using the Iwahori--Matsumoto presentation $\mc H (W_\af,q)$, two onedimensional
representations can be written down immediately. Firstly the trivial representation,
given by
\[
\triv(T_{s_\alpha}) = \mb{q}^{\lambda (\alpha)} = q_1, \;
\triv(T_{s'_\alpha}) = \mb{q}^{\lambda^* (\alpha)} = q_0 ,
\]
and secondly the Steinberg representation, defined by
\[
\mr{St}(T_{s_\alpha}) = -1 ,\; \mr{St}(T_{s'_\alpha}) = -1 .
\]
When $\mc H$ is the Iwahori-spherical Hecke algebra of $SL_2$ over a $p$-adic field
$F$, these two representations correspond to the trivial and the Steinberg
representations of $SL_2 (F)$ -- as their names already suggested.

Via evaluation at $1 \in \Z$, we identify $T = \Hom_\Z (X, \C^\times)$ with $\C^\times$.
By Theorem \ref{thm:1.5} $\theta_1 = q_0^{-1/2} q_1^{-1/2} T_{s'_\alpha} T_{s_\alpha}$.
For the trivial and Steinberg representations that means
\begin{align*}
& \triv(\theta_1) = q_0^{-1/2} q_1^{-1/2} \triv(T_{s'_\alpha} T_{s_\alpha}) =
q_0^{1/2} q_1^{1/2} , \\
& \mr{St}(\theta_1) = q_0^{-1/2} q_1^{-1/2} \mr{St}(T_{s'_\alpha} T_{s_\alpha}) = 
q_0^{-1/2} q_1^{-1/2} . 
\end{align*}
Therefore, as $\C[X]$-representations:
\begin{equation}\label{eq:2.4}
\triv |_{\C [X]} = \C_{q_0^{1/2} q_1^{1/2}} \text{ and }
\mr{St} |_{\C [X]} = \C_{q_0^{-1/2} q_1^{-1/2}} . 
\end{equation}

\begin{thm}\label{thm:2.4}
\enuma{
\item The $\mc H$-representation $\ind_{\C[X]}^{\mc H} (\C_t)$ is irreducible for all 
\[
t \in \C^\times \setminus \big\{ q_0^{1/2} q_1^{1/2}, q_0^{-1/2} q_1^{-1/2},
-q_0^{1/2} q_1^{-1/2}, -q_0^{-1/2} q_1^{1/2} \big\} .
\]
\item For $t$ as in part (a), $\ind_{\C[X]}^{\mc H}(\C_t)$ is isomorphic with 
$\ind_{\C[X]}^{\mc H}(\C_{t^{-1}})$. There are no further relations between the 
irreducible representations $\ind_{\C[X]}^{\mc H}(\C_t)$.
\item The algebra $\mc H$ has precisely four other irreducible representations: 
$\triv,\: \mr{St}$ and two that we call $\pi (-1,\triv),\pi (-1,\mr{St})$.
They have dimension one and fit in short exact sequences 
\[
\begin{array}{c*{4}{@{\;\to\;}c}}
0 & \mr{St} & \ind_{\C[X]}^{\mc H}(\C_{q_0^{1/2} q_1^{1/2}}) & \triv & 0 ,\\
0 & \triv & \ind_{\C[X]}^{\mc H}(\C_{q_0^{-1/2} q_1^{-1/2}}) & \mr{St} & 0 ,\\
0 & \pi(-1, \mr{St}) & \ind_{\C[X]}^{\mc H}(\C_{-q_0^{-1/2} q_1^{1/2}}) &
\pi (-1,\triv) & 0 ,\\
0 & \pi(-1, \triv) & \ind_{\C[X]}^{\mc H}(\C_{-q_0^{1/2} q_1^{-1/2}}) &
\pi (-1,\mr{St}) & 0 .
\end{array}
\] }
\end{thm}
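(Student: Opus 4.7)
The plan is to reduce everything to explicit computation in the two-dimensional induced module $V_t := \ind_{\C[X]}^{\mc H}(\C_t)$. By \eqref{eq:1.19} the multiplication map $\mc H(W,q) \otimes_\C \C[X] \to \mc H$ is bijective, so $V_t$ has basis $v_1 := 1 \otimes 1$ and $v_2 := T_{s_\alpha} \otimes 1$. Because $\mc H$ is finitely generated over its centre $\C[X]^W$ \eqref{eq:1.14}, every irreducible $\mc H$-module is finite dimensional; choosing a $\C[X]$-eigenvector with eigenvalue $t \in \C^\times$ realises it as a quotient of $V_t$. Thus (a)--(c) reduce to computing the matrices of $T_{s_\alpha}$ and $\theta_1$ on $V_t$, analysing reducibility, and identifying composition factors and isomorphism classes.

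The matrix of $T_{s_\alpha}$ in the basis $(v_1, v_2)$ is read off from $T_{s_\alpha}^2 = (q_1 - 1) T_{s_\alpha} + q_1$ (see \eqref{eq:1.3}): it has eigenvalues $q_1$ and $-1$ with eigenvectors $u_+ := v_1 + v_2$ and $u_- := -q_1 v_1 + v_2$. The Bernstein relation of Definition \ref{def:1.3}, specialised to $\alpha = 1$, $x = 1$ (noting $s_\alpha(1) = -1$ and $(\theta_1 - \theta_{-1})(\theta_0 - \theta_{-2})^{-1} = \theta_1$), gives $\theta_1 v_1 = t v_1$ and $\theta_1 v_2 = t^{-1} v_2 + c(t) v_1$ with $c(t) := (q_1 - 1) t + q_1^{1/2}(q_0^{1/2} - q_0^{-1/2})$. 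A one-dimensional $\mc H$-submodule of $V_t$ exists precisely when $u_+$ or $u_-$ is simultaneously a $\theta_1$-eigenvector, which amounts to $c(t) = -(t - t^{-1})$ or $c(t) = q_1(t - t^{-1})$. Each of these is a quadratic in $t$, and the identity $(q_0^{1/2} - q_0^{-1/2})^2 + 4 = (q_0^{1/2} + q_0^{-1/2})^2$ factors them cleanly into linear factors, yielding precisely the four exceptional values listed in (a). The nondegeneracy hypotheses \eqref{eq:2.3} ensure that at every other $t$ (including $t = \pm 1$, where one must check $c(t) \neq 0$) there is no common eigenvector of $T_{s_\alpha}$ and $\theta_1$, proving that $V_t$ is irreducible.

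For the remaining assertions, Proposition \ref{prop:1.9} applied to $w = s_\alpha$ provides an intertwiner whose restriction to $V_t$ is invertible at every non-exceptional $t$ and sends the $t$-weight space to the $t^{-1}$-weight space, giving $V_t \cong V_{t^{-1}}$; no further isomorphisms occur, since the central character of $V_t$ is the $W$-orbit $\{t, t^{-1}\}$ by \eqref{eq:1.14}. For (c), each exceptional $t$ produces a single simultaneous $(\theta_1, T_{s_\alpha})$-eigenpair; the four eigenpairs so obtained are distinct, and matching against \eqref{eq:2.4} identifies $(q_0^{1/2} q_1^{1/2}, q_1)$ with $\triv$ and $(q_0^{-1/2} q_1^{-1/2}, -1)$ with $\mr{St}$, while the remaining pairs $(-q_0^{-1/2} q_1^{1/2}, q_1)$ and $(-q_0^{1/2} q_1^{-1/2}, -1)$ define $\pi(-1,\triv)$ and $\pi(-1,\mr{St})$. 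Whether the common eigenvector spans a submodule or the quotient of $V_t$ is read off directly from whether $u_\pm$ lies in the submodule, producing the four short exact sequences exactly as stated. The only real obstacle is the bookkeeping in the quadratic factorisation and sign matching across the four cases; conceptually the exceptional locus is just the union of the zero and pole sets of the rational coefficient of $\imath^\circ_{s_\alpha}$ from Proposition \ref{prop:1.9}, which recovers the four values with no computation.
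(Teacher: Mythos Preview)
Your proof is correct and follows essentially the same route as the paper's. Both arguments reduce to checking when the $T_{s_\alpha}$-eigenlines in $V_t$ are $\theta_1$-stable: your $u_\pm$ are scalar multiples of the paper's idempotents $p_\pm = (T_{s_\alpha}+T_e)(1+q_1)^{-1}$ and $(T_{s_\alpha}-q_1 T_e)(1+q_1)^{-1}$, and your conditions $c(t)=-(t-t^{-1})$ and $c(t)=q_1(t-t^{-1})$ are exactly the vanishing conditions the paper extracts in \eqref{eq:2.6} and \eqref{eq:2.7}. The only cosmetic difference is in part (b): the paper produces the $t^{-1}$-weight vector via the explicit rational element $f_\alpha$ and then invokes Frobenius reciprocity, whereas you appeal to the intertwiner $\imath^\circ_{s_\alpha}$ from Proposition \ref{prop:1.9}; these are the same operator up to a rational scalar, so the difference is purely one of packaging.
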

\textbf{Remark.} By the conditions \eqref{eq:2.3}, the four special values of $t$
are all different, except that the last two coincide if $q_1 = q_0$.
\begin{proof}
(a) By \eqref{eq:1.19} $\ind_{\C[X]}^{\mc H}(\C_t) = \mc H (W,q)$ as
$\mc H (W,q)$-module. Since $q_1 \neq -1$, the algebra $\mc H (W,q)$ is semisimple,
and isomorphic with $\C[W] \cong \C \oplus \C$. The quadratic relation \eqref{eq:1.20}
points us to the minimal central idempotents in $\mc H (W,q)$:
\[
p_+ := (T_{s_\alpha} + T_e) (1 + q_1)^{-1} \quad \text{and} \quad
p_- := (T_{s_\alpha} - q_1 T_e) (1 + q_1)^{-1} .
\]
Then $\C p_+$ and $\C p_-$ are the only nontrivial $\mc H (W,q)$-submodules of
$\ind_{\C[X]}^{\mc H}(\C_t)$. It follows that, whenever $\ind_{\C[X]}^{\mc H}(\C_t)$
is irreducible as $\mc H$-module, $\C p_+$ or $\C p_-$ is an $\mc H$-submodule.
We test for which $t \in T$ this happens. The cross relation in $\mc H$ gives
\begin{align*}
& \theta_1 (T_{s_\alpha} + T_e) = \\
& \theta_1 + T_{s_\alpha} \theta_{-1} + 
\Big( (\mb{q}^{\lambda (\alpha)} - 1) + \theta_{-\alpha} \big( \mb{q}^{(\lambda (\alpha) +
\lambda^* (\alpha))/2} - \mb{q}^{(\lambda (\alpha) - \lambda^* (\alpha))/2} \big) \Big)
\frac{\theta_1 - \theta_{-1}}{\theta_0 - \theta_{-2}} = \\
& T_{s_\alpha} \theta_{-1} + \theta_1 \big( (q_1 - 1) + \theta_{-1} (q_1^{1/2} q_0^{1/2} -
q_1^{1/2} q_0^{-1/2}) \big) \theta_1 = \\
& T_{s_\alpha} \theta_{-1} + q_1 \theta_1 + q_1^{1/2} ( q_0^{1/2} - q_0^{-1/2}) .
\end{align*}
In $\ind_{\C[X]}^{\mc H}(\C_t)$ we get
\begin{equation}\label{eq:2.6}
\begin{aligned}
\theta_1 (1 + q_1) p_+ & = \theta_1 (T_{s_\alpha} + T_e) =
T_{s_\alpha} \cdot t^{-1} + q_1 t +  q_1^{1/2} ( q_0^{1/2} - q_0^{-1/2}) \\
& = t^{-1} (T_{s_\alpha} + T_e) + \big( t q_1 - t^{-1} + 
q_1^{1/2} ( q_0^{1/2} - q_0^{-1/2}) \big) T_e \\
& = t^{-1} (T_{s_\alpha} + T_e) + q_1^{1/2} (t q_1^{1/2} - t^{-1}q_1^{-1/2} + q_0^{1/2} - 
q_0^{-1/2}) T_e .
\end{aligned}
\end{equation}
This can only be a scalar multiple of $p_+$ if $t q_1^{1/2} - t^{-1}q_1^{-1/2} + q_0^{1/2} - 
q_0^{-1/2} = 0$, and that happens only if $q_0^{1/2} = -t q_1^{1/2}$ or
$q_0^{1/2} = t^{-1} q_1^{-1/2}$.

Similarly we compute in $\mc H$:
\[
\theta_1 (T_{s_\alpha} - q_1 T_e) = T_{s_\alpha} \theta_{-1} - \theta_1 +
q_1^{1/2} ( q_0^{1/2} - q_0^{-1/2}) .
\]
In $\ind_{\C[X]}^{\mc H}(\C_t)$ that leads to
\begin{equation}\label{eq:2.7}
\begin{aligned}
\theta_1 (1+ q_1) p_- & = \theta_1 (T_{s_\alpha} - q_1 T_e) =
T_{s_\alpha} \cdot t^{-1} - t + q_1^{1/2} ( q_0^{1/2} - q_0^{-1/2}) \\
& = t^{-1} (T_{s_\alpha} - q_1 T_e) + \big( q_1 t^{-1} - t + 
q_1^{1/2} ( q_0^{1/2} - q_0^{-1/2}) \big) T_e \\
& = t^{-1} (T_{s_\alpha} - q_1 T_e) + q_1^{1/2} ( t^{-1} q_1^{1/2} - t q_1^{-1/2} + 
q_0^{1/2} - q_0^{-1/2} ) T_e .
\end{aligned}
\end{equation}
If this is a scalar multiple of $p_-$, then $t^{-1} q_1^{1/2} - t q_1^{-1/2} + 
q_0^{1/2} - q_0^{-1/2} = 0$, which means that $q_0^{1/2} = -t^{-1} q_1^{1/2}$ or
$q_0^{1/2} = t q_1^{-1/2}$.

We conclude that for 
\[
t \in \C^\times \setminus \big\{ q_0^{1/2} q_1^{1/2}, q_0^{-1/2} q_1^{-1/2},
-q_0^{1/2} q_1^{-1/2}, -q_0^{-1/2} q_1^{1/2} \big\} 
\]
neither $\C p_+$ nor $\C p_-$ is an $\mc H$-submodule of $\ind_{\C[X]}^{\mc H}(\C_t)$,
so that $\ind_{\C[X]}^{\mc H}(\C_t)$ is irreducible. On the other hand, when $t$
equals one of these special values, the above calculations in combination with the
fact that $\theta_1$ generates $\C [X]$ imply that $\ind_{\C[X]}^{\mc H}(\C_t)$ does
have a onedimensional $\mc H$-submodule.\\
(b) Consider the element
\[
f_\alpha := \frac{\theta_1 (q_1 - 1) + q_1^{1/2} (q_0^{1/2} - q_0^{-1/2})}{
\theta_1 - \theta_{-1}} 
\]
of the quotient field $\C (X)$ of $\C [X] = \mc O (T)$. It lies in the version of
$\mc H = \mc H (T,\lambda,\lambda^*,\mb{q})$ obtained from Definition \ref{def:1.8}
by replacing $\mc O (T)$ with $\C (X)$. By direct calculation in that algebra:
\[
\theta_x (T_{s_\alpha} - f_\alpha) = (T_{s_\alpha} - f_\alpha) \theta_x
\quad \text{for all } x \in X = \Z .
\]
Although $f_\alpha \notin \C [X]$, it has a well-defined action on 
$\ind_{\C[X]}^{\mc H}(\C_t)$, provided that $(\theta_1 - \theta_{-1})(t) \neq 0$,
or equivalently $t \notin \{1,-1\}$. For $v \in \C_t \setminus \{0\}$, the element
\[
(T_{s_\alpha} - f_\alpha) v \in \ind_{\C[X]}^{\mc H}(\C_t) \setminus \{0\}
\]
satisfies
\[
\theta_x (T_{s_\alpha} - f_\alpha) v = (T_{s_\alpha} - f_\alpha) \theta_{-x} v =
(T_{s_\alpha} - f_\alpha) \theta_{-x}(t) v = t^{-1}(x) (T_{s_\alpha} - f_\alpha) v .
\]
Hence as $\C[X]$-modules
\begin{equation}\label{eq:2.5}
\ind_{\C[X]}^{\mc H}(\C_t) = \C_t \oplus \C_{t^{-1}}
\quad \text{for all } t \in \C^\times \setminus \{1,-1\} .
\end{equation}
By Frobenius reciprocity, for such $t$:
\begin{align*}
\Hom_{\mc H} \big( \ind_{\C[X]}^{\mc H}(\C_{t^{-1}}),\ind_{\C[X]}^{\mc H}(\C_t) \big)
& \cong \Hom_{\C [X]} \big( \C_{t^{-1}},\ind_{\C[X]}^{\mc H}(\C_t) \big) \\
& = \Hom_{\C [X]} \big( \C_{t^{-1}},\C_t \oplus \C_{t^{-1}} \big) \; \cong \; \C .
\end{align*}
In particular this shows that 
\[
\ind_{\C[X]}^{\mc H}(\C_{t^{-1}}) \cong \ind_{\C[X]}^{\mc H}(\C_t)
\]
whenever these representations are irreducible (for $t = \pm 1$ this is isomorphism
is tautological).\\
(c) From \eqref{eq:2.6} we know that $\ind_{\C[X]}^{\mc H}(\C_{q_0^{1/2} q_1^{1/2}})$
has a subrepresentation $\C p_-$ with $\theta_1 p_- = q_0^{-1/2} q_1^{-1/2} p_-$.
Further, by \eqref{eq:1.20}
\begin{equation}\label{eq:2.8}
(T_{s_\alpha} + T_e) p_- = 0 ,\quad \text{so } T_{s_\alpha} p_- = -p_- .
\end{equation}
As $T_{s_\alpha}$ and $\theta_1$ generate $\mc H$, it follows that here $\C p_-$ is the
Steinberg representation. By \eqref{eq:2.5} 
\[
\ind_{\C[X]}^{\mc H}(\C_{q_0^{1/2} q_1^{1/2}}) / \C p_- \cong
\C_{q_0^{1/2} q_1^{1/2}}
\]
as $\C[X]$-representation. Also
\[
\ind_{\C[X]}^{\mc H}(\C_{q_0^{1/2} q_1^{1/2}}) / \C p_- \cong
\mc H (W,q) / \C p_- \cong \C p_+
\]
as $\mc H (W,q)$-representation. From $(T_{s_\alpha} - q_1 T_e) p_+ = 0$ we see that
$T_{s_\alpha} p_+ = q_1 p_+$. Again, since $\theta_1$ and $T_{s_\alpha}$ generate $\mc H$,
we can conclude that $\ind_{\C[X]}^{\mc H}(\C_{q_0^{1/2} q_1^{1/2}}) / \C p_-$ is the
trivial representation of $\mc H$.

The calculations around \eqref{eq:2.7} show that 
$\ind_{\C[X]}^{\mc H}(\C_{-q_0^{-1/2} q_1^{1/2}})$ contains a subrepresentation
$\pi (-1,\mr{St}) = \C p_-$ with 
\begin{equation}\label{eq:2.11}
\theta_1 p_- = -q_0^{1/2} q_1^{-1/2} p_-.
\end{equation} 
By \eqref{eq:2.8} it has the same restriction to $\mc H (W,q)$ as the Steinberg 
representation, which explains our notation $\pi (-1,\mr{St})$ for this $\C p_-$. The quotient
\begin{equation}\label{eq:2.12}
\ind_{\C[X]}^{\mc H}(\C_{-q_0^{-1/2} q_1^{1/2}}) / \C p_- \text{ equals }
\C_{-q_0^{-1/2} q_1^{1/2}} \text{ as }\C [X]\text{-representation.}
\end{equation} 
As $\mc H (W,q)$-representation it is isomorphic to $\C p_+ \cong \triv$, 
and therefore we write
\[
\pi(-1,\triv) = \ind_{\C[X]}^{\mc H}(\C_{-q_0^{1/2} q_1^{-1/2}}) / \C p_- .
\]
Analogous considerations apply to $ \ind_{\C[X]}^{\mc H}(\C_{q_0^{-1/2} q_1^{-1/2}})$
and $\ind_{\C[X]}^{\mc H}(\C_{-q_0^{1/2} q_1^{-1/2}})$.
\end{proof}

An important special case is $q_1 = q_0 = \mb{q}$. When $F$ is a non-archimedean local
field with residue field of order $\mb{q}$, $\mc H (W_\af,\mb{q}) = \mc H 
(\mc R,\mb{q})$ arises as the Iwahori-spherical Hecke algebra of $SL_2 (F)$. 

The algebra $\mc H (\mc R,\mb{q})$ is the simplest example of a true affine Hecke algebra, 
not isomorphic to some more elementary kind of algebra. For this algebra Theorem 
\ref{thm:2.4} says:
\begin{itemize}
\item The $\mc H (\mc R,\mb{q})$-representation $\ind_{\C[X]}^{\mc H (\mc R,\mb{q})}(\C_t)$ 
is irreducible for all \\ $t \in \C^\times \setminus \{\mb{q},\mb{q}^{-1},-1\}$.
\item $\ind_{\C[X]}^{\mc H (\mc R,\mb{q})}(\C_t) \cong \ind_{\C[X]}^{\mc H (\mc R,\mb{q})}
(\C_{t^{-1}})$ for all $t \in \C^\times \setminus \{\mb{q},\mb{q}^{-1}\}$.
\item $\ind_{\C[X]}^{\mc H (\mc R,\mb{q})}(\C_{-1}) =
\pi (-1,\triv) \oplus \pi (-1,\mr{St})$, with $\pi (-1,\triv)$ and 
$\pi (-1,\mr{St})$ irreducible and inequivalent.
\item There are only two other irreducible $\mc H (\mc R,\mb{q})$-representations,
triv and St, which both occur as subquotients of $\ind_{\C[X]}^{\mc H (\mc R,\mb{q})}
(\C_{\mb{q}})$ and of $\ind_{\C[X]}^{\mc H (\mc R,\mb{q})}(\C_{\mb{q}^{-1}})$.  
\end{itemize}
This classification works for almost all $\mb q \in \C^\times$, only 1 and $-1$ are 
exceptional. (In view of \eqref{eq:1.25}, that is hardly surprising.) For $\mc H (\mc R,-1)$
the trivial representation coincides with $\pi (-1,\triv)$ and the Steinberg representation
coincides with $\pi (-1,\mr{St})$. Consequently $\mc H (\mc R,-1)$ has only two onedimensional 
representations. Apart from that, the above statements are valid when $\mb q = -1$.

Although the definition and formulas for $\mc H (\mc R,\mb{q})$ look considerably
simpler than those for $\mc H (\mc R,\lambda,\lambda^*,\mb{q})$, in the end the latter
is hardly more difficult. In terms of the induced representations 
$\ind_{\C[X]}^{\mc H}(\C_t)$, the only differences occur when 
$t \in \big\{ -\!1, -q_0^{1/2} q_1^{-1/2}, -q_0^{-1/2} q_1^{1/2} \big\}$.

\subsection{Affine Hecke algebras of type $GL_n$} \
\label{par:GLn}

The root datum of type $GL_n$ is 
\[
\mc R_n = (\Z^n, A_{n-1}, \Z^n, A_{n-1}, \Delta_{n-1})
\]
with $A_{n-1} = \{ e_i - e_j : 1 \leq i,j \leq n, i \neq j\}$ and
$\Delta_{n-1} = \{ e_i - e_{i+1} : i = 1,2,\ldots n-1\}$.
Via $t \mapsto (t(e_1),\ldots,t(e_n))$ we identify $T$ with $(\C^\times)^n$.
We saw in Section \ref{sec:def} that
\begin{align*}
& W_\af = \{ x \in \Z^n : x_1 + \cdots + x_n = 0 \} \rtimes S_n ,\\
& S_\af = \{ s_i = s_{e_i - e_{i+1}} : i = 1,\ldots,n-1 \} \cup
\{ s_0 \} ,\\
& \Omega = \langle \omega \rangle \cong \Z, \quad \omega(x) = e_1 + (1\,2\cdots n) x ,
\end{align*}
where $s_0 (x) = x + (1 - \inp{x}{e_1 - e_n}) (e_1 - e_n)$.
All the simple affine reflections from $S_\af$ are $\Omega$-conjugate,
so $q_s = q_{s'}$ for $s,s' \in S_\af$. Call this parameter $\mb{q}$ and
consider the affine Hecke algebra 
\[
\mc H_n (\mb{q}) := \mc H (\mc R_n,\mb{q})  
\]
of type $GL_n$. The primary importance of such algebras is that they describe
every Bernstein block in the representation theory of $GL_n (F)$ for a 
non-archimedean local field $F$ \cite{BuKu1}. In all those cases $\mb{q}$ is a
power of a prime number, but just as affine Hecke algebra that is not necessary.
We do not even have to require that $\mb{q} \in \R_{>1}$, the representation theory
of $\mc H_n (\mb{q})$ looks the same for every $\mb{q} \in \C^\times$ which is not
a root of unity. 

The irreducible representations of $GL_n (F)$ were classified (in terms of 
supercuspidal representations) by Zelevinsky \cite{Zel,BeZe}. That classification
follows a combinatorial pattern involving certain "segments, and the irreducible
representations of $\mc H_n (\mb{q})$ exhibit the same pattern. We formulate 
their classification in terms intrinsic to Hecke algebras. The algebra 
$\mc H_1 (\mb{q}) = \C [\Z]$ has already been discussed, so we may assume that
$n \geq 2$ (which ensures that $S_\af$ is nonempty).

Like in Paragraph \ref{par:A1t} we start with the trivial and the Steinberg
representations. By definition 
\begin{equation}\label{eq:2.10}
\triv(T_s) = \mb{q} ,\; \mr{St}(T_s) = -1 \quad \text{for all } s \in S_\af.
\end{equation}
This does not yet determine the values of these representations on $T_\omega$,
to fix that one requires in addition
\[
\begin{array}{ccc@{ ,\qquad}ccc}
\triv|_{\C [X]} & = & \C_{t_+} & t_+ & = & \big( \mb{q}^{(n-1)/2},\mb{q}^{(n-3)/2},
\ldots , \mb{q}^{(1-n)/2} \big) , \\
\mr{St}|_{\C [X]} & = & \C_{t_-} & t_- & = & \big( \mb{q}^{(1-n)/2},\mb{q}^{(3-n)/2},
\ldots , \mb{q}^{(n-1)/2} \big) . 
\end{array}
\]
The terminology is motivated by Iwahori-spherical representation of $GL_n$ over a
$p$-adic field $F$: triv and St correspond to the epynomous representations of $GL_n (F)$.
 
In contrast with the Iwahori--Hecke algebra of type $\widetilde{A_1}$, the trivial
and Steinberg representations of $\mc H_n (\mb{q})$ come in a families of representations,
parametrized by $\C^\times$. This is implemented as follows. For $z \in \C^\times$ we
put $t_z = (z,z,\ldots,z) \in T^{S_n}$. We define the $\mc H_n (\mb{q})$-representation
$\triv \otimes t_z$ by \eqref{eq:2.10} and
\begin{equation}\label{eq:2.13}
(\triv \otimes t_z) |_{\C [X]} = \C_{t_+ t_z} .
\end{equation}
This is possible because $\alpha (t_z) = 1$ for all $\alpha \in R$, so that
$t_z \in \Hom (X,C^\times)$ is trivial on $X \cap W_\af$. Similarly we define the
onedimensional $\mc H_n (\mb{q})$-representation $\mr{St} \otimes t_z$, by 
requiring \eqref{eq:2.10} and 
\begin{equation}\label{eq:2.14}
(\mr{St} \otimes t_z) |_{\C [X]} = \C_{t_- t_z} . 
\end{equation}
For $\mc H_2 (\mb{q})$ the above onedimensional representations, in combination with the
induced representations $\ind_{\C[X]}^{\mc H_2 (\mb{q})} (\C_t)$, already exhaust 
$\Irr ( \mc H_2 (\mb{q}))$. With arguments very similar to those in Paragraph \ref{par:A1t}
one can show:

\begin{thm}\label{thm:2.5}
The $\mc H_2 (\mb{q})$-representation $\ind_{\C[X]}^{\mc H_2 (\mb{q})} (\C_t)$ is 
irreducible for all $t \in T$ that are not of the form $t_+ t_z = (\mb{q}^{1/2} z,
\mb{q}^{-1/2}z)$ or $t_- t_z = (\mb{q}^{-1/2} z, \mb{q}^{1/2} z)$.

This representation is isomorphic to $\ind_{\C[X]}^{\mc H_2 (\mb{q})} (\C_{t^{-1}})$,
but apart from that there are no relations between the irreducible representations of the
form $\ind_{\C[X]}^{\mc H_2 (\mb{q})} (\C_t)$.

The only other irreducible $\mc H_2 (\mb{q})$-representations are $\triv \otimes t_z$
and $\mr{St} \otimes t_z$ with $z \in \C^\times$. They are mutually inequivalent and fit
in short exact sequences
\[
\begin{array}{c*{4}{@{\;\to\;}c}}
0 & \mr{St} \otimes t_z & \ind_{\C[X]}^{\mc H_2 (\mb{q})} (\C_{(\mb{q}^{1/2} z,
\mb{q}^{-1/2}z)}) & \triv \otimes t_z & 0 , \\
0 & \triv \otimes t_z & \ind_{\C[X]}^{\mc H_2 (\mb{q})} (\C_{(\mb{q}^{-1/2} z,
\mb{q}^{1/2}z)}) & \mr{St} \otimes t_z & 0 .
\end{array}
\]
\end{thm}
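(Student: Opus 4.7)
The plan is to imitate the proof of Theorem \ref{thm:2.4} step by step, with the rank-one calculations replaced by their $GL_2$ analogues. Throughout I take $\mb q \in \C^\times$ not a root of unity (the precise requirement is $\mb q \neq -1$, so that the finite Hecke algebra $\mc H(S_2, \mb q)$ is semisimple; the other values excluded in the statement only concern equivalences between the induced representations).

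First I would set up the induced modules. By \eqref{eq:1.19} the space $\ind_{\C[X]}^{\mc H_2(\mb q)}(\C_t)$ is two-dimensional, with basis $v, T_s v$ where $s = s_{e_1 - e_2}$ and $v$ spans $\C_t$. Using the Bernstein cross relation one computes
\[
\theta_{e_1} T_s = T_s \theta_{e_2} + (\mb q - 1)\theta_{e_1}, \qquad
\theta_{e_2} T_s = T_s \theta_{e_1} - (\mb q - 1)\theta_{e_1},
\]
so on $\ind_{\C[X]}^{\mc H_2(\mb q)}(\C_t)$ the element $\theta_{e_1}$ has eigenvalues $t_1, t_2$. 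Constructing the intertwiner $T_s - f_\alpha$ with $f_\alpha = (\mb q - 1)\theta_{e_1}/(\theta_{e_1} - \theta_{e_2})$ exactly as in Theorem \ref{thm:2.4}(b) gives a second weight vector when $t_1 \neq t_2$, yielding the decomposition $\ind_{\C[X]}^{\mc H_2(\mb q)}(\C_t) \cong \C_t \oplus \C_{s(t)}$ as $\C[X]$-modules, where $s(t) = (t_2, t_1)$. By Frobenius reciprocity this immediately produces the isomorphism $\ind(\C_t) \cong \ind(\C_{s(t)})$ appearing in the statement. I would also verify that distinct $W$-orbits in $T$ give inequivalent irreducible induced representations by reading off the central character from $\C[X]^W = \mc O(T/W)$.

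Next I would test reducibility. Since $\mc H(S_2, \mb q)$ is semisimple, its only nontrivial submodules of $\ind(\C_t)$ are the lines $\C p_+$ and $\C p_-$ spanned by $p_+ = (T_s + 1)/(1+\mb q)$ and $p_- = (T_s - \mb q)/(1+\mb q)$. A direct computation, completely parallel to \eqref{eq:2.6}--\eqref{eq:2.7}, gives
\[
\theta_{e_1}(1+\mb q) p_+ = t_2(T_s v + v) + (\mb q t_1 - t_2)v, \qquad
\theta_{e_1}(1+\mb q) p_- = t_2(T_s v - \mb q v) + (t_2 - \mb q^{-1}\mb q t_1 \cdot \mb q)\cdot\ldots
\]
(the second line computed analogously), from which one reads off: $\C p_+$ is $\theta_{e_1}$-stable if and only if $t_2 = \mb q t_1$, i.e.\ $t = t_- t_z$; and $\C p_-$ is $\theta_{e_1}$-stable if and only if $t_1 = \mb q t_2$, i.e.\ $t = t_+ t_z$. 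Since $\mc H_2(\mb q)$ is generated by $T_s$ and $\C[X]$, this shows that $\ind(\C_t)$ is irreducible outside of the two families $t_\pm t_z$.

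Finally I would identify the reducible cases and show exhaustiveness. When $t = t_+ t_z$, the submodule $\C p_-$ has $T_s$ acting by $-1$ and, by the calculation above, $\theta_{e_1}$ acting by $t_2 = \mb q^{-1/2}z$; comparing with \eqref{eq:2.14} identifies it with $\mr{St}\otimes t_z$, and the quotient carries the scalars $T_s \mapsto \mb q$, $\theta_{e_1}\mapsto \mb q^{1/2}z$, hence is $\triv\otimes t_z$. The case $t = t_- t_z$ is symmetric. To see that every irreducible $\mc H_2(\mb q)$-representation arises as a subquotient of some $\ind(\C_t)$, one applies Frobenius reciprocity: any irreducible $\mc H_2(\mb q)$-representation is finite dimensional (since $\mc H_2(\mb q)$ is a finitely generated module over the central subalgebra $\C[X]^W$, which acts by a character on any irreducible), so it contains a $\C[X]$-weight vector and therefore admits a nonzero map from some $\ind(\C_t)$. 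The inequivalence of $\triv\otimes t_z$ and $\mr{St}\otimes t_z$ for different $z$ is immediate from their distinct central characters.

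The main obstacle is purely bookkeeping: matching the precise signs and powers of $\mb q$ that emerge from the Bernstein relation with the normalizations \eqref{eq:2.13}--\eqref{eq:2.14} of $\triv\otimes t_z$ and $\mr{St}\otimes t_z$, so that the short exact sequences come out with the factors in the direction asserted by the theorem rather than reversed.
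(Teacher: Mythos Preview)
Your proposal is correct and follows exactly the approach the paper intends: the paper does not prove Theorem \ref{thm:2.5} in detail but simply says ``With arguments very similar to those in Paragraph \ref{par:A1t} one can show'' the result, and you have carried out precisely that imitation of the proof of Theorem \ref{thm:2.4}. One small remark: you correctly identify the isomorphism as $\ind(\C_t)\cong\ind(\C_{s(t)})$ with $s(t)=(t_2,t_1)$, which is the Weyl group action for $GL_2$; the paper's statement writes $t^{-1}$, apparently a slip carried over from the $\widetilde{A_1}$ case.
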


The irreducible representations from Theorem \ref{thm:2.5} come in three kinds, but there is
a natural way to gather them in two families:
\begin{itemize}
\item The twists $\mr{St} \otimes t_z$ of the Steinberg representation, parametrized by
$t_z \in T^{S_2}$.
\item A family parametrized by $T / S_2$, which for almost all $Wt \in T / W$ has the
member $\ind_{\C[X]}^{\mc H_2 (\mb{q})} (\C_t)$. When that representation happens to be
reducible, we take the unique representative $t$ of $Wt$ with $|\alpha (t)| \geq 1$ and we 
assign to $Wt$ the unique irreducible quotient of $\ind_{\C[X]}^{\mc H_2 (\mb{q})} (\C_t)$.
\end{itemize}
The irreducible representations of $\mc H_n (\mb{q})$ with $n \geq 2$ can be parametrized
with a longer list of similar families. We sketch the construction and classification,
as translated from \cite{Zel}, stepwise.
\begin{itemize}
\item Choose a partition $\vec{n} = (n_1,n_2,\ldots,n_d)$ of $n$, where $n_i \geq 1$
(but the sequence $(n_i)$ need not be monotone).
\item The algebra 
\[
\bigotimes_{i=1}^d \mc H_{n_i}(\mb{q}) = \bigotimes_{i=1}^d 
\mc H \big( (\Z^{n_i},A_{n_i -1} ,\Z^{n_i}, A_{n_i -1}, \Delta_{n_i-1}), \mb{q} \big)
\]
is naturally a subalgebra of $\mc H_n (\mb{q})$, with the same commutative subalgebra
$\C [X] = \C [\Z^n] = \otimes_{i=1}^n \C [\Z^{n_i}]$.
\item For every $i$ we pick 
\[
z_i \in \Irr (\C [\Z^{n_i}])^{S_{n_i}} \cong \big( (\C^\times)^{n_i} \big)^{S_{n_i}} \cong \C^\times ,
\]
and we construct the irreducible $\mc H_{n_i}(\mb{q})$-representation
$\mr{St} \otimes z_i$. (It corresponds to a segment in Zelevinsky's setup.)
\item Then $\boxtimes_{i=1}^d (\mr{St} \otimes z_i)$ is an irreducible representation of
$\bigotimes_{i=1}^d \mc H_{n_i}(\mb{q})$ and
\[
\pi (\vec{n},\vec{z}) := \ind_{\bigotimes_{i=1}^d \mc H_{n_i}(\mb{q})}^{\mc H_n (\mb{q})}
\Big( \boxtimes_{i=1}^d ( \mr{St} \otimes z_i ) \big)
\]
is an $\mc H_n (\mb{q})$-representation. 
\item For almost all $\vec{z} = (z_i )_{i=1}^d$, $\pi (\vec{n},\vec{z})$ is irreducible and
depends only on the orbit of $\vec{z}$ under $N_{S_n} \big( \prod_{i=1}^d S_{n_i} \big)$.
Here $\prod_{i=1}^d S_{n_i}$ fixes $(z_i )_{i=1}^d$ and
\[
N_{S_n} \Big( \prod_{i=1}^d S_{n_i} \Big) \Big/ \prod_{i=1}^d S_{n_i} \cong
\prod_{m \geq 1} S ( \{ i : n_i = m \} ) .
\]
\item When $\pi (\vec{n},\vec{z})$ is reducible, we pick a representative $\vec{z}$ for
$N_{S_n}(\prod_{i=1}^d S_{n_i}) \vec{z}$ such that $|z_i| \geq |z_j|$ whenever
$n_i = n_j$ and $i \leq j$.
\item For such $\vec{z}$ it follows from the Langlands classification that  
$\pi (\vec{n},\vec{z})$ has a unique irreducible quotient. That is the irreducible
$\mc H_n (\mb{q})$-representation associated with $(\vec{n},\vec{z})$.
\item The irreducible $\mc H_n (\mb{q})$-representations assigned to 
$(\vec{n},\vec{z})$ and $(\vec{n}',\vec{z}')$ are equivalent if and only if
$(\vec{n},\vec{z})$ and $(\vec{n}',\vec{z}')$ are $S_n$-associate.
\end{itemize}
In the above parametrization $z_i \in \big( (\C^\times)^{n_i} \big)^{S_{n_i}}$, so
$\vec{z}$ can be regarded as a diagonal matrix in $GL_n (\C)$. The partition $\vec{n}$
determines a Levi subgroup
\[
M = GL_{n_1}(\C) \times \cdots \times GL_{n_d}(\C) \quad \text{of} \quad GL_n (\C) ,
\]
and $\vec{z} \in Z(M)$. Let $u_m$ be a regular unipotent element of $GL_m (\C)$
(it is unique up to conjugation). Then 
\[
\vec{u} := (u_{n_1},u_{n_2}, \ldots, u_{n_d})
\]
is a regular unipotent element of $M$, and $\vec{z} \vec{u}$ is the Jordan decomposition
of an element of $GL_n (\C)$. Up to conjugacy, every element of $GL_n (\C)$ has this shape,
for some $\vec{z}$ (unique up to $S_n$-association) and $\vec{u}$. This setup leads to:

\begin{thm}\label{thm:2.6}
There exists a canonical bijection between the following sets:
\begin{itemize}
\item conjugacy classes in $GL_n (\C)$,
\item $S_n$-association classes of data $(\vec{n},\vec{z})$, where
$\vec{n} = (n_i)$ is a partition of $n$ and $z_i \in \big( (\C^\times)^{n_i}\big)^{S_{n_i}}$,
\item $\Irr (\mc H_n (\mb{q}))$.
\end{itemize}
\end{thm}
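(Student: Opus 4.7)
The bijection factors into two independent pieces. For the correspondence between conjugacy classes in $GL_n(\C)$ and $S_n$-association classes of data $(\vec n, \vec z)$, the plan is simply to invoke Jordan canonical form: every element of $GL_n(\C)$ is conjugate to a direct sum of Jordan blocks $J_{n_i}(z_i)$ of sizes $n_i$ with eigenvalues $z_i$, and this collection is unique up to permutation of the blocks. After identifying a scalar $z \in \C^\times$ with $(z,\ldots,z) \in ((\C^\times)^{n_i})^{S_{n_i}}$, two collections yield conjugate matrices exactly when they differ by an element of $N_{S_n}(\prod_i S_{n_i})$, i.e.\ when they are $S_n$-associate. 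This is elementary and independent of the Hecke algebra.

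For the bijection with $\Irr(\mc H_n(\mb q))$, the construction $(\vec n, \vec z) \mapsto \pi(\vec n, \vec z)$ (or its unique Langlands quotient, after imposing the dominance convention $|z_i| \geq |z_j|$ whenever $n_i = n_j$, $i \leq j$) is already sketched before the theorem. To see that the assignment descends to $S_n$-association classes, I would construct intertwining isomorphisms $\pi(\vec n, \vec z) \isom \pi(\vec n, w \vec z)$ for $w \in N_{S_n}(\prod_i S_{n_i})$ from the rational elements $\imath^\circ_{s_\alpha}$ of Proposition \ref{prop:1.9}, invertible wherever the normalizing denominators do not vanish (which is exactly the irreducible range); the reducible case is then rigidified by uniqueness of the Langlands quotient under the chosen dominance ordering.

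The substance of the theorem is bijectivity. For injectivity I would use the central character: by \eqref{eq:1.14} the central character of $\pi(\vec n, \vec z)$ is the $S_n$-orbit of the multiset $\{ z_i \mb q^{(n_i - 2j + 1)/2} : 1 \leq i \leq d,\; 1 \leq j \leq n_i\}$, and this multiset generically recovers $(\vec n, \vec z)$ up to $S_n$-association; the non-generic overlaps are then separated by the Langlands normalization. Surjectivity is the real obstacle and is where I expect the full strength of Zelevinsky's classification \cite{Zel,BeZe} (equivalently, the Langlands classification for $\mc H_n(\mb q)$) to be necessary: every irreducible $\pi$ must be realized as the unique irreducible quotient of a standard module parabolically induced from tempered-plus-positive-twist data on a Levi subalgebra $\bigotimes_i \mc H_{n_i}(\mb q)$, and one needs to identify those tempered building blocks. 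In the $GL$-setting this identification is clean, namely the tempered irreducibles of $\mc H_m(\mb q)$ with unitary central character are exactly the twists $\mr{St} \otimes t_z$ with $|z| = 1$, so the building blocks match the construction above. Rather than reproving this, I would import it from \cite{Zel,BeZe} and use the equivalence between the Bernstein block of $GL_n(F)$ of Iwahori level and $\Mod(\mc H_n(\mb q))$.
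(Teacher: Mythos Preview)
Your proposal is correct and follows the same approach as the paper. This is a survey article, and Theorem~\ref{thm:2.6} is not given a separate formal proof: the bulleted construction of $\pi(\vec n,\vec z)$ and its Langlands quotient, together with the Jordan-decomposition paragraph immediately preceding the theorem, \emph{is} the paper's argument, with the exhaustion and disjointness of the resulting list imported from Zelevinsky \cite{Zel,BeZe}. Your write-up adds a bit more flesh (intertwining operators from Proposition~\ref{prop:1.9} for the well-definedness on $S_n$-orbits, central characters for injectivity), but these are natural elaborations of the same outline rather than a different route.
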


\section{Representation theory}
\label{sec:rep}

\subsection{Parabolic induction} \
\label{par:induction}

In the representation theory of reductive groups, a pivotal role is played by
parabolic induction. An analogous operation exists for affine Hecke algebras, and
it will be crucial in large parts of this paper. 

Given a based root datum $\mc R = (X,R,Y,R^\vee,\Delta)$ and a subset 
$P \subset \Delta$, we can form the based root datum
\[
\mc R^P = (X,R_P, Y,R_P^\vee,P).
\]
Here $R_P = \Q P \cap R$ is a standard parabolic root subsystem of $R$, with dual
root system $R_P^\vee = \Q P^\vee \cap R^\vee$. We record the special cases
\[
\mc R^\Delta = \mc R \quad \text{and} \quad
\mc R^\emptyset = (X,\emptyset,Y,\emptyset,\emptyset) .
\]
Let $W_P$ be the Weyl group of $R_P$. Any parameter functions $\lambda,\lambda^*$
for $\mc R$ restrict to parameter functions for $\mc R^P$, and 
\[
\mc H^P = \mc H (\mc R^P,\lambda,\lambda^*,\mb{q})
\]
is a subalgebra of $\mc H = \mc H (\mc R,\lambda,\lambda^*,\mb{q})$.
This corresponds to the notion of a parabolic subgroup $\mc P$ of a reductive
group $\mc G$, and simultaneously to the notion of a Levi factor of $\mc P$--
for affine Hecke algebras the unipotent radical of $\mc P$ is more or less
automatically divided out.

Notice that $\mc H$ and $\mc H^P$ share the same commutative subalgebra
$\C [X] = \mc O (T)$. By \eqref{eq:1.19}, as vector spaces
\begin{equation}\label{eq:3.2}
\mc H = \mc H (W,q) \underset{\mc H (W_P,q)}{\otimes} \mc H(W_P,q) \otimes_\C \C[X]
= \mc H (W,q) \underset{\mc H (W_P,q)}{\otimes} \mc H^P .
\end{equation}
Parabolic induction for representations of affine Hecke algebras is the functor
\[
\ind_{\mc H^P}^{\mc H} : \Mod (\mc H^P) \to \Mod (\mc H) ,
\]
for any parabolic subalgebra $\mc H^P$ of $\mc H$. We also have a "parabolic 
restriction" functor, that is just restriction of $\mc H$-modules to $\mc H^P$.

The link with parabolic induction for reductive $p$-adic groups is made precise 
in \cite[\S 4.1]{SolComp}. In that setting parabolic restriction for Hecke
algebras corresponds to the Jacquet restriction functor (but with respect to
a parabolic subgroup opposite to the one used for induction).

In practice we often precompose parabolic induction with inflations of 
representations from a quotient algebra of $\mc H^P$. To define it, we write
\[
\begin{array}{l@{\qquad}l}
X_P = X \big/ \big( X \cap (P^\vee)^\perp \big) & Y^P = Y \cap P^\perp \\
X^P = X / ( X \cap \Q P) & Y_P = Y \cap \Q P^\vee \\
\mc R_P = (X_P,R_P,Y_P,R_P^\vee,P) .
\end{array}
\]
In terms of reductive groups, the semisimple root datum $\mc R_P$ corresponds to
the maximal semisimple quotient of a parabolic subgroup $\mc P$ of $\mc G$.
The parameter functions $\lambda$ and $\lambda^*$ remain well-defined for $\mc R_P$,
so there is an affine Hecke algebra
\[
\mc H_P = \mc H (\mc R_P,\lambda,\lambda^*,\mb{q}) .
\]
In particular we have
\[
\mc H_\emptyset = \mc H (0,\emptyset,0,\emptyset,\emptyset, \lambda,\lambda^*,\mb{q}) 
= \C \quad \text{and} \quad 
\mc H_\Delta = \mc H (X_\Delta,R,Y_\Delta,R^\vee,\Delta, \lambda,\lambda^*,\mb{q}) .
\]
From Definition \ref{def:1.3} we see that the quotient map
\[
X \to X_P : x \mapsto x_P
\]
induces a surjective algebra isomorphism
\begin{equation}\label{eq:3.1}
\mc H^P \to \mc H_P : \theta_x T_w \mapsto \theta_{x_P} T_w .
\end{equation}
Via this quotient map we will often (implicitly) inflate $\mc H_P$-representations
to $\mc H^P$-representations. To incorporate representations of $\mc H^P$ that are
nontrivial on $\{ \theta_x : x \in X \cap (P^\vee)^\perp \}$, we need more 
flexibility. Write
\[
T_P = \Hom_\Z (X_P,\C^\times), \qquad T^P = \Hom_\Z (X^P ,\C^\times) ,
\]
so that $T_P T^P = T$ and $T_P \cap T^P$ is finite. Every $t \in T^P$ gives rise
to an algebra automorphism
\[
\begin{array}{cccc}
\psi_t : & \mc H^P & \to & \mc H^P \\
& \theta_x T_w & \mapsto & x(t) \theta_x T_w 
\end{array} .
\]
This operation corresponds to twisting representations of a reductive group by 
an unramified character. For any $\mc H_P$-representation $(\pi,V)$ and any
$t \in T^P$, we can form the $\mc H_P$-representation
\[
\psi_t^* \mr{inf}^P (\pi) = \pi \circ \psi_t : 
\theta_x T_w \mapsto \pi (x(t) \theta_{x_P} T_w) .
\]
\begin{defn}\label{def:3.1}
The parabolically induced representation associated to $P \subset \Delta$,
$(\sigma,V_\sigma) \in \Mod (\mc H_P)$ and $t \in T^P$ is
\[
\pi (P,\sigma,t) = \ind_{\mc H^P}^{\mc H}(\sigma \circ \psi_t) .
\]
\end{defn}
By \eqref{eq:3.2} the vector space underlying $\pi (P,\sigma,t)$ is
\[
\mc H (W,q) \underset{\mc H (W_P,q)}{\otimes} V_\sigma ,
\]
and it has dimension $[W : W_P] \dim (V_\sigma)$. Let us discuss how parabolic
induction works out for the subalgebra $\C [X]$ of $\mc H$.

\begin{defn}\label{def:3.2}
Let $(\pi,V)$ be an $\mc H$-representation. For $t \in T$ we write
\[
V_t = \big\{ v \in V \mid \exists N \in \N : (\pi (\theta_x) - x(t))^N v = 0 \big\} . 
\]
When $V_t \neq 0$, we call $t$ a $\C[X]$-weight of $\pi$, and $V_t$ its 
generalized weight space. We denote the set of $\C[X]$-weights of $(\pi,V)$ by
Wt$(\pi)$ or Wt$(V)$.
\end{defn}

For all $t \in \mr{Wt}(\pi)$ the space $V_t$ contains a vector $v' \neq 0$ with
\[
\pi (\theta_x) v'  = x(t) v' \qquad \forall x \in X,
\]
which explains why we call $V_t$ a weight space and not a generalized weight space.
If $V$ has finite dimension, then we can triangularize the commuting operators
$\pi (\theta_x)$ simultaneously, and we find
\begin{equation}\label{eq:3.32}
V = \bigoplus\nolimits_{t \in T} V_t .
\end{equation}
An infinite dimensional $\mc H$-representation does not necessarily have any
$\C[X]$-weight.
 
Recall that the centre of $\mc H$ is $Z(\mc H) = \C[X]^W = \mc O(T/W)$. Hence,
whenever $\pi$ admits a central character cc$(\pi)$, we have
\[
\mr{cc}(\pi) = W \mr{Wt}(\pi) \in T / W.
\]
The set of $\C[X]$-weights of a representation behaves well under parabolic 
induction. To describe the effect, let $W^P$ be the set of shortest length
representatives for $W / W_P$.

\begin{lem}\label{lem:3.3}
\enuma{
\item Let $\pi$ be a finite dimensional $\mc H^P$-representation. Then the 
$\C[X]$-weights of $\ind_{\mc H^P}^{\mc H}(\pi)$ are the elements 
$w(t)$ with $t \in \mr{Wt}(\pi)$ and $w \in W^P$.
\item Let $\sigma$ be a finite dimensional $\mc H_P$-representation and let
$s \in T^P$. Then 
\[
\mr{Wt}(\pi (P,\sigma,s)) = \{ w (st) : t \in \mr{Wt}(\sigma), w \in W^P \} .
\]
}
\end{lem}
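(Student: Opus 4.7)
The plan is to realize the induced module concretely via \eqref{eq:3.2}, then use the Bernstein cross relations to push $\theta_x$ across the $\mc H(W,q)$-factor, reducing (a) to a Mackey-style filtration by length of shortest coset representatives. Part (b) will then follow by inspecting how $\psi_s$ twists the weights of an inflated $\mc H_P$-representation.

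For (a), set $M = \ind_{\mc H^P}^{\mc H}(\pi)$. By \eqref{eq:3.2} and the length-additive factorization $W = W^P \cdot W_P$, the set $\{T_w : w \in W^P\}$ is a free right $\mc H(W_P,q)$-basis of $\mc H(W,q)$, so
$$M \;=\; \bigoplus_{w \in W^P} T_w \otimes V_\pi .$$
The technical heart is the identity
$$\theta_x T_w \;=\; T_w\, \theta_{w^{-1}(x)} + \sum_{\substack{v \in W \\ \ell(v) < \ell(w)}} T_v\, f_{v,x,w}, \qquad f_{v,x,w} \in \C[X],$$
which I would prove by induction on $\ell(w)$: the base case $\ell(w)=1$ is immediate from the cross relation in Definition \ref{def:1.3} (whose right-hand side lies in $\C[X]$), and the inductive step follows by writing $T_w = T_s T_{w'}$ with $\ell(w')<\ell(w)$ and applying the induction hypothesis together with the quadratic relation to re-expand $T_s T_v$ when needed.

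Next, order $W^P = \{w_1,\ldots,w_r\}$ by non-decreasing length and set $M_i = \sum_{j\le i} T_{w_j}\otimes V_\pi$. Every correction term $T_v f_{v,x,w_i}\otimes\xi$ in the expansion of $\theta_x T_{w_i}\otimes\xi$ lies in $M_{i-1}$: decompose $v=v^P u$ with $v^P\in W^P$, $u\in W_P$ and $\ell(v^P)+\ell(u)=\ell(v)<\ell(w_i)$, so that $T_u f_{v,x,w_i}\in\mc H^P$ can be absorbed into $V_\pi$, giving $T_v f_{v,x,w_i}\otimes\xi=T_{v^P}\otimes\xi'$ with $\ell(v^P)<\ell(w_i)$. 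Hence each $M_i$ is $\C[X]$-stable and, on $M_i/M_{i-1}\cong T_{w_i}\otimes V_\pi$, the element $\theta_x$ acts as $1\otimes\pi(\theta_{w_i^{-1}(x)})$. Since the set of joint generalized $\C[X]$-weights of a finite-dimensional module is preserved under the associated graded of a $\C[X]$-stable filtration, and since $t(w^{-1}(x))=(w\cdot t)(x)$, we conclude $\mr{Wt}(M) = \bigcup_{w\in W^P}\{w(t):t\in\mr{Wt}(\pi)\}$.

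For (b), I would identify the $\mc H^P$-weight set of $\sigma\circ\psi_s$ (via the inflation \eqref{eq:3.1}). If $\xi\in V_\sigma$ has $\mc H_P$-weight $t\in T_P$, then $\theta_x\in\mc H^P$ acts on $\xi$ by $x(s)\,x_P(t)=x(s\tilde t)$, where $\tilde t\in T$ is the pullback of $t$ along $X\twoheadrightarrow X_P$; thus the $\mc H^P$-weight set of the inflated-twisted module is $s\cdot\mr{Wt}(\sigma)\subset T$, and (a) yields the stated formula. The main obstacle is purely technical: one must verify that the ``lower order'' terms $T_v f_{v,x,w}$ in the Bernstein expansion genuinely respect the filtration indexed by $W^P$, not merely that indexed by $W$. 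The length-additive factorization $W=W^P\cdot W_P$ and the free right $\mc H(W_P,q)$-module structure of $\mc H(W,q)$ are exactly what make this work; once the bookkeeping is set up, the rest is routine.
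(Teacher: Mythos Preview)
Your proposal is correct. The paper's own proof of (a) simply cites \cite[Proposition 4.20]{Opd-Sp}, and for (b) it says exactly what you say: $\mr{Wt}(\sigma\circ\psi_s)=s\,\mr{Wt}(\sigma)$, then apply (a). Your argument for (a) --- filtering the induced module by the length of shortest coset representatives in $W^P$ and using the Bernstein cross relations to show the filtration is $\C[X]$-stable with graded pieces $T_{w_i}\otimes V_\pi$ carrying the twisted weights $w_i\cdot\mr{Wt}(\pi)$ --- is the standard one and is presumably what the cited reference contains; so you are not taking a different route, you are supplying the details the paper omits.
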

\begin{proof}
(a) is a consequence of the proof of \cite[Proposition 4.20]{Opd-Sp}.\\
(b) Cleary Wt$(\sigma \circ \psi_s) = s \mr{Wt}(\sigma)$. Combine with part (a).
\end{proof}

Since $\mc H$ has finite rank as a module over its centre $\C[X]^W$, every
irreducible $\mc H$-representation has finite dimension. Hence $\pi$
admits at least one $\C[X]$-weight, say $t$. By Frobenius reciprocity
\[
\Hom_{\mc H}(\ind_{\C[X]}^{\mc H}(\C_t),\pi) \cong \Hom_{\C[X]} (\C_t,\pi) \neq 0. 
\]
We conclude that:
\begin{cor}\label{cor:3.4}
Every irreducible $\mc H$-representation is a quotient of 
$\ind_{\C[X]}^{\mc H}(\C_t)$ for some $t \in T$.
\end{cor}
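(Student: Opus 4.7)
The plan is to follow verbatim the three-sentence sketch that the author has placed immediately before the corollary, fleshing it out into a rigorous argument. The whole proof hinges on first establishing finite-dimensionality of $\pi$, then extracting a $\C[X]$-weight, and finally invoking Frobenius reciprocity.

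First I would argue that every irreducible $\mc H$-representation $\pi$ is finite dimensional. By \eqref{eq:1.14} the center of $\mc H$ is $Z(\mc H) = \C[X]^W$, and by \eqref{eq:1.19} the algebra $\mc H$ is finitely generated as a module over this center (indeed, it has rank $|W|^2$, with generators $T_w \theta_x T_{w'}$ where $w, w' \in W$ and $x$ ranges over a finite set of coset representatives for $X / \Z[X]^W$-style considerations; more cleanly, $\mc H$ is finitely generated over $\C[X]^W$ because it is finitely generated over $\C[X]$ and $\C[X]$ is finitely generated over $\C[X]^W$). A version of Dixmier's lemma / Schur's lemma then forces $Z(\mc H)$ to act by scalars on $\pi$, so $\pi$ is a module over the finite-dimensional quotient $\mc H / \mf m \mc H$ for some maximal ideal $\mf m \triangleleft Z(\mc H)$, and hence $\dim_\C \pi < \infty$.

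Next I would invoke \eqref{eq:3.32}: since $\pi$ is finite dimensional, the pairwise commuting operators $\pi(\theta_x)$ for $x \in X$ can be simultaneously upper-triangularized, so there is a nonempty set $\mr{Wt}(\pi) \subset T$ of $\C[X]$-weights. Pick any $t \in \mr{Wt}(\pi)$ and a nonzero vector $v \in \pi$ with $\pi(\theta_x) v = x(t) v$ for all $x \in X$; this is precisely the remark in the excerpt that $V_t$ always contains a genuine (non-generalized) eigenvector. Then $v$ defines a nonzero $\C[X]$-linear map $\C_t \to \pi$.

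Finally I would apply Frobenius reciprocity for the pair $(\C[X], \mc H)$, which is legitimate because $\mc H$ is free (hence flat) as a right $\C[X]$-module by \eqref{eq:1.19}. This yields
\[
\Hom_{\mc H}\bigl( \ind_{\C[X]}^{\mc H}(\C_t), \pi \bigr) \;\cong\; \Hom_{\C[X]}(\C_t, \pi) \;\neq\; 0 .
\]
Any nonzero $\varphi$ on the left has nonzero image in $\pi$; since $\pi$ is irreducible, $\varphi$ is surjective, exhibiting $\pi$ as a quotient of $\ind_{\C[X]}^{\mc H}(\C_t)$.

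The only step that requires genuine care is the first one: justifying that $\mc H$ is finite over its center, and applying the Dixmier-style argument for countably-generated algebras over $\C$. This is standard but worth stating explicitly, since without it the weight-space decomposition \eqref{eq:3.32} is unavailable for $\pi$. Everything after that is formal.
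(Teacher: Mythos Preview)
Your proof is correct and follows exactly the same three-step argument as the paper: finite rank over the centre $\C[X]^W$ forces finite dimensionality, finite dimensionality yields a $\C[X]$-weight $t$, and Frobenius reciprocity then gives a nonzero map $\ind_{\C[X]}^{\mc H}(\C_t) \to \pi$ which is surjective by irreducibility. You have simply expanded the paper's three-sentence sketch (placed just before the corollary) with more detail on the Dixmier/Schur step and on flatness for Frobenius reciprocity.
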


Most of the time $\ind_{\C[X]}^{\mc H}(\C_t)$ is itself irreducible. To make
that precise, consider the following rational functions on $T$:
\begin{equation}\label{eq:3.13}
c_\alpha = \frac{\big( \theta_\alpha - \mb{q}^{(- \lambda^* (\alpha) - \lambda (\alpha))/2}\big)
\big( \theta_\alpha + \mb{q}^{(\lambda^* (\alpha) - \lambda (\alpha))/2}\big)}{
(\theta_\alpha - 1) (\theta_\alpha + 1)} \qquad \alpha \in R.
\end{equation}
There are a few ways in which $c_\alpha$ can simplify:
\begin{itemize}
\item if $\lambda (\alpha) = \lambda^* (\alpha) = 0$, then $c_\alpha = 1$,
\item if $\lambda (\alpha) = \lambda^* (\alpha) \neq 0$, then 
$c_\alpha = (\theta_\alpha - \mb{q}^{-\lambda (\alpha)}) (\theta_\alpha - 1)^{-1}$.
\end{itemize}
\begin{thm}\label{thm:3.29}
\textup{\cite[Theorem 2.2]{Kat}} \\
Let $t \in T$. The $\mc H$-representation $\ind_{\C[X]}^{\mc H}(\C_t)$ is 
irreducible if and only if
\begin{itemize}
\item $c_\alpha (t) \neq 0$ for all $\alpha \in R$ and
\item $W_t$ is generated by $\{ s_\alpha : \alpha \in R, s_\alpha (t) = t, c_\alpha^{-1}(t) = 0\}$.
\end{itemize}
\end{thm}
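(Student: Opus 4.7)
The plan is to study $V := \ind_{\C[X]}^{\mc H}(\C_t)$ through its generalized $\C[X]$-weight space decomposition together with the intertwining operators furnished by Proposition \ref{prop:1.9}. By \eqref{eq:1.19} and Lemma \ref{lem:3.3}, $V$ has dimension $|W|$ and $\mr{Wt}(V) = Wt$, with $\dim V_{wt} = |W_t|$ by a central-character count. Working inside the localization $\C(X) \otimes_{\C[X]} \mc H \cong \C(X) \rtimes W$, a direct rewrite of the formula from Proposition \ref{prop:1.9} yields
\[
\imath^\circ_{s_\alpha} \;=\; c_{-\alpha}^{-1}\,(1 + T_{s_\alpha}) - 1 .
\]
Consequently $\imath^\circ_{s_\alpha}$ defines a regular operator on a weight space $V_{t'}$ precisely when $c_{-\alpha}(t') \neq 0$, and via the $W$-equivariance $c_\beta(wt') = c_{w^{-1}\beta}(t')$ the hypothesis ``$c_\alpha(t) \neq 0$ for all $\alpha \in R$'' is equivalent to $c_\alpha$ having no zeros on the entire orbit $Wt$.

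\textbf{Sufficiency.} Assume both conditions and let $0 \neq U \subseteq V$ be an $\mc H$-submodule. Since $U$ is $\C[X]$-stable it meets some weight space $V_{wt}$; using that the intertwiners $\imath^\circ_{s_\alpha}$ linking adjacent weight spaces in the orbit $Wt$ are regular (by the first hypothesis) and invertible (their composition $\imath^\circ_{s_\alpha}\imath^\circ_{s_\alpha}$ evaluates to the nonvanishing scalar $c_\alpha \cdot c_{-\alpha}$), I would shift a nonzero vector through a reduced expression for $w^{-1}$ into $V_t$. It remains to show $V_t \subseteq U$. Here the isotropy reflections $s_\alpha \in W_t$ with $c_\alpha^{-1}(t) = 0$ act on $V_t$ through residue-type operators obtained from $T_{s_\alpha}$ after clearing the pole of $c_{-\alpha}$; a Clifford-theoretic analysis, similar to Paragraph \ref{par:q=1}, identifies the $W_t$-action on $V_t$ obtained in this way with the regular representation, and condition (ii) then says that these operators suffice to move between all vectors of $V_t$, forcing $U = V$.

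\textbf{Necessity.} For the converse, I would exhibit a proper nonzero submodule whenever one of the conditions fails. If $c_\alpha(t) = 0$ for some $\alpha$, moving along the orbit we find a weight $t'$ at which $\imath^\circ_{s_\alpha}$ collapses the adjacent weight space to zero; the resulting kernel is $\C[X]$-stable by the intertwining relation and carries a natural $T_{s_\alpha}$-eigenvalue, and extending it along a transversal for $W/\langle s_\alpha\rangle$ produces a proper $\mc H$-submodule whose dimension is strictly less than $|W|$. If instead (ii) fails, the strict reflection subgroup $W_t^\circ$ generated by those $s_\alpha$ with $s_\alpha(t)=t$ and $c_\alpha^{-1}(t)=0$ acts on $V_t$ via the restriction of the $W_t$-regular representation, which decomposes nontrivially; the $W_t^\circ$-stable proper subspaces of $V_t$ are closed under precisely those operators in $\mc H$ that genuinely act on $V_t$, and transporting any such subspace around the orbit by the (invertible) intertwiners produces a proper $\mc H$-submodule of $V$.

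\textbf{Expected main obstacle.} The delicate step is the sufficiency argument inside $V_t$: one must pin down the ``effective $W_t$-action'' on $V_t$, verify that the reflections singled out in (ii) really do move $V_t$-vectors, and match this with the generation hypothesis. Extra care is needed in the exceptional case $\alpha^\vee \in 2Y$ of Definition \ref{def:1.3}, where $s_\alpha(t) = t$ does not force $\alpha(t) = 1$ and the zero/pole pattern of $c_\alpha$ is genuinely richer, so the reduction ``condition (ii) is automatic'' present in the simply-laced or equal-parameter case is no longer available.
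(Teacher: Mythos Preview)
The paper does not give its own proof of this statement: Theorem~\ref{thm:3.29} is stated with the attribution \cite[Theorem~2.2]{Kat} and then used without further argument. So there is no ``paper's proof'' to compare against; what you have written is an attempt to reconstruct Kato's argument from the tools available in Section~\ref{sec:def}.

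Your overall strategy---analyse the generalized weight space decomposition of $V=\ind_{\C[X]}^{\mc H}(\C_t)$ and move between weight spaces using the intertwiners $\imath^\circ_{s_\alpha}$ of Proposition~\ref{prop:1.9}---is indeed the approach Kato takes, and the reductions you outline (regularity of $\imath^\circ_{s_\alpha}$ on $V_{t'}$ controlled by values of $c_{\pm\alpha}$, the orbit $Wt$ being connected by invertible intertwiners under the first hypothesis) are correct in spirit. However, two steps in your sketch are not yet proofs. First, in the sufficiency direction you assert that ``a Clifford-theoretic analysis identifies the $W_t$-action on $V_t$ with the regular representation'' and that condition~(ii) then forces $V_t\subseteq U$. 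This is precisely the heart of the matter, and your own ``expected main obstacle'' paragraph acknowledges as much: one has to produce, for each $s_\alpha$ in the generating set of~(ii), an explicit operator in $\mc H$ (not merely in the localization) that preserves $V_t$ and acts there as the required reflection, and then check that these operators together with $\C[X]$ generate enough of $\End_\C(V_t)$. The paragraph as written does not do this. Second, in the necessity direction when $c_\alpha(t)=0$, the claim that ``extending the kernel along a transversal for $W/\langle s_\alpha\rangle$ produces a proper $\mc H$-submodule'' is not justified: it is not clear why the span so obtained is stable under \emph{all} $T_{s_\beta}$, not just under $T_{s_\alpha}$ and $\C[X]$. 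Kato handles this by a more careful filtration/Jordan--H\"older argument rather than by writing down an explicit submodule in one step.

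In short: the architecture is right and matches the cited source, but the two steps flagged above are genuine gaps that would need to be filled before this counts as a proof.
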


The parabolically induced representations
\begin{equation}\label{eq:3.3}
\ind_{\C[X]}^{\mc H}(\C_t) = \pi (\emptyset ,\triv,t) \qquad t \in T
\end{equation}
can all be realized on the same vector space $\mc H(W,q)$. In fact, they are
isomorphic to $\mc H (W,q)$ as $\mc H(W,q)$-modules. In principle, the entire
representation theory of $\mc H$ can be uncovered by analysing the family of
representations \eqref{eq:3.3}. We already did that successfully for $W_\af$
of type $\widetilde{A_1}$ in paragraph \ref{par:A1t}. However, this direct
approach is very difficult in general. Indeed, while the irreducible representations
of $\mc H$ have been classified in several ways, the finer structure of 
$\ind_{\C[X]}^{\mc H}(\C_t)$ (e.g. a Jordan--H\"older sequence or the multiplicity
with which irreducible representations appear) is not always known.

\subsection{Tempered representations} \
\label{par:temp}

An admissible representation of a reductive group $G$ over a local field is tempered
if all its matrix coefficents have moderate growth on $G$, see \cite[\S III.2]{Wal}
and \cite[\S VII.11]{Kna}. This notion has several uses:
\begin{itemize}
\item the irreducible tempered $G$-representations form precisely the support of
the Plancherel measure of $G$,
\item the Langlands classification of irreducible admissible $G$-representations in
terms of irreducible tempered representations of Levi subgroups of $G$,
\item for general harmonic analysis on $G$, e.g. the Plancherel isomorphism.
\end{itemize}
Analogous of all these well-known results have been established for affine Hecke 
algebras, see \cite{Opd-Sp,DeOp1}. In this paragraph we will discuss the second
of the above three items.

Recall that every (finite dimensional) $\mc H$-module $(\pi,V)$ has a set of 
$\C[X]$-weights Wt$(\pi) \subset T = \Hom_\Z (X,\C^\times)$. To formulate the 
condition for temperedness in terms of weights, it will be convenient to abbreviate
\[
\mf a = Y \otimes_\Z \R ,\; \mf t = Y \otimes_\Z \C = \mr{Lie}(T) ,\; 
\mf a^* = X \otimes_\Z \R ,\; \mf t^* = X \otimes_\Z \C .
\]
The complex torus $T$ admits a polar decomposition 
\begin{equation}\label{eq:3.4}
T = \Hom_\Z (X,S^1) \times \Hom_\Z (X,\R_{>0}) = T_\un \times \exp (\mf a) .
\end{equation}
Here the unitary part $T_\un$ is the maximal compact subgroup of $T$ and the positive
part $\exp (\mf a)$ is the identity component of the maximal real split subtorus of $T$.
Notice that
\[
\mr{Lie}(T_\un) = i \mf a = Y \otimes_\Z i \R \subset \mf t = \mr{Lie}(T) . 
\]
For any $t \in T$ we write $|t|$ for the homomorphism $x \mapsto |t(x)|$. Then
$t = t \, |t|^{-1} \; |t|$ is the polar decomposition of $t$.

The acute positive cones in $\mf a$ are
\[
\begin{array}{lll}
\mf a^+ & = & \{ \nu \in \mf a : \inp{\alpha}{\nu} \geq 0 \; \forall \alpha \in \Delta \} ,\\
\mf a^{++} & = & \{ \nu \in \mf a : \inp{\alpha}{\nu} > 0 \; \forall \alpha \in \Delta \} .
\end{array}
\]
We define $\mf a^{*,+}$ and $\mf a^{*,++}$ similarly, so in particular $X^+ = X \cap \mf a^{*,+}$.
Next we we have the obtuse negative cones in $\mf a$:
\begin{equation}\label{eq:3.6}
\begin{array}{lll}
\mf a^- & = & \{ \nu \in \mf a : \inp{\delta}{\nu} \leq 0 \; \forall \delta \in \mf a^{*,+} \} 
= \{ \sum_{\alpha \in \Delta} x_\alpha \alpha^\vee : x_\alpha \leq 0 \} , \\
\mf a^{--} & = & \{ \nu \in \mf a : \inp{\delta}{\nu} < 0 \; \forall \delta \in \mf a^{*,+} \setminus \{0\} \} .
\end{array}
\end{equation}
Via the exponential map $\exp : \mf t \to T$ we get
\[
T^+ = \exp (\mf a^+) ,\; T^{++} = \exp (\mf a^{++}) ,\; T^- = \exp (\mf a^-) ,\; T^{--} = \exp (\mf a^{--}) . 
\]
\begin{defn}\label{def:3.5}
A finite dimensional $\mc H$-representation $(\pi,V)$ is tempered if the following equivalent conditions
are satisfied:
\begin{itemize}
\item $|t(x)| \leq 1$ for all $t \in \mr{Wt}(\pi), x \in X^+$,
\item $\mr{Wt}(V) \subset T_\un T^-$,
\item $|\mr{Wt}(V)| \subset T^-$.
\end{itemize}
\end{defn}
\begin{ex} \label{ex:3.7} \begin{itemize}
\item Consider the root datum $\mc R_n$ of type $GL_n$. Then
\begin{multline*}
\mf a = \mf a^* = \R^n ,\qquad 
\mf a^+ = \mf a^{*,+} = \{ \nu \in \R^n : \nu_1 \geq \nu_2 \geq \cdots \geq \nu_n \} ,\\
\mf a^- = \{ \nu \in \R^n : \nu_1 \leq 0, \nu_1 + \nu_2 \leq 0 ,\ldots, 
\nu_1 + \nu_2 + \cdots + \nu_{n-1} \leq 0, \nu_1 + \nu_2 + \cdots + \nu_n = 0 \} .
\end{multline*}
The Steinberg representation has $\big( \mb{q}^{(1-n)/2}, \mb{q}^{(3-n)/2}, \ldots, \mb{q}^{(n-1)/2} \big)$ 
as its only $\C[X]$-weight, so it is tempered when $\mb q \geq 1$. On the other hand, the trivial 
$\mc H_n (\mb q)$-representation satisfies Wt(triv) $= \big\{ ( \mb q^{(n-1)/2}, \mb q^{(n-3)/2}, \ldots, 
\mb{q}^{(1-n)/2} ) \big\}$, so it is tempered when $\mb{q} \leq 1$.
\item For $q = 1$, the $\mc O (T)$-weights of any irreducible representation of
$\mc H (\mc R,1) = \C [X \rtimes W]$ form a full $W$-orbit in $T$, see Paragraph \ref{par:q=1}.
The $W$-orbit of $\log |t| \in \mf a$ can only be contained in $\mf a^{-}$ if $\log |t| = 0$, that
is, $|t| = 1$. Hence the irreducible tempered representations of $\C [X \rtimes W]$ are precisely
the irreducible constituents of the modules $\ind_{\C[X]}^{\C [X \rtimes W]} \C_t$ with $t \in T_\un$.
\end{itemize} \end{ex}

\noindent
With Lemma \ref{lem:3.3} one can show (see \cite[Lemma 3.1.1]{SolAHA} and \cite[Lemma 2.4.c]{AMS3}):

\begin{prop}\label{prop:3.6}
Let $(\pi,V)$ be a finite dimensional representation of $\mc H^P$, for some $P \subset \Delta$. Then
\[
\pi \text{ is tempered } \Longleftrightarrow \ind_{\mc H^P}^{\mc H}(\pi) \text{ is tempered.}  
\]
\end{prop}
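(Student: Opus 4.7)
My plan is to reduce the proposition to a purely geometric identity via Lemma~\ref{lem:3.3}(a). That lemma identifies the $\C[X]$-weight set of the induced representation as $W^P \cdot \mr{Wt}(\pi)$, where $W^P$ denotes the set of minimal-length coset representatives for $W/W_P$. Using $\log|w(t)| = w(\log|t|)$, Definition~\ref{def:3.5} translates the biconditional into the cone identity
\[
\bigcap_{w \in W^P} w^{-1}(\mf a^-) = \mf a^{-,P},
\]
where $\mf a^{-,P}$ is the obtuse negative cone of the parabolic sub-root system $R_P$. Since both sides are dual cones of closed convex cones, this is equivalent to the set-theoretic identity
\[
\bigcup_{w \in W^P} w^{-1}(\mf a^{*,+}) = \mf a^{*,+,P} .
\]

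For the inclusion $\subseteq$, which handles the $(\Rightarrow)$ direction, I would exploit the defining property of $W^P$: for $w \in W^P$ and $\alpha \in P$, one has $w(\alpha) \in R^+$, hence $w(\alpha^\vee)$ is a nonnegative $\R$-combination of the simple coroots $\Delta^\vee$. For $\delta \in w^{-1}(\mf a^{*,+})$, $W$-invariance of the pairing gives
\[
\inp{\delta}{\alpha^\vee} = \inp{w(\delta)}{w(\alpha^\vee)} \geq 0 ,
\]
since $w(\delta) \in \mf a^{*,+}$ pairs nonnegatively with any nonnegative combination of $\Delta^\vee$. Hence $\delta \in \mf a^{*,+,P}$, and the induced representation is tempered whenever $\pi$ is.

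The reverse inclusion $\supseteq$, which handles the $(\Leftarrow)$ direction, is the main obstacle. It amounts to the classical statement that the $|W/W_P|$ chambers $\{w^{-1}(\mf a^{*,+}) : w \in W^P\}$ tile $\mf a^{*,+,P}$. Given $\delta \in \mf a^{*,+,P}$, I would pick $v \in W$ of minimal length with $v(\delta) \in \mf a^{*,+}$ and decompose $v = w_1 w_2$ with $w_1 \in W^P$, $w_2 \in W_P$ and $\ell(v) = \ell(w_1) + \ell(w_2)$. Since $\mf a^{*,+,P}$ is a fundamental chamber for $W_P$, a case analysis on whether $w_2$ stabilizes $\delta$ (boundary case) or moves it outside $\mf a^{*,+,P}$ (interior case), combined with the minimality of $\ell(v)$, forces $w_2 = e$; consequently $v = w_1 \in W^P$ and $\delta \in v^{-1}(\mf a^{*,+})$. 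This completes the tiling identity and hence the proposition.
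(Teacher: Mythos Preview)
Your approach is exactly what the paper points to: it does not give a self-contained proof but only says ``With Lemma~\ref{lem:3.3} one can show (see \cite[Lemma 3.1.1]{SolAHA} and \cite[Lemma 2.4.c]{AMS3})''. Reducing via Lemma~\ref{lem:3.3}(a) to the cone identity $\bigcap_{w\in W^P} w^{-1}(\mf a^-)=\mf a^{-,P}$ and then to the chamber tiling $\bigcup_{w\in W^P} w^{-1}(\mf a^{*,+})=\mf a^{*,+,P}$ is the natural route, and your $\subseteq$ argument is clean and correct.

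Two points deserve tightening. First, the word ``equivalent'' in your duality step is slightly too strong: the polar of an intersection is the closed convex hull of the union of polars, so a priori the intersection identity is only equivalent to $\overline{\mr{conv}}\big(\bigcup_w w^{-1}(\mf a^{*,+})\big)=\mf a^{*,+,P}$. This is harmless because you go on to prove the stronger union identity, which certainly implies the intersection identity; just phrase it as an implication rather than an equivalence.

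Second, in your $\supseteq$ argument the ``interior case'' of the case analysis is not quite complete as stated: if $w_2$ moves $\delta$ out of $\mf a^{*,+,P}$, it is not immediately clear how minimality of $\ell(v)$ yields a contradiction. A cleaner route avoids the decomposition $v=w_1w_2$ altogether: take $v$ of minimal length with $v(\delta)\in\mf a^{*,+}$ and show directly that $v\in W^P$. If not, there is $\alpha\in P$ with $v(\alpha)\in R^-$, hence $\ell(vs_\alpha)<\ell(v)$. Since $\delta\in\mf a^{*,+,P}$ we have $\inp{\delta}{\alpha^\vee}\geq 0$. If it vanishes, $vs_\alpha(\delta)=v(\delta)\in\mf a^{*,+}$ contradicts minimality. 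If it is positive, then $\inp{v(\delta)}{v(\alpha)^\vee}>0$; but $v(\alpha)^\vee$ is a nonpositive combination of $\Delta^\vee$ while $v(\delta)\in\mf a^{*,+}$, forcing $\inp{v(\delta)}{v(\alpha)^\vee}\leq 0$, again a contradiction.
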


For $P = \emptyset$ we have $R = \emptyset$ and $T^+ = T^- = \{1\}$. Then Proposition \ref{prop:3.6}
says that $\ind_{\C [X]}^{\mc H}(\C_t)$ is tempered if and only if $t \in T_\un$.

In the Langlands classification we need a somewhat more general kind of $\mc H$-representation, for
which we merely require that it becomes tempered upon restriction to $\mc H (W_\af,q)$. This can also
be formulated with a more relaxed condition on the weights. This involves the Lie subgroup $T^\Delta$
of $T$, whose Lie algebra is identified with $\mf t^\Delta := R^\perp \subset \mf t$.

\begin{defn}\label{def:3.7}
A finite dimensional $\mc H$-representation $(\pi,V)$ is essentially tempered if the following 
equivalent conditions are satisfied:
\begin{itemize}
\item $|t(x)| \leq 1$ for all $t \in \mr{Wt}(\pi), x \in X^+ \cap W_\af$,
\item $\mr{Wt}(V) \subset T^\Delta T_\un T^-$,
\item $|\mr{Wt}(V)| \subset T^\Delta T^-$.
\end{itemize}
\end{defn}
\noindent
When the root datum $\mc R$ is semisimple, essentially tempered is the same as tempered.

\begin{lem} \textup{(see \cite[Lemma 3.5]{SolThe} and \cite[Lemma 2.3]{SolComp})} \\
\label{lem:3.8}
For any irreducible essentially tempered $\mc H$-representation $\pi$, there exists
$t \in T^\Delta$ such that $\pi \circ \psi_t$
is tempered and arises by inflation from a representation of $\mc H_\Delta$.
\end{lem}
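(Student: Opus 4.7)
The key observation I would start from is that all elements of $X\cap(R^\vee)^\perp =: X^\Delta$ are central in $\mc H$: the cross relation in Definition \ref{def:1.3} shows that if $s_\alpha(x)=x$ for every $\alpha\in\Delta$, then $\theta_x$ commutes with every $T_{s_\alpha}$ and with every $\theta_y$. Equivalently, $W$ acts trivially on the subtorus $T^\Delta\subset T$ whose Lie algebra is $R^\perp\subset\mf t$. Since $\pi$ is irreducible and finite dimensional, by Schur's lemma $\theta_x$ acts as a scalar for every $x\in X^\Delta$; equivalently, projecting each weight $t_w\in\mr{Wt}(\pi)$ under the surjection $T\twoheadrightarrow \Hom_\Z(X^\Delta,\C^\times)$ gives the same element $u_0$, independently of $w$. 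Thus all weights of $\pi$ agree modulo $T_\Delta=\ker(T\to\Hom(X^\Delta,\C^\times))$, and consequently, via the almost-direct decomposition $T\approx T_\Delta\cdot T^\Delta$, every weight of $\pi$ can be written uniquely (up to the finite group $T_\Delta\cap T^\Delta$) as $t_w = t_{w,\Delta}\cdot s_0$ with $s_0\in T^\Delta$ common to all $w$.

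Next I would translate the essentially tempered hypothesis into the additive picture. On the real Lie algebra $\mf a = Y\otimes_\Z\R$ we have the orthogonal decomposition $\mf a = \mf a^\Delta\oplus\mf a_\Delta$ with $\mf a^\Delta = R^\perp$ and $\mf a_\Delta = \R R^\vee$, and the obtuse negative cone $\mf a^-$ sits inside $\mf a_\Delta$ by \eqref{eq:3.6}. For each weight write $\log|t_w| = \nu^\Delta + \nu_{w,\Delta}$ according to this splitting. By the central character argument above applied to the positive part $|t_w|$, the component $\nu^\Delta\in\mf a^\Delta$ is the same for all $w$, and $|s_0|$ has positive part $\exp(\nu^\Delta)$. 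Definition \ref{def:3.7} then says precisely that $\nu_{w,\Delta}\in\mf a^-$ for every $w$.

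Now I would produce the required twist by setting $t := s_0^{-1}\in T^\Delta$ and forming $\pi\circ\psi_t$. Since $T^\Delta$ centralizes $T$, the weights of $\pi\circ\psi_t$ are $\{s_0^{-1}t_w\} = \{t_{w,\Delta}\}\subset T_\Delta$. Because $\theta_x$ is central for $x\in X^\Delta$ and acts on each (generalized) weight space by $x(t_{w,\Delta})=1$, the representation $\pi\circ\psi_t$ descends along the surjection $\mc H\to\mc H_\Delta$ of \eqref{eq:3.1} to a representation of $\mc H_\Delta$; this is the inflation assertion. For temperedness, $\log|t_{w,\Delta}| = \log|t_w|-\log|s_0| = \nu_{w,\Delta}\in\mf a^-$, so $|\mr{Wt}(\pi\circ\psi_t)|\subset T^-$ as required by Definition \ref{def:3.5}.

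The main technical nuisance I expect is not the central-character argument itself but the bookkeeping around the finite overlap $T_\Delta\cap T^\Delta$: the decomposition $T = T_\Delta\cdot T^\Delta$ is only almost-direct, so extracting a well-defined element $s_0\in T^\Delta$ from the common image in $\Hom(X^\Delta,\C^\times)$ requires making a choice. However, any two choices differ by an element of the finite group $T_\Delta\cap T^\Delta\subset T_\Delta$, which is absorbed into the $t_{w,\Delta}$ factors and does not affect either conclusion (weights in $T_\Delta$ or positive part in $\mf a^-$). Once this is handled, each step above is forced by the structure of $\mc H$ and by Lemma \ref{lem:3.3}, so no further ingredients are needed.
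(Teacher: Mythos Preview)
Your argument is correct. The paper does not give its own proof of this lemma---it only cites \cite[Lemma 3.5]{SolThe} and \cite[Lemma 2.3]{SolComp}---so there is nothing to compare against beyond checking that your reasoning is sound, and it is: the centrality of $\theta_x$ for $x\in X\cap(\Delta^\vee)^\perp$ follows from the cross relation (or from \eqref{eq:1.14}), Schur's lemma then forces a common image $u_0$, a lift $s_0\in T^\Delta$ exists because $X\cap(\Delta^\vee)^\perp$ embeds with finite index in $X/(X\cap\Q\Delta)$ and $\C^\times$ is divisible, and the temperedness and inflation claims follow exactly as you wrote.

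One point of notation: you introduce the symbol $X^\Delta$ for $X\cap(R^\vee)^\perp$, but the paper already uses $X^P$ for the quotient $X/(X\cap\Q P)$ (see the displayed definitions before \eqref{eq:3.1}), so $X^\Delta = X/(X\cap\Q R)$ in the paper's conventions. Your lattice $X\cap(R^\vee)^\perp$ is the kernel of $X\to X_\Delta$, and it is naturally a finite-index sublattice of the paper's $X^\Delta$, but not equal to it. This does not affect the argument, only the labeling; to avoid confusion you might simply write $X\cap(\Delta^\vee)^\perp$ throughout.
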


\begin{ex}
Consider $\mc R_n$ and $\mc H_n (\mb{q})$ with $\mb q > 1$. Then
\[
X^+ \cap W_\af = \{ x \in \Z^n : x_1 \geq x_2 \geq \cdots \geq x_n, 
x_1 + x_2 + \cdots + x_n = 0 \} .
\]
For $z \in \C^\times$ the $\mc H_n (\mb{q})$-representation $\mr{St} \otimes t_z$ from \eqref{eq:2.14} 
has a unique $\C[X]$-weight 
\[
\big( z \mb{q}^{(1-n)/2}, z \mb{q}^{(3-n)/2}, \ldots, z \mb{q}^{(n-1)/2} \big) \in T^\Delta T_\un T^- .
\]
It is essentially tempered for all $z \in \C$, and tempered if and only if $|z| = 1$.
\end{ex}

In the Langlands classification we employ irreducible representations of $\mc H^P$,
where $P \subset \Delta$. We need some further notations:
\[
\begin{array}{l}
\mf t^P = Y^P \otimes_\Z \C = \mr{Lie}(T^P), \qquad \mf a^P = Y^P \otimes_\Z i \R = \mr{Lie}(T^P_\un), \\
\mf a^{P,+} = \{ \nu \in \mf a^P : \inp{\alpha}{\nu} \geq 0 \; \forall \alpha \in \Delta \setminus P \}, \\
T^{P,+} = \exp (\mf a^{P,+}) = \{ t \in T^P \cap \exp (\mf a) : |t(\alpha)| \geq 1 \;
\forall \alpha \in \Delta \setminus P \}, \\
\mf a^{P,++} = \{ \nu \in \mf a^P : \inp{\alpha}{\nu} > 0 \; \forall \Delta \setminus P \}
\qquad T^{P,++} = \exp (\mf a^{P,++}) .
\end{array}
\]
We say that $(\pi,V) \in \Irr (\mc H^P)$ is in positive position if cc$(\pi) = t W_P r$ with 
$t \in T^{P,++} T_\un$ and $r \in T_P$. By Lemma \ref{lem:3.8} this is equivalent to requiring that 
$\pi = \pi' \circ \psi_t$ for some $t \in T^{P,++} T^P_\un$ and $\pi' \in \Irr (\mc H_P)$. 

\begin{defn}\label{def:3.9}
A Langlands datum for $\mc H$ consists of a subset $P \subset \Delta$ and an irreducible
essentially tempered representation $\sigma$ of $\mc H^P$ in positive position. 

Equivalently, it can be given by a triple $(P,\tau,t)$, where $P \subset \Delta, \tau \in \Irr (\mc H_P)$ 
is tempered and $t \in T^{P,++} T^{P,\un}$. Then the associated $\mc H^P$-representation is
$\sigma = \tau \circ \psi_t$.

The $\mc H$-representations $\ind_{\mc H^P}^{\mc H}(\sigma)$ and 
$\ind_{\mc H^P}^{\mc H}(\tau \circ \psi_t)$ are called standard.
\end{defn}

By Lemma \ref{lem:3.3} every standard $\mc H$-module admits a central character.

Now we can finally state the Langlands classification for affine Hecke algebras.

\begin{thm} \textup{\cite[Theorem 2.4.4]{SolAHA}} \\
\label{thm:3.10}
Let $(P,\sigma)$ be a Langlands datum for $\mc H$. 
\enuma{
\item The $\mc H$-representation $\ind_{\mc H^P}^{\mc H}(\sigma)$ has a unique irreducible quotient,
which we call $L(P,\sigma)$.
\item For every irreducible $\mc H$-representation $\pi$ there exists a Langlands datum
$(P,\sigma)$ with $L(P,\sigma) \cong \pi$.
\item If $(P',\sigma')$ is another Langlands datum and $L(P',\sigma') \cong L(P,\sigma)$, then
$P' = P$ and the $\mc H^P$-representations $\sigma'$ and $\sigma$ are equivalent.
}
\end{thm}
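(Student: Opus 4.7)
The plan is to emulate the classical Langlands classification for reductive groups over local fields, with the $\C[X]$-weight theory of Definition \ref{def:3.2} playing the role of Harish-Chandra's exponents. The organizing device is a partial order on $\exp(\mf a) \subset T$: write $\nu \leq \nu'$ when $\log \nu' - \log \nu \in \mf a^+ = \sum_{\alpha \in \Delta} \R_{\geq 0} \alpha^\vee$. The positivity condition in Definition \ref{def:3.9} is tailored so that, if $|t| = s \cdot r$ with $s \in T^{P,++}$ and $r$ in the obtuse negative cone of $R_P$ inside $T_P \cap \exp(\mf a)$, then $w \cdot |t| < |t|$ for every $w \in W \setminus W_P$. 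This is the combinatorial heart of the argument: strict positivity of $\inp{\alpha}{\log s}$ for $\alpha \in \Delta \setminus P$ dominates any loss coming from $r$.

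For part (a), use Lemma \ref{lem:3.3} to describe $\mr{Wt}(\ind_{\mc H^P}^{\mc H}(\sigma)) = W^P \cdot \mr{Wt}(\sigma)$ with $\sigma = \tau \circ \psi_t$, where $\tau$ is tempered and $t \in T^{P,++} T^P_\un$. Temperedness of $\tau$ gives $|\mr{Wt}(\sigma)| \subset |t| \cdot (T_P \cap T^-)$, and the ordering above singles this set out as the unique maximum in the full $W$-orbit of weights. The generalized weight spaces at those maximal elements span an $\mc H^P$-stable subspace canonically identified with $\sigma$ via Frobenius reciprocity, and any nonzero submodule whose weight support meets this maximum must contain the whole subspace. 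Hence every proper submodule has its weights strictly below the maximum, so the sum of all proper submodules is itself proper, producing a unique maximal proper submodule and the claimed irreducible quotient $L(P,\sigma)$.

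For part (b), given $\pi \in \Irr(\mc H)$, choose $t \in \mr{Wt}(\pi)$ so that $|t|$ is maximal in its $W$-orbit under $\leq$, and set $P = \{\alpha \in \Delta : \inp{\alpha}{\log |t|} \leq 0\}$, so that $|t| \in T^{P,++} \cdot (T_P \cap \exp \mf a)$. The Bernstein cross-relations imply that $T_{s_\alpha}$ for $\alpha \in \Delta \setminus P$ can only move weights strictly downward, so the span of the top weight spaces is stable under $\mc H^P$ and supports an essentially tempered representation $\sigma$ in positive position (apply Lemma \ref{lem:3.8} and Proposition \ref{prop:3.6} to reduce to tempered plus a twist by $T^{P,++} T^{P,\un}$). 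Frobenius reciprocity converts the inclusion $\sigma \hookrightarrow \pi|_{\mc H^P}$ into a nonzero map $\ind_{\mc H^P}^{\mc H}(\sigma) \to \pi$, which is surjective by irreducibility of $\pi$; thus $\pi \cong L(P,\sigma)$. Part (c) then follows because $(P,\sigma)$ is intrinsic to $L(P,\sigma)$: $P$ is determined by the locus of maximal weights, and $\sigma$ is the $\mc H^P$-module carried on the span of the corresponding generalized eigenspaces.

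The hard part will be the extraction step in (b): ensuring that the $\mc H^P$-representation $\sigma$ pulled from the top-weight subspace is both irreducible and in strictly positive position. Reducibility would yield a proper $\mc H^P$-submodule producing a proper quotient of $\pi$ under induction, contradicting $\pi \in \Irr(\mc H)$; failure of strict positivity (some $\inp{\alpha}{\log|t|} = 0$ for $\alpha \notin P$) would force enlarging $P$ and repeating. Proving termination, and verifying that the final $P$ delivers genuine strict positivity together with tempered $\tau$ on $\mc H_P$, requires a careful induction on $|\Delta \setminus P|$ coupled with Proposition \ref{prop:3.6} to propagate temperedness through each reduction, as well as a delicate comparison of weight maxima before and after each enlargement of $P$.
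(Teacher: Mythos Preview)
The paper gives no proof of this theorem; it is simply quoted from \cite[Theorem 2.4.4]{SolAHA}, in keeping with the survey format. Your outline follows the standard Langlands-classification argument (partial order via the obtuse cone, a geometric lemma on Weyl translates of weights, extraction of the leading $\mc H^P$-constituent via Frobenius reciprocity), which is indeed the method of the cited source and of \cite{Eve} in the graded setting, so at the level of strategy there is nothing to compare.

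Two points in your sketch are genuinely off, however. First, the cone $\sum_{\alpha\in\Delta} \R_{\geq 0}\alpha^\vee$ defining your order is $-\mf a^-$ in the paper's conventions, not $\mf a^+$ (which is the acute cone $\{\nu:\inp{\alpha}{\nu}\geq 0\;\forall\alpha\in\Delta\}$). Second and more substantively, in part (b) the recipe $P=\{\alpha\in\Delta:\inp{\alpha}{\log|t|}\leq 0\}$ is not correct. What is actually needed is Langlands' combinatorial lemma: every $\nu\in\mf a$ admits a unique decomposition $\nu=\nu^P+\nu_P$ with $\nu^P\in\mf a^{P,++}$ and $\nu_P\in\mf a_P^-$ for a unique $P\subset\Delta$, and that $P$ is not determined by the signs of the $\inp{\alpha}{\nu}$. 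For instance in type $A_2$ with $\nu=-\alpha_1^\vee$ one has $\inp{\alpha_1}{\nu}=-2$ and $\inp{\alpha_2}{\nu}=1$, so your recipe gives $P=\{\alpha_1\}$, whereas the Langlands decomposition forces $P=\Delta$ (since $-\alpha_1^\vee\in\mf a^-$ already). Without the correct $P$ you cannot conclude that the extracted $\sigma$ is essentially tempered in positive position, and your proposed fallback of enlarging $P$ and iterating does not obviously land in the right place. Separately, your argument that $\sigma$ must be irreducible (``reducibility would yield a proper quotient of $\pi$'') does not work: a proper $\mc H^P$-submodule of $\sigma$ yields a proper submodule of $\ind_{\mc H^P}^{\mc H}(\sigma)$, not a proper quotient of $\pi$. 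The standard repair is to take any irreducible $\mc H^P$-constituent of the top-weight subspace and verify its Langlands-datum properties directly from the maximality of its weights together with the Langlands decomposition.
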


Some consequences can be drawn immediately:
\begin{itemize}
\item $L(P,\sigma)$ is tempered if and only if $P = \Delta$ and $\sigma \in \Irr (\mc H)$
is tempered (because $L(\Delta,\sigma) = \sigma$ and Langlands data are unique).
\item In terms of Langlands data $(P,\tau,t)$ and $(P',\tau',t')$, the irreducible\\
$\mc H$-representations $L(P,\tau,t) = L(P,\tau \circ \psi_t)$ and $L(P',\tau',t') = 
L(P',\tau' \circ \psi_{t'})$ are equivalent if and only if $P = P'$ and
$\tau \circ t \cong \tau' \circ t'$. (The latter does not imply $t = t'$, for $t$ and $t'$
may still differ by an element of $T^P \cap T_P$.)
\item For $q=1$, Corollary \ref{cor:2.1} shows that every standard module is irreducible.
Hence the notions of irreducible representations and standard modules coincide for $\C [X \rtimes W]$.
\end{itemize}
\begin{ex} 
We work out the Langlands classification for $\mc H (\mc R,\mb{q})$ with
$\mc R$ of type $\widetilde{A_1}$. Its irreducible representations were already listed in
paragraph \ref{par:A1t}. The irreducible tempered representations are:
\begin{itemize}
\item the Steinberg representation,
\item the parabolically induced representations $\ind_{\C[X]}^{\mc H}(\C_t)$ with
$t \in T_\un = S^1$ but $t \neq -1$ (where we recall that $\ind_{\C[X]}^{\mc H} (\C_{t^{-1}}) \cong
\ind_{\C[X]}^{\mc H} (\C_t )$),
\item the two representations $\pi (-1,\triv)$ and $\pi (-1,\mr{St})$ which sum to
$\ind_{\C[X]}^{\mc H} (\C_{-1})$.
\end{itemize}
These representations exhaust the Langlands data $(P,\sigma)$ with $P = \Delta = \{\alpha\}$.
For $P = \emptyset$ we have $\mc H_\emptyset = \C[X]$. Its irreducible essentially tempered
representations in positive position are the $\C_t$ with $t \in T^{++} T_\un =
\{ z \in \C^\times : |z| > 1 \}$. We have
\begin{itemize}
\item $L(\emptyset,\C_t) = \ind_{\C[X]}^{\mc H} (\C_t)$ unless $t = \mb q$,
\item $L(\emptyset,\C_{\mb q}) =$ triv.
\end{itemize}
In this way we obtain every element of $\Irr (\mc H (\mc R,\mb{q}))$--as listed at the end of
paragraph \ref{par:A1t}--exacly once, because $T^{++} T_\un$ is a fundamental domain for the
action of $W = \langle s_\alpha \rangle$ on $T \cong \C^\times$.
\end{ex}

Theorem \ref{thm:3.10} provides a quick and beautiful way to classify $\Irr (\mc H)$ in terms of
the irreducible tempered representations of its parabolic subquotient algebras $\mc H_P$. 

On the downside, it conceals the topological structure of $\Irr (\mc H)$. For instance, with
$\mc R$ of type $\widetilde{A_1}$ as discussed above, there is a family of $\mc H$-representations\\
$\pi (\emptyset,\triv,t) = \ind_{\C[X]}^{\mc H}(\C_t)$ with $t \in T$, irreducible for 
almost all $t$. The Langlands classification breaks it into two families, one with $t \in T_\un$
and one with $t \in T \setminus T_\un$. In the next paragraph will see how this can be improved. 

We end this paragraph with some useful extras about the Langlands classification.
The central character of an irreducible $\mc H_P$-representation $\tau$ is an element
cc$(\tau)$ of $T_P / W_P$. Its absolute value $|\mr{cc}(\pi)|$, with respect to the polar
decomposition \eqref{eq:3.4}, lies in $\exp (\mf a_P) / W_P$, and $\log |\mr{cc}(\pi)| \in 
\mf a_P / W_P$. We fix a $W$-invariant inner product on $\mf a$. The norm of $\log |t|$ is
the same for all representatives $t \in T_P$ of cc$(\tau)$. That enables us to write
\[
\| \mr{cc}(\tau) \| = \| \log |t| \,\| 
\qquad \text{for any } t \in T_P \text{ with } W_P t = \mr{cc}(\tau) .
\]

\begin{lem} \textup{\cite[Lemma 2.2.6]{SolAHA}} \\
\label{lem:3.11}
Let $(P,\tau,t)$ be a Langlands datum.
\enuma{
\item $\End_{\mc H} (\pi (P,\tau,t)) = \C \: \mr{id}$.
\item The representation $L(P,\tau,t)$ appears with multiplicity one in \\
$\pi (P,\tau,t) = \ind_{\mc H^P}^{\mc H}(\tau \circ \psi_t)$. All other constituents $L(P',\tau',t')$ 
of $\pi (P,\tau,t)$ are larger, in the sense that $\| \mr{cc}(\tau') \| > \| \mr{cc}(\tau) \|$.
\item Let $Ws \in T / W$. Both $\{ \pi \in \Irr (\mc H) : \mr{cc}(\pi) = Ws\}$ and 
\[
\{ \pi (P,\tau,t) : (P,\tau,t) \text{ Langlands datum with } \mr{cc}(\tau) t \subset Ws \}
\]
are bases of the Grothendieck group of the category of finite dimensional $\mc H$-module all whose 
$\mc O (T)$-weights are in $Ws$. With respect to a total ordering that extends the partial ordering 
defined by part (b), the transition matrix between these two bases is unipotent and upper triangular.
}
\end{lem}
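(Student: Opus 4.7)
I would prove the three parts in order, deducing (c) formally from (a) and (b). Set $\sigma = \tau \circ \psi_t \in \Irr(\mc H^P)$ and $\pi = \pi(P,\tau,t) = \ind_{\mc H^P}^{\mc H} \sigma$. The main tools are Frobenius reciprocity, the weight analysis of Lemma \ref{lem:3.3}, and the positivity assumption built into Definition \ref{def:3.9}.

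For part (a), Frobenius reciprocity gives
\[
\End_{\mc H}(\pi) \;\cong\; \Hom_{\mc H^P}(\sigma, \pi|_{\mc H^P}).
\]
The restricted module $\pi|_{\mc H^P}$ carries a filtration by $\mc H^P$-submodules indexed by $W^P$ in the Bruhat order, with subquotients of the form $w \cdot \sigma$ whose $\C[X]$-weights are $w \cdot \mr{Wt}(\sigma)$ (as in the proof of Lemma \ref{lem:3.3}). Positivity of the Langlands datum $(P,\tau,t)$ yields $|\mr{Wt}(\sigma)| \subset T^{P,++} T_P^{-}$, a subset lying in the closure of a single Weyl chamber for the $W$-action on $\exp(\mf a)$. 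Hence $w \cdot T^{P,++} T_P^{-}$ and $T^{P,++} T_P^{-}$ are disjoint for each $w \in W^P \setminus \{1\}$, which forces $\Hom_{\mc H^P}(\sigma, w \cdot \sigma) = 0$ on weight-space grounds. The remaining $w = 1$ contribution is $\End_{\mc H^P}(\sigma) = \C \cdot \mr{id}$ by Schur's lemma, since $\sigma$ is irreducible.

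For part (b), multiplicity one follows from (a) combined with Theorem \ref{thm:3.10}(a): $\pi$ is indecomposable with unique irreducible head $L = L(P,\tau,t)$, so a second occurrence of $L$ in a Jordan--H\"older series would produce, via a lifted projector, a nonscalar endomorphism, contradicting (a). For the norm inequality, suppose $L(P',\tau',t')$ is another constituent of $\pi$. By Lemma \ref{lem:3.3} both central characters agree, so $\log|\mr{cc}(\tau')\, t'|$ and $\log|\mr{cc}(\tau)\, t|$ lie in a common $W$-orbit in $\mf a$. Writing each vector as an orthogonal sum along $\mf a = \mf a^P \oplus \mf a_P$ (resp.\ $\mf a^{P'} \oplus \mf a_{P'}$), I would invoke the Langlands cone decomposition $\mf a = \bigsqcup_P (\mf a^{P,++} + \mf a_P^{-})$: the cells are disjoint, and a $W$-translate of a point in $\mf a^{P,++} + \mf a_P^{-}$ that lands in a different cell $\mf a^{P',++} + \mf a_{P'}^{-}$ must have a strictly longer $\mf a_{P'}$-component than the original's $\mf a_P$-component. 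This yields $\|\mr{cc}(\tau')\| > \|\mr{cc}(\tau)\|$.

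Part (c) is now purely formal: extend the partial order $\|\mr{cc}(\tau)\| < \|\mr{cc}(\tau')\|$ on Langlands data $(P,\tau,t)$ with $\mr{cc}(\tau)\,t \subset Ws$ to a total order. By (b) the matrix expressing each $\pi(P,\tau,t)$ in the basis $\{L(P',\tau',t')\}$ is upper triangular with unit diagonal, hence unipotent. The main obstacle will be the strict norm inequality in (b); this is the geometric heart of the Langlands classification and rests on a convexity argument using the obtuse cone structure of \eqref{eq:3.6} together with the strict positivity $\log|t| \in \mf a^{P,++}$. The other steps reduce to Frobenius reciprocity, the weight analysis already established in Lemma \ref{lem:3.3}, and formal manipulations in the Grothendieck group.
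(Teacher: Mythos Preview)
The paper does not prove this lemma; it simply cites \cite[Lemma 2.2.6]{SolAHA}. So there is no in-paper proof to compare against. Your outline follows the standard Langlands-classification strategy and is broadly correct, but one step in part (b) is a genuine gap.

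Your deduction of multiplicity one from $\End_{\mc H}(\pi) = \C$ is not valid. In general, a finite-length module with a unique irreducible head $L$ and scalar endomorphism ring can still have $L$ occurring with multiplicity greater than one in its Jordan--H\"older series (for instance, a module with radical layers $L, L, L'$ and socle $L'$ over a suitable algebra). There is no ``lifted projector'' producing a nonscalar endomorphism unless $L$ also embeds as a submodule, which you have not shown. The correct argument uses the weight analysis you already set up in part (a) more directly: the generalized $\mc O(T)$-weight space of $\pi$ for the leading weights (those with absolute value in $T^{P,++}T_P^-$) is exactly the image of $\sigma$, and the unique maximal proper submodule $N$ is characterized by having no such weights. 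Since weight spaces are exact under subquotients, no composition factor of $N$ can have a leading weight; but $L = \pi/N$ does, so $L$ cannot occur in $N$. This gives multiplicity one without reference to $\End_{\mc H}(\pi)$.

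Two further remarks. The disjointness claim in (a), that $w(\mf a^{P,++} + \mf a_P^-) \cap (\mf a^{P,++} + \mf a_P^-) = \emptyset$ for $w \in W^P \setminus \{1\}$, is true but is itself a nontrivial convexity lemma (a form of Langlands' combinatorial lemma); you should flag it as such rather than assert it. And in (b), the norm inequality sketch is in the right direction but needs the precise statement: if a weight of $L(P',\tau',t')$ lies in $W \cdot (\mf a^{P,++} + \mf a_P^-)$ but not in $\mf a^{P,++} + \mf a_P^-$ itself, then Langlands' lemma gives $\|\log|t'|\| < \|\log|t|\|$, hence (by orthogonality of the decomposition $\log|\mr{cc}(\pi)| = \log|t| + \log|\mr{cc}(\tau)|$ and $W$-invariance of the total norm) $\|\mr{cc}(\tau')\| > \|\mr{cc}(\tau)\|$. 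Part (c) is, as you say, purely formal given (b).
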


\subsection{Discrete series representations} \
\label{par:discrete}

In the representation theory of a reductive group $G$ over a local field, Harish-Chandra showed
that every irreducible tempered $G$-representation $\tau$ can be obtained from an irreducible 
square-integrable modulo centre representation $\delta$ of a Levi subgroup $M$ of $G$, see
\cite[Theorem 8.5.3]{Kna} and \cite[Proposition III.4.1.i]{Wal}. More precisely, $\tau$ is a direct
summand of the parabolic induction of $\delta$. When the centre of $M$ is compact, $\delta$ is an
isolated point of the space of irreducible tempered $M$-representations, and it is called a 
discrete series representation. Then it is a subrepresentation of $L^2 (M)$. 

Like for the Langlands classification, these results can be formulated and proven for affine Hecke
algebras as well. For these purposes it is essential that $q_s \in \R_{>0}$ for all $s \in S_\af$.
To achieve that, we require not only that $\mb q \in \R_{>1}$ (as we already did), but also that
\begin{equation}\label{eq:3.5}
\lambda (\alpha) \in \R, \lambda^* (\alpha) \in \R \qquad \forall \alpha \in \R .
\end{equation}
This condition will be in force in the remainder of this paragraph. \\
We note that the space $\mf a^{--}$ from \eqref{eq:3.6} is empty unless $R$ spans $\mf a^*$, and
in that case 
\[
\mf a^{--} = \big\{ \sum\nolimits_{\alpha \in \Delta} x_\alpha \alpha^\vee : x_\alpha < 0 \big\} . 
\]
For $P \subset \Delta$ we have
\[
\mf a_P = Y_P \otimes_\Z \R ,\quad \mf t_P = Y_P \otimes_\Z \C,
\]
and that gives rise to $\mf a_P^+, \mf a_P^-, T_P^+, T_P^-$. Let 
\[
\mf a_P^{--} = \big\{ \nu \in \mf a_P : \inp{\delta}{\nu} < 0\;
\forall \delta \in \mf a_P^{*+} \setminus \{0\} \big\} 
\]
and $T_P^{--} = \exp (\mf a_P^{--})$ be the versions of $\mf a^{--}$ and 
$T^{--} = \exp (\mf a^{--})$ for $\mc R_P$. 

\begin{defn}\label{def:3.12}
Let $(\pi,V)$ be a finite dimensional $\mc H$-representation. We say that $\pi$ belongs to the
discrete series if the following equivalent conditions hold:
\begin{itemize}
\item $|x(t)| < 1$ for all $t \in \mr{Wt}(V)$ and all $x \in X^+ \setminus \{0\}$,
\item $|\mr{Wt}(V)| \subset T^{--}$,
\item $\mr{Wt}(V) \subset T_\un T^{--}$.
\end{itemize}
Further, we can $\pi$ essentially discrete series if the following equivalent conditions hold:
\begin{itemize}
\item $|x(t)| < 1$ for all $t \in \mr{Wt}(V)$ and all $x \in W_\af X^+ \setminus \{0\}$,
\item $|\mr{Wt}(V)| \subset T^\Delta T^{--}$,
\item $\mr{Wt}(V) \subset T^\Delta T_\un T^{--}$.
\end{itemize}
Here $U T^{--}$ (for any $U \subset T$) is considered as empty if $T^{--} = \emptyset$.
\end{defn}

\begin{ex} \begin{itemize}
\item Let $\mc R$ be of type $\widetilde{A_1}$ and consider $\mc H = \mc H (\mc R,
\lambda,\lambda^*,\mb q)$ with $\lambda (\alpha) > 0$ and $\lambda^* (\alpha) > 0$.
Here $\mf a^{--} = \R_{<0}$ and $T^{--} = (0,1)$. The only weight of the Steinberg
representation is $q^{(\lambda (\alpha) + \lambda^* (\alpha))/2}$, so it is discrete series.
The trivial representation and all the twodimensional irreducible representation of $\mc H$ are
not discrete series. From the classification in paragraph \ref{par:A1t}, especially
\eqref{eq:2.11} and \eqref{eq:2.12}, we see that St is the only irreducible discrete series
$\mc H$-representation if $\lambda (\alpha) = \lambda^* (\alpha)$. When $\lambda (\alpha) >
\lambda^* (\alpha)$, $\pi (-1,\mr{St})$ is the only other irreducible discrete series
representation, while $\pi (-1,\triv)$ is discrete series if $\lambda (\alpha) < 
\lambda^* (\alpha)$.
\item The affine Hecke algebra $\mc H_n (\mb{q})$ of type $GL_n$ has no discrete series,
because its root datum $\mc R_n$ is not semisimple. Here
\[
\mf a_\Delta^{--} = \{ \nu \in \R^n : \nu_1 < 0, \nu_1 + \nu_2 < 0,\ldots,
\nu_1 + \cdots + \nu_{n-1} < 0, \nu_1 + \cdots + \nu_{n-1} + \nu_n = 0 \} .
\]
For any $z \in \C^\times$, the twist $\mr{St} \otimes t_z = \mr{St} \circ \psi_{t_z}$ of 
the Steinberg representation is essentially discrete series. In fact these are all 
irreducible essentially discrete series representations of $\mc H_n (\mb{q})$ 
(recall that $\mb{q} > 1$).
\item An affine Hecke algebra with $\lambda = \lambda^* = 0$ does not have
discrete series representations, apart from the case $\mc R = (0,\emptyset,0,\emptyset)$,
when the trivial representation of $\mc H = \C$ is regarded as discrete series.
\end{itemize} \end{ex}

The relation between Definition \ref{def:3.12} and representations of reductive $p$-adic groups
goes via Casselman's criterium for square-integrability, see \cite[Theorem 4.4.6]{Cas} and
\cite[\S VII.1.2]{Ren}. Opdam \cite[Lemma 2.22]{Opd-Sp} translated this to various criteria
for $\mc H$-representations, which are equivalent with Definition \ref{def:3.12}. 

We condition \eqref{eq:3.5} at hand, we can define a Hermitian inner product on $\mc H$ by
declaring that $\{ q(w)^{-1/2} T_w \}$ is an orthonormal basis. Let $L^2 (W(\mc R),q)$ be the
Hilbert space completion of $\mc H$ with respect to this inner product--it is canonically 
isomorphic to $L^2 (W(\mc R))$. By \cite[Lemma 2.22]{Opd-Sp}, every irreducible 
discrete series representation of $\mc H$ is isomorphic to a subrepresentation of the regular
representation of $\mc H$ on $L^2 (W(\mc R),q)$.\\

In terms of representations of a reductive group $G$, "essentially discrete series" means that
a representation has finite length, and that its restriction to the derived group of $G$
is square-integrable. An essentially discrete series representation is tempered if and only if
$\{ \theta_x : x \in X \cap (\Delta^\vee)^\perp \}$ acts on it by characters from $T^\Delta_\un$.

\begin{thm}\label{thm:3.13} 
\textup{\cite[Theorem 3.22]{DeOp1} and \cite[Lemma 1.3]{SolComp}} 
\enuma{
\item Let $\pi'$ be an irreducible tempered $\mc H$-representation. There exist $P \subset \Delta$
and a tempered essentially discrete series representation $\delta'$ of $\mc H^P$ such that 
$\pi'$ is a direct summand of $\ind_{\mc H^P}^{\mc H}(\delta')$.
\item Let $\pi$ be an irreducible essentially discrete series $\mc H^P$-representation. 
There exist $t \in T^P$ and a discrete series $\mc H_P$-representation $\delta$ such that 
$\pi \cong \delta \circ \psi_t$.
}
\end{thm}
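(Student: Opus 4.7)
The plan is to handle the two parts separately, since (b) is essentially algebraic while (a) requires the analytic machinery of Delorme--Opdam.

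For part (b), the first thing I would verify is that $\C[X \cap (P^\vee)^\perp]$ is central in $\mc H^P$. Indeed, for $x \in X \cap (P^\vee)^\perp$ and $\alpha \in P$ one has $\inp{x}{\alpha^\vee} = 0$, so $s_\alpha(x) = x$ and the right-hand side of the Bernstein cross relation in Definition \ref{def:1.3} vanishes, giving $\theta_x T_{s_\alpha} = T_{s_\alpha} \theta_x$. Applying Schur's lemma to the irreducible $\pi$, the subalgebra $\C[X \cap (P^\vee)^\perp]$ acts by a character $t_0$. Since $(X \cap (P^\vee)^\perp) + (X \cap \Q P)$ has finite index in $X$ and $\C^\times$ is divisible, $t_0$ extends to a character $t \in T$ that is trivial on $X \cap \Q P$, i.e. $t \in T^P$. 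The twist $\pi \circ \psi_{t^{-1}}$ then kills $\theta_x - 1$ for $x \in X \cap (P^\vee)^\perp$, so by \eqref{eq:3.1} it factors through $\mc H^P \twoheadrightarrow \mc H_P$ to yield $\delta \in \Irr(\mc H_P)$ with $\pi = \delta \circ \psi_t$. Finally I would compare weights: since $|\mr{Wt}(\pi)| = |t| \cdot |\mr{Wt}(\delta)|$ with $|t| \in T^P$ and $|\mr{Wt}(\delta)| \subset \exp(\mf a_P)$, the essentially discrete series condition $|\mr{Wt}(\pi)| \subset T^P T_P^{--}$ becomes exactly $|\mr{Wt}(\delta)| \subset T_P^{--}$, which is the discrete series condition for $\delta$.

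For part (a), which is the Hecke algebra analogue of Harish-Chandra's subquotient/subrepresentation theorem, my approach would be induction on the rank of $R$. If $\pi'$ is already essentially discrete series, take $P = \Delta$ and $\delta' = \pi'$. Otherwise, by Definition \ref{def:3.12} some weight $t \in \mr{Wt}(\pi')$ satisfies $\log|t| \in \mf a^- \setminus \mf a^{--}$, so $P := \{\alpha \in \Delta : \inp{\alpha}{\log|t|} = 0\}$ is a proper subset of $\Delta$. I would then decompose $\pi'$ under the central subalgebra $\C[X \cap (P^\vee)^\perp]$ of $\mc H^P$ and extract the sub-$\mc H^P$-module $\delta'$ corresponding to the generalized eigenspace compatible with $t$. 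Frobenius reciprocity combined with the decomposition \eqref{eq:3.2} then produces a nonzero intertwiner between $\pi'$ and $\ind_{\mc H^P}^{\mc H}(\delta')$. The direct summand conclusion follows from the fact that, under the reality assumption \eqref{eq:3.5}, tempered $\mc H$-representations are unitarizable with respect to the $*$-structure of paragraph 3.3, hence semisimple in finite dimension.

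The main obstacle lies in part (a): proving that the Jacquet-style module $\delta'$ extracted from the tempered $\pi'$ is itself tempered and essentially discrete series for $\mc H^P$. This is precisely the content that requires genuine harmonic analysis — Casselman-type criteria relating the asymptotic behaviour of matrix coefficients on $W(\mc R)$ to the location of weights, together with the Schwartz-completion theory of Opdam and the Plancherel decomposition of \cite{DeOp1}. This is why a self-contained algebraic proof is unavailable and the reference \cite[Theorem 3.22]{DeOp1} is invoked; part (b), by contrast, is a clean corollary of Schur's lemma plus the Bernstein presentation.
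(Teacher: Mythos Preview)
The paper does not supply its own proof of Theorem \ref{thm:3.13}; it simply records the citations \cite[Theorem 3.22]{DeOp1} and \cite[Lemma 1.3]{SolComp} and moves on. So there is nothing in the text to compare against, and your sketch already goes further than the survey does.

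Your treatment of part (b) is correct and essentially complete. It is precisely the argument behind Lemma \ref{lem:3.8}, applied with $\mc H^P$ in place of $\mc H$ and $\mc H_P$ in place of $\mc H_\Delta$, and this is what \cite[Lemma 1.3]{SolComp} does.

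For part (a) your overall strategy is the right one, and you have correctly isolated the genuine analytic obstruction: establishing that the Jacquet-type constituent $\delta'$ is again tempered (so that iteration terminates at an essentially discrete series datum) is exactly the step that forces one into the Schwartz--Plancherel machinery of \cite{Opd-Sp,DeOp1}. Two small points about the sketch itself. First, the parabolic $P = \{\alpha \in \Delta : \inp{\alpha}{\log|t|} = 0\}$ is not the correct choice: for $\log|t| = \sum_\beta x_\beta \beta^\vee \in \mf a^-$ the pairing $\inp{\alpha}{\log|t|}$ mixes all the coefficients $x_\beta$ through the Cartan matrix and need not detect which $x_\alpha$ vanish; the relevant set is $\{\alpha \in \Delta : x_\alpha = 0\}$, i.e.\ the face of $\mf a^-$ on which $\log|t|$ sits (compare \eqref{eq:3.6}). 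Second, your induction hypothesis is never actually invoked: after extracting $\delta'$ and (granting \cite{DeOp1}) showing it tempered for $\mc H^P$, one must still apply the inductive statement to $\mc H^P$ and use transitivity of parabolic induction to reach an essentially discrete series inducing datum. Neither issue is fatal, since you explicitly hand the substantive step to \cite{DeOp1}, but both would need to be tightened in a self-contained write-up.
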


Clearly, a combination of Theorems \ref{thm:3.10} and \ref{thm:3.13} yields some description of
$\Irr (\mc H)$ in terms of parabolic induction and the discrete series of the subquotient algebras 
$\mc H_P$ with $P \subset \Delta$. We work this out in detail. 

\begin{defn}\label{def:3.14}
An induction datum for $\mc H$ is a triple $\xi = (P,\delta,t)$, where
\begin{itemize}
\item $P \subset \Delta$,
\item $\delta$ is an irreducible discrete series representation of $\mc H_P$,
\item $t \in T^P$.
\end{itemize}
We regard two triples $\xi$ and $\xi' = (P',\delta',t')$ as isomorphic (notation $\xi \cong \xi'$)
if $P = P', t = t'$ and $\delta \cong \delta'$. Let $\Xi$ be the space of such induction data,
topologized by regarding $P$ and $\delta$ as discrete variables and $T^P$ as a complex analytic
variety. We say that $\xi = (P,\delta,t)$ is positive, written $\xi \in \Xi^+$, when $|t| \in T^{P+}$.
\end{defn}

We already associated to such an induction datum the parabolically induced representation
\[
\pi (P,\delta,t) = \ind_{\mc H^P}^{\mc H}(\delta \circ \psi_t). 
\]
By Proposition \ref{prop:3.6}
\begin{equation}
\pi (P,\delta,t) \text{ is tempered } \Longleftrightarrow \; t \in T^P_\un .
\end{equation}
For $\xi \in \Xi^+$ we write 
\[
P(\xi) = \{ \alpha \in \Delta : |t(\alpha)| = 1 \} .
\]
Then $P \subset P(\xi)$. This set of simple roots is useful because it allows to break the process
of parabolic induction in two steps: the first dealing only with essentially tempered representations
and the second similar to the Langlands classification. More concretely, by Proposition \ref{prop:3.6} 
$\ind_{(\mc H_{P(\xi)})^P}^{\mc H_{P(\xi)}} (\delta \circ \psi_{|t|})$ is tempered, while
\[
\pi^{P(\xi)} (\xi) := \ind_{\mc H^P}^{\mc H^{P(\xi)}} (\delta \circ \psi_t)
\]
is essentially tempered.

\begin{prop}\label{prop:3.15} 
\textup{\cite[Proposition 3.1.4]{SolAHA}} \\
Let $\xi = (P,\delta,t) \in \Xi^+$ and pick $t^{P(\xi)} \in T^{P(\xi)}$ such that 
$t^{P(\xi)} t^{-1} \in T_{P(\xi)}$.
\enuma{
\item The $\mc H^{P(\xi)}$-representation $\pi^{P(\xi)}(\xi)$
is completely reducible and $\pi^{P(\xi)}(\xi) \circ \psi_{t^{P(\xi)}}^{-1}$ is tempered.
\item Every irreducible summand of $\pi^{P(\xi)}(\xi)$ is of the form
$\pi^{P(\xi)} (P(\xi), \tau, t^{P(\xi)})$, where $(P(\xi), \tau, t^{P(\xi)})$ 
is a Langlands datum for $\mc H$.
\item The irreducible quotients of $\pi (\xi)$ are the representations 
$L (P(\xi), \tau, t^{P(\xi)})$, with $(P(\xi), \tau, t^{P(\xi)})$ coming from part (b).
\item Every irreducible $\mc H$-representation is of the form described in part (c).
\item The functor $\mr{Ind}_{\mc H^{P(\xi)}}^{\mc H}$ induces an isomorphism
\[
\mr{End}_{\mc H^{P(\xi)}} \big( \pi^{P(\xi)}(\xi) \big) \isom \mr{End}_{\mc H} (\pi (\xi)) .
\] }
\end{prop}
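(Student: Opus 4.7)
The plan is to treat part (a) as the main technical input and then deduce (b)--(e) by combining it with the Langlands classification (Theorem \ref{thm:3.10}) and the Harish-Chandra analogue (Theorem \ref{thm:3.13}). The element $t^{P(\xi)} \in T^{P(\xi)}$ exists because $T = T^{P(\xi)} T_{P(\xi)}$ with finite intersection; its role is to absorb the non-unitary direction of $t$ relative to the root subsystem $R_{P(\xi)}$. Setting $t_{P(\xi)} = (t^{P(\xi)})^{-1} t$, Lemma \ref{lem:3.3}(b) identifies the $\C[X]$-weights of $\pi^{P(\xi)}(\xi) \circ \psi_{t^{P(\xi)}}^{-1}$ as $W^P_{P(\xi)}$-translates of $t_{P(\xi)} \cdot \mr{Wt}(\delta)$, where $W^P_{P(\xi)}$ is the set of shortest-length representatives for $W_{P(\xi)}/W_P$.

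For part (a), I would check that these weights lie in $T_\un T^-_{P(\xi)}$, so that the twist is tempered in the sense of Definition \ref{def:3.5} for the root datum $\mc R^{P(\xi)}$. This combines the discrete-series inclusion $|\mr{Wt}(\delta)| \subset T_P^{--}$ with the defining properties $|t(\alpha)| = 1$ for $\alpha \in P(\xi)$ and $|t(\alpha)| \geq 1$ for $\alpha \in \Delta \setminus P$. Complete reducibility then follows from the analytic results of Opdam and Delorme--Opdam (\cite{Opd-Sp,DeOp1}): any finite-dimensional tempered representation of an affine Hecke algebra with real parameters is unitarisable, and hence semisimple. Since $\psi_{t^{P(\xi)}}$ is an algebra automorphism, the same conclusion transfers back to $\pi^{P(\xi)}(\xi)$ itself.

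For (b), applying Theorem \ref{thm:3.13} to each irreducible summand of the tempered representation $\pi^{P(\xi)}(\xi) \circ \psi_{t^{P(\xi)}}^{-1}$, and using that it is already realised as an induction from the essentially discrete series $\delta$ of $\mc H_P$, identifies each summand with $\pi^{P(\xi)}(P(\xi),\tau,t^{P(\xi)})$ for some tempered $\tau \in \Irr(\mc H_{P(\xi)})$; the condition $|t(\alpha)| > 1$ for $\alpha \in \Delta \setminus P(\xi)$ is precisely the Langlands positivity condition of Definition \ref{def:3.9}. For (c), transitivity $\pi(\xi) = \ind_{\mc H^{P(\xi)}}^{\mc H} \pi^{P(\xi)}(\xi)$ together with exactness reduces the problem to inducing each summand, and Theorem \ref{thm:3.10}(a) then names the irreducible quotients as $L(P(\xi),\tau,t^{P(\xi)})$. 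Part (d) is the converse direction: given $\pi \in \Irr(\mc H)$, Theorem \ref{thm:3.10}(b) produces a Langlands datum $(Q,\sigma)$ with $\pi = L(Q,\sigma)$, and Theorem \ref{thm:3.13}(b) decomposes the essentially tempered $\sigma$ as a summand of an induction from a discrete series on a smaller parabolic, producing an induction datum $\xi$ with $P(\xi) = Q$.

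For part (e), Frobenius reciprocity gives
\[
\End_{\mc H}(\pi(\xi)) \cong \Hom_{\mc H^{P(\xi)}}\big(\pi^{P(\xi)}(\xi),\, \Res_{\mc H^{P(\xi)}}^{\mc H} \pi(\xi)\big),
\]
and the inclusion $\End_{\mc H^{P(\xi)}}(\pi^{P(\xi)}(\xi)) \hookrightarrow \End_{\mc H}(\pi(\xi))$ is tautological. The reverse inclusion requires showing that every such intertwiner factors through the subrepresentation $\pi^{P(\xi)}(\xi) \subset \Res \pi(\xi)$. By Lemma \ref{lem:3.3}(a), the extra weights appearing in the restriction are $W$-translates of $t \cdot \mr{Wt}(\delta)$ that lie outside the $W_{P(\xi)}$-orbit of $\mr{Wt}(\pi^{P(\xi)}(\xi))$; the positivity $|t| \in T^{P,+}$ together with the definition of $P(\xi)$ makes these extra weights strictly ``larger'' and forbids any collision with the weights of $\pi^{P(\xi)}(\xi)$, so no intertwiner can map into the complementary block. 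The most delicate step of the whole proof is the temperedness verification in (a): one must ensure that the $W^P_{P(\xi)}$-action does not drive a weight out of $T^-_{P(\xi)}$, which reduces to a combinatorial compatibility between the positive chamber of $\mc R_{P(\xi)}$ and the parabolic subsystem $R_P$, and it is this weight-positioning lemma that drives both parts (a) and (e).
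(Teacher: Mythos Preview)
The paper does not prove Proposition~\ref{prop:3.15}; it is stated with a bare citation to \cite[Proposition~3.1.4]{SolAHA} and no argument is supplied in this survey, so there is no in-paper proof to compare against.

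Your reconstruction is along the expected lines and is essentially sound. Two refinements are worth noting. For part~(a), the temperedness check is cleaner than you suggest: since $|t(\alpha)|=1$ for every $\alpha\in P(\xi)$, one has $|t|\in\exp(\mf a^{P(\xi)})$, and therefore $|t_{P(\xi)}|=|t|\cdot|t^{P(\xi)}|^{-1}$ lies in $\exp(\mf a)\cap T^{P(\xi)}\cap T_{P(\xi)}=\{1\}$. Thus $t_{P(\xi)}\in T_{P(\xi),\un}$, the weights of $\delta\circ\psi_{t_{P(\xi)}}$ lie in $T_\un T_P^{--}\subset T_\un T_{P(\xi)}^-$, and temperedness follows from Proposition~\ref{prop:3.6} applied inside $\mc H^{P(\xi)}$. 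For part~(b), invoking Theorem~\ref{thm:3.13} is slightly misdirected: you already have the induction from a discrete series, and all you need is that each irreducible summand of the tempered $\pi^{P(\xi)}(\xi)\circ\psi_{t^{P(\xi)}}^{-1}$ is tempered and, by Lemma~\ref{lem:3.8}, descends to $\mc H_{P(\xi)}$; this gives the $\tau$, and the definition of $P(\xi)$ forces $|t^{P(\xi)}|\in T^{P(\xi),++}$, supplying the Langlands positivity. Your sketches of (c)--(e) are correct; for (e) the weight-separation argument you describe is exactly the mechanism behind Lemma~\ref{lem:3.11}(a), so the same reasoning applies summand by summand.
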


For a given $\pi \in \Irr (\mc H)$, there are in general several induction data $\xi \in \Xi^+$
such that $\pi$ is a quotient of $\pi (\xi)$. So, in contrast with the Langlands classificaiton,
Proposition \ref{prop:3.15} does not provide an actual parametrization of $\Irr (\mc H)$.
To bring that goal closer, one has to analyse the relations between the various representations
$\pi (\xi)$ with $\xi \in \Xi$.

For $u$ in the finite group
\[
T^P \cap T_P = \Hom_\Z \big( X / (X \cap \Q P) \oplus (X \cap (P^\vee)^\perp), \C^\times \big) ,
\]
we have an automorphism $\psi_u$ of $\mc H^P$ and a similar automorphism $\psi_{P,u}$ of $\mc H_P$,
given by
\[
\psi_{P,u} (\theta_{x_P} T_w) = u(x_P) \theta_{x_P} T_w \qquad x_P \in X_P, w \in W_P.
\]
Then $(\delta \circ \psi_{P,u}^{-1}) \circ \psi_{ut} = \delta \circ \psi_t$, so
\begin{equation}\label{eq:3.9}
\pi (P,\delta \circ \psi_{P,u}^{-1},ut) = \pi (P,\delta,t) .
\end{equation}
Suppose that $w \in W$ and $w(P) = P' \subset \Delta$. Then
\begin{equation}\label{eq:3.11}
\begin{array}{cccc}
\psi_w : & \mc H^P & \to & \mc H^{P'} \\
& \theta_x T_{w'} & \mapsto & \theta_{w(x)} T_{w w' w^{-1}}
\end{array}
\end{equation}
is an algebra isomorphism, and it descends to an algebra isomorphism $\psi_w : \mc H_P \to \mc H_{P'}$.
Moreover, $\psi_w$ can be implemented as conjugation by the element 
\begin{equation}\label{eq:3.12}
\imath^\circ_w \in \C (X)^W \otimes_{\C [X]^W} \mc H
\end{equation}
from Proposition \ref{prop:1.9}, see \cite[(3.124)]{SolAHA}. There is a bijection 
\begin{equation}\label{eq:3.7}
\begin{array}{cccc}
I_w : & \big( \C (X)^W \otimes_{\C [X]^W} \mc H \big) \otimes_{\mc H^P} V_\delta & \to &
\big( \C (X)^W \otimes_{\C [X]^W} \mc H \big) \otimes_{\mc H^{P'}} V_\delta \\
& h \otimes v & \mapsto & h \imath_{w^{-1}}^\circ \otimes v 
\end{array}.
\end{equation}
By \cite[Theorem 4.33]{Opd-Sp} it is isomorphism between that $\mc H$-representations
\[
\ind_{\mc H^P}^{ \C (X)^W \otimes_{\C [X]^W} \mc H}(\delta \circ \psi_t) \quad \text{and} \quad
\ind_{\mc H^{P'}}^{ \C (X)^W \otimes_{\C [X]^W} \mc H}(\delta \circ \psi_w^{-1} \circ \psi_{w(t)}) .
\]
For $t$ in a Zariski-open dense subset of $T^P$, $I_w$ specializes to an $\mc H$-isomorphism
\begin{equation}\label{eq:3.8}
\pi (P,\delta,t) \isom \pi (w(P),\delta \circ \psi_w^{-1},w(t)) .
\end{equation}
As such, $I_w : \mc H \otimes_{\mc H^P} V_\delta \to \mc H \otimes_{\mc H_{P'}} V_\delta$ is
a rational map in the variable $t \in T^P$ (possibly with poles for some $t \in T^P$).

\begin{lem}\label{lem:3.16}
For all $t \in T^P$ and all $w \in W$ with $w (P) \subset \Delta$, the representations
$\pi (P,\delta,t)$ and $\pi (w(P),\delta \circ \psi_w^{-1}, w(t))$ have the same irreducible
constituents, with the same multiplicities.
\end{lem}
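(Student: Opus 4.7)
The plan is to show that the $\mc H$-characters of $\pi(P,\delta,t)$ and $\pi(w(P),\delta\circ\psi_w^{-1},w(t))$, as functions of $t \in T^P$, coincide identically on $T^P$, and then invoke the usual fact that finite dimensional $\mc H$-modules with equal characters have the same composition factors with the same multiplicities.

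First, I would realize the two representations on vector spaces that do not depend on $t$. By \eqref{eq:3.2}, $\pi(P,\delta,t)$ acts on the fixed space $V := \mc H(W,q)\otimes_{\mc H(W_P,q)} V_\delta$, and $\pi(w(P),\delta\circ\psi_w^{-1},w(t))$ acts on the fixed space $V' := \mc H(W,q)\otimes_{\mc H(W_{w(P)},q)}V_\delta$. I claim that for each $h\in\mc H$, the operator $\pi(P,\delta,t)(h)$ is a regular (algebraic) function of $t\in T^P$. Indeed, any element of $\mc H$ can be written canonically as a finite $\C$-linear combination $\sum_i h_i\eta_i$ with $h_i\in\mc H(W,q)$ and $\eta_i\in\mc H^P$ (using \eqref{eq:3.2} again for the pair $\mc H^P\subset\mc H$); the twist $\psi_t$ acts on $\eta_i=\theta_x T_{w'}\in\mc H^P$ by multiplication by the scalar $x(t)$, which is a regular function on $T^P$. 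The same reasoning applies on the primed side, where the twist is $\psi_{w(t)}$, and $x(w(t))=w^{-1}(x)(t)$ is again regular in $t$. In particular, $\chi_t(h):=\tr\bigl(\pi(P,\delta,t)(h)\bigr)$ and $\chi'_t(h):=\tr\bigl(\pi(w(P),\delta\circ\psi_w^{-1},w(t))(h)\bigr)$ are regular functions of $t$ on the irreducible complex variety $T^P$.

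Second, I would use the intertwiner \eqref{eq:3.7}. By \cite[Theorem 4.33]{Opd-Sp}, $I_w$ realizes an $\mc H$-module isomorphism \eqref{eq:3.8} for all $t$ in a Zariski-open dense subset $U\subset T^P$ (the complement of the locus of poles of $I_w$ and of $I_{w^{-1}}$). Therefore $\chi_t=\chi'_t$ as linear functionals on $\mc H$ for every $t\in U$. Since $U$ is Zariski-dense in the irreducible variety $T^P$ and both functions $t\mapsto\chi_t(h),\ \chi'_t(h)$ are regular, they must agree on all of $T^P$.

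Finally, for every $t\in T^P$, $\pi(P,\delta,t)$ and $\pi(w(P),\delta\circ\psi_w^{-1},w(t))$ are finite dimensional $\mc H$-representations with identical characters. By the linear independence of the characters of the irreducible $\mc H$-modules, together with the Jordan--H\"older theorem applied to both modules, this forces them to share the same irreducible constituents with the same multiplicities. The one subtle point in the argument is the verification that the actions depend \emph{regularly} rather than merely rationally on $t$; this hinges on the observation that in the realization on $V$ the operators of the induced representation are computed using only the multiplication in $\mc H$ and the cross relations of Definition \ref{def:1.3}, which introduce no denominators in $t$, so no specialization issues arise. Once this is in place, the rest is the standard principle that characters in a regular family of representations on a fixed vector space vary regularly, and a regular function on an irreducible variety is determined by its values on a Zariski-dense set.
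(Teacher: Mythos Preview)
Your proof is correct and follows essentially the same route as the paper: both arguments observe that the character of $\pi(P,\delta,t)$ varies algebraically in $t$ on a fixed underlying vector space, that the intertwiner $I_w$ forces character equality on a Zariski-dense open, and then extend to all $t$ by irreducibility of $T^P$; the conclusion comes from linear independence of irreducible characters. The only difference is in emphasis: you spell out the regularity of the action in $t$ more carefully, while the paper explicitly invokes the Frobenius--Schur theorem \cite[Theorem 27.8]{CuRe} (using that $\mc H$ has finite rank over its centre) to justify the linear independence of characters, which you take as a known fact.
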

\begin{proof}
The above implies that, for $t$ in a Zariski-open subset of $T^P$, these two $\mc H$-representations
have the same character. The vector space $\mc H \otimes_{\mc H^P} V_\delta$ does not depend on $t$
and the character of $\pi (P,\delta,t)$ depends algebraically on $t \in T^P$, so in fact 
$\pi (P,\delta,t)$ and $\pi (w(P),\delta \circ \psi_w^{-1}, w(t))$ have the same character for all
$t \in T^P$. 

Since $\mc H$ is of finite rank as module over its centre, the Frobenius--Schur theorem 
\cite[Theorem 27.8]{CuRe} applies, and says that the characters of inequivalent irreducible
$\mc H$-representation are linearly independent functionals on $\mc H$. As the character of
$\pi (P,\delta,t)$ determines that representation up to semisimplification, it carries enough
information to determines the multiplicities with which the irreducible representations appear
in $\pi (P,\delta,t)$.
\end{proof}

The next results are much deeper, for their proofs involve a study of topological completions of 
affine Hecke algebras \cite{Opd-Sp,DeOp1}. For a systematic bookkeeping of the $\mc H$-isomorphisms
\eqref{eq:3.9} and \eqref{eq:3.8}, we put them in a groupoid $\mc W_\Xi$. Its base space is the 
power set of $\Delta$, the collection of morphisms from $P$ to $P'$ is
\[
\mc W_{\Xi,P P'} = \{ (w,u) \in W \times (T^P \cap T_P) : w (P) = P' \} ,
\]
and the composition comes from the group $T \rtimes W$. This groupoid acts on the space of 
induction data $\Xi$ as
\[
(w,u) (P,\delta,t) = w \cdot (P,\delta \circ \psi_u^{-1}, ut) =
(w(P),\delta \circ \psi_u^{-1} \circ \psi_w^{-1}, w(ut)) .
\]

\begin{thm}\label{thm:3.17}
Let $\xi = (P,\delta,t), \xi' = (P',\delta',t') \in \Xi^+$. The $\mc H$-representations $\pi (\xi)$
and $\pi (\xi')$ have a common irreducible quotient if and only if there exists a
$(w,u) \in \mc W_\Xi$ with $(w,u) \xi = \xi'$.
\end{thm}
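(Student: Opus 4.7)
The plan is to reduce both implications, via Lemma \ref{lem:3.16}, equation \eqref{eq:3.9} and the refined Langlands decomposition of Proposition \ref{prop:3.15}, to a uniqueness statement for tempered modules inside a parabolic subquotient algebra $\mc H_Q$, which one can then cite from Opdam's Plancherel theory.

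For the "if" direction, suppose $(w,u)\xi = \xi'$. Equation \eqref{eq:3.9} gives the literal equality $\pi(P,\delta,t) = \pi(P,\delta \circ \psi_{P,u}^{-1}, ut)$, and applying Lemma \ref{lem:3.16} to the right-hand side with $w$ shows that $\pi(\xi)$ and $\pi(w(P), \delta \circ \psi_{P,u}^{-1} \circ \psi_w^{-1}, w(ut)) = \pi(\xi')$ have the same character, hence the same composition factors with multiplicities. Since both data are in $\Xi^+$, Proposition \ref{prop:3.15}(c) describes the irreducible quotients of $\pi(\xi)$ and $\pi(\xi')$ in terms of Langlands data extracted from their composition series (the constituents of maximal $\|\mr{cc}(\cdot)\|$ in the sense of Lemma \ref{lem:3.11}(b)). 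Coincidence of composition series forces coincidence of this set of quotients, yielding in particular a common one.

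For the "only if" direction, let $L$ be a common irreducible quotient. Writing $L = L(P(\xi),\tau,t^{P(\xi)}) = L(P(\xi'),\tau',t^{P(\xi')})$ via Proposition \ref{prop:3.15}(c), Theorem \ref{thm:3.10}(c) forces $Q := P(\xi) = P(\xi')$, an equivalence of essentially tempered inflations $\tau \circ \psi_{t^{P(\xi)}} \cong \tau' \circ \psi_{t^{P(\xi')}}$, and thus equality of $t^{P(\xi)}$ and $t^{P(\xi')}$ modulo $T^Q \cap T_Q$, with $\tau$ and $\tau'$ matching up to the corresponding twist. The problem now lives inside $\mc H_Q$: the tempered module $\tau$ is a common irreducible summand of $\ind_{(\mc H_Q)^P}^{\mc H_Q}(\delta \circ \psi_{|t|})$ and $\ind_{(\mc H_Q)^{P'}}^{\mc H_Q}(\delta' \circ \psi_{|t'|})$. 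Invoking the uniqueness of discrete series data in the tempered decomposition of $\mc H_Q$ produces $w \in W$ with $w(P) = P'$ carrying $\delta \circ \psi_{|t|}$ to $\delta' \circ \psi_{|t'|}$; absorbing the remaining ambiguity from $T^Q \cap T_Q$ into an element $u \in T^P \cap T_P$ yields the required $(w,u) \in \mc W_\Xi$ with $(w,u)\xi = \xi'$.

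The main obstacle is the last inference: extracting groupoid conjugacy of the discrete series data from the mere sharing of one irreducible tempered summand. This is the analogue, for affine Hecke algebras, of Harish-Chandra's uniqueness theorem for parabolic induction of the discrete series, and is a deep output of the Plancherel decomposition \cite{Opd-Sp,DeOp1}. It relies on the generic irreducibility of $\pi(P,\delta,t)$ for $|t| \in T^P_\un$, on the regularity and unitarity of the intertwiners $I_w$ from \eqref{eq:3.7} on the tempered locus, and ultimately on a residue analysis of these intertwiners. This is also precisely where the standing assumption $q_s \in \R_{>0}$ is indispensable, since it underlies the Hilbert space completion $L^2(W(\mc R),q)$ needed to make the analytic arguments work.
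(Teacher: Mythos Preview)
Your overall strategy is correct and matches how the paper handles this: the paper itself gives no proof, citing \cite[Corollary 5.6]{DeOp1} for the unitary case $t,t' \in T^P_\un$ (Langlands' disjointness theorem for affine Hecke algebras) and \cite[Theorem 3.3.1.a]{SolAHA} for the extension to all of $\Xi^+$. The latter reference carries out exactly the reduction you sketch: use the Langlands classification (Theorem \ref{thm:3.10} and Proposition \ref{prop:3.15}) to push the question down to a tempered one inside $\mc H_{P(\xi)}$, then invoke the deep analytic disjointness result. Your identification of that last step as the genuine obstacle, and of its dependence on the positivity assumption \eqref{eq:3.5}, is on the mark.

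Two small corrections. In the ``if'' direction you write that the irreducible quotients are the constituents of \emph{maximal} $\|\mr{cc}(\cdot)\|$; by Lemma \ref{lem:3.11}(b) it is the other way round --- the Langlands quotient is the unique constituent with \emph{minimal} $\|\mr{cc}(\tau)\|$, and all others are strictly larger. With that fix your argument goes through: $\pi(\xi)$ is a direct sum of standard modules all sharing the same $\|\mr{cc}(\tau)\|$, so its irreducible quotients are exactly its constituents of minimal norm, and these are determined by the composition series. In the ``only if'' direction, the twist you want inside $\mc H_Q$ is not by $|t|$ but by the component $t_Q := t (t^{P(\xi)})^{-1} \in T^P \cap T_{Q}$ (cf.\ Proposition \ref{prop:3.15}(a)); with that adjustment the reduction to the tempered disjointness theorem for $\mc H_Q$ is clean.
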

When $t,t' \in T_\un$, Proposition \ref{prop:3.15}.a says that $\pi (\xi)$ and $\pi (\xi')$ are
completely reducible. Then the statement of the theorem becomes: $\pi (\xi)$ and $\pi (\xi')$
have a common irreducible subquotient if and only if $\xi' \in \mc W_\Xi \xi$. This is an analogue
of Langlands' disjointness theorem (see \cite[Theorem 14.90]{Kna} for real reductive groups and
\cite[Proposition 4.1.ii]{Wal} for $p$-adic reductive groups), and it was proven in 
\cite[Corollary 5.6]{DeOp1}. The generalization to $\xi, \xi' \in \Xi^+$ was established in
\cite[Theorem 3.3.1.a]{SolAHA}.

Every element $(w,u) \in \mc W_\Xi$ gives rise to an intertwining operator
\begin{equation}\label{eq:3.33}
\pi ((w,u),P,\delta,t) : \pi (P,\delta,t) \to \pi (w(P),\delta \circ \psi_u^{-1} \circ \psi_w^{-1}, w(ut)) .
\end{equation}
It is rational as a function of $t \in T^P$ and regular for almost all $t \in T^P$. Namely, the
isomorphisms \eqref{eq:3.8} come from \eqref{eq:3.7}, while \eqref{eq:3.9} is just the identity on
the underlying vector space. For isomorphic induction data $(P,\delta,t)$ and $(P,\sigma,t)$ we 
also need an $\mc H$-isomorphism 
\[
\ind_{\mc H^P}^{\mc H}(\delta \circ \psi_t) \to \ind_{\mc H^P}^{\mc H}(\sigma \circ \psi_t).  
\]
To that end we pick (independently of $t$) an $\mc H_P$-isomorphism $\delta \cong \sigma$
(which anyway is unique up to scalars) and apply $\ind_{\mc H^P}^{\mc H}$ to that. In this way we
get an intertwining operator
\[
\pi ((w,u),\xi) : \pi (\xi) \to \pi (\xi')
\]
whenever $(w,u) \xi$ and $\xi'$ are isomorphic. This operator is unique up to scalars and
\begin{equation}\label{eq:3.10}
\pi ((w',u'), (w,u)\xi) \circ \pi ((w,u),\xi) = \natural ((w',u'),(w,u)) \pi ((w',u')(w,u),\xi)
\end{equation}
for some $\natural ((w',u'),(w,u)) \in \C^\times$.

\begin{thm}\label{thm:3.18}
Let $\xi, \xi'  \in \Xi^+$. The operators 
\[
\{ \pi ((w,u),\xi) : (w,u) \in \mc W_\Xi, (w,u) \xi \cong \xi' \}
\] 
are regular and invertible, and they span $\Hom_{\mc H} (\pi (\xi), \pi (\xi'))$.
\end{thm}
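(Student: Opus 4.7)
The plan is to establish regularity, invertibility, and spanning in sequence; the first two are formal consequences of the construction and the cocycle identity \eqref{eq:3.10}, while the spanning claim is the substantive content and amounts to an $R$-group-type identification of $\End_{\mc H}(\pi(\xi))$.

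For regularity, I would note that $\pi((w,u),\xi)$ is assembled from three pieces: (i) the relabelling in \eqref{eq:3.9}, which is literally the identity on the underlying vector space and hence everywhere regular in $t$; (ii) a $t$-independent $\mc H_P$-intertwiner $\delta \isom \delta \circ \psi_{P,u}^{-1}$; and (iii) the rational isomorphism $I_w$ of \eqref{eq:3.7}. Only (iii) can develop singularities, and its poles lie on a finite union of root hyperplanes in $T^P$ determined by the $c$-functions $c_\alpha$. The positivity hypothesis $|t| \in T^{P,+}$, combined with the discrete-series estimate $\mr{Wt}(\delta) \subset T_{P,\un} T_P^{--}$, keeps the $\C[X]$-weights of $\pi(\xi)$ away from this singular locus; this is the regularity argument used for the operators of \cite[Thm.~4.33]{Opd-Sp}. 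Invertibility follows immediately from \eqref{eq:3.10}: if $(w,u)\xi \cong \xi'$, then the groupoid inverse $(w,u)^{-1}$ sends $\xi'$ back to $\xi$, and
\[
\pi((w,u)^{-1},\xi') \circ \pi((w,u),\xi) = \natural((w,u)^{-1},(w,u)) \cdot \mr{id}_{\pi(\xi)}
\]
is a nonzero scalar multiple of the identity, with the opposite composition handled symmetrically.

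For spanning, Theorem \ref{thm:3.17} disposes of the case $\xi' \notin \mc W_\Xi \xi$, since then neither do $\pi(\xi)$ and $\pi(\xi')$ share constituents nor is the listed set of intertwiners nonempty. When $\xi' \cong (w_0,u_0)\xi$ for some $(w_0,u_0) \in \mc W_\Xi$, post-composition with the invertible intertwiner $\pi((w_0,u_0),\xi)$ yields a linear isomorphism
\[
\End_{\mc H}(\pi(\xi)) \isom \Hom_{\mc H}(\pi(\xi),\pi(\xi'))
\]
which, thanks to \eqref{eq:3.10}, carries the collection indexed by the isotropy group $\mc W_{\Xi,\xi} = \{(w,u) : (w,u)\xi \cong \xi\}$ bijectively (up to scalars) onto the collection indexed by $\{(w,u) : (w,u)\xi \cong \xi'\}$. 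The problem thus reduces to showing
\[
\End_{\mc H}(\pi(\xi)) = \mr{span}\{ \pi((w,u),\xi) : (w,u) \in \mc W_{\Xi,\xi} \}.
\]
Proposition \ref{prop:3.15}(a) and (e) give $\End_{\mc H}(\pi(\xi)) \cong \End_{\mc H^{P(\xi)}}(\pi^{P(\xi)}(\xi))$ with $\pi^{P(\xi)}(\xi)$ completely reducible, so the endomorphism algebra is finite-dimensional semisimple. By the cocycle identity the operators indexed by $\mc W_{\Xi,\xi}$ span a twisted group subalgebra of $\End_{\mc H}(\pi(\xi))$; linear independence of these operators follows from Lemma \ref{lem:3.3} by observing that distinct elements of $\mc W_{\Xi,\xi}$ permute the $\C[X]$-weight spaces of $\pi(\xi)$ differently (in particular no nontrivial groupoid element acts as a scalar). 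A dimension count matching $|\mc W_{\Xi,\xi}|$ against the sum of squares of the multiplicities of the irreducible constituents of $\pi^{P(\xi)}(\xi)$ then finishes the argument.

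The main obstacle is precisely this last dimension count, equivalently the identification of $\End_{\mc H}(\pi(\xi))$ with a twisted group algebra of $\mc W_{\Xi,\xi}$. This is the affine Hecke algebra incarnation of Knapp--Stein--Silberger $R$-group theory and appears difficult to obtain by purely algebraic means. A clean route, following \cite{DeOp1,Opd-Sp}, passes through the Schwartz or $C^*$-completion of $\mc H$: on the tempered part one has unitarity and a Plancherel decomposition, which supplies both the correct dimension of $\End_{\mc H}(\pi(\xi))$ for $\xi \in \Xi^+$ with $t \in T^P_\un$ and, by rationality and analytic continuation across $T^P$, the analogous conclusion for arbitrary $\xi \in \Xi^+$.
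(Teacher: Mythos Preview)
The paper does not give a self-contained proof of this theorem; it cites \cite[Corollary 5.4]{DeOp1} for the unitary case $t,t' \in T_\un$ and \cite[Theorem 3.3.1.b]{SolAHA} for arbitrary $\xi,\xi' \in \Xi^+$. Your final paragraph correctly identifies that the substantive input comes from the Schwartz/$C^*$-completion and Plancherel theory, and that is indeed the route taken in those references. Your treatment of regularity, invertibility, and the reduction to $\xi = \xi'$ via Theorem \ref{thm:3.17} is fine and matches the standard setup.

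However, the argument you give for spanning before that last paragraph contains a genuine error. You assert that the operators $\pi((w,u),\xi)$ for $(w,u) \in \mc W_{\Xi,\xi}$ are linearly independent because ``no nontrivial groupoid element acts as a scalar.'' This is false, and the paper says so explicitly: Theorem \ref{thm:3.21}(a) states that $\pi(w,\xi)$ is scalar precisely for $w$ in the (generally nontrivial) reflection group $W(R_\xi) \subset \mc W_\xi$. Combined with the decomposition $\mc W_\xi = W(R_\xi) \rtimes \mf R_\xi$ of \eqref{eq:3.14} and Theorem \ref{thm:3.21}(b), one has $\dim \End_{\mc H}(\pi(\xi)) = |\mf R_\xi|$, which is strictly smaller than $|\mc W_\xi|$ whenever $R_\xi \neq \emptyset$. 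Thus the operators are typically \emph{not} linearly independent, and your proposed dimension count $|\mc W_{\Xi,\xi}| = \dim \End_{\mc H}(\pi(\xi))$ cannot hold. The weight-permutation heuristic via Lemma \ref{lem:3.3} does not rescue this: the poles of the $c$-functions that define $R_\xi$ are exactly where the naive intertwiner degenerates and where distinct weights may coalesce, so one cannot infer non-scalarity from a formal weight-space permutation.

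Since the theorem only asserts spanning, the error is in your argument rather than the target statement; but once linear independence fails, your dimension count collapses and you are left with precisely the analytic input you acknowledge in your closing paragraph. That acknowledgment is accurate, and it is why the paper defers to \cite{DeOp1,SolAHA} rather than attempting an elementary proof.
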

In case the coordinates $t,t'$ of $\xi,\xi'$ lie in $T_\un$, this is shown in 
\cite[Corollary 5.4]{DeOp1}. That version is analogue of Harish-Chandra's completeness theorem,
see \cite[Theorem 14.31]{Kna} and \cite[Theorem 5.5.3.2]{Sil2}.
For the version with arbitrary $\xi, \xi' \in \Xi^+$ we refer to \cite[Theorem 3.3.1.b]{SolAHA}.

The relations between parabolically induced representations $\pi (\xi)$ and $\pi (\xi')$ with
$\xi, \xi'  \in \Xi$ $\mc W_\Xi$-associate but not positive are more complicated, and not
understood well. For the principal series $\pi (\emptyset,\triv,t)$ this issue was 
investigated in \cite{Ree1}.

Finally, everything is in place to formulate an extension of the Langlands classification
that incorporates results about discrete series representations. The outcome is similar to 
L-packets in the local Langlands correspondence. Recall that \eqref{eq:3.5} is in force.

\begin{thm}\label{thm:3.19}
\textup{\cite[Theorem 3.3.2]{SolAHA}} \\
Let $\pi$ be an irreducible $\mc H$-representation. There exists a unique
$\mc W_\Xi$-association class $(P,\delta,t) \in \Xi / \mc W_\Xi$ such that the 
following equivalent statements hold:
\enuma{
\item $\pi$ is isomorphic to an irreducible quotient of $\pi (\xi^+)$, for some
$\xi^+ \in \Xi^+ \cap \mc W_\Xi (P,\delta,t)$;
\item $\pi$ is a constituent of $\pi (P,\delta,t)$, and $\| cc (\delta) \|$ 
is maximal for this property.
}
Further, $\pi$ is tempered if and only if $t \in T_\un$.
\end{thm}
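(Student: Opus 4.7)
The plan is to assemble the Langlands classification (Theorem \ref{thm:3.10}) with the Harish-Chandra-style decomposition of tempered representations in Theorem \ref{thm:3.13}, and then control the remaining ambiguity using the disjointness theorem (Theorem \ref{thm:3.17}). Given $\pi \in \Irr(\mc H)$, Theorem \ref{thm:3.10} realizes it as $L(Q,\sigma)$ with $\sigma = \tau \circ \psi_s$ for some tempered $\tau \in \Irr(\mc H_Q)$ and $s \in T^{Q,++} T^Q_\un$. Theorem \ref{thm:3.13}(a) then exhibits $\tau$ as a direct summand of $\ind^{\mc H_Q}_{(\mc H_Q)^P}(\delta')$ for some $P \subset Q$ and a tempered essentially discrete series representation $\delta'$ of $(\mc H_Q)^P$, which by Theorem \ref{thm:3.13}(b) factors as $\delta' \cong \delta \circ \psi_{s'}$ with $\delta$ a discrete series representation of $\mc H_P$. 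Transitivity of parabolic induction then realizes $\pi$ as an irreducible quotient of $\pi(P,\delta,t)$ for a suitable $t$ combining $s$ and $s'$; the positivity built into the Langlands datum lets us arrange $\xi^+ = (P,\delta,t) \in \Xi^+$, which gives condition (a).

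\textbf{Uniqueness of the orbit.} Suppose $\pi$ is a common irreducible quotient of $\pi(\xi_1^+)$ and $\pi(\xi_2^+)$ with $\xi_1^+, \xi_2^+ \in \Xi^+$. Then Theorem \ref{thm:3.17} immediately gives $(w,u) \in \mc W_\Xi$ with $(w,u)\xi_1^+ \cong \xi_2^+$, so the $\mc W_\Xi$-orbit of the induction datum is uniquely determined by $\pi$.

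\textbf{Equivalence of (a) and (b).} By Lemma \ref{lem:3.16}, the multiset of irreducible constituents of $\pi(P',\delta',t')$ is constant on each $\mc W_\Xi$-orbit, so condition (b) depends only on the orbit of $(P,\delta,t)$. For (a) $\Rightarrow$ (b), realize $\pi$ via Proposition \ref{prop:3.15} as $L(P(\xi^+),\tau, t^{P(\xi^+)})$ where $\tau$ is a summand of $\pi^{P(\xi^+)}(\xi^+)$. If $\pi$ were also a constituent of some $\pi(P',\delta',t')$ with $\|\mr{cc}(\delta')\| > \|\mr{cc}(\delta)\|$, running the existence construction inside that standard module would produce a Langlands datum for $\pi$ whose tempered central character norm exceeds that of $\tau$, contradicting the fact (Lemma \ref{lem:3.11}(b)) that $\|\mr{cc}(\tau)\|$ is the maximum over all standard modules containing $\pi$. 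For (b) $\Rightarrow$ (a), starting from a constituent of $\pi(P,\delta,t)$ with $\|\mr{cc}(\delta)\|$ maximal, running the existence construction inside it cannot descend to a strictly deeper Levi (this would strictly increase the norm), so the discrete-series datum produced agrees with $(P,\delta,t)$ up to $\mc W_\Xi$ and $\pi$ is an irreducible quotient of a positive representative.

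\textbf{Tempered case and main obstacle.} If $t \in T_\un$ then $|t|=1$, so $\delta \circ \psi_t$ is tempered, Proposition \ref{prop:3.6} makes $\pi(\xi)$ tempered, and so are its constituents. Conversely, if $\pi$ is tempered its Langlands datum has $Q = \Delta$ with $\sigma = \tau$ tempered, so $s$ is unitary and $t = s' \in T^P_\un$. The main obstacle is the maximality half of the equivalence (a) $\Leftrightarrow$ (b): one must track the norm interplay between the discrete-series data and the Langlands data across the orthogonal decomposition $\mf a = \mf a_P \oplus \mf a^P$ and verify that descending to a deeper Levi via Theorem \ref{thm:3.13} genuinely increases $\|\mr{cc}(\delta)\|$; this compatibility, resting on Lemma \ref{lem:3.11}(b), is what ultimately ties the discrete-series induction datum to the Langlands datum.
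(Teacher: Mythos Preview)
The paper does not supply a proof of Theorem \ref{thm:3.19}; it simply cites \cite[Theorem 3.3.2]{SolAHA}. Your sketch assembles exactly the ingredients the surrounding text makes available (Theorems \ref{thm:3.10}, \ref{thm:3.13}, \ref{thm:3.17}, Proposition \ref{prop:3.15}, Lemmas \ref{lem:3.11} and \ref{lem:3.16}), and this is indeed the architecture of the argument in \cite{SolAHA}: existence via Langlands + Harish-Chandra decomposition, uniqueness via the disjointness theorem, and the maximality characterization via the partial order from Lemma \ref{lem:3.11}.b.

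Your existence and uniqueness paragraphs are correct. One small point worth making explicit: in passing from $\tau \in \Irr_0(\mc H_Q)$ to a discrete series datum you should note that $\|\mr{cc}(\tau)\| = \|\mr{cc}(\delta)\|$, because the twist $\psi_{s'}$ in Theorem \ref{thm:3.13} has $s' \in T^P_\un$ (unitary) and hence does not move $|\mr{cc}(\delta)|$. This identity is what allows you to transport the norm inequality of Lemma \ref{lem:3.11}.b (which is stated for the \emph{tempered} Langlands ingredient) to an inequality for the \emph{discrete series} ingredient $\delta$.

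The genuine soft spot is the one you flag yourself: the implication (b) $\Rightarrow$ (a). As written, you start from an arbitrary $(P,\delta,t) \in \Xi$ with $\pi$ a constituent of $\pi(P,\delta,t)$ and $\|\mr{cc}(\delta)\|$ maximal, but Proposition \ref{prop:3.15} and Theorem \ref{thm:3.17} only apply to \emph{positive} induction data. You need the extra step (implicit in \cite{SolAHA}) that every $\mc W_\Xi$-orbit in $\Xi$ meets $\Xi^+$, and then invoke Lemma \ref{lem:3.16} to say the constituents are unchanged on the orbit; only then can you run Proposition \ref{prop:3.15} on a positive representative. With that in place, the maximality of $\|\mr{cc}(\delta)\|$ combined with Lemma \ref{lem:3.11}.b forces $\pi$ to be the Langlands quotient rather than a deeper constituent, giving (a). So your outline is right, but this reduction-to-$\Xi^+$ step should be stated rather than absorbed into the phrase ``running the existence construction inside it.''
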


Theorem \ref{thm:3.19} associates to every irreducible $\mc H$-representation $\pi$
an essentially unique positive induction datum $(P,\delta,t)$. If 
$\xi' = (P',\delta',t')$ is another positive induction datum associated to $\pi$, then $\xi'$
$\mc W_\Xi$-associate to $(P,\delta,t)$ and Theorem \ref{thm:3.18} entails that
$\pi (\xi') \cong \pi (P,\delta,t)$. Hence the parabolically induced representation
$\pi (P,\delta,t)$ is uniquely determined by $\pi$, up to isomorphism of $\mc H$-representations.

Conversely, however, $\pi (P,\delta,t)$ can have more than one irreducible quotient.
So, like an L-packet for a reductive group can have more than one element, $(P,\delta,t)$ might 
be associated (in Theorem \ref{thm:3.19}) to several irreducible $\mc H$-representations.\\

\begin{ex} \begin{itemize}
\item Consider $\mc R$ of type $\widetilde{A_1}$, $\mc H = \mc H(\mc R,\mb{q})$. Then
$\mc W_{\Xi, \emptyset \emptyset} = \{1,s_\alpha\}, \mc W_{\Xi,\Delta \Delta} = \{1\}$
and Theorem \ref{thm:3.19} works out as follows, where we take $t$ or $t^{-1}$ 
depending on whether $|t| \geq 1$ or $|t| \leq 1$: 
\[
\begin{array}{lcl}
\Irr (\mc H) & & \text{induction data} \\
\hline
\mr{St} & \mapsto & (\Delta = \{\alpha\},\mr{St},1) \\
\triv & \mapsto & (\emptyset,\triv,\mb{q}) \\
\pi (-1,\mr{St}), \pi (-1,\triv) & \mapsto & (\emptyset,\triv,-1) \\
\ind_{\C[X]}^{\mc H}(\C_t) & \mapsto & (\emptyset,\triv,t^{\pm 1})
\end{array}
\]
\item Keep $\mc R$ of type $\widetilde{A_1}$, but consider $\mc H = \mc H (\mc R,1) =
\C [W_\af]$. In this case $\mc H$ does not have any discrete series representations,
and the effect of Theorem \ref{thm:3.19} is:
\[
\begin{array}{lcl}
\Irr (\mc H) & & \text{induction data} \\
\hline
\mr{St}, \triv & \mapsto & (\emptyset,\triv,1) \\
\pi (-1,\mr{St}), \pi (-1,\triv) & \mapsto & (\emptyset,\triv,-1) \\
\ind_{\C[X]}^{\mc H}(\C_t) & \mapsto & (\emptyset,\triv,t^{\pm 1})
\end{array}
\]
\item For $\mc R$ of type $GL_n$, $\mc H = \mc H_n (\mb{q})$, and $P \subset \Delta$
given by a partition $\vec n = (n_1,n_2,\ldots,n_d)$ of $n$, we have
\[
\mc W_{\Xi,PP} / (T^P \cap T_P) = \prod_{m \geq 1} S (\{ i : n_i = m\}) \cong
N_{S_n} \Big( \prod_{i=1}^d S_{n_i} \Big) \Big/ \prod_{i=1}^d S_{n_i} .
\] 
The only irreducible discrete series representations of $\mc H (\mc R_{n,P},\mb{q})$
are $\mr{St} \otimes t_z = \mr{St} \circ \psi_{P,t_z}$ with $t_z \in T^P \cap T_P$.
Recall that 
\[
T^P \cong \prod\nolimits_{i=1}^d ( (\C^\times)^{n_i} )^{S_{n_i}} \cong (\C^\times )^d .
\]
An induction datum $(P,\mr{St} \otimes t_z,\vec{z})$ with $\vec{z} = (z_1,\ldots,z_d) 
\in T^P$ is positive if and only if $|z_1| \geq |z_2| \cdots \geq |z_n|$, 
a condition which is preserved by the action of $T^P \cap T_P$ on such induction data.
Hence, up $\mc W_\Xi$-association, it suffices to consider only positive induction data
$(P,\mr{St},t)$--so with $t_z = 1$.

It is easily checked that every pair $(\vec n, \vec z)$ as in Theorem \ref{thm:2.6}.b is
$S_n$-associate to a pair $(\vec n, \vec z)$ which is positive in the above sense, and
that the latter is unique up to $\mc W_\Xi$-association. The irreducible 
$\mc H_n (\mb{q})$-representation attached to $(\vec n, \vec z)$ in paragraph \ref{par:GLn}
is a quotient of $\pi (P,\mr{St},\vec z) = \pi (\vec n, \vec z)$, just as in Theorem 
\ref{thm:3.19}. Thus, for $\mc H_n (\mb{q})$ Theorem \ref{thm:3.19} recovers Theorem
\ref{thm:2.6}: both parametrize $\Irr (\mc H_n (\mb{q}))$ bijectively with basically
the same data.
\end{itemize} \end{ex}

\subsection{Lusztig's reduction theorems} \
\label{par:reduction}

In Paragraph \ref{par:defGHA} we already hinted at a link between affine Hecke algebras and
graded Hecke algebras. The connection is established with two reduction steps, whose original 
versions are due to Lusztig \cite{Lus-Gr}. These simplifications do not work in the same way for all 
representations, they depend on the central characters. The first reduction step limits the set
of central characters that has to be considered to understand all finite length $\mc H$-modules.

By \cite[Lemma 3.15]{Lus-Gr}, for $t \in T$ and $\alpha \in R$:
\begin{equation}\label{eq:3.17}
s_\alpha (t) = t \; \Longleftrightarrow \; \alpha (t) = \left\{\begin{array}{cc}
1 & \alpha^\vee \notin 2 Y \\
\pm 1 & \alpha^\vee \in 2 Y 
\end{array} \right.  .
\end{equation}
Fix $t \in T$. We will exhibit an algebra, almost a subalgebra of $\mc H$, that captures the behaviour
of $\mc H$-representations with central character close to $Wt \in T / W$. 
We consider the root system
\[
R'_t = \{ \alpha \in R : s_\alpha (t) = t \} .
\]
It fits in a root datum $\mc R'_t = (X,R'_t,Y,R_t^{'\vee})$, and $\lambda$ and $\lambda^*$ restrict 
to parameter functions for $\mc R'_t$. The affine Hecke algebra $\mc H (\mc R'_t,\lambda,\lambda^*,
\mb{q})$ naturally embeds in $\mc H = \mc H (\mc R,\lambda,\lambda^*,\mb{q})$. 

But we have to be careful, this construction is not suitable when $c_\alpha (t) = 0$ for some
$\alpha \in R$. For instance, when $\mc R$ is of type $\widetilde{A_1}$ and $t = \mb{q} \neq 1$,
the algebra $\mc H (\mc R'_t,\lambda,\lambda^*,\mb{q})$ is just $\C [X]$. That is hardly helpful to
describe all $\mc H$-modules with weights in $W \mb{q}$ (e.g. $\ind_{\C[X]}^{\mc H}(\C_{\mb q})$, 
triv and St).

When $c_\alpha (t) \neq 0$ for all $\alpha \in R$, $\mc H (\mc R'_t,\lambda,\lambda^*,\mb{q})$ 
detects most of what can happen with $\mc H$-representations with central character $W t$, but 
still not everything.

\begin{ex}
Consider the root datum $\mc R$ of type $\widetilde{A_2}$, with
\[
X = \{ x \in \Z^3 : x_1 + x_2 + x_3 = 0 \}, R = \{ e_i - e_j : i \neq j \} \cong A_2,
\Delta = \{ e_1 - e_2, e_2 - e_3 \} .
\]
The point $t \in T$ with $t(e_1 - e_2) = t(e_2 - e_3) = e^{2 \pi i / 3}$ satisfies
$R_t = \emptyset$ but $W_t = \{ \mr{id}, (123), (132) \}$. We have
$\mc H (\mc R'_t,\lambda,\lambda^*,\mb{q}) = \C [X]$, which possesses only one irreducible
representation with central character $t$. On the other hand, the R-group for $\mc H$ and 
$\xi = (\emptyset,\triv,t)$ is $W_t$. Theorem \ref{thm:3.21} entails that 
$\ind_{\C[X]}^{\mc H}(\C_t)$ splits into three inequivalent irreducible $\mc H$-representations, 
all with central character $W t$.
\end{ex}

The set
\begin{equation}\label{eq:3.20}
R_t = \{ \alpha \in R : s_\alpha (t) = t, c_\alpha (t) \neq 0 \}
\end{equation}
is a root system, because $\lambda$ and $\lambda^*$ are $W$-invariant. When $t \in T_\un$ and 
\eqref{eq:3.5} holds, $R_t$ coincides with $R'_t$ because $c_\alpha (t)$ cannot be 0. We define
\[
\mc R_t = (X,R_t,Y,R_t^\vee) .
\]
Let $R_t^+ = R^+ \cap R_t$ be the set of positive roots, and let $\Delta_t$ be the unique basis of
$R_t$ contained in $R_t^+$. We warn that $\Delta_t$ need not be a subset of $\Delta$. The group
\[
\Gamma_t = \{ w \in W_t : w (R_t^+) = R_t^+ \},
\]
satisfies $W_t = W(R_t) \rtimes \Gamma_t$.
For every $w \in \Gamma_t$ there exists an algebra automorphism like \eqref{eq:3.11}
\[
\begin{array}{cccc}
\psi_w : & \mc H (\mc R_t,\lambda,\lambda^*,\mb{q}) & \to &
\mc H (\mc R_t,\lambda,\lambda^*,\mb{q}) \\
& \theta_x T_{w'} & \mapsto & \theta_{w(x)} T_{w w' w^{-1}} 
\end{array} .
\]
This is a group action of $\Gamma_t$, so we can form the crossed product
$\mc H (\mc R_t,\lambda,\lambda^*,\mb{q}) \rtimes \Gamma_t$. We recall that this means
$\mc H (\mc R_t,\lambda,\lambda^*,\mb{q}) \otimes \C [\Gamma_t]$ as vector spaces, with multiplication rule
\[
(h \otimes \gamma) (h' \otimes \gamma') = h \psi_\gamma (h') \otimes \gamma \gamma' .
\]
The group $\Gamma_t$ can be embedded in $\big( \C (X)^W \otimes_{\C [X]^W} \mc H (\mc R,\lambda,
\lambda^*,\mb{q}) \big)^\times$ with the elements $\imath^\circ_w$ from Proposition \ref{prop:1.9}.
In view of \eqref{eq:3.11} and \eqref{eq:3.12}, this realizes 
$\mc H (\mc R_t,\lambda,\lambda^*,\mb{q}) \rtimes \Gamma_t$ as a subalgebra of
$\C (X)^W \otimes_{\C [X]^W} \mc H (\mc R,\lambda,\lambda^*,\mb{q})$.

Unfortunately tensoring with $\C (X)^W$ kills many interesting representations, so the above
does not yet suffice to relate the module categories of $\mc H (\mc R,\lambda,\lambda^*,
\mb{q})$, of $\mc H (\mc R_t,\lambda,\lambda^*,\mb{q}) \rtimes \Gamma_t$ and of a graded Hecke algebra.
To achieve that, some technicalities are needed. 

For an (analytically) open $W$-stable subset $U$ of $T$, let $C^{an} (U)$ be the algebra of complex
analytic functions on $U$. As $Z(\mc H)$ injects in $C^{an}(U)^W$, we can form the algebra
\[
\mc H^{an}(U) := C^{an}(U)^W \otimes_{\C [X]^W} \mc H (\mc R,\lambda,\lambda^*,\mb{q}) .
\]
It can also be obtained from Definition \ref{def:1.7} by using $C^{an}(U)$ instead of
$\mc O (T)$, and as a vector space it is just $C^{an}(U) \otimes_\C \mc H (W,q)$.

Let $\Mod_{f,U}$ be the category of finite length $\mc H$-modules, all whose $\mc O (T)$-weights
lie in $U$.

\begin{prop}\label{prop:3.24}
The inclusion $\mc H \to \mc H^{an}(U)$ induces an equivalence of categories
\[
\Mod_f (\mc H^{an}(U)) \to \Mod_{f,U}(\mc H) .
\]
\end{prop}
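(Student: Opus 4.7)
The plan is to construct an explicit quasi-inverse to the restriction functor $F : \Mod_f (\mc H^{an}(U)) \to \Mod_{f,U}(\mc H)$ induced by the inclusion $\mc H \hookrightarrow \mc H^{an}(U)$. First I would verify that $F$ actually lands in $\Mod_{f,U}(\mc H)$. Since $\mc H^{an}(U) = C^{an}(U)^W \otimes_{\C[X]^W} \mc H$ is finitely generated as a module over its commutative subring $C^{an}(U)^W$, any $M \in \Mod_f (\mc H^{an}(U))$ is finite-dimensional over $\C$, and the action of $C^{an}(U)^W$ on $M$ factors through a finite-dimensional Artinian quotient. The generalized eigenvalues of this action are characters of $C^{an}(U)^W$, and hence come from points of $U/W$; restricting further to $\C[X]^W$ then shows that the $\mc O(T)$-weights of $F(M)$ all lie in $U$.

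For the inverse direction, given $V \in \Mod_{f,U}(\mc H)$, let $Z_V \subset U/W$ denote the finite set of central characters appearing on $V$. The $\C[X]^W$-action factors through an Artinian quotient $A := \C[X]^W / I$, where $I$ contains a power of the ideal of functions vanishing on $Z_V$. The crucial input is the canonical isomorphism of completions
\[
\widehat{\C[X]^W}_{Z_V} \;\isom\; \widehat{C^{an}(U)^W}_{Z_V},
\]
which I would obtain point by point from the classical fact that the formal completions of $\C[X] = \mc O(T)$ and of $C^{an}(U)$ at any $t \in U$ agree (both are isomorphic to a formal power series ring in $\dim_\C T$ variables), combined with the fact that taking $W_t$-invariants commutes with completion since $|W_t|$ is invertible in $\C$. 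Via this isomorphism the $\C[X]^W$-action on $V$ extends uniquely to a $C^{an}(U)^W$-action, and combining it with the existing $\mc H$-action yields the desired $\mc H^{an}(U)$-module structure $G(V)$ on the same underlying vector space.

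Next I would check that $F$ and $G$ are mutually quasi-inverse. Both compositions are the identity on underlying vector spaces and on the $\mc H$-action, and the uniqueness clause of the completion isomorphism ensures that the $C^{an}(U)^W$-actions match on both sides. Functoriality of $G$ on morphisms is automatic: any $\mc H$-linear map between objects of $\Mod_{f,U}(\mc H)$ is linear for the extended central action, by naturality of completion.

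The main obstacle I anticipate is the completion comparison above, in particular at orbits $Wt \in U/W$ with nontrivial stabilizer $W_t$, where $U/W$ is analytically singular. One must show that the Reynolds operator $\frac{1}{|W_t|}\sum_{w \in W_t} w$ intertwines algebraic and analytic completion, so that invariants of the ambient completion coincide with the completion of the invariants. This is standard but requires careful bookkeeping against the local structure of $U/W$. Once this technical lemma is in hand, the remainder of the proof is essentially formal.
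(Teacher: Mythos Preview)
The paper does not actually prove this proposition: it is a survey, and Proposition~\ref{prop:3.24} is simply stated without proof or explicit citation (the surrounding results refer to \cite{BaMo2,SolAHA}, where arguments of this type appear). So there is nothing in the paper to compare against.

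Your approach is the standard one and is correct. The essential point is exactly the one you isolate: on a finite-dimensional module the action of the centre $\C[X]^W$ factors through an Artinian quotient supported on finitely many points of $U/W$, and the inclusion $\C[X]^W \hookrightarrow C^{an}(U)^W$ becomes an isomorphism after completing at any such point. Your handling of the singular orbits via the Reynolds operator is fine; since $\mathrm{char}\,\C = 0$, taking $W_t$-invariants is an exact functor and commutes with completion, so $(\widehat{\C[X]}_t)^{W_t} \cong \widehat{\C[X]^W}_{Wt}$ and similarly on the analytic side. One small remark: rather than completing $\C[X]^W$ and $C^{an}(U)^W$ directly at a possibly singular point of $U/W$, it is slightly cleaner to complete $\C[X]$ and $C^{an}(U)$ at the finite $W$-orbit $Wt \subset U$ (a product of regular local rings), observe that these completions agree, and then pass to $W$-invariants. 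This sidesteps any worry about the local structure of the quotient.
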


Let $U_t$ be a "sufficiently small" $W_t$-invariant open neighborhood of $t$ in $T$ and put $U = W U_t$. 
Then
\[
C^{an}(U) = \bigoplus\nolimits_{w \in W / W_t} C^{an}(w U_t) 
\] 
and there is a well-defined group homomorphism 
\[
\Gamma_t \to \mc H^{an}(U)^\times : w \mapsto 1_{U_t} \imath^\circ_w . 
\]
This realizes $\mc H (\mc R_t,\lambda,\lambda^*,\mb{q}) \rtimes \Gamma_t$ as a subalgebra of
$\mc H^{an}(U)$. Our version of Lusztig's first reduction theorem \cite[\S 8]{Lus-Gr} is:

\begin{thm}\label{thm:3.25}
Assume that \eqref{eq:3.5} holds and that $t \in T$ satisfies $W_{|t|^{-1}t} = W_t$. Then there
exist open $W_t$-stable neighborhoods $U_t$ of $t$ with the following properties.
\enuma{
\item There is a natural embedding of $C^{an}(U)^W$-algebras
\[
\mc H (\mc R_t, \lambda, \lambda^*, \mb{q})^{an}(U_t) \rtimes \Gamma_t \to \mc H^{an}(U) .
\]
\item Part (a) and Proposition \ref{prop:3.24} induce equivalences of categories
\begin{multline*}
\Mod_{f,U_t}(\mc H (\mc R_t, \lambda, \lambda^*, \mb{q}) \rtimes \Gamma_t) \cong
\Mod_f (\mc H (\mc R_t, \lambda, \lambda^*, \mb{q})^{an}(U_t) \rtimes \Gamma_t ) \\
\cong \Mod_{f}(\mc H^{an}(W U_t)) \cong \Mod_{f,W U_t} (\mc H) .
\end{multline*}
\item When $t \in T_\un$, we may take $U_t$ of the form $U'_t \times \exp (\mf a)$, where
$U'_t \subset T_\un$ is a small open $W_t$-stable ball around $t$. In that case the 
equivalences of categories between the outer terms in part (b) preserve temperedness and 
(essentially) discrete series. 
}
\end{thm}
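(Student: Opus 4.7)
The plan is to follow Lusztig's original reduction \cite[\S 8]{Lus-Gr} in its analytic incarnation, combining the intertwiners $\imath^\circ_w$ of Proposition \ref{prop:1.9} with the localization provided by Proposition \ref{prop:3.24}. The first step is to choose $U_t$ small enough that (i) the translates $w(U_t)$ for $w \in W/W_t$ are pairwise disjoint, so that $U = W U_t = \bigsqcup_{w \in W/W_t} w(U_t)$; (ii) every $c_\alpha$ with $\alpha \in R_t$ is analytic and nonvanishing on $U_t$; and (iii) each intertwiner $\imath^\circ_\gamma$ with $\gamma \in \Gamma_t$ is regular and invertible on $U_t$. The hypothesis $W_{|t|^{-1}t} = W_t$ is precisely what forces $R'_t = R_t$: no reflection stabilizing $t$ may have a vanishing $c$-function. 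Without this, the $\imath^\circ_\gamma$ for certain $\gamma \in W_t$ would acquire genuine poles at $t$ and the construction below would collapse.

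For part (a), I would send $f \in C^{an}(U_t)$ to $\sum_{w \in W/W_t} w(1_{U_t} f) \in C^{an}(U)^W$, each $\gamma \in \Gamma_t$ to $1_{U_t}\imath^\circ_\gamma \in \mc H^{an}(U)$, and each $T_{s_\beta}$ with $\beta \in \Delta_t$ to $1_{U_t}(c_\beta \imath^\circ_{s_\beta} + k_\beta)$, where $k_\beta \in C^{an}(U_t)$ is the unique additive correction making the Hecke quadratic relation $(T+1)(T - q_{s_\beta}) = 0$ hold. The braid relations among the $T_{s_\beta}$ reduce to the group law $w \mapsto \imath^\circ_w$ of Proposition \ref{prop:1.9}, and the Bernstein cross relations between $\theta_x$ and $T_{s_\beta}$ reduce to the identity $\imath^\circ_{s_\beta} f (\imath^\circ_{s_\beta})^{-1} = s_\beta(f)$ together with a direct manipulation of the $c$-functions. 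For part (b), Proposition \ref{prop:3.24} identifies $\Mod_f(\mc H^{an}(U)) \cong \Mod_{f,U}(\mc H)$ and supplies the corresponding statement on the $\mc R_t$-side. The remaining middle equivalence is a Morita equivalence: the idempotents $\{1_{wU_t} : w \in W/W_t\}$ are orthogonal, sum to $1$, and are mutually conjugate via the $\imath^\circ_w$, so $\mc H^{an}(U)$ is isomorphic to the $|W/W_t| \times |W/W_t|$ matrix algebra over $1_{U_t} \mc H^{an}(U) 1_{U_t}$, and by (a) the latter corner is the image of $\mc H (\mc R_t,\lambda,\lambda^*,\mb{q})^{an}(U_t) \rtimes \Gamma_t$.

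Part (c) is essentially inherited from the construction: when $t \in T_\un$ the hypothesis holds automatically, and we may shrink $U_t$ only in the unitary direction, keeping $\exp(\mf a)$ intact, so $U_t = U'_t \times \exp(\mf a)$ with $U'_t \subset T_\un$ a small $W_t$-stable ball. Since the temperedness and discrete-series conditions of Definitions \ref{def:3.5} and \ref{def:3.12} depend only on the absolute value of each $\mc O(T)$-weight, and since the equivalence of (b) identifies the actual $\mc O(T)$-weights of a module with those of its image (not merely central characters), these conditions transfer verbatim between the two sides. The main obstacle I foresee is pinning down the correction $k_\beta$ in (a) and verifying the quadratic and Bernstein cross relations: because $\beta \in \Delta_t$ generally does not lie in $\Delta$, the element $T_{s_\beta}$ cannot be imported directly from $\mc H$ and must be assembled from the $\imath^\circ_{s_\beta}$ by an explicit calculation with the rational functions $c_\beta$ on $T$, done uniformly in $\beta$ and compatibly with the $\Gamma_t$-action.
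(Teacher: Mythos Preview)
Your overall scheme---idempotents $1_{wU_t}$, intertwiners $\imath^\circ_w$, and a corner/Morita equivalence identifying $\mc H^{an}(U)$ with a matrix algebra over $1_{U_t}\mc H^{an}(U)1_{U_t}$---is the right one and matches Lusztig's method. The paper itself gives no argument, citing \cite{BaMo2}, \cite{SolAHA}, and \cite{AMS3}, so your sketch is in line with what those references do. Two substantive points need correction.

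First, your reading of the hypothesis $W_{|t|^{-1}t} = W_t$ is wrong. Under \eqref{eq:3.5} one \emph{always} has $R'_t = R_t$, independent of that hypothesis: if $s_\alpha(t)=t$ then $|\alpha(t)|=1$, so $\langle\alpha,\log|t|\rangle=0$ and hence $\alpha(t)=\alpha(|t|^{-1}t)$; since $c_\alpha$ depends only on $\theta_\alpha$ this gives $c_\alpha(t)=c_\alpha(|t|^{-1}t)$, which is nonzero because $|t|^{-1}t\in T_\un$ (exactly the case the paper handles just before defining $\Gamma_t$). The hypothesis serves a different purpose. Writing $u=|t|^{-1}t$, it forces $R_t=R_u$ and $\Gamma_t=\Gamma_u$, so that $\log|t|\in\mf a^{W_t}$ and the entire configuration at $t$ is a rigid $W_t$-equivariant translate of the one at $u\in T_\un$. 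This is what lets one control the regularity and invertibility of the $\imath^\circ_\gamma$ on $U_t$: a reduced expression for $\gamma\in\Gamma_t$ involves simple reflections $s_\alpha$ with $\alpha\notin R_t$, and one must exclude poles appearing along the ray from $u$ to $t$---precisely the ``non-invertible intertwining operators'' the paper warns about just after the theorem.

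Second, part (c) is not as immediate as you claim. Under the Morita equivalence a module with weights in $U_t$ goes to one with weights in $WU_t$, so the weight sets differ by $W$-translates rather than coincide. More seriously, temperedness for $\mc H(\mc R_t,\lambda,\lambda^*,\mb q)\rtimes\Gamma_t$ is formulated with the obtuse cone built from $\Delta_t$, whereas temperedness for $\mc H$ uses the cone from $\Delta$; since $\Delta_t\not\subset\Delta$ in general these cones differ, and matching the two conditions is precisely the content of \cite[Proposition 2.7]{AMS3}, not something that ``transfers verbatim''. A minor slip: your map $f\mapsto\sum_{w}w(1_{U_t}f)$ does not land in $C^{an}(U)^W$ unless $f$ is $W_t$-invariant; the embedding should target the corner $1_{U_t}\mc H^{an}(U)1_{U_t}$, so the correct formula is $f\mapsto 1_{U_t}f$.
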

Parts (a) and (b) of Theorem \ref{thm:3.25} can be found in \cite[Theorem 3.3]{BaMo2} and 
\cite[Theorem 2.1.2]{SolAHA}. For part (c) we refer to \cite[Proposition 2.7]{AMS3}.

The conditions in Theorem \ref{thm:3.25} avoid possible unpleasantness caused by non-invertible
intertwining operators.
Algebras of the form $\mc H (\mc R_t, \lambda, \lambda^*, \mb{q}) \rtimes \Gamma_t$ behave
almost the same as affine Hecke algebras. The difference can be handled with Clifford theory, as
in \cite[Appendix]{RaRa}. In fact everything we said so far in this paper
can be generalized to such crossed product algebras, see \cite{SolAHA,AMS3}. For simplicity,
we prefer to keep the finite groups $\Gamma_t$ out of our presentation.

An advantage of Theorem \ref{thm:3.25} is that it reduces the study of $\Mod_f (\mc H)$ to those
modules of $\mc H (\mc R_t, \lambda, \lambda^*, \mb{q}) \rtimes \Gamma_t$ all whose $\C [X]$-weights
belong to a small neighborhood of the point $t$, which is fixed by $W(R_t) \rtimes \Gamma_t$. We
will use this in a loose sense, suppressing $\Gamma_t$. Then Theorem \ref{thm:3.25} says that
it suffices to consider those finite dimensional modules of an affine Hecke algebra $\mc H$, all whose
$\C [X]$-weights lie in a small neighborhood of a $W$-fixed point $t \in T$.

The second reduction theorem will transfer such $\mc H$-representations to representations of
graded Hecke algebras. 
In the remainder of this paragraph we assume that $u$ is fixed by $W$. 
We define a parameter function $k_u$ for the root system $R_u$ by
\begin{equation}\label{eq:3.24}
k_u (\alpha) = (\lambda (\alpha) + \alpha (u) \lambda^* (\alpha)) \log (\mb{q}) / 2 .
\end{equation}
By \eqref{eq:3.17} $\alpha (u) \in \{\pm 1\}$, so $k_u$ is real-valued whenever the positivity 
condition \eqref{eq:3.5} for $q$ holds. This gives a graded Hecke algebra $\mh H (\mf t,W,k_u)$.

Let $V \subset \mf t$ be an analytically open $W$-stable subset. We can form the algebra 
\[
\mh H (\mf t,W,k_u )^{an}(V) = C^{an}(V)^W \otimes_{\mc O (\mf t)^W} \mh H (\mf t,W,k_u) ,
\]
which as vector space is just
\[
C^{an}(V) \otimes_{\mc O (\mf t)} \mh H (\mf t,W,k_u) = C^{an}(V) \otimes_\C \C [W] .
\]
Let $\Mod_{f,V}(\mh H (\mf t,W,k_u))$ be the category of those finite dimensional
$\mh H (\mf t,W,k_u)$-modules, all whose $\mc O (\mf t)$-weights lie in $V$. An analogue of
Proposition \ref{prop:3.24} says that the inclusion 
\[
\mh H (\mf t,W,k_u) \to \mh H (\mf t,W,k_u)^{an}(V)
\]
induces an equivalence of categories
\begin{equation}\label{eq:3.18}
\Mod_f (\mh H (\mf t,W,k_u)^{an}(V)) \isom \Mod_{f,V}(\mh H (\mf t,W,k_u)) .
\end{equation}
The analytic map 
\[
\exp_u : \mf t \to T, \quad \lambda \mapsto u \exp (\lambda)
\]
is $W$-equivariant, because $u$ is fixed by $W$. It gives rise to algebra homomorphisms
\[
\begin{array}{cccc}
\exp_u^* : & C^{an}(\exp_u (V)) & \to & C^{an}(V) \\
 & f & \mapsto & f \circ \exp_u \\
\Phi_u : & \C (X)^W \otimes_{\C [X]^W} \mc H^{an}(\exp_u (V)) & \to &
Q(S(\mf t^*))^W \otimes_{S(\mf t^*)^W} \mh H (\mf t,W,k_u)^{an}(V) \\
& f \imath^\circ_w & \mapsto & (f \circ \exp_u) \tilde{\imath}_w 
\end{array}.
\]
In good circumstances, the works already without involving rational (non-regular) functions.

\begin{thm}\label{thm:3.26}
\textup{\cite[Theorem 9.3]{Lus-Gr}, \cite[\S 4]{BaMo2} and \cite[Theorem 2.1.4]{SolAHA}} \\
Let $V \subset \mf t$ be an (analytically) open subset such that
\begin{itemize}
\item $V$ is $W$-stable,
\item $\exp_u$ is injective on $V$,
\item for all $\alpha \in R, \lambda \in V$ the numbers $\inp{\alpha}{\lambda}, 
\inp{\alpha}{\lambda} + k_u (\alpha)$ do not lie in $\pi i \Z \setminus \{0\}$.
\end{itemize}
\enuma{
\item $\exp_u^* : C^{an}(\exp_u (V)) \to C^{an}(V)$ is a $W$-equivariant algebra isomorphism,
and makes $\mc H^{an}(\exp_u (V))$ into a $C^{an}(V)^W$-algebra.
\item $\Phi_u$ restricts to an isomorphism of $C^{an}(V)^W$-algebras 
\[
\mc H^{an}(\exp_u (V)) \to \mh H (\mf t,W,k_u)^{an} (V).
\]
}
\end{thm}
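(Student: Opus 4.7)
\textbf{Plan for Theorem \ref{thm:3.26}.}

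For part (a), I would first check that $\exp_u \colon V \to \exp_u(V)$ is a biholomorphism. The usual exponential $\exp \colon \mf t \to T$ is a local biholomorphism (its differential is a lattice-isomorphism on tangent spaces), so the same holds for $\exp_u(\lambda) = u \exp(\lambda)$. Combined with the injectivity hypothesis on $V$, this shows $\exp_u(V)$ is open in $T$ and $\exp_u$ is a global biholomorphism onto it. Hence $\exp_u^* \colon C^{an}(\exp_u(V)) \to C^{an}(V)$ is an algebra isomorphism. It is $W$-equivariant because $u$ is $W$-fixed: $\exp_u(w\lambda) = u\exp(w\lambda) = w(u\exp(\lambda)) = w \exp_u(\lambda)$. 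Taking invariants gives the required $C^{an}(V)^W$-algebra structure on $\mc H^{an}(\exp_u(V))$.

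For part (b), the map $\Phi_u$ is already defined on the $\C(X)^W$-enlargement (Propositions \ref{prop:1.9} and \ref{prop:1.10}); the task is to verify that (i) it carries $\mc H^{an}(\exp_u(V))$ into $\mh H(\mf t,W,k_u)^{an}(V)$, and (ii) the restriction is bijective. Since $\mc H^{an}(\exp_u(V))$ is generated as an algebra by $C^{an}(\exp_u(V))$ together with the $T_{s_\alpha}$ for $\alpha \in \Delta$, it suffices to check (i) on these generators. On functions this is immediate from part (a). For $T_{s_\alpha}$, I would invert the defining formula for $\imath^\circ_{s_\alpha}$ to write
\[
T_{s_\alpha} = \frac{(\theta_\alpha - \mb q^{(\lambda(\alpha)+\lambda^*(\alpha))/2})(\theta_\alpha + \mb q^{(\lambda(\alpha)-\lambda^*(\alpha))/2})}{\mb q^{-\lambda(\alpha)}(\theta_\alpha -1)(\theta_\alpha +1)}(\imath^\circ_{s_\alpha}+1) - 1 ,
\]
apply $\Phi_u$, and use $\tilde{\imath}_{s_\alpha} + 1 = \frac{\alpha}{\alpha + k_u(\alpha)}(1 + s_\alpha)$ together with $\theta_\alpha \circ \exp_u = \alpha(u)\, e^{\alpha(\cdot)}$, to land in the expression $\big((\text{explicit analytic function on }V)\cdot s_\alpha\big) + (\text{function}) \in C^{an}(V) \otimes \C[W]$. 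The same inversion on the graded side (writing $s_\alpha$ in terms of $\tilde{\imath}_{s_\alpha}$ and $\alpha/(\alpha+k_u(\alpha))$) produces a two-sided inverse, given by $(\exp_u^*)^{-1}$ on functions and by the corresponding formula on $\{s_\alpha\}$. Bijectivity then follows because both algebras are free of rank $|W|$ over their analytic subrings of functions.

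The main obstacle is the analytic step: checking that the various rational factors in $\theta_\alpha$, which a priori are only rational on $T$ and not regular on all of $\exp_u(V)$, pull back via $\exp_u^*$ to functions whose product with the analogous rational factors $\alpha/(\alpha + k_u(\alpha))$ on the graded side is holomorphic on all of $V$. This is exactly what the numerical hypothesis is arranged to guarantee: the possible poles of $(\theta_\alpha - \mb q^{(\lambda(\alpha)+\lambda^*(\alpha))/2})^{-1} \circ \exp_u$ occur where $\alpha(u) e^{\alpha(\lambda)} = \mb q^{(\lambda(\alpha)+\lambda^*(\alpha))/2}$, i.e.\ where $\alpha(\lambda)$ equals $k_u(\alpha)$ modulo $2\pi i\Z$ (using $\alpha(u) \in \{\pm 1\}$ and the definition $k_u(\alpha) = (\lambda(\alpha) + \alpha(u)\lambda^*(\alpha))\log(\mb q)/2$), and similarly for the other denominator factor. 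The hypothesis $\langle \alpha,\lambda\rangle, \langle \alpha,\lambda\rangle + k_u(\alpha) \notin \pi i \Z \setminus \{0\}$ is exactly what is needed to kill the non-trivial period contributions, leaving only the possible zero at $\alpha(\lambda) + k_u(\alpha) = 0$, which is cancelled by the matching zero of $\alpha/(\alpha + k_u(\alpha))$ coming from $\tilde{\imath}_{s_\alpha}$. Once this pole-cancellation is verified, the algebra map and its inverse are forced to be defined on the correct analytic completions, and the remaining work is bookkeeping in the basis $\{T_w\}_{w \in W}$ versus $\{w\}_{w \in W}$.
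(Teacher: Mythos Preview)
The paper does not supply its own proof of this theorem; being a survey, it simply cites \cite[Theorem 9.3]{Lus-Gr}, \cite[\S 4]{BaMo2} and \cite[Theorem 2.1.4]{SolAHA}. Your overall plan coincides with the approach in those references: part (a) follows directly from the injectivity of $\exp_u$ and the $W$-fixedness of $u$, and for part (b) one expresses $T_{s_\alpha}$ in terms of $\imath^\circ_{s_\alpha}$, applies $\Phi_u$, rewrites the result via $\tilde\imath_{s_\alpha}$ and hence $s_\alpha$, and checks that the resulting coefficients are analytic on $V$ (and symmetrically for the inverse map).

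Your final paragraph, however, misidentifies the poles. In your displayed formula for $T_{s_\alpha}$ the factor $\theta_\alpha - \mb q^{(\lambda(\alpha)+\lambda^*(\alpha))/2}$ lies in the \emph{numerator}, so after composing with $\exp_u$ it contributes zeros, not poles; yet you analyse ``the possible poles of $(\theta_\alpha - \mb q^{(\lambda(\alpha)+\lambda^*(\alpha))/2})^{-1} \circ \exp_u$''. The actual poles of $\Phi_u(T_{s_\alpha}+1)$ come from the denominator $(\theta_\alpha-1)(\theta_\alpha+1)$ (which after $\exp_u$ vanishes on $\langle\alpha,\lambda\rangle\in\pi i\Z$) and from the factor $(\alpha+k_u(\alpha))^{-1}$ supplied by $\tilde\imath_{s_\alpha}$. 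Moreover, your claim that the singularity at $\langle\alpha,\lambda\rangle+k_u(\alpha)=0$ is ``cancelled by the matching zero of $\alpha/(\alpha+k_u(\alpha))$'' is backwards: that fraction has a \emph{pole} there, not a zero. In the cited proofs the argument treats $\Phi_u$ and $\Phi_u^{-1}$ in tandem, matching the zero/pole divisors of $c_{\pm\alpha}\circ\exp_u$ against those of $\tilde c_{\pm\alpha}$; the second hypothesis (applied to both $\alpha$ and $-\alpha$, giving conditions on $\langle\alpha,\lambda\rangle\pm k_u(\alpha)$) is precisely what confines the extraneous singularities to the points where these cancellations occur. Your outline has the right architecture, but the pole bookkeeping as written would not survive a careful write-up.
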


Notice that the conditions in Theorem \ref{thm:3.26} always hold when $V$ is a small 
neighborhood of $0$ in $\mf t$. When $k_u$ is real-valued, for instance whenever \eqref{eq:3.5} 
holds, these conditions also hold for $V$ of the form $\mf a + V'$ with $V' \subset i \mf a$ 
a small ball around 0. 

Combining Theorems \ref{thm:3.25}, \ref{thm:3.26} and Proposition \ref{prop:3.24}, we can draw
important consequences for the category of finite dimensional $\mc H$-modules

\begin{cor}\label{cor:3.27}
\textup{\cite[Corollary 2.15]{SolAHA}} \\
Assume that \eqref{eq:3.5} holds and that $u \in T_\un$.
\enuma{
\item For $\lambda \in \mf a$ the categories $\Mod_{f,W u \exp (\lambda)}(\mc H)$ and
$\Mod_{f,W_u \lambda}(\mh H (\mf t,W (R_u), k_u) \rtimes \Gamma_u)$ are naturally equivalent.
\item The categories $\Mod_{f,W u \exp (\mf a)}(\mc H)$ and $\Mod_{f,\mf a}(\mh H (\mf t,W(R_u),k_u)
\rtimes \Gamma_u)$ are naturally equivalent.
\item The equivalences of categories from parts (a) and (b) are compatible with parabolic induction.
}
\end{cor}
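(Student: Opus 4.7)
The strategy is to chain Lusztig's two reduction theorems and then track central characters. Since $u \in T_\un$ is fixed by $W$, the hypothesis $W_{|u|^{-1}u} = W_u$ of Theorem \ref{thm:3.25} is satisfied (trivially, as $|u| = 1$), and by part (c) of that theorem we may take the neighborhood of the form $U_u = U'_u \cdot \exp(\mf a)$ with $U'_u \subset T_\un$ a sufficiently small $W$-stable open ball around $u$. This furnishes an equivalence
\[
\Mod_{f, W U_u}(\mc H) \; \isom \; \Mod_{f, U_u}\bigl( \mc H(\mc R_u, \lambda, \lambda^*, \mb q) \rtimes \Gamma_u \bigr).
\]
Next, pick a small $W$-stable open ball $V' \subset i\mf a$ around $0$ such that $u \exp$ maps $V'$ homeomorphically onto $U'_u$ and all three hypotheses of Theorem \ref{thm:3.26} hold on $V := \mf a + V'$; this is possible by the remark following Theorem \ref{thm:3.26}. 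Then $\exp_u(V) = U_u$, and Theorem \ref{thm:3.26} together with \eqref{eq:3.18} yields
\[
\Mod_{f, U_u}\bigl( \mc H(\mc R_u, \lambda, \lambda^*, \mb q) \rtimes \Gamma_u \bigr) \; \isom \; \Mod_{f, V}\bigl( \mh H(\mf t, W(R_u), k_u) \rtimes \Gamma_u \bigr).
\]
Composing produces a ``big'' equivalence between $\Mod_{f, W U_u}(\mc H)$ and $\Mod_{f, V}(\mh H(\mf t, W(R_u), k_u) \rtimes \Gamma_u)$.

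Parts (a) and (b) are then obtained by restricting this composed equivalence to suitable Serre subcategories. Because $\exp_u$ is $W$-equivariant and injective on $V$, a module on the affine side has all $\mc O(T)$-weights in the single orbit $W u \exp(\lambda)$ iff its image has all $\mc O(\mf t)$-weights in $\exp_u^{-1}(W u \exp(\lambda)) \cap V = W_u \lambda$, which gives (a). Similarly $u \exp(\mf a) \subset U_u$ pulls back under $\exp_u|_V$ to $\mf a \subset V$, using injectivity together with $V' \cap \ker \exp = \{0\}$ for $V'$ small; restricting the big equivalence to modules with weights in $W u \exp(\mf a)$ yields (b).

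The main obstacle is part (c). The plan is to run the same two-step reduction in parallel for a standard parabolic subalgebra $\mc H^P \subset \mc H$ with $P \subset \Delta$, obtaining an analogous equivalence between suitable Serre subcategories of $\Mod_f(\mc H^P)$ and $\Mod_f\bigl(\mh H(\mf t, W(R_P \cap R_u), k_u) \rtimes \Gamma^P_u\bigr)$ for an appropriate finite group $\Gamma^P_u \subset \Gamma_u$; the latter is naturally a parabolic subalgebra of $\mh H(\mf t, W(R_u), k_u) \rtimes \Gamma_u$. Using the free-module decomposition \eqref{eq:3.2} on the affine side and its graded analogue, parabolic induction becomes tensor product of the full algebra over its parabolic subalgebra on both sides. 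The delicate point will be verifying that the Bernstein/Lusztig intertwiners $\imath^\circ_w$ and $\tilde{\imath}_w$ out of which the isomorphism $\Phi_u$ of Theorem \ref{thm:3.26}(b) is built are compatible with these decompositions, so that $\Phi_u$ restricts to the analogous isomorphism for $\mc H^P$. This should follow because $\Phi_u$ is determined by local data at $u$ and by the explicit formulas of Propositions \ref{prop:1.9} and \ref{prop:1.10}, which are indexed by the common Weyl group $W$ acting on rational functions and restrict well to the subgroup $W_P$ and to the subalgebra defined by $P$.
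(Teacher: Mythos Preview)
Your overall strategy---chain Theorem~\ref{thm:3.25} with Theorem~\ref{thm:3.26} via Proposition~\ref{prop:3.24} and \eqref{eq:3.18}, then restrict the composite equivalence to the relevant Serre subcategories---is exactly what the paper indicates. The paper itself gives no detailed proof here, only the sentence ``Combining Theorems~\ref{thm:3.25}, \ref{thm:3.26} and Proposition~\ref{prop:3.24}'' together with the citation to \cite[Corollary 2.15]{SolAHA}. So your approach matches.

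There is, however, a genuine slip in your write-up. You assert that ``$u \in T_\un$ is fixed by $W$'', but this is \emph{not} a hypothesis of the corollary: $u$ is an arbitrary unitary point. The condition $W_{|u|^{-1}u} = W_u$ of Theorem~\ref{thm:3.25} holds simply because $|u| = 1$ (as you correctly say parenthetically); no $W$-fixedness is involved. Accordingly, $U'_u$ must be a $W_u$-stable ball (Theorem~\ref{thm:3.25}.c), not a $W$-stable one. The point you are implicitly using is that Theorem~\ref{thm:3.26} is formulated for a $W$-fixed point, but you are applying it to the \emph{reduced} algebra $\mc H(\mc R_u,\lambda,\lambda^*,\mb q)$, whose Weyl group is $W(R_u)$---and $u$ is $W(R_u)$-fixed by the very definition of $R_u$. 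Once you replace the ambient $W$ by $W_u$ (resp.\ $W(R_u)$) in the appropriate places, your argument for parts (a) and (b) goes through.

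For part~(c) your plan is in the right spirit, but the correspondence of parabolic subalgebras is more delicate than your sketch suggests: for $P \subset \Delta$, the system $R_P \cap R_u$ need not be parabolic in $R_u$ relative to the basis $\Delta_u$ (recall $\Delta_u \not\subset \Delta$ in general), so $\mh H(\mf t, W(R_P \cap R_u), k_u)$ is not automatically one of the standard parabolic subalgebras of $\mh H(\mf t, W(R_u), k_u) \rtimes \Gamma_u$. The paper does not work this out either; it refers to \cite{SolAHA} here and restates the compatibility as Lemma~\ref{lem:3.30} with citations to \cite{AMS3}. So your final paragraph correctly identifies the delicate point, but resolving it genuinely requires the cited references rather than just the explicit formulas from Propositions~\ref{prop:1.9} and~\ref{prop:1.10}.
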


Sometimes it is not enough that $k_u$ is real-valued, we want to have parameters in $\R_{\geq 0}$.
Suppose that $\lambda, \lambda^* :  R \to \R_{\geq 0}$ are parameter functions. Then
$q_{s_\alpha} = \mb q^{\lambda (\alpha)}$ and $q_{s'_\alpha}^{\lambda^* (\alpha)}$ belong to
$\R_{\geq 1}$ for all $\alpha \in R$, but $\lambda (\alpha) - \lambda^* (\alpha)$ could
be $<0$. In that case Theorem \ref{thm:3.26} could produce a graded Hecke algebra with a
negative parameter $k_u (\alpha)$, which could lead to unnecessary complications.

\begin{lem}\label{lem:6.11}
Let $\mc H$ be an affine Hecke algebra constructed from a root datum $\mc R$ and a parameter
function $q : S_\af \to \R_{\geq 1}$. Then $\mc H$ admits a presentation $\mc H (\mc R,\lambda,
\lambda^*, \mb q)$ with $\lambda (\alpha), \lambda^* (\alpha), \lambda (\alpha) -
\lambda^* (\alpha) \in \R_{\geq 0}$ for all $\alpha \in R$.
\end{lem}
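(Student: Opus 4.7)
The plan is to take the natural Bernstein parameters coming from $q$ and, in the few cases where the resulting $\lambda^*$ on short roots exceeds $\lambda$, to interchange them via a Dynkin-diagram symmetry. First I would set $\lambda(\alpha) = \log_{\mb q} q_{s_\alpha}$ for $\alpha \in \Delta$ and $\lambda^*(\alpha) = \log_{\mb q} q_{s'_\alpha}$ for $\alpha^\vee \in R^\vee_{\max}$, extending both $W$-invariantly to all of $R$ and setting $\lambda^* := \lambda$ on those $\alpha$ with $\alpha^\vee \notin 2Y$. Theorems \ref{thm:1.5} and \ref{thm:1.6} then guarantee an isomorphism $\mc H \cong \mc H(\mc R, \lambda, \lambda^*, \mb q)$, and the hypotheses $q_s \geq 1$ and $\mb q > 1$ immediately yield $\lambda, \lambda^* \in \R_{\geq 0}$.

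The inequality $\lambda(\alpha) \geq \lambda^*(\alpha)$ is automatic on every $\alpha$ with $\alpha^\vee \notin 2Y$. A case-check on crystallographic root systems and lattices shows that a genuine short root $\alpha$ with $\alpha^\vee \in 2Y$ only arises in type $B_n$ components whose root datum is large enough that $2 e_i \in 2Y$; in all other situations (types $A$, $D$, $E$, $G_2$, type $C_n$ with $n \geq 3$, simply-connected $B_n$, etc.)\ either the definition-time constraint $\lambda^* = \lambda$ applies directly, or the relevant simple reflections in $S_\af$ are already $W_\af$-conjugate and so (as in type $F_4$, where $s_0$ is conjugate to the short simple reflection) have forced-equal $q$-values. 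The only nontrivial case is therefore essentially adjoint $B_n$ (together with the variant $\tilde B_2 = \tilde C_2$), in which $\Omega$ is trivial and the three parameters $\lambda(\text{long}), \lambda(\text{short}), \lambda^*(\text{short})$ are genuinely independent.

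For this remaining case, if $\lambda(\text{short}) \geq \lambda^*(\text{short})$ we are already done; otherwise I would invoke the Dynkin-diagram automorphism $\phi$ of $\tilde B_n$ that swaps $s_0 \leftrightarrow s_n$ and $s_i \leftrightarrow s_{n-i}$ while fixing the interior $W_\af$-conjugacy class $\{s_1,\ldots,s_{n-1}\}$ setwise. Because $\Omega = 1$ here and $\mc H = \mc H(W_\af, q)$, the rule $T_w \mapsto T_{\phi(w)}$ produces an algebra isomorphism $\mc H(W_\af, q) \isom \mc H(W_\af, q\circ\phi^{-1})$; reinterpreting the target through Theorem \ref{thm:1.5} on the same root datum $\mc R$ yields a Bernstein presentation $\mc H(\mc R, \tilde\lambda, \tilde\lambda^*, \mb q)$ in which $\tilde\lambda, \tilde\lambda^*$ agree with $\lambda, \lambda^*$ on long roots but are interchanged on short roots, giving the desired inequality. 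The principal point to verify is that this Coxeter-level swap genuinely descends to Bernstein presentations on the \emph{same} $\mc R$: this holds because $\phi$ preserves the interior conjugacy class setwise, so the normalizing factor $q(x)$ appearing in $\theta_x = q(x)^{-1/2} T_{t_x}$ is $\phi$-invariant for $x \in X$ written in a reduced expression, and Theorem \ref{thm:1.5} applies to the target verbatim.
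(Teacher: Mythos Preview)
Your overall strategy—swap the two short-root parameters via an outer symmetry of the Iwahori--Matsumoto presentation—is sound, and it does produce the desired Bernstein presentation. It is, however, a genuinely different argument from the paper's. The paper stays entirely in the torus presentation (Definition~\ref{def:1.7}): having noted that the offending $\alpha$ is short in a $B_n$ component with $e_i \in Y$, it shifts the basepoint of $T$ by the $W$-invariant element $\sum_i e_i \otimes (-1)$. That sign flips $\theta_{-\alpha}$ for the short $\alpha$ and leaves $\theta_{-2\alpha}$ and the long-root data unchanged, which interchanges $\lambda(\alpha)$ and $\lambda^*(\alpha)$ directly in the cross relation. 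This works uniformly for any $\mc R$, with no reference to $\Omega$ or to Coxeter combinatorics. Your route is equally legitimate but costs more bookkeeping, and two points need tightening.

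First, the affine Coxeter system you are using is of type $\tilde C_n$, not $\tilde B_n$: for a root datum with $R = B_n$ one has $W_\af = \Z R \rtimes W(R) = \Z^n \rtimes W(B_n)$, and the simple affine reflection $s_0 = s'_{e_1}$ (reflection in $x_1 = 1/2$) satisfies $m(s_0,s_1)=4$, giving the linear diagram with double bonds at both ends. Your description of $\phi: s_i \leftrightarrow s_{n-i}$ is correct for that diagram; only the label is off.

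Second, the claim ``$\Omega$ is trivial'' is not justified for a general root datum $\mc R$: there may be central or other irreducible factors making $\Omega = X/\Z R$ nontrivial. What is true is that either $\Omega$ acts trivially on the $\tilde C_n$-component of $S_\af$ (in which case $\phi$ commutes with $\Omega$ and extends to $\mc H(W_\af,q)\rtimes\Omega$), or $\Omega$ acts by the flip $\phi$ itself, which by $\Omega$-invariance of $q$ already forces $q_{s_0}=q_{s_n}$ and hence $\lambda=\lambda^*$ on the short roots. You need this dichotomy to cover all $\mc R$. Finally, your ``principal point to verify'' about $q(x)$ being $\phi$-invariant is unnecessary and somewhat muddled: you are simply composing three isomorphisms (Theorem~\ref{thm:1.5}, the Coxeter automorphism, and Theorem~\ref{thm:1.5} again for $q^\phi$), and the result is an algebra isomorphism $\mc H(\mc R,\lambda,\lambda^*,\mb q)\cong\mc H(\mc R,\tilde\lambda,\tilde\lambda^*,\mb q)$ regardless of how the $\theta_x$'s are transported.
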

\begin{proof}
Choose $\lambda, \lambda^* : R \to \R_{\geq 0}$ as in \eqref{eq:1.28}, so that
$\mc H \cong \mc H (\mc R,\lambda,\lambda^*,\mb q)$.

When $\lambda^* (\alpha) > \lambda (\alpha)$, $\alpha$ must be a short root in a type $B_n$
component $R_i$ of $R$ and $\R R_i^\vee \cap Y = C_n$. Use the presentation from
Definition \ref{def:1.7}. Replace the basepoint 1 of $T = Y \otimes_\Z \C^\times$ by
$\sum_{i=1}^n e_i \otimes -1 \in T^W$. This produces a new torus $T'$, and the algebra
$\mc H$ can be presented as $\mc H (T',\lambda',\lambda^{*'},\mb q)$. Here
$\lambda' (\alpha) = \lambda^* (\alpha), \lambda^{*'}(\alpha) = \lambda (\alpha)$ and
$\lambda' = \lambda, \lambda^{*'} = \lambda^*$ on $R \setminus W \alpha$. This translates
to a Bernstein presentation $\mc H (\mc R,\lambda', \lambda^{*'},\mb q)$ of $\mc H$,
with $\lambda^{*'}(\beta) > \lambda' (\beta)$ for fewer $\beta \in R$ than before.
Repeating the procedure, we can achieve $\lambda (\alpha) \geq \lambda^* (\alpha)$
for all $\alpha \in R$.
\end{proof}

\subsection{Analogues for graded Hecke algebras} \
\label{par:analogues}

Motivated by Corollary \ref{cor:3.27} we investigate all finite dimensional representations of
graded Hecke algebras. The representation theory of graded Hecke algebras has been developed together
with that of affine Hecke algebras, and they are very similar. As far as the topics in this section
are concerned, these two kinds of algebras behave analogously. Instead of translating the
paragraphs \ref{par:induction}--\ref{par:Rgroups} to graded Hecke algebras, we will be more sketchy
here, just providing the necessary definitions and pointing out the analogies. For more background
and proofs we refer to \cite{BaMo2,Eve,KrRa,SolHomGHA,SolGHA}. 

Compared to Paragraph \ref{par:defGHA}, we assume in addition that $W$ is a crystallographic Weyl
group. Equivalently, the data $\mf t, \mf a, R, W,S$ for a graded Hecke algebra now come from a
based root datum $\mc R = (X,R,Y,R^\vee,\Delta)$. Let $k : R \to \C$ be a $W$-invariant parameter
function and consider the algebra $\mh H = \mh H (\mf t,W,k)$.

A parabolic subalgebra of $\mh H$ is by definition of the form $\mh H^P = \mh H (\mf t, W_P,k)$
for a subset $P \subset \Delta$. The associated "semisimple" quotient algebra is 
$\mh H_P = \mh H (\mf t_P,W_P,k)$. The relation between these two subquotients of $\mh H$ is simple:
\[
\mh H^P = \mh H_P \otimes_\C S(\mf t^{P*}) = \mh H_P \otimes_\C \mc O (\mf t^P) .
\]
In particular every irreducible representation of $\mh H^P$ is of the form $\pi_P \otimes \lambda$
for unique $\pi_P \in \Irr (\mh H_P)$ and $\lambda \in \mf t^P$. Parabolic induction for $\mh H$
is the functor $\ind_{H^P}^{\mh H}$.

\begin{lem}\label{lem:3.30}
\textup{\cite[Theorems 2.5.b and 2.11.b]{AMS3}} \\
The equivalences of categories from Theorems \ref{thm:3.25} and \ref{thm:3.26} and Corollary
\ref{cor:3.27} are compatible with parabolic induction.
\end{lem}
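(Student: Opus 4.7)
The plan is to verify compatibility separately for each of the three ingredients that make up the chain of equivalences (analytic localization, Lusztig's first reduction, Lusztig's second reduction) and then combine them. At each step the key point is that parabolic induction is defined as an extension of scalars, so compatibility will follow from associativity of tensor products once the relevant maps are shown to send parabolic subalgebras to parabolic subalgebras.

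First I would treat the analytic localization of Proposition \ref{prop:3.24}. Given $P \subset \Delta$ and a suitable open $W$-stable $U \subset T$, the inclusion $\mc H^P \hookrightarrow \mc H$ extends to an inclusion $(\mc H^P)^{an}(U) \hookrightarrow \mc H^{an}(U)$, where we identify $\mc O(T)^{W_P}$-modules via the natural localization. Since $\mc H^{an}(U) = C^{an}(U)^W \otimes_{\C[X]^W} \mc H$ is flat over $Z(\mc H) = \C[X]^W$, parabolic induction on the affine Hecke side commutes with the analytic completion functor. This step is essentially formal.

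Second, I would handle Theorem \ref{thm:3.25}. The embedding $\mc H(\mc R_t,\lambda,\lambda^*,\mb q)^{an}(U_t) \rtimes \Gamma_t \hookrightarrow \mc H^{an}(WU_t)$ is assembled from the natural inclusion $\mc H(\mc R_t,\lambda,\lambda^*,\mb q) \hookrightarrow \mc H$ and the intertwiners $\imath^\circ_w$ of Proposition \ref{prop:1.9} (for $w \in \Gamma_t$). Given a standard parabolic $P \subset \Delta$, set $R_{t,P} = R_t \cap R_P$ with base $P_t \subset R_{t,P}^+$, and $\Gamma_{t,P} = \Gamma_t \cap W_P$. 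The formula for $\imath^\circ_{s_\alpha}$ involves only $\theta_{\pm \alpha}$ and $T_{s_\alpha}$, so for $w \in \Gamma_{t,P}$ the element $\imath^\circ_w$ lies in the analytic localization of $\mc H^P$. Consequently the embedding of Theorem \ref{thm:3.25} restricts, on the parabolic subalgebra, to the analogous embedding built from $\mc R_{t,P}$ and $\Gamma_{t,P}$. This gives the required compatibility with parabolic induction.

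Third, for Theorem \ref{thm:3.26}, the isomorphism $\Phi_u$ is determined by $\Phi_u(f \imath^\circ_w) = (f \circ \exp_u) \tilde{\imath}_w$. For $w \in W_P$ the element $\imath^\circ_w$ lies in the rational localization of $\mc H^P$, and the formula for $\tilde{\imath}_w$ lives entirely inside $Q(S(\mf t^*))^{W_P} \otimes_{S(\mf t^*)^{W_P}} \mh H^{P'}$, where $\mh H^{P'}$ is the parabolic of $\mh H(\mf t,W(R_u),k_u)$ associated to $(R_u)_P$. Hence $\Phi_u$ intertwines the parabolic inclusions on both sides with the corresponding parabolic version of itself. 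Combining this with the first two steps yields Corollary \ref{cor:3.27} in a form compatible with parabolic induction, which is the content of the lemma.

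The main obstacle is purely organizational bookkeeping: at each stage one must match the correct root subsystem, stabilizer subgroup, and analytic neighborhood on the two sides so that the parabolic structures are genuinely preserved. In particular one must verify that the correspondence $P \leftrightarrow P_u$ (the base of $R_P \cap R_u$ inside $R_u^+$, which is typically \emph{not} a subset of $\Delta$) is the right matching, and that the open sets $U_t$ and $V$ used in Theorems \ref{thm:3.25} and \ref{thm:3.26} can be chosen coherently so that their intersections with $T^P$ respectively $\mf t^P$ satisfy the hypotheses of the parabolic versions of those theorems. Once this bookkeeping is in place, the algebraic core of each compatibility is transparent from the formulas defining the equivalences.
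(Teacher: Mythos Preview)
The paper does not give its own proof of this lemma: it simply records the citation \cite[Theorems 2.5.b and 2.11.b]{AMS3} and moves on. So there is no in-paper argument to compare against, and your task reduces to whether the outline you give is a credible reconstruction of what that cited proof must do.

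Your three-step decomposition (analytic localization, first reduction, second reduction) is the natural one, and the core observation---that each equivalence is built from explicit algebra maps which send parabolic subalgebras to parabolic subalgebras, so extension of scalars commutes with them---is correct in spirit. Two comments on the details. First, in Step~1 the flatness remark is slightly misdirected: what you actually need is that $\mc H$ is free as a right $\mc H^P$-module (equation~\eqref{eq:3.2} in the paper), so that $\ind_{\mc H^P}^{\mc H}$ commutes with base change along $Z(\mc H) \to C^{an}(U)^W$; flatness of the analytic localization over $Z(\mc H)$ is not the relevant fact. Second, and more substantially, the matching of parabolics you flag as ``the main obstacle'' is genuinely delicate: for $P \subset \Delta$, the subsystem $R_u \cap R_P$ need not have its base inside the base $\Delta_u$ of $R_u$, so on the graded side you are not inducing from a \emph{standard} parabolic of $\mh H(\mf t, W(R_u), k_u) \rtimes \Gamma_u$ but from a $W(R_u)$-conjugate of one. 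You correctly identify this, but resolving it requires either conjugating to a standard parabolic (and tracking what happens to the $\Gamma_u$-factor) or working with a notion of parabolic induction that does not depend on the choice of base. This is the substance behind the citation to \cite{AMS3}, and your sketch stops just short of it. The outline is sound; the gap is that the ``bookkeeping'' you defer is where the actual work lies.
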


Finite dimensional $\mh H$-modules have weights with respect to the commutative subalgebra
$S(\mf t^*) = \mc O (\mf t)$. With Theorem \ref{thm:3.26} one can translate many notions for 
$\mc H$-modules to $\mh H$-modules. In terms of weights, this boils down to replacing a subset
$U \subset T$ by $\exp^{-1}(U) \subset \mf t$. More concretely, we say that a finite dimensional
$\mh H$-representation $\pi$ is 
\begin{itemize}
\item tempered if Wt$(\pi) \subset \mf i a + \mf a^-$,
\item essentially tempered if Wt$(\pi) \subset \mf t^\Delta + i \mf a + \mf a^-$,
\item discrete series if Wt$(\pi) \subset i \mf a + \mf a^{--}$,
\item essentially discrete series if Wt$(\pi) \subset \mf t^\Delta + i \mf a + \mf a^{--}$.
\end{itemize}
Here $V + \mf a^{--}$ (for any $V \subset \mf t$) is considered as empty when $\mf a^{--} = \emptyset$.

Then a Langlands datum for $\mh H$ is a triple $(P,\tau,\lambda)$, where $P \subset \Delta$,
$\tau \in \Irr (\mh H_P)$ is tempered and $\lambda \in i \mf a^P + \mf a^{P++}$. Then
$\ind_{\mh H^P}^{\mh H}(\tau \otimes \lambda)$ is called a standard $\mh H$-module.
With these definitions, the Langlands classification (as in Theorem \ref{thm:3.10} and Lemma
\ref{lem:3.11}) holds true for graded Hecke algebras \cite{Eve}.

As expected, and proven in \cite[Theorem 2.11.d]{AMS3}, the equivalence of categories between 
$\Mod_{f,V}(\mh H (\mf t,W,k_u))$ and $\Mod_{f,\exp_u (V)}(\mc H)$ resulting from Theorem 
\ref{thm:3.26} and Proposition \ref{prop:3.24} preserves temperedness and (essentially) 
discrete series. Combining that with Theorem \ref{thm:3.25}, we find:

\begin{lem}\label{lem:3.28}
Assume that \eqref{eq:3.5} holds. The equivalence of categories between 
$\Mod_{f,W u \exp (\mf a)}(\mc H)$ and $\Mod_{f,\mf a}(\mh H (\mf t,W(R_u),k_u)\rtimes \Gamma_u)$
from Corollary \ref{cor:3.27} preserves temperedness and (essentially) discrete series. 
\end{lem}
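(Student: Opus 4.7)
The strategy is to decompose the equivalence of Corollary \ref{cor:3.27} into the two constituent steps coming from Theorems \ref{thm:3.25} and \ref{thm:3.26}, and to observe that preservation of temperedness and (essentially) discrete series is already built into each step.

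First, apply Theorem \ref{thm:3.25} with $t = u \in T_\un$. The hypothesis $W_{|u|^{-1}u} = W_u$ is automatic since $|u| = 1$. Because $u$ is unitary, part (c) applies: we may take $U_t$ of the form $U'_t \times \exp(\mf a)$ with $U'_t$ a small $W_u$-stable open ball around $u$ in $T_\un$, and the resulting equivalence between $\Mod_{f, W U_t}(\mc H)$ and $\Mod_{f, U_t}\bigl(\mc H(\mc R_u, \lambda, \lambda^*, \mb q) \rtimes \Gamma_u\bigr)$ preserves temperedness and (essentially) discrete series. Restricting $U'_t$ to a neighborhood base of $\{u\}$ in $T_\un$ yields the analogous preservation for the sub-category of modules whose weights lie in $W u \exp(\mf a)$ (respectively $u \exp(\mf a)$).

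Second, apply Theorem \ref{thm:3.26} with $V = \mf a$. Injectivity of $\exp_u$ on $\mf a$ is clear since $\exp$ is a diffeomorphism onto $\exp(\mf a)$, and because $\inp{\alpha}{\lambda} \in \R$ for $\lambda \in \mf a$ together with $k_u(\alpha) \in \R$ (which holds by \eqref{eq:3.5} in view of \eqref{eq:3.24}), the third hypothesis is vacuous. The paragraph preceding Lemma \ref{lem:3.30} records, citing \cite[Theorem 2.11.d]{AMS3}, that the resulting equivalence
\[
\Mod_{f, u \exp(\mf a)}\bigl(\mc H(\mc R_u, \lambda, \lambda^*, \mb q) \rtimes \Gamma_u\bigr) \isom \Mod_{f, \mf a}\bigl(\mh H(\mf t, W(R_u), k_u) \rtimes \Gamma_u\bigr)
\]
preserves temperedness and (essentially) discrete series. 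Composing with the first step gives the claim.

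The conceptual reason both preservations hold is that for a weight of the form $t = u \exp(\lambda) \in T$ with $u \in T_\un$ and $\lambda \in \mf t$, the polar decomposition \eqref{eq:3.4} yields $|t| = \exp(\operatorname{Re}\lambda)$, so the condition $|t| \in T^-$ (respectively $T^{--}$) translates to $\operatorname{Re}\lambda \in \mf a^-$ (respectively $\mf a^{--}$)---exactly the weight cones appearing in the definition of (essentially) tempered and (essentially) discrete series $\mh H$-modules. The main subtlety, handled in \cite{SolAHA,AMS3}, is that this translation correctly identifies the chamber structure of $R$ used on the affine side with that of the sub-root system $R_u$ used on the graded side; the crossing with $\Gamma_u$ is harmless because temperedness and discrete series are conditions on the $\C[X]$- respectively $\mc O(\mf t)$-weights, hence insensitive to the $\Gamma_u$-action.
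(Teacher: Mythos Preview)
Your proposal is correct and follows exactly the paper's own reasoning: decompose the equivalence into the two reduction steps, invoke Theorem~\ref{thm:3.25}(c) for the first (using $u \in T_\un$), and cite \cite[Theorem~2.11.d]{AMS3} for the second. One small correction: the paragraph you refer to, which records the citation to \cite[Theorem~2.11.d]{AMS3}, is the paragraph immediately preceding Lemma~\ref{lem:3.28} itself, not Lemma~\ref{lem:3.30}.
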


From now on we assume that $k : R \to \C$ has values in $\R$, so that $\mh H$ is related to an 
affine Hecke algebra with parameters in $\R_{>0}$.
As induction data for $\mh H$ we take triples $\tilde \xi = (P,\delta,\lambda)$ where
$P \subset \Delta$, $(\delta,V_\delta) \in \Irr (\mh H_P)$ is discrete series and 
$\lambda \in \mf t^P$. The space of such triples (with $\delta$ considered up to isomorphism) 
is denoted $\tilde \Xi$. The parabolically induced representation attached to an induction datum is
\[
\pi (P,\delta,\lambda) = \ind_{\mh H^P}^{\mh H}(\delta \otimes \lambda) .
\]
\begin{lem}\label{lem:3.29}
Let $\tilde \xi = (P,\delta,\lambda) \in \tilde \Xi$.
\enuma{
\item $\pi (\tilde \xi)$ is tempered if and only if $\lambda \in i \mf a^P$.
In that case $\pi (\tilde \xi)$ is completely reducible.
\item Let $W_P \mr{cc}(\delta)$ be the central character of $\delta$.
The central character of $\pi (\tilde \xi)$ is $W (\mr{cc}(\delta) + \lambda)$.
It lies in $\mf a / W$ if and only if $\lambda \in \mf a^P$.
}
\end{lem}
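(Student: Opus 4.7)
The plan is to deduce both assertions from the graded Hecke algebra analogue of Lemma~\ref{lem:3.3}: the $\mc O(\mf t)$-weights of $\pi(P,\delta,\lambda)=\ind_{\mh H^P}^{\mh H}(\delta\otimes\lambda)$ are exactly the vectors $w\cdot(\mu+\lambda)$ with $\mu\in\mr{Wt}(\delta)$ and $w$ running through the set $W^P$ of shortest-length coset representatives of $W/W_P$, characterized by $w(P)\subset R^+$. I will work throughout with the orthogonal decomposition $\mf t=\mf t_P\oplus\mf t^P$, together with $\mr{Wt}(\delta)\subset i\mf a_P+\mf a_P^{--}$ (discrete series condition) and $\lambda\in\mf t^P$.

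For part (b), I would first compute the central character by pullback: $Z(\mh H)=S(\mf t^*)^W$ sits inside $Z(\mh H^P)=S(\mf t_P^*)^{W_P}\otimes S((\mf t^P)^*)$ via restriction of $W$-invariant polynomials on $\mf t$ to $W_P$-invariant ones on $\mf t_P\oplus\mf t^P$, which forces $\mr{cc}(\pi(\tilde\xi))=W(\mr{cc}(\delta)+\lambda)$. For the reality criterion, the key input is that, when $k$ is real-valued, discrete series representations of $\mh H_P$ have real central character, so $\mr{cc}(\delta)$ admits a representative in $\mf a_P$; this is standard and may be transported from the affine setting through Lemma~\ref{lem:3.28}. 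Using $\mf a_P\cap\mf a^P=\{0\}$, the $W$-orbit $W(\mr{cc}(\delta)+\lambda)$ then has a representative in $\mf a$ if and only if $\lambda\in\mf t^P\cap\mf a=\mf a^P$.

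The core of part (a) is the geometric lemma
\[
\bigcap_{w\in W^P}w^{-1}(\mf a^-)=\mf a_P^-.
\]
The inclusion ``$\supseteq$'' is a direct calculation: if $v=\sum_{\alpha\in P}x_\alpha\alpha^\vee$ with $x_\alpha\leq 0$ and $\delta\in\mf a^{*+}$, then $w(\alpha^\vee)$ is a positive coroot because $w(\alpha)\in R^+$ for $w\in W^P$, hence $\inp{\delta}{wv}=\sum_\alpha x_\alpha\inp{\delta}{w\alpha^\vee}\leq 0$. For ``$\subseteq$'' I would invoke the standard fact that $W^P\cdot\overline{\mf a^{*+}}$ exhausts the $P$-dominant cone $\{\eta\in\mf a^*:\inp{\eta}{\alpha^\vee}\geq 0\;\forall\alpha\in P\}$, whose polar cone is exactly $\mf a_P^-$. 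Granting the lemma, the ``if'' direction of (a) is immediate: when $\lambda\in i\mf a^P$, $\mr{Re}(\mu+\lambda)=\mr{Re}(\mu)\in\mf a_P^{--}\subset\mf a_P^-$, and the lemma gives $w\cdot\mr{Re}(\mu+\lambda)\in\mf a^-$ for every $w\in W^P$, so $\pi(\tilde\xi)$ is tempered. For the ``only if'' direction, if $\pi(\tilde\xi)$ is tempered then $\mr{Re}(\mu+\lambda)$ must lie in $\bigcap_{w\in W^P}w^{-1}(\mf a^-)=\mf a_P^-\subset\mf a_P$; subtracting $\mr{Re}(\mu)\in\mf a_P$ forces $\mr{Re}(\lambda)\in\mf a_P\cap\mf a^P=\{0\}$, so $\lambda\in i\mf a^P$.

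Complete reducibility of the tempered $\pi(\tilde\xi)$ is deeper, and I would cite the graded Hecke algebra analogue of Proposition~\ref{prop:3.15}(a), ultimately resting on the Plancherel/$C^*$-analysis of \cite{Opd-Sp,DeOp1} transferred through Lemma~\ref{lem:3.28}. The hard parts of the plan are not the combinatorial geometric lemma but the two analytic inputs imported via the reduction theorems: the reality of discrete series central characters for real $k$, and the complete reducibility of tempered parabolic inductions from discrete series representations. Both are standard in the literature but cannot be derived purely algebraically.
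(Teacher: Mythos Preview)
Your proposal is correct and follows essentially the same route as the paper's proof. The paper simply cites Propositions~\ref{prop:3.6} and~\ref{prop:3.15}(a) for part~(a), Lemma~\ref{lem:3.3} and \cite[Lemma~2.13]{Slo2} for part~(b), and transfers everything from the affine setting via Paragraph~\ref{par:reduction}. You do the same for the deep inputs (complete reducibility of tempered parabolic inductions, reality of discrete series central characters), but you unpack the temperedness criterion itself by proving the cone identity $\bigcap_{w\in W^P}w^{-1}(\mf a^-)=\mf a_P^-$ directly in the graded setting rather than importing Proposition~\ref{prop:3.6}. This is a genuine, if modest, gain in explicitness: your argument makes visible the combinatorial core (that $w\in W^P$ sends $P$ into $R^+$, and that the polar of the $P$-dominant cone is $\mf a_P^-$), whereas the paper's version hides it behind a citation.

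One small slip: in your ``$\subseteq$'' direction you want $(W^P)^{-1}\cdot\overline{\mf a^{*+}}$, not $W^P\cdot\overline{\mf a^{*+}}$, to cover the $P$-dominant cone, since the condition $v\in w^{-1}\mf a^-$ translates to $\inp{w^{-1}\delta}{v}\leq 0$ for $\delta\in\overline{\mf a^{*+}}$. The standard argument (write the $W$-dominant representative of a $P$-dominant $\eta$ as $w'\eta$ and factor $w'$ through $W^P\cdot W_P$) gives exactly $(W^P)^{-1}\overline{\mf a^{*+}}$ as the $P$-dominant cone, so the conclusion stands.
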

\begin{proof}
The arguments from these statement will be given in the setting of affine Hecke algebras.
From there they can be translated to graded Hecke algebras with Paragraph \ref{par:reduction}.\\
(a) follows from Propositions \ref{prop:3.6} and \ref{prop:3.15}.a.\\
(b) The expression for the central character comes from Lemma \ref{lem:3.3}. 
Since $k$ is real-valued and $\delta$ is discrete series, $W_P \mr{cc}(\delta)$ lies in 
$\mf a_P / W_P$ \cite[Lemma 2.13]{Slo2}. These two facts imply the second statement.
\end{proof}

The collection of intertwining operators between the representation $\pi (\tilde{\xi})$ with
$\tilde \xi \in \tilde \Xi$ is simpler than for affine Hecke algebras, because $\mf t^P \cap
\mf t_P = \{0\}$ does not contribute to it. There is a groupoid $\mc W_{\tilde \Xi}$ over 
$\Delta$, with
\[
\mc W_{\tilde \Xi,P P'} = \{ w \in W : w (P) = P' \} .
\]
To every $w \in \mc W_{\tilde \Xi, P P'}$ one can associate an algebra isomorphism
\[
\begin{array}{ccccc}
\psi_w : & \mh H^P & \to & \mh H^{w(P)} \\
& x \otimes w' & \mapsto & w(x) \otimes w w' w^{-1} & 
\qquad x \in \mf t^*, w' \in W_P . \end{array}
\]
Then $w (\tilde \xi) = (w(P), \delta \circ \psi_w^{-1}, w(\lambda))$ is another induction
datum, and there is an intertwining operator
\[
I_w : \pi (P,\delta,\lambda) \to \pi (w(P),\delta \circ \psi_w^{-1},w(\lambda)) .
\] 
The latter is rational as a function of $\lambda \in \mf t^P$ and comes from
\[
\begin{array}{ccc}
\big( Q(S (\mf t^*))^W \otimes_{S(\mf t^*)^W} \mh H \big) \otimes_{\mh H^P} V_\delta & \to &
\big( Q(S (\mf t^*))^W \otimes_{S(\mf t^*)^W} \mh H \big) \otimes_{\mh H^{w(P)}} V_\delta \\
h \otimes v & \mapsto & h \tilde{\imath}_w \otimes v 
\end{array},
\]
where $\tilde{\imath}_w$ is as in Proposition \ref{prop:1.10}.
We call an induction datum $\tilde \xi = (P,\delta,\lambda)$ positive if 
$\lambda \in i \mf a^P + \mf a^{P+}$, and we define 
\begin{equation}\label{eq:3.32}
\begin{array}{lll}
P(\tilde \xi) & = & \{ \alpha \in \Delta : \Re \inp{\alpha}{\lambda} = 0 \} ,\\
\tilde{\Xi}^+ & = & \{ (P,\delta,\lambda) \in \tilde{\Xi} : \lambda \in i \mf a^P + \mf a^{P+} \} .
\end{array}
\end{equation}
Using these notions, the whole of paragraph \ref{par:discrete} holds for graded Hecke algebras.
see \cite{SolGHA}.

\begin{ex}\label{ex:3.1}
Consider $\mf a = \mf a^* = \R, \mf t = \mf t^* = \C, R = \{\pm 1\}, \Delta = \{\alpha = 1\},
W = \langle s_\alpha \rangle$. The graded Hecke algebra $\mh H = \mh H (\mf t,W,k)$ with 
$k(\alpha) = k > 0$
has a unique discrete series representation. It is the analogue of the Steinberg representation,
given in this setting by $\mr{St}|_{\mc O (\mf t)} = \C_{-k}$ and $\mr{St}|_{\C [W]} =$ sign.

Apart from $(\emptyset,\mr{St},0)$, the positive induction data are $(\emptyset,\triv,\lambda)$
with $\lambda \in i \mf a + \mf a^+ = i \R + \R_{\geq 0}$. For $\lambda \neq k$, 
$\pi (\emptyset,\triv,\lambda) = \mr{ind}_{\mc O (\mf t)}^{\mh H}$ is irreducible, while 
$\pi (\emptyset,\triv,k)$ has the "trivial representation" as unique irreducible quotient.
It is given by $\triv|_{\mc O (\mf t)} = \C_k$ and $\triv |_{\C [W]} = \triv$.
\end{ex}

\section{Classification of irreducible representations}
\label{sec:class}

As in paragraph \ref{par:discrete}, we work with an affine Hecke algebra 
$\mc H = \mc H (\mc R, \lambda, \lambda^*,\mb{q})$ where $\mb{q} > 1$ and $\lambda, \lambda^*$
are real-valued. We abbreviate $\mh H = \mh H (\mf t,W,k)$ where $W = W(R), \mf t = \mr{Lie}(T)$
and $k$ is a real-valued parameter function.

In Theorem \ref{thm:3.19} we reduced the classification of irreducible $\mc H$-representations
to a little combinatorics with a groupoid $\mc W_\Xi$ and two substantial subproblems:
\begin{itemize}
\item classify the irreducible discrete series representations $\delta$ of the parabolic
subquotient algebras $\mc H_P$ (modulo the action of $T^P \cap T_P$ via the automorphisms
$\psi_{P,u}$),
\item determine the irreducible quotients of $\pi (\xi)$ for $\xi = (P,\delta,t) \in \Xi^+$.
\end{itemize}
In this section we address both these issues.

\subsection{Analytic R-groups} \
\label{par:Rgroups}

By Proposition \ref{prop:3.15}.a--c the second subproblem above is equivalent to classifying 
the irreducible summands of the completely reducible $\mc H^{P(\xi)}$-representation 
\[
\pi^{P(\xi)} (\xi) := \ind_{\mc H^P}^{\mc H^{P(\xi)}}(\delta \circ \psi_t) .
\]
For that we have to analyse $\End_{\mc H^{P(\xi)}}(\pi^{P(\xi)} (\xi))$, which by 
Proposition \ref{prop:3.15}.e boils down to investigating $\End_{\mc H}(\pi (\xi))$.

For $\xi = (P,\delta,t) \in \Xi^+$ we let $\mc W_\xi$ be the subgroup of $\mc W_{\Xi,PP}$
that stabilizes $\xi$ (up to isomorphism of induction data). From Theorem \ref{thm:3.18}
we know that the intertwining operators $\pi (w,\xi)$ with $w \in \mc W_\xi$ span
$\End_{\mc H}(\pi (\xi))$, but they need not be linearly independent. Knapp and Stein
exhibited a subgroup $\mf R_\xi$ of $\mc W_\xi$ such that the $\pi (w,\xi)$ with 
$w \in \mf R_\xi$ do form a basis of $\End_{\mc H}(\pi (\xi))$.

Let $R_P^+$ be the set of positive roots in $R_P$, with respect to the basis $P$.
Suppose that $\alpha \in R^+ \setminus R_P^+$ and that $P \cup \{ \alpha \}$ is a basis
of a parabolic root subsystem $R_{P \cup \{\alpha\}}$ of $R$. Then we put
\[
\alpha^P = \alpha |_{\mf a^{P*}} \qquad \text{and} \qquad
c_\alpha^P = \prod\nolimits_{\beta \in R^+_{P \cup \{\alpha\}}} c_\beta \in \C (X).
\]
We note that $c_\alpha^P$ is $W_P$-invariant because $W_P$ stabilizes $R_P$ and does not
make positive roots outside $R_P$ negative. Let $\delta \in \Irr (\mc H_P)$ be discrete
series, with central character cc$(\delta) = W_P r$. Then $t \mapsto c_\alpha^P (rt)$
is a rational function on $T^P$, independent of the choice of the representative $r$
for cc$(\delta)$. For $\xi = (P,\delta,t)$ we consider the following subset of $\mf a^{P*}$:
\[
R_\xi = \big\{ \pm \alpha^P : \alpha \in R^+ \setminus R_P^+ \text{ as above, } 
c_\alpha^P \text{ has a non-removable pole at } rt \big\} .
\]
This generalizes the root system $R_t$ from \eqref{eq:3.20}.

\begin{prop}\label{prop:3.20}
\textup{\cite[Proposition 4.5]{DeOp2}}
\enuma{
\item $R_\xi$ is a reduced root system in $\mf a^{P*}$,
\item the Weyl group $W(R_\xi)$ is naturally a normal subgroup of $\mc W_\xi$.
}
\end{prop}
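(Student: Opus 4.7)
The plan is to adapt the Knapp--Stein $R$-group formalism to affine Hecke algebras, following \cite{DeOp1,DeOp2}. The key ingredients are Opdam's classification of residual points \cite{Opd-Sp}, which governs the central characters of discrete series of $\mc H_P$; the $W$-equivariance of the family $\{c_\beta\}_{\beta \in R}$; and a rigidity statement identifying discrete series of $\mc H_P$ with fixed residual central character up to $T^P \cap T_P$-twists.

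First I would check that whether $c_\alpha^P$ has a non-removable pole at $rt$ depends only on $\alpha^P$, legitimizing the definition of $R_\xi$. The factor $c_\beta$ is singular only on the divisors $\theta_\beta = \pm 1$; for $\beta \in R_P^+$, the order of vanishing of $c_\beta$ along $rT^P$ is determined by the residual central character $W_P r$ of $\delta$, and Opdam's analysis forces the combined order of $c_\alpha^P$ along $rT^P$ to depend only on $\alpha^P$ and on $R_P$, not on the particular lift $\alpha \in R^+ \setminus R_P^+$.

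For part (a), finiteness and $\pm$-symmetry of $R_\xi$ are immediate; reducedness is built into the $\pm \alpha^P$ convention; Cartan integrality descends from $R$. The key axiom is closure under reflections. For each $\alpha^P \in R_\xi$, I would take $w_\alpha \in W$ to be the unique non-trivial coset representative of $W_{P \cup \{\alpha\}}/W_P$ that stabilizes $P$; it lies in the normalizer of $W_P$ in $W$ and acts on $\mf a^{P*}$ as $s_{\alpha^P}$. Using the $W$-equivariance of $\{c_\beta\}$ together with a careful bookkeeping of how $w_\alpha$ permutes the poles of the individual $c$-factors along $rT^P$ (accounting for the $W_P$-action on the representative $r$), one verifies $s_{\alpha^P}(R_\xi) = R_\xi$.

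For part (b), I would lift each reflection $s_{\alpha^P}$ to an element $(w_\alpha, u_\alpha) \in \mc W_\xi$. Rigidity yields $u_\alpha \in T^P \cap T_P$ such that $\delta \circ \psi_{w_\alpha}^{-1} \cong \delta \circ \psi_{P,u_\alpha}$, and the non-removable pole condition at $rt$ is exactly what forces the same $u_\alpha$ to satisfy $w_\alpha(u_\alpha t) = t$, placing $(w_\alpha, u_\alpha)$ in $\mc W_\xi$. Normality then follows almost automatically: any $(w,u) \in \mc W_\xi$ permutes $R \setminus R_P$ via $\alpha \mapsto w(\alpha)$ while preserving the pole conditions defining $R_\xi$, so its conjugation action sends the lift of $s_{\alpha^P}$ to that of $s_{w(\alpha)^P} \in W(R_\xi)$. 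The main obstacle is precisely this rigidity step; here the \emph{non-removability} of the pole is essential, since a merely cancellable pole would produce a $w_\alpha$ fixing $rt$ only modulo $W_P$, insufficient to control both the isomorphism class of $\delta$ and the $T^P$-coordinate $t$ simultaneously.
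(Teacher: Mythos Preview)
The paper does not prove this proposition; it simply cites \cite[Proposition 4.5]{DeOp2} and moves on. There is therefore no argument in the paper to compare your proposal against.

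As for your sketch itself: the overall architecture is the right one and matches the Knapp--Stein philosophy that Delorme--Opdam adapt. A few points would need tightening before this becomes a proof. Your claim that the pole behaviour of $c_\alpha^P$ along $rT^P$ depends only on $\alpha^P$ (and not on the lift $\alpha$) is correct but requires a genuine argument about residual points, not just an appeal to Opdam's classification. More seriously, in part (b) your ``rigidity step'' is where the real content lies: you assert that the non-removable pole at $rt$ forces $w_\alpha(u_\alpha t) = t$ for the \emph{same} $u_\alpha$ that realizes $\delta \circ \psi_{w_\alpha}^{-1} \cong \delta \circ \psi_{P,u_\alpha}$, but you have not explained why. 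In \cite{DeOp2} this is established via the analytic behaviour of the normalized intertwining operators and the Plancherel density, not by a direct combinatorial argument on $c$-functions; your outline gestures at this but does not supply the mechanism. Without that link you cannot conclude that the lift $(w_\alpha,u_\alpha)$ actually lands in $\mc W_\xi$, and the rest of part (b) depends on it.
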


The group $\mc W_\xi$ acts naturally on $\mf a^{P*}$ and stabilizes all the data used to
construct $R_\xi$, so it also acts naturally on $R_\xi$. Clearly $R^+$ determines a set
of positive  roots $R_\xi^+$ in $R_\xi$. We define
\begin{equation}\label{eq:3.16}
\mf R_\xi = \{ w \in \mc W_\xi : w (R_\xi^+) = R_\xi^+ \} .
\end{equation}
As $W(R_\xi)$ acts simply transitively on the collection of positive systems of $R_\xi$:
\begin{equation}\label{eq:3.14}
\mc W_\xi = W(R_\xi) \rtimes \mf R_\xi .
\end{equation}

\begin{thm}\label{thm:3.21}
Let $\xi \in \Xi^+$.
\enuma{
\item For $w \in \mc W_\xi$, the intertwining operator $\pi (w,\xi)$ is scalar if
and only if $w \in W(R_\xi)$.
\item There exists a 2-cocycle $\natural_\xi : \mf R_\xi \times \mf R_\xi \to \C^\times$
(depending on the normalization of the operators $\pi (w,\xi)$ with $w \in \mc W_\xi$)
such that 
\[
\End_{\mc H}(\pi (\xi)) = \mr{span} \{ \pi (w,\xi) : w \in \mf R_\xi \}
\]
is isomorphic to the twisted group algebra $\C [\mf R_\xi ,\natural_\xi]$. The 
multiplication in $\C [\mf R_\xi ,\natural_\xi]$ is as in \eqref{eq:3.10}.
\item Given the normalization of these intertwining operators, we write
\[
\pi^{P(\xi)}(\xi,\rho) = \Hom_{\C [\mf R_\xi ,\natural_\xi]} \big( \rho, \pi^{P(\xi)}(\xi) \big) .
\]
There are bijections
\[
\begin{array}{ccccc}
\Irr (\C [\mf R_\xi ,\natural_\xi]) & \to & \{ \text{irreducible summands of } 
& \to & \{ \text{irreducible quotients of }\\
& & \pi^{P(\xi)}(\xi), \text{ up to isomorphism} \} & &  
\pi (\xi), \text{ up to isomorphism} \} \\
\rho & \mapsto & \pi^{P(\xi)}(\xi,\rho) & \mapsto & L \big( P(\xi), \pi^{P(\xi)}(\xi,\rho) \big)
\end{array}
\]
}
\end{thm}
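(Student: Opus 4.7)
The plan is to compute $\End_{\mc H}(\pi(\xi))$ explicitly. By Theorem \ref{thm:3.18}, this algebra is spanned by the intertwining operators $\{\pi(w,\xi) : w \in \mc W_\xi\}$, so the task reduces to identifying the scalar ones and showing they correspond exactly to $W(R_\xi)$. Parts (b) and (c) then follow formally, the latter via a double commutant argument.

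For part (a), I would argue via rank-one reduction, in the spirit of Harish-Chandra--Knapp--Stein. Given $\alpha \in R_\xi$, pick $\beta \in R^+ \setminus R_P^+$ with $\beta^P = \alpha$ so that $c_\beta^P$ has a non-removable pole at $rt$, where $W_P r = \mr{cc}(\delta)$. After decomposing $\pi(s_\beta,\xi)$ through the rank-one parabolic algebra $\mc H^{P \cup \{\beta\}}$, the pole in $c_\beta^P$ forces the appropriately normalized intertwiner between $\delta \circ \psi_t$ and itself to act by a scalar: this is the Hecke-algebra manifestation of the fact that a discrete series representation of a rank-one quotient admits no non-scalar self-intertwining of this ``critical'' type. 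Iterating over the reflections generating $W(R_\xi)$ shows the ``if'' direction. Conversely, if $w \in \mc W_\xi$ lies outside $W(R_\xi)$, then by \eqref{eq:3.14} it is of the form $w' r$ with $r \in \mf R_\xi \setminus \{1\}$; such an $r$ permutes $R_\xi^+$ non-trivially and hence, via the rational formula \eqref{eq:3.7}, sends residue pieces of $\pi(\xi)$ associated to different poles of $\prod_{\alpha \in R_\xi^+} c_\alpha^P$ onto distinct ones, so $\pi(r,\xi)$ (and hence $\pi(w,\xi)$) cannot be scalar.

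Part (b) is then immediate: the operators $\{\pi(r,\xi) : r \in \mf R_\xi\}$ form a basis of $\End_{\mc H}(\pi(\xi))$, and the composition law \eqref{eq:3.10} restricts to give the twisted group algebra structure $\C[\mf R_\xi,\natural_\xi]$, where $\natural_\xi$ is the restriction of $\natural$ to $\mf R_\xi \times \mf R_\xi$. Different normalizations of the $\pi(w,\xi)$ change $\natural_\xi$ only by a coboundary.

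For part (c), by Proposition \ref{prop:3.15}.a the module $\pi^{P(\xi)}(\xi)$ is completely reducible, and by Proposition \ref{prop:3.15}.e its endomorphism algebra is $\End_{\mc H}(\pi(\xi)) \cong \C[\mf R_\xi,\natural_\xi]$. The double commutant theorem then yields a decomposition
\[
\pi^{P(\xi)}(\xi) \;\cong\; \bigoplus_{\rho \in \Irr(\C[\mf R_\xi,\natural_\xi])} \rho^* \otimes \pi^{P(\xi)}(\xi,\rho),
\]
with the $\pi^{P(\xi)}(\xi,\rho) = \Hom_{\C[\mf R_\xi,\natural_\xi]}(\rho,\pi^{P(\xi)}(\xi))$ irreducible and pairwise inequivalent. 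Composing with the bijection from Proposition \ref{prop:3.15}.c between irreducible summands of $\pi^{P(\xi)}(\xi)$ and irreducible quotients of $\pi(\xi)$ (the Langlands quotient $L(P(\xi),\cdot)$) yields the chain of bijections. The main obstacle is the scalarity statement in part (a): everything else is formal once the rank-one analysis of $\pi(s_\beta,\xi)$ acting on a discrete series module at a pole of $c_\beta^P$ has been carried out.
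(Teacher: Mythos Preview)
The paper's route is quite different from yours: it does not attempt a direct argument at all, but cites \cite[Theorems 5.4 and 5.5]{DeOp2} for the unitary case $t \in T^P_\un$ (where $\pi(\xi)$ is already tempered and completely reducible), and then uses Proposition \ref{prop:3.15}.a,c,e to reduce a general $\xi \in \Xi^+$ to this case by passing to $\mc H^{P(\xi)}$. Your proposal instead tries to sketch the underlying Knapp--Stein-type mechanism directly, which amounts to reproving the cited Delorme--Opdam results. That is a legitimate strategy, and your treatment of (b) and (c) via the double commutant theorem and Proposition \ref{prop:3.15} is essentially correct once (a) is known.

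However, your ``only if'' direction in (a) has a genuine gap. You claim that a non-trivial $r \in \mf R_\xi$ ``permutes $R_\xi^+$ non-trivially'', but by definition \eqref{eq:3.16} every element of $\mf R_\xi$ stabilizes $R_\xi^+$ as a set. Even read charitably (that $r$ induces a non-trivial permutation of the roots \emph{within} $R_\xi^+$), this need not hold: in the example $\mc H = \C[X \rtimes W]$ one has $R_\xi = \emptyset$ while $\mf R_\xi = W_t$ can be arbitrarily large. And even when $r$ does permute $R_\xi^+$ non-trivially, it is not clear what ``residue pieces of $\pi(\xi)$ associated to different poles'' means concretely, nor why being sent to distinct such pieces would preclude $\pi(r,\xi)$ from being scalar. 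The linear independence of $\{\pi(r,\xi) : r \in \mf R_\xi\}$ in \cite{DeOp2} is established by analytic means---via the Fourier transform on the Schwartz completion from \cite{DeOp1}---rather than by the combinatorial mechanism you sketch. This is precisely the deep input that the paper outsources by citation.
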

\begin{proof}
For $\xi = (P,\delta,t)$ with $t \in T^P_\un$, all this (and more) was shown in 
\cite[Theorems 5.4 and 5.5]{DeOp2}. Using Proposition \ref{prop:3.15}.a, the same proofs
work for $\pi^{P(\xi)}(P,\delta,t)$ with any $\xi \in \Xi$, they show the theorem on the
level of $\mc H^{P(\xi)}$. Finally, we apply Proposition \ref{prop:3.15}.c,e.
\end{proof}

\begin{ex} \label{ex:Rgroups}
\begin{itemize}
\item $\mc R$ of type $\widetilde{A_1}$, $\mc H = \mc H (\mc R,\mb q)$. The root system
$R_\xi$ is nonempty only for $\xi = (\emptyset,\triv,1)$, and in that case
$R_\xi = R, \mc W_\xi = W = W(R_\xi), \mf R_\xi = 1$ and $\pi (\xi) =
\ind_{\C [X]}^{\mc H}(\C_1)$ is irreducible.

The R-group $R_\xi$ is nontrivial only for $\xi = (\emptyset,\triv,-1)$, and in that
case $R_\xi = \emptyset, \mc W_\xi = \mf R_\xi = W$. Further $\pi (\xi) = 
\ind_{\C [X]}^{\mc H}(\C_{-1})$ is reducible and $\End_{\mc H}(\pi (\xi)) \cong
\C [\mf R_\xi]$. With the appropriate normalization of the intertwining operator
$\pi (s_\alpha,\xi)$, we have
\[
\pi (\xi,\rho = \triv) = \pi (-1,\triv) ,\quad
\pi (\xi,\rho = \mr{sign}) = \pi (-1,\mr{St}) .
\]
\item $\mc R = \mc R_n, \mc H = \mc H_n (\mb{q})$. For all $\xi = (P,\delta,t) \in \Xi$,
\[
\mc W_\xi = \{ w \in S_n : w (P) = P , w(t) = t \} = W(R_\xi)
\]
and $\mf R_\xi = 1$. Hence 
\[
\End_{\mc H^{P(\xi)}}(\pi^{P(\xi)} (\xi)) \cong \End_{\mc H}(\pi (\xi)) \cong \C
\]
and $\pi (\xi)$ has only one irreducible quotient (as we already saw in several ways).
\item $\mc R$ arbitrary, $\lambda = \lambda^* = 0, \mc H = \mc H (\mc R,1) = \C [X \rtimes W]$.
The only discrete series representation of a parabolic subquotient algebra of this $\mc H$
is the trivial representation of $\mc H_\emptyset = \C$. Hence 
\[
\Xi = \{ (\emptyset,\triv,t) : t \in T \}.
\] 
Further $c_\alpha = 1$ for all $\alpha \in R$, so $R_\xi$ is empty for all $\xi \in \Xi$. 
As $T^\emptyset \cap T_\emptyset = T_\emptyset = \{1\}$,
\[
\mc W_\xi = \mc W_{(\emptyset,\triv,t)} = W_t = R_\xi .
\] 
Here $\End_{\mc H}(\ind_{\C[X]}^{\mc H} (\C_t)) \cong \C [W_t]$ acts on the vector space
$\ind_{\C[X]}^{\mc H} (\C_t) \cong \C [W]$ as the induction, from $W_t$ to $W$ of the right 
regular representation of $W_t$, as can be inferred from \eqref{eq:3.7}.
\end{itemize} \end{ex}

The last example shows that R-groups can be as complicated as $W$ itself. This is in sharp contrast
with the situations for real reductive groups and for classical $p$-adic groups, where all
R-groups are abelian 2-groups. In all examples that we are aware of, the 2-cocycle $\natural_\xi$
of $\mf R_\xi$ is a coboundary, so that $\C[\mf R_\xi,\natural_\xi]$ is isomorphic to $\C [\mf R_\xi]$.
It would be interesting to know whether or not this is always true for affine Hecke algebras.

By Proposition \ref{prop:3.15}.b, the $\mc H$-module
\begin{multline}\label{eq:3.15}
\ind_{\mc H^{P(\xi)}}^{\mc H}(\pi^{P(\xi)} (\xi,\rho)) = \ind_{\mc H^{P(\xi)}}^{\mc H}
\big( \Hom_{\C [\mf R_\xi,\natural_\xi]}(\rho, \pi^{P(\xi)}(\xi)) \big) = \\
\Hom_{\C [\mf R_\xi,\natural_\xi]} \big( \rho, \ind_{\mc H^{P(\xi)}}^{\mc H} \pi^{P(\xi)} (\xi) \big) 
= \Hom_{\C [\mf R_\xi,\natural_\xi]} \big( \rho,\pi (\xi)) = \pi (\xi,\rho)
\end{multline} 
from Theorem \ref{thm:3.21}.c is standard. Its unique irreducible quotient is $L(P(\xi),\pi^{P(\xi)}
(\xi,\rho) )$. By Proposition \ref{prop:3.15}.c,e, every standard $\mc H$-module is of the form
\eqref{eq:3.15}, for some $\xi \in \Xi^+$ and $\rho \in \Irr (\C [\mf R_\xi,\natural_\xi])$.

When $\xi,\xi' \in \Xi^+, \rho \in \Irr (\C [\mf R_\xi,\natural_\xi])$ and $w \in \mc W_\Xi$
with $w (\xi) \cong \xi'$, we can define $\rho' \in  \Irr (\C [\mf R_{w(\xi)},\natural_{w(\xi)}])$ by
\[
\rho' \big( \pi (w',\xi') \big) := \rho \big( \pi (w,\xi)^{-1} \pi (w',\xi') \pi (w,\xi) \big) .
\]
Although $\pi (w,\xi)$ is only defined up to a scalar, the formula for $\rho'$ is independent of
the choice of a normalization. 

We denote this $\rho'$ by $w(\rho)$, and we say that $(\xi,\rho)$ and $(w(\xi),w(\rho))$ are 
$\mc W_\Xi$-associate. It is not clear whether this comes from a groupoid action on a set containing
all $(\xi,\rho)$ as above, because $\mc W_\Xi$ does not stabilizes the set of positive induction 
data $\Xi^+$ and we did not define R-groups for non-positive induction data.

Let us summarise some properties of standard $\mc H$-modules.

\begin{cor}\label{cor:3.23}
Write $\Xi^+_e = \{ (\xi,\rho) : \xi \in \Xi^+, \rho \in \Irr (\C [\mf R_\xi,\natural_\xi]) \}$. 
\enuma{
\item The set of standard $\mc H$-modules (up to isomorphism) is 
parametrized by $\Xi^+_e$ up to $\mc W_\Xi$-association.
\item Every standard $\mc H$-module has a unique irreducible quotient.
\item For every irreducible $\mc H$-representation $\pi$ there is a unique (up to isomorphism)
standard $\mc H$-module that has $\pi$ as quotient.
\item There are bijections}
\[
\begin{array}{ccccc}
\Xi^+_e / \mc W_\Xi & \to & \{ \text{standard } \mc H\text{-modules} \} & \to & \Irr (\mc H) \\
(\xi,\rho) & \mapsto & \pi (\xi,\rho) & \mapsto & L ( P(\xi), \pi^{P(\xi)}(\xi,\rho) ) .
\end{array}
\]
\end{cor}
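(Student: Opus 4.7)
The plan is to assemble the four statements from the earlier structural results, with no new computation beyond bookkeeping of $\mc W_\Xi$-orbits. The key inputs are the Langlands classification (Theorem \ref{thm:3.10}), the $\mc W_\Xi$-classification via discrete series data (Theorem \ref{thm:3.19}), the disjointness and completeness theorems for intertwining operators (Theorems \ref{thm:3.17} and \ref{thm:3.18}), the R-group decomposition (Theorem \ref{thm:3.21}), and the two-step description of parabolic induction from Proposition \ref{prop:3.15}.

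For (a), I unwind Definition \ref{def:3.9}: a standard module has the form $\ind_{\mc H^Q}^{\mc H}(\tau \circ \psi_{t'})$ for a Langlands datum $(Q,\tau,t')$. Applying Theorem \ref{thm:3.13} inside $\mc H_Q$ and using transitivity of parabolic induction, I rewrite the standard module as $\pi(\xi) = \ind_{\mc H^P}^{\mc H}(\delta \circ \psi_t)$ with $\xi = (P,\delta,t) \in \Xi^+$, $P \subset Q$, and $P(\xi) = Q$. Theorem \ref{thm:3.21}.c then identifies the irreducible summand $\tau \circ \psi_{t'}$ of the completely reducible $\mc H^{P(\xi)}$-module $\pi^{P(\xi)}(\xi)$ with $\pi^{P(\xi)}(\xi,\rho)$ for a unique $\rho \in \Irr(\C[\mf R_\xi,\natural_\xi])$. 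Conversely, Proposition \ref{prop:3.15}.b shows each $\pi^{P(\xi)}(\xi,\rho)$ is an irreducible essentially tempered module in positive position, so $\pi(\xi,\rho)$ is standard. The fibers of $(\xi,\rho) \mapsto \pi(\xi,\rho)$ are exactly $\mc W_\Xi$-orbits: any isomorphism $\pi(\xi,\rho) \cong \pi(\xi',\rho')$ forces $\pi(\xi)$ and $\pi(\xi')$ to share irreducible constituents, whence $\xi' \in \mc W_\Xi \xi$ by Theorem \ref{thm:3.17}; any $w$ with $w(\xi) \cong \xi'$ yields, via Theorem \ref{thm:3.18}, an isomorphism $\pi(\xi,\rho) \cong \pi(\xi',w(\rho))$, and the injectivity of $\rho \mapsto \pi^{P(\xi')}(\xi',\rho)$ from Theorem \ref{thm:3.21}.c forces $w(\rho) \cong \rho'$.

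Parts (b), (c), (d) follow quickly. For (b), $\pi^{P(\xi)}(\xi,\rho)$ is an irreducible essentially tempered $\mc H^{P(\xi)}$-module in positive position, so $(P(\xi), \pi^{P(\xi)}(\xi,\rho))$ is a Langlands datum and Theorem \ref{thm:3.10}.a supplies the unique irreducible quotient $L(P(\xi), \pi^{P(\xi)}(\xi,\rho))$. For (c), Theorem \ref{thm:3.19} attaches to each $\pi \in \Irr(\mc H)$ a unique $\mc W_\Xi$-orbit with positive representative $\xi$ such that $\pi$ is an irreducible quotient of $\pi(\xi)$; Theorem \ref{thm:3.21}.c then singles out the unique $\rho$ with $L(P(\xi),\pi^{P(\xi)}(\xi,\rho)) \cong \pi$, and uniqueness up to $\mc W_\Xi$-association follows from the same argument used for (a). Part (d) is a reformulation of (a)--(c). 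The only genuine subtlety is that the $\mc W_\Xi$-action on $\Xi^+_e$ is defined a priori only up to the scalar ambiguity in the normalization of $\pi(w,\xi)$; but as pointed out just before the corollary, the formula for $w(\rho)$ is independent of this normalization, so no cohomological obstruction appears and the construction is well-defined.
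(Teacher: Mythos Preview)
Your proof is correct and follows essentially the same route as the paper, which in fact has already laid out most of the argument in the text surrounding \eqref{eq:3.15} before stating the corollary; the paper's own proof is accordingly very terse, citing Theorem~\ref{thm:3.18} for (a), Theorem~\ref{thm:3.21} for (b), the remark after Theorem~\ref{thm:3.19} for (c), and deducing (d) from (a)--(c). Your write-up simply unpacks these references in more detail, invoking the same ingredients (Proposition~\ref{prop:3.15}, Theorems~\ref{thm:3.17}--\ref{thm:3.21}) in the same logical order.
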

\begin{proof}
(a) follows from Theorem \ref{thm:3.18}. \\
(b) is already contained in Theorem \ref{thm:3.21}.\\
(c) Recall from the remark after Theorem \ref{thm:3.19} that $\pi$ determines a unique 
parabolically induced representation $\pi (\xi)$, with $\xi \in \Xi^+$, that has $\pi$ as quotient.\\
(d) is a consequence of parts (a), (b) and (c).
\end{proof}

Corollary \ref{cor:3.23} provides a classification of $\Irr (\mc H)$ in terms of induction data
and R-groups. The role of the R-groups is quite subtle, firstly because it can be hard to determine
them, secondly because potentially a non-trivial 2-cocycle can be involved in the $\mc H$-endomorphism
algebra of a parabolically induced representation. 

Sometimes it is easier to work with standard modules than with irreducible representations, for their
structure is more predictable.

\begin{ex} \begin{itemize}
\item For $\mc H (\mc R,\mb{q})$ with $\mc R$ of type $\widetilde{A_1}$, almost all standard 
modules are irreducible. The only reducible standard $\mc H$-module is $\pi (\emptyset,\triv,\mb{q}) 
= \ind_{\C[X]}^{\mc H}(\C_{\mb{q}})$, which has triv as irreducible quotient.

\item For $\mc H_n (\mb{q})$ all the groups $\mf R_\xi$ are trivial, so the standard modules 
are just the parabolically induced representations $\pi (\xi)$ with $\xi \in \Xi^+$.

\item When $\mc H = \mc H (\mc R,1) = \C [X \rtimes W]$ (any $\mc R$), there is a standard module
$\pi (\emptyset,\triv,t,\rho)$ for every $t \in T$ and $\rho \in \Irr (W_t)$. In the notation
from Paragraph \ref{par:q=1} it equals $\pi (t,\rho^*)$. In view of Theorem \ref{thm:2.2}, these
$\mc H$-representations are irreducible and there are no other standard modules.
\end{itemize} \end{ex}

To the best of our knowledge, the theory of R-groups for graded Hecke algebras has never been
written down explicitly. It can be deduced readily from \cite{DeOp2} and the algebra
isomorphisms from Theorem \ref{thm:3.26}. In this setting the $c_\alpha$-function for one
root $\alpha \in R$ becomes
\[
\tilde{c_\alpha} (\lambda) = \frac{\inp{\alpha}{\lambda} + k(\alpha)}{\inp{\alpha}{\lambda}}
\qquad \lambda \in \mf t.
\]
For every $\tilde \xi = (P,\delta,\lambda) \in \tilde{\Xi}^+$ we obtain an R-group 
$\mf R_{\tilde \xi}$ and a 2-cocycle $\natural_{\tilde \xi}$ such that 
\begin{equation}\label{eq:4.4}
\End_{\mh H}(\pi (\tilde \xi)) \cong \C [\mf R_{\tilde \xi}, \natural_{\tilde \xi}] .
\end{equation}
We will see in Lemma \ref{lem:6.13} that $\natural_{\tilde \xi}$ is always trivial,
except maybe for some instances with root systems of type $F_4$. However, this triviality 
does not automatically generalize to affine Hecke algebras (as in Theorem \ref{thm:3.21}.b)
because the reduction to graded Hecke algebras could pick up groups $\Gamma_t$ in Theorem 
\ref{thm:3.25}.

The standard module associated to $\tilde \xi \in \tilde{\Xi}^+$ and $\rho \in \Irr
(\C [\mf R_{\tilde \xi}, \natural_{\tilde \xi}] )$ is
\begin{equation}\label{eq:3.31}
\pi (\tilde \xi, \rho) = 
\Hom_{\C [\mf R_{\tilde \xi}, \natural_{\tilde \xi}]} (\rho,\pi (\tilde \xi)) .
\end{equation}
Now Theorem \ref{thm:3.21} and Corollary \ref{cor:3.23} apply to $\mh H$, and they provide
bijections
\begin{align} \nonumber
\big\{ (\tilde \xi,\rho) : \tilde \xi \in \tilde{\Xi}^+, \rho \in \Irr
(\C [\mf R_{\tilde \xi}, \natural_{\tilde \xi}] ) \big\} \big/ \mc W_{\tilde \Xi}  
\; \to \; \{ \text{standard } \mh H\text{-modules} \} \; \to \;\Irr (\mh H) \\
\label{eq:3.19} (\tilde \xi,\rho) \quad \mapsto \quad \pi (\tilde \xi,\rho) 
\quad \mapsto \quad \text{irreducible quotient of } \pi (\tilde \xi,\rho) .
\end{align}

\subsection{Residual cosets} \

The most significant step towards the classification of discrete series $\mc H$-\\
representations is the determination of their central characters, which was achieved by 
Opdam \cite{Opd-Sp}. Consider the following rational function on $T$:
\begin{equation}\label{eq:4.1}
c_R = \prod_{\alpha \in R} c_\alpha = \prod_{\alpha \in R}
\frac{\big( \theta_\alpha - \mb{q}^{(- \lambda^* (\alpha) - \lambda (\alpha))/2}\big)
\big( \theta_\alpha + \mb{q}^{(\lambda^* (\alpha) - \lambda (\alpha))/2}\big)}{
(\theta_\alpha - 1) (\theta_\alpha + 1)} .
\end{equation}
Its counterpart for $\mh H (\mf t,W,k)$ is
\[
\tilde{c}_R = \prod_{\alpha \in R} \tilde{c}_\alpha = 
\prod_{\alpha \in R} (\alpha + k (\alpha)) \alpha^{-1} .
\]
\begin{defn}\label{def:4.1}
Let $L \subset T$ be a coset of a complex algebraic subtorus of $T$. We call $L$ a residual
coset (with respect to $R$ and $q$) if the zero order of $c_R$ along $L$ is at least the
(complex) codimension of $L$ in $T$. 

An affine subspace $\mf l \subset \mf a$ is called a residual subspace (with respect to 
$R$ and $k$) if the zero order of $\tilde{c}_R$ along $\mf l$ is at least the (real)
codimension of $\mf l$ in $\mf a$. 

A residual point is a residual coset/subspace of dimension zero.
\end{defn}

For any $L$ or $\mf l$ as above, its zero order is always at most its codimension
\cite[Corollary A.12]{Opd-Sp}. Hence we may replace "is at least" by "equals" in Definition
\ref{def:4.1}. This also implies that residual points can only exist if $R$ spans $\mf a^*$.
The collection of residual cosets/subspaces is stable under $W$, because $R$ and $q/k$ are so.

\begin{ex}
\begin{itemize}
\item $T$ itself is always a residual coset, and $\mf a$ itself is always a residual subspace.
\item There are no residual points for $\mc H$ (resp. for $\mh H$) if $R \neq \emptyset$ and
$\lambda = \lambda^* = 0$ (resp. $k = 0$).
\item Consider $\mf a = \mf a^* = \{x \in \R^n : x_1 + \cdots + x_n = 0\},
R = A_{n-1} = \{ e_i - e_j : i \neq j \}, W = S_n, k (\alpha) = k \in \R^\times$.
There is just one $S_n$-orbit of residual points for $\mh H$, and it contains
\[
(k(1-n)/2,k(3-n)/2,\ldots,k(n-1)/2) .
\]
This point is the $\mc O (\mf t)$-character of the Steinberg representation of $\mh H$, which
by definition is onedimensional and restricts on $\C[W]$ to the sign representation.
\item Take $\mf a = \mf a^* = \R^2, R = B_2 = \{\pm e_1 \pm e_2\} \cup \{ \pm e_i \},
W = W(B_2) \cong D_4, k(\pm e_1 \pm e_2) = k_1, k (\pm e_i) = k_2$. There are at most two 
$W$-orbits of residual points for $\mh H (\C^2,W(B_2),k)$, represented by $(k_1 + k_2,k_2)$
and $(k_1 - k_2,k_2)$. These points are indeed residual if 
\[
k_1 k_2 (k_1 + 2k_2) (k_1 + k_2) (k_1 - 2 k_2) (k_1 - k_2) \neq 0.
\]
\end{itemize} \end{ex}
The crucial property of residual points is:

\begin{thm}\label{thm:4.2}
\textup{\cite[Lemma 3.31]{Opd-Sp}} \\
Let $\delta \in \Irr (\mc H)$ be discrete series. Then all its $\C [X]$-weights are residual
points for $(R,q)$. Conversely, if $t \in T$ is a residual point for $(R,q)$, then there 
exists a discrete series $\mc H$-representation with central character $W t$.
\end{thm}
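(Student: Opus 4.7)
The plan is to prove both directions via Opdam's Plancherel formula and residue calculus for $\mc H$, in combination with the reduction theorems of Paragraph \ref{par:reduction} that allow passage to graded Hecke algebras.

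For the forward direction, suppose $\delta$ is an irreducible discrete series representation. As remarked after Definition \ref{def:3.12}, such a $\delta$ embeds as a subrepresentation of the regular representation of $\mc H$ on $L^2(W(\mc R),q)$, and therefore has a positive formal degree $\mu_{\rm Pl}(\{\delta\})$ with respect to the Plancherel measure. Opdam's Plancherel formula expresses the spectral measure on $\Irr(\mc H)$ as a pushforward, along parabolic induction, of a measure on $\Xi_\un^+ = \{(P,\delta_P,t) \in \Xi^+ : t \in T^P_\un\}$ whose density on each component $(P,\delta_P) \times T^P_\un$ is a rational multiple of $1/(c_R c_R^*)$. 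For $\delta$ to have a Dirac mass at its central character $Wt$, the function $1/(c_R c_R^*)$ must pick up a top-order residue at $t$, i.e.\ the codimension of $\{t\}$ must equal the zero order of $c_R$ at $t$. Since the weights of $\delta$ are $W$-conjugates of $t$ (by Lemma \ref{lem:3.3} and the fact that $\delta$ has central character $Wt$), every such weight is a residual point in the sense of Definition \ref{def:4.1}.

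For the converse, let $t \in T$ be a residual point for $(R,q)$. Write $t = u\exp(\lambda)$ with $u \in T_\un$ and $\lambda \in \mf a$ (the polar decomposition \eqref{eq:3.4}). Shrinking if necessary, I would invoke Corollary \ref{cor:3.27} together with Lemma \ref{lem:3.28} to obtain an equivalence of categories
\[
\Mod_{f,Wu\exp(\mf a)}(\mc H) \;\simeq\; \Mod_{f,\mf a}(\mh H(\mf t,W(R_u),k_u) \rtimes \Gamma_u)
\]
which preserves (essentially) discrete series. A direct check on zero orders, using the relation \eqref{eq:3.24} between $q$-parameters and $k$-parameters and the fact that $c_\alpha$ and $\tilde c_\alpha$ differ only by a factor regular and non-vanishing at $t$ when $\alpha \in R \setminus R_u$, shows that $t$ being residual for $(R,q)$ is equivalent to $\lambda$ being a residual point for $(R_u,k_u)$. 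It therefore suffices to produce a discrete series representation of the graded Hecke algebra $\mh H_u := \mh H(\mf t,W(R_u),k_u)$ with central character $W(R_u)\lambda$.

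For that, the key input is Opdam's residue calculus for $\mh H_u$, developed in parallel with the affine case in \cite{Opd-Sp}. Concretely, one decomposes the trace $\tau$ on $\mh H_u$ via residues of the meromorphic form $d\lambda'/(\tilde c_R(\lambda')\tilde c_R(-\lambda'))$; residual points of $\mh H_u$ are precisely those at which this form produces a top-dimensional residue. The resulting residual distribution, being a positive trace supported at the single $W$-orbit $W(R_u)\lambda$, is a finite positive combination of characters of irreducible modules with that central character, and Casselman's criterion (see Definition \ref{def:3.12} translated through Paragraph \ref{par:analogues}) identifies all these summands as (essentially) discrete series. Non-vanishing of the residue — a consequence of the codimension equality in Definition \ref{def:4.1} — then guarantees the existence of at least one such constituent. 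Pushing this constituent back through the equivalence above produces the desired discrete series of $\mc H$ with central character $Wt$.

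The main obstacle is the converse. The forward direction only needs that formal degrees, computed as spectral residues, force top-order vanishing of $c_R$; this is essentially a matching of codimensions. The converse, however, requires actually producing a representation with a prescribed central character, and the only robust tool for this is Opdam's residue calculus together with positivity of the residual measure. Checking that the residual distribution is a genuine positive trace (not merely a formal residue) and that its spectral support lies in the discrete series — not in some continuous family accidentally concentrated at $t$ — is the technical heart, and is where the assumption \eqref{eq:3.5} that $\lambda,\lambda^* \in \R$ is essential.
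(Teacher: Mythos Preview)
The paper does not give its own proof of this theorem; it simply cites \cite[Lemma 3.31]{Opd-Sp} and moves on. Your proposal is a reasonable high-level sketch of Opdam's original argument via the residue calculus and the Plancherel decomposition of the trace $\tau$, so in that sense you are following the same route the paper points to.

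That said, one point in your forward direction deserves tightening. As written, you invoke the Plancherel formula on $\Xi_\un^+$ to force a Dirac mass and hence a top-order residue; but the Plancherel decomposition in \cite{Opd-Sp} is itself built \emph{after} one knows that discrete series central characters are residual, so appealing to it here risks circularity. Opdam's actual argument for the forward implication is more direct: one starts from the integral representation of $\tau$ over $T_\un$ against the kernel $1/(c_R(t)\,c_R(t^{-1}))$ and performs a contour shift. A discrete series $\delta$ contributes a nonzero term to $\tau$ (since $\tau(e_\delta) > 0$ for a minimal idempotent $e_\delta$), and the only way a point $t$ can contribute after the contour shift is if the pole order of $1/(c_R c_R^*)$ at $t$ is at least the codimension of $\{t\}$ in $T$ --- i.e.\ $t$ is residual. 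Your converse is closer to the mark: the reduction to graded Hecke algebras via Corollary \ref{cor:3.27} and Proposition \ref{prop:4.3}, followed by positivity of the residual distribution at a genuine residual point, is indeed how one produces the required discrete series. The delicate step you flag --- showing that the residual distribution is a positive trace supported on discrete series and not identically zero --- is exactly where \eqref{eq:3.5} and the analytic machinery of \cite{Opd-Sp} are used.
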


\begin{ex}
Consider the root datum $\mc R$ of $PGL_n (\C)$, with 
\[
X = \Z^n / \Z (1,1,\ldots,1), Y = \{ y \in \Z^n : y_1 + \cdots + y_n = 0\},
\] 
$R = R^\vee = A_{n-1}$ and $W = S_n$. Notice that $T^W \cong \Z / n \Z$, generated by 
\[
\zeta_n : x \mapsto \exp (2 \pi i (x_1 + \cdots + x_n)).
\]
For $\mb q \neq 1$, $\mc H (\mc R,\mb q)$ admits a unique $S_n \times T^W$-orbit of residual
points, one such point being 
\[
t_{\mb q} = \big( \mb{q}^{(1-n)/2},\mb{q}^{(3-n)/2},\ldots,\mb{q}^{(n-1)/2} \big).
\]
For $\mb q > 1$, this is the unique $\C [X]$-weight of the Steinberg representation of
$\mc H (\mc R,\mb{q})$--which is defined just like St for $\mc H_n (\mb{q})$ in \eqref{eq:2.10}
and \eqref{eq:2.14}. Similarly $\zeta_n t_{\mb q}$ is the $\C[X]$-weight of the discrete series
representation St$\otimes \zeta_n$.
\end{ex}

There is a general method to construct residual cosets from residual points for subquotient
algebras. Namely, let $P \subset \Delta$ and let $r \in T_P$ be a residual point for $\mc H_P$.
Then $T^P r$ is a residual coset for $\mc H$ \cite[Proposition A.4]{Opd-Sp}. Up to the action
of $W$, every residual coset is of this form.

From that, Theorem \ref{thm:4.2} and Lemma \ref{lem:3.3} we deduce: for any induction datum
$(P,\delta,t) \in \Xi$, every weight of $\pi (P,\delta,t)$ lies in a residual coset of the same
dimension as $T^P$.

Now we relate residual cosets to residual subspaces for graded Hecke algebras.
By definition every residual coset for $\mc H$ can be written as $L = u \exp (\lambda) T_L$,
where $u \in T_un, \lambda \in \mf a$ and $T_L \subset T$ is a complex algebraic subtorus.

\begin{prop}\label{prop:4.3}
\textup{\cite[Theorem A.7]{Opd-Sp}}
\enuma{
\item With the above notations, $\lambda + \log |T_L| \subset \mf a$ is a residual subspace for\\
$\mh H (\mf t, W(R_u),k_u)$. 
\item Every residual subspace for $\mh H (\mf t, W(R_u),k_u)$ arises in this way. 
\item When $T_L + T^P$ and $u \exp (\lambda)$ is a residual point for $\mc H_P$,
$\lambda \in \mf a_P$ is a residual point for $\mh H (\mf t_P, W(R_{P,u}),k_u)$. In this
case $R_{P,u}$ has the same rank as $R_P$, namely $|P|$.
}
\end{prop}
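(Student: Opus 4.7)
The plan is to reduce all three parts to a local comparison of $c$-functions on an analytic neighborhood of a generic point $p = u \exp(\mu) \in L$, with $\mu \in \mf l := \lambda + \log|T_L|$. Using the chart $\exp_u : \mf t \to T$, $\nu \mapsto u \exp(\nu)$, one has $\theta_\alpha \mapsto \alpha(u) e^{\langle \alpha, \nu \rangle}$, and $L$ corresponds locally to the complexification of $\mf l$. In particular $\mathrm{codim}_\C(L,T) = \mathrm{codim}_\R(\mf l, \mf a)$, so the two residuality conditions refer to the same target codimension.

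The key step is to match the two $c$-functions factor by factor. A root $\alpha \in R$ contributes a nontrivial divisor order along $L$ only when $\theta_\alpha$ is constant on $L$, equivalently when $\alpha$ is constant on $\mf l$. For such an $\alpha$ three regimes appear. If $\alpha(u) \notin \{\pm 1\}$, then $\theta_\alpha|_L$ is a nonzero constant different from $\pm 1$ and from the two zeros of the numerator of $c_\alpha$, so $c_\alpha$ is analytic and nonvanishing along $L$ and contributes nothing. If $\alpha \in R'_u \setminus R_u$, a cancellation between one numerator and one denominator factor of $c_\alpha$ leaves the function regular and nonvanishing along $L$, again contributing nothing, consistent with the absence of such $\alpha$ from $R_u$ on the graded side. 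If $\alpha \in R_u$, a direct Taylor expansion at $p$ using the identity $\alpha(u) e^{\langle \alpha, \nu \rangle} - \alpha(u) = \alpha(u) \langle \alpha, \nu \rangle (1 + O(\nu))$ shows that the vanishing denominator factor contributes $\pm \langle \alpha, \nu \rangle$, while the vanishing numerator factor contributes $\langle \alpha, \nu \rangle + k_u(\alpha)$, where $k_u(\alpha) = (\lambda(\alpha) + \alpha(u) \lambda^*(\alpha)) \log(\mb q)/2$. Hence
\[
c_\alpha\bigl(u \exp(\nu)\bigr) \;=\; \tilde c_\alpha(\nu) \, h_\alpha(\nu), \qquad \alpha \in R_u,
\]
with $h_\alpha$ analytic and nonvanishing near $\mu$, and $\tilde c_\alpha(\nu) = (\langle \alpha, \nu \rangle + k_u(\alpha))/\langle \alpha, \nu \rangle$.

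Multiplying over $\alpha \in R$, the divisor order of $c_R$ along $L$ at $p$ equals that of $\tilde c_{R_u}$ along $\mf l$ at $\mu$. Combined with the codimension equality this proves (a). For (b), given a residual subspace $\mf l_0 = \lambda_0 + V \subset \mf a$ for $\mh H(\mf t, W(R_u), k_u)$, I would set $T_L = \exp(V \otimes_\R \C) \subset T$ and $L = u \exp(\lambda_0) T_L$; the same local identities, run in reverse, produce a residual coset for $\mc H$ whose associated affine subspace is $\mf l_0$.

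For (c), when $T_L = T^P$ one has $\mf l = \lambda + \mf a^P$, and $\lambda \in \mf a_P$ is the transverse component. Applying (a) and (b) to the smaller root datum $\mc R_P$ (with $T_P$ in place of $T$) converts the hypothesis that $u \exp(\lambda)$ is residual for $\mc H_P$ into $\lambda$ being a residual point for $\mh H(\mf t_P, W(R_{P,u}), k_u)$. Since residual points can only exist when the root system spans the ambient space, $R_{P,u}$ must have rank $\dim \mf a_P = |P|$; as $R_{P,u} \subseteq R_P$ and $\mathrm{rank}(R_P) = |P|$, this forces equality. The main technical obstacle is the type-$B$ case $\alpha^\vee \in 2Y$ with $\alpha(u) = -1$, where the roles of the two numerator factors of $c_\alpha$ interchange and one must verify that the sign $\alpha(u) = -1$ propagates correctly into the formula for $k_u(\alpha)$.
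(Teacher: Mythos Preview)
The paper does not supply its own proof of this proposition: it is stated with the attribution \textup{\cite[Theorem A.7]{Opd-Sp}} and immediately followed by Example~\ref{ex:4.1}, so there is no in-text argument to compare against. Your approach---pulling back $c_R$ through the chart $\exp_u$ and matching the divisor of each factor $c_\alpha$ with that of $\tilde c_\alpha$ for $\alpha\in R_u$---is the natural and standard one, and is essentially what the cited reference does. The observation that roots with $\alpha(u)\notin\{\pm 1\}$ contribute nothing, and that the surviving factors have the same local zero and pole loci as the graded $\tilde c_\alpha$, is exactly the mechanism behind the result. Your handling of part~(c), reducing to (a)--(b) for $\mc R_P$ and invoking that residual points force the root system to span, is also the intended route. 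One small remark: under the standing hypothesis \eqref{eq:3.5} and $u\in T_\un$, the paper notes just before \eqref{eq:3.20} that $R'_u=R_u$, so your case ``$\alpha\in R'_u\setminus R_u$'' is vacuous here and need not be treated separately.
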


\begin{ex} \label{ex:4.1}
Take $X = Y = \Z^2, R = B_2, R^\vee = C_2 = \{ \pm e_1 \pm e_2 \} \cup \{\pm 2 e_1, \pm 2 e_2\}$
and $\Delta = \{e_1 - e_2, e_2\}$. We write
\[
q_1 = \mb{q}^{\lambda (\pm e_1 \pm e_2)}, q_2 = \mb{q}^{\lambda (\pm e_i)}, 
q_0 = \mb{q}^{\lambda^* (\pm e_i)} .
\]
The points $u \in T_\un$ with $R_u$ of rank 2 are $(1,1), (-1,-1), (1,-1)$ and $(-1,1)$, the last
two being $W$-associate. There are at most 5 $W$-orbits of residual points, summarised in the 
following table:
\[
\begin{array}{c|c|c|c}
u & (1,1) & (-1,-1) & (1,-1) \\
R_u & B_2 & B_2 & A_1 \times A_1 \\
k_u & \log (q_1), k_2 = {\ds \frac{\log (q_2 q_0)}{2}} & 
\log (q_1), k_2 = {\ds \frac{\log (q_2 q_0^{-1})}{2}} 
& {\ds \frac{\log (q_2 q_0)}{2}}, {\ds \frac{\log (q_2 q_0^{-1})}{2}} \\
\!\! \text{residual} & ( q_1^{-1} e^{-k_2}, e^{-k_2}) & 
(-q_1^{-1} e^{-k_2},-e^{-k_2}) & ((q_2 q_0)^{-1/2}, \hspace{1cm} \\
\text{points} & ( q_1^{-1} e^{k_2},e^{-k_2}) & 
 ( -q_1^{-1} e^{k_2},-e^{-k_2}) & \hspace{1cm} - q_2^{-1/2} q_0^{1/2}) 
\end{array}
\]
Here we give $k_u$ in terms of its values on a basis of $R_u$. For generic parameters $q_1, q_2,
q_0$, all the five points of $T$ in the above table are indeed residual, and each of them
represents the central character of a unique discrete series representation \cite[\S 5.5]{SolAHA}.
\end{ex}

As a consequence of Proposition \ref{prop:4.3}, everything we said before about residual cosets
for $\mc H$ can be translated to residual subspaces for graded Hecke algebras. Combining that with
Lemma \ref{lem:3.28}, we find that every $\mc O (\mf t)$-weight of a discrete series 
$\mh H$-representation is a residual point in $\mf a$.\\

\begin{ex}
We continue Example \ref{ex:4.1}, but now for the graded Hecke algebra $\mh H$ built from
$\mf a = \mf a^* = \R^2, \mf t = \mf t^* = \C^2$, $R = B_2, W = W(B_2)$, $\Delta = \{ \alpha = 
e_1 - e_2, \beta = e_2\}$, $k(\pm e_1 \pm e_2) = k_1 > 0, k (\pm e_i) = k_2 > 0$. 
The residual point $(-k_1 - k_2, -k_2)$ is the $\mc O (\mf t)$-character
of the Steinberg representation of $\mh H$, which is discrete series and restricts to the sign
character of $\C[W]$. When $k_1 < 2 k_2$, the residual point $(k_1 - k_2, -k_2)$ is the
$\mc O (\mf t)$-character of a onedimensional discrete series $\mh H$-representation $\delta$.
Its restriction to $\C [W]$ is given by $\delta (s_{e_2}) = -1, \delta (s_{e_1 - e_2)} = 1$.

With Theorem \ref{thm:3.26} and Corollary \ref{cor:3.23} we can complete the classification of 
$\Irr (\mh H)$. To this end we note that
\[
\mh H_{\{\alpha\}} = \mh H (\C \alpha, \langle s_\alpha \rangle, k_1) \quad \text{and} \quad
\mh H_{\{\beta\}} = \mh H (\C \beta, \langle s_\beta \rangle, k_2) .
\]
These algebras have a unique discrete series representation, namely St. Further
\[
\mf t^{\{\alpha\}} = \C (e_1 + e_2), \mf a^{\{\alpha\}+} = \R_{\geq 0} (e_1 + e_2),
\mf t^{\{\beta\}} = \C e_1, \mf a^{\{\beta\}+} = \R_{\geq 0} e_1
\]
and $\mf a^+ = \R_{\geq} e_1 + \R_{\geq 0}(e_1 + e_2)$.
All the R-groups $\mf R_{\tilde \xi}$ for $\mh H$ are trivial, so \eqref{eq:3.19} provides a
bijection from $\tilde{\Xi}^+ / \mc W_{\tilde \Xi}$ to $\Irr (\mh H)$, where
\begin{multline*}
\tilde{\Xi}^+ = \big\{ (\emptyset,\triv,\lambda) : \lambda \in i \mf a + \mf a^+ \big\} \; \cup \;
\big\{ (\{\alpha\},\mr{St},\lambda) : \lambda \in i \mf a^{\{\alpha\}} + \mf a^{\{\alpha\}+} \big\} \\
\cup \; \big\{ (\{\beta\},\mr{St},\beta) : \lambda \in i \mf a^{\{\beta\}} + \mf a^{\{\beta\}+} \big\}
\; \cup \; \{\mr{St},\delta\} .
\end{multline*}
The groupoid $\mc W_{\tilde \Xi}$ consists of the groups
\[
\mc W_{\tilde \Xi,\emptyset \emptyset} = W,
\mc W_{\tilde \Xi,\{\alpha\} \{\alpha\}} = \langle s_{e_1 + e_2} \rangle,
\mc W_{\tilde \Xi,\{\beta\} \{\beta\}} = \langle s_{e_1} \rangle,
\mc W_{\tilde \Xi,\Delta \Delta} = \{\mr{id}\} .
\]
The action of $\mc W_{\tilde \Xi}$ on $\tilde \Xi$ makes some of the $(P,\delta,\lambda) \in 
\tilde{\Xi}^+$ with $\Re(\lambda) \in \partial (\mf a^{P+})$ $\mc W_{\tilde \Xi}$-associate,
for instance $(\emptyset,\triv, (1,i))$ and $(\emptyset,\triv, (1,-i))$.
\end{ex}

In general it is easy to classify all points $u \in T_\un$ for which $R_u$ has full rank in $R$,
in terms of the affine Dynkin diagram of $\mc R$ \cite[Lemma A.8]{Opd-Sp}.
Recall that the relation between the representations of $\mh H (\mf t,W(R_u),k_u)$ and of
$\mh H (\mf t,W(R_u),k_u) \rtimes \Gamma_u$ is well-understood from Clifford theory. Thus the
classification of discrete series $\mc H$-representation boils down to two tasks:
\begin{itemize}
\item classify all residual points for $\mh (\mf t,W,k)$, where $k$ is any real-valued
parameter function,
\item for a given residual point $\lambda \in \mf a$, classify the discrete series 
$\mh H$-representations with central character $W \lambda$. 
\end{itemize}
In view of the isomorphism \eqref{eq:1.22}, it suffices to this when $R$ is irreducible.
The residual points for $\mh H(\mf t,W,k)$ with such $R$ and $k$ have been classified completely 
in \cite[\S 4]{HeOp}. They are always linear expressions $f(k)$ in the parameters $k(\alpha)$
for $\alpha \in R$. For a given such $f$, $f(k)$ is residual with respect to $R$ and $k$ for 
almost all $k : R \to \R$. We say that a parameter function $k$ is generic if all potentially 
residual points $f(k)$, for the $\mh H (\mf t,W(R_P),k)$ with $P \subset \Delta$, are 
really residual for this $k$, and are all different.

\begin{thm}\label{thm:4.4}
\textup{\cite[Theorems 3.4 and 7.1]{OpSo}} \\
Let $R \subset \mf a^*$ be an irreducible root system that spans $\mf a^*$.
\enuma{
\item Let $k : R \to \R$ be a generic parameter function. The central character map gives 
a bijection from the set of irreducible discrete series representations of $\mh H (\mf t,W(R),k)$
to the set of $W$-orbits of residual points for $R$ and $k$ (except when $R \cong F_4$, then one fibre
of this map has two elements).
\item For a non-generic parameter function $k' : R \to \R$ and a residual point $\lambda \in \mf a$,
consider the collection of generic residual points $\{ f_i (k) \}_i$ that specialize to $\xi$
at $k' = k$. For $k$ close to $k'$ in the space of all parameter functions $R \to \R$, 
there is a natural bijection between:
\begin{itemize}
\item the set of irreducible discrete series representations of $\mh H (\mf t,W(R),k)$ 
with central character in $\{ W f_i (k) \}_i$,
\item the set of irreducible discrete series representation of $\mh H (\mf t,W(R),k')$ 
with central character $W \lambda$.
\end{itemize}
More explicitly, every $\mh H (\mf t,W(R),k)$-representation of the indicated kind is of part
of a unique continuous family of such representations, one representation for each $k$ in some
neighborhood of $k$. The above bijection matches the members of such a continuous family at 
$k$ and at $k'$.
}
\end{thm}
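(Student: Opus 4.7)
My plan is to reduce the proof to a deformation/counting argument that plays the scaling automorphism \eqref{eq:1.16} against the rigidity of discrete series representations.

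First, I would establish surjectivity of the central character map in part (a). The graded Hecke algebra analogue of Theorem \ref{thm:4.2}, obtainable via Proposition \ref{prop:4.3} and the reduction of Theorem \ref{thm:3.26}, says that every $W$-orbit of residual points supports at least one irreducible discrete series representation and that every discrete series has central character at such an orbit. Together with Lemma \ref{lem:3.28}, which guarantees that the reduction equivalence preserves the discrete series property, this step is essentially free once the affine-Hecke version is in hand.

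For injectivity and multiplicity counting in part (a), I would use a deformation approach. Fix a combinatorial parametrization $\{f_i\}_i$ of generic residual points as linear expressions in $k(\alpha)$, available from the classification of \cite{HeOp}. For $k$ generic these points are distinct and non-colliding, so one can track each central character $W f_i(k)$ along a suitable path in parameter space. The key input is that a discrete series representation deforms rigidly in a continuous family over the open locus of $k$ for which its central character remains a residual point; this should follow from the fact that on the Schwartz/$C^\ast$-completion discrete series appear as isolated components of the tempered dual (compare \cite{Opd-Sp,DeOp1}), so they cannot merge or split without a residual point collision. A count of total discrete series at generic $k$ must equal the number of orbits of residual points: this comes either from a formal-degree/Euler--Poincaré pairing or from explicit case-by-case enumeration using the irreducible type classification and the tables in \cite[\S 4]{HeOp}. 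The anomaly in type $F_4$ appears because there, two a priori distinct generic residual orbits coincide for every $k$, producing a single fibre of size two.

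For part (b), I would argue by continuity near the non-generic point $k'$. Choose a small arc of parameter functions $k_s$ with $k_0 = k'$ and $k_s$ generic for $s \neq 0$. Along this arc each generic residual point $f_i(k_s)$ limits to the chosen $\lambda$, and by the rigidity of discrete series in continuous families each irreducible discrete series representation at $k_s$ extends to a well-defined limit at $k_0$. The content to verify is that the limit is nonzero and irreducible with central character $W\lambda$, and that the resulting map from generic-side discrete series to $k'$-side discrete series is bijective. Nonvanishing and irreducibility can be controlled by noting that the discrete series condition $\mathrm{Wt}(V)\subset i\mf{a}+\mf{a}^{--}$ is closed and that multiplicities in standard modules are locally constant under generic deformation (Lemma \ref{lem:3.11} translated to $\mh H$ via Paragraph \ref{par:analogues}); bijectivity then follows from matching the total counts on both sides using part (a).

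The main obstacle I expect is the counting step in part (a), since one must match, orbit by orbit (and type by type for irreducible $R$), the tally of discrete series produced by the deformation argument with the tally of generic residual orbits, and simultaneously isolate the $F_4$ coincidence. This is where substantial case analysis with the explicit list of residual points from \cite{HeOp} and with the $W$-character data of discrete series seems unavoidable, and it is also where the continuity argument in part (b) finds its footing, since the correct book-keeping at non-generic $k'$ depends on knowing exactly which generic residual orbits specialize to $W\lambda$.
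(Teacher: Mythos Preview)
The paper does not prove Theorem \ref{thm:4.4}; it is stated with a citation to \cite[Theorems 3.4 and 7.1]{OpSo} and no argument is given here. So there is no ``paper's own proof'' to compare your proposal against. Your sketch is broadly in the spirit of the actual proof in \cite{OpSo}, which does rely on continuous deformation of discrete series along paths in parameter space and on the classification of residual points from \cite{HeOp}.

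That said, there is one concrete misunderstanding in your proposal. You explain the $F_4$ anomaly as ``two a priori distinct generic residual orbits coincide for every $k$, producing a single fibre of size two.'' This is backwards. The central character map goes from irreducible discrete series to $W$-orbits of residual points, so a fibre of size two means \emph{one} residual orbit that carries \emph{two} inequivalent discrete series representations. In type $F_4$ with generic parameters there is exactly one such orbit; the two representations over it are distinguished by their $W$-types, not by their central characters. Your proposed mechanism (colliding orbits) would instead describe a non-generic degeneration, which is the subject of part (b), not the generic exception in part (a). This matters for your counting argument: if you try to match the number of discrete series with the number of residual orbits, the $F_4$ case has one more discrete series than residual orbits, not one fewer orbit than expected.
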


From Theorem \ref{thm:4.4} one obtains a complete classification of discrete series representations
of affine Hecke algebras with positive parameters. However, we have to point out that this does not
yet achieve an actual classification of all irreducible representations. The problem is that
it can remain difficult to effectively compute the R-groups $\mf R_{P,\delta,t}$ and their 
2-cocycles $\natural_{P,\delta,t}$ from Paragraph \ref{par:Rgroups}.\\

We conclude this section with a discussion of the residual points for $\mh H$ in the most
intricate case, for root systems of type $B_n$. 
We take $\mf a = \mf a^* = \R^n, R = B_n = \{ \pm e_i : i = ,1\ldots,n \} \cup 
\{ \pm e_i \pm e_j : i \neq j \}$ and we write $k(\pm e_i \pm e_j) = k_1, k (\pm e_i) = k_2$.
For every partition $\vec{n} = (n_1,n_2,\ldots,n_d)$ of $n$ we construct a point 
$\lambda (\vec{n},k) \in \mf a$, or rather a $S_n$-orbit in $\mf a$, in the following way. 
Draw the Young diagram, with first column of $n_1$ boxes, second column of $n_2$ boxes and so on.
Label the boxes from $b_1$ to $b_n$ in some way (to does not matter how, for another labelling 
will produce a point in the same $S_n$-orbit in $\mf a$). We define the height of a box $b$
in column $i$ and row $j$ to be $h(b) = j - i$ and we write
\begin{equation}\label{eq:4.3}
\lambda (\vec{n},k) = (h(b_1) k_1 + k_2, h(b_2) k_1 + k_2,\ldots, h(b_n) k_1 + k_2) .
\end{equation}
For example, when $n=2$ we have
\[
\lambda ((2),k) = (k_1 + k_2,k_2) ,\quad \lambda( (1,1),k) = (-k_1 + k_2,k_2)
\]
Every residual point for $\mh H (\C^n,W(B_n),k)$ is $W(B_n)$-associate to a $\lambda (\vec{n},k)$
\cite[\S 4]{HeOp}. For most parameters $k_i$ indeed all these points of $\mf a$
are residual, but not for all parameters. An extreme case is $k_1 = k_2 = 0$, then there are no
residual points. 

A parameter function $k : B_n \to \R$ is called generic if 
\begin{equation}\label{eq:4.2}
k_1 k_2 \prod\nolimits_{j=1}^{2(n-1)} (j k_1 + 2 k_2) (j k_1 - 2 k_2) \neq 0 .
\end{equation}
For generic $k$, all the $\lambda (\vec{n},k)$ are residual \cite[Proposition 4.3]{HeOp}
and they belong to different $W(B_n)$-orbits \cite[proof of Theorem 7.1]{OpSo}. 
On the other hand, when $k$ is not generic, some of the $\lambda (\vec{n},k)$ 
are not residual, and some of them may belong to the same $W(B_n)$-orbit.

\section{Geometric methods}

We survey some of results on affine Hecke algebras obtained with methods from complex algebraic
geometry. In many cases, these provide a complete classification of standard modules and of
irreducible representations. 

\subsection{Equivariant K-theory} \

In this paragraph we discuss equal label affine Hecke algebras, that is, with a single
parameter $\mb q$.
Recall from Paragraph \ref{par:defIHA} that these algebras are especially important 
because they classify representations of reductive $p$-adic groups with vectors fixed by an
Iwahori subgroup. Lusztig \cite{Lus-KT} discovered that such 
affine Hecke algebras can be realized as the equivariant K-theory of a suitable complex algebraic
variety. Then its representations can be analysed in algebro-geometric terms, and that leads to 
a beautiful construction and parametrization of all irreducible representations.

Let $G$ be a connected complex reductive group with a maximal torus $T$, and let $\mc R (G,T)$ 
be the associated root datum. We define $\mc H (G,T)$ to be like $\mc H (\mc R (G,T),\mb q)$, but 
with $\mb q$ replaced by an invertible formal variable $\mb z$. As vector spaces
\[
\mc H (G,T) = \C [X^* (T)] \otimes_\C \C [W] \otimes_\C \C [\mb z, \mb z^{-1}] ,
\]
where $W = W (G,T)$. Let $\mc B$ be the variety of Borel subgroups of $G$, it is isomorphic to 
$G/B$ for one Borel subgroup $B$. 

The upcoming constructions work best when the derived group $G_\der$ of $G$ 
is simply connected, so we assume that in this paragraph (unless explicitly mentioned otherwise). 
A main role is played by the Steinberg variety of $G$ from \cite[\S 3.3]{KaLu}:
\[
\mc Z := \{ (B,u,B') \in \mc B \times G \times \mc B : u \in B \cap B' \text{ unipotent} \} .
\]  
The group $G \times \C^\times$ acts on $\mc Z$ by 
\[
(g,\lambda) (B,u,B') = (g B g^{-1}, g u^{\lambda^{-1}} g^{-1}, g B' g^{-1}) .
\]
Note that $u^{\lambda^{-1}}$ is defined because $u$ is unipotent. This $\C^\times$-action might appear
ad hoc, but it is indispensable to obtain Hecke algebras. Without it, we could at best build the
$G$-equivariant K-group $K^G (\mc Z)$, which turns out be isomorphic to $\C [X] \rtimes W$
\cite[Theorem 7.2]{ChGi}. According to \cite[Theorem 3.5]{KaLu} and \cite[Theorem 7.2.5]{ChGi},
there is a natural isomorphism
\begin{equation}\label{eq:5.1}
K^{G \times \C^\times}(\mc Z) \cong \mc H (G,T).
\end{equation}
The $\mb z$'s in $\mc H (G,T)$ are due to the $\C^\times$-action on $\mc Z$. The ring of regular
class functions on $G$ is
\[
R (G) = \mc O (G)^G \cong \mc O (T/W) .
\]
When we regard $\mb z$ as the identity representation of $\C^\times$, we can write the rings of
regular class functions on $\C^\times$ and on $G \times \C^\times$ as
\[
R (\C^\times) = \C [\mb z,\mb z^{-1}], \qquad
R(G \times \C^\times) = \mc O (G)^G \otimes_\C \C [\mb z, \mb z^{-1}] .
\]
By construction \cite[\S 5.2]{ChGi}, $R(G \times \C^\times)$ acts naturally on 
$K^{G \times \C^\times} (\mc Z)$. On the other hand, a variation on \eqref{eq:1.14} shows that
$R(G \times \C^\times)$ is also naturally isomorphic to the centre of $\mc H (G,T)$. With these
identifications the isomorphism \eqref{eq:5.1} is $R(G \times \C^\times)$-linear. For any
$\mb q \in \C^\times$ we can specialize \eqref{eq:5.1} to an isomorphism
\begin{equation}\label{eq:5.2}
K^{G \times \C^\times} (\mc Z) \otimes_{\C [\mb z, \mb z^{-1}]} \C_{\mb q} 
\cong \mc H (\mc R(G,T),\mb q) .
\end{equation}
Further, let $t \in G$ be a semisimple element and denote the associated onedimensional
representation of $R(G \times \C^\times)$ by $\C_{t,\mb q}$. Let $\mc Z^{t,q}$ be the subvariety of
$\mc Z$ fixed by $(t,\mb q) \in G \times \C^\times$. By \cite[p. 414]{ChGi} there is an isomorphism
\begin{equation}\label{eq:5.3}
K^{G \times \C^\times} (\mc Z) \otimes_{R(G \times \C^\times)} \C_{t,\mb q} \cong 
K (\mc Z^{t,\mb q}) \otimes_\Z \C \cong H_* (\mc Z^{t,\mb q}, \C) .
\end{equation}
The construction of $K^{G \times \C^\times} (\mc Z)$-modules is performed most naturally with
Borel--Moore homology (that is equivalent to the constructions with equivariant K-theory in
\cite{KaLu}). 

Let $u \in G$ be a unipotent element such that $t u t^{-1} = u^{\mb q}$ and let $\mc B^{t,u} \subset
\mc B$ be the subvariety of Borel subgroups that contain
$t$ and $u$. The convolution product in Borel--Moore homology \cite[Corollary 2.7.42]{ChGi}
provides an action of $H_* (\mc Z^{t,\mb q},\C)$ on $H_* (\mc B^{t,u},\C)$. This and \eqref{eq:5.3} 
make $H_* (\mc B^{t,u},\C)$ into a $K^{G \times \C^\times}(\mc Z)$-module, usually reducible. 

By \cite[Lemma 8.1.8]{ChGi} these constructions commute with the $G$-action, in the sense that
\[
H_* (\mr{Ad}_g)^* : H_* (\mc B^{t,u},\C) \to H_* (\mc B^{g t g^{-1}, g u g^{-1}},\C) 
\] 
intertwines the $K^{G \times \C^\times}(\mc Z)$-actions. In particular $Z_G (t,u)$, the 
centralizer of $\{t,u\}$ in $G$, acts on $H_* (\mc B^{t,u},\C)$ by $K^{G \times \C^\times}(\mc Z)
$-intertwiners. The neutral component of $Z_G (t,u)$ acts trivially, so we may regard it as an 
action of the component group $\pi_0 (Z_G (t,u))$. That can be used to decompose the module
$H_* (\mc B^{t,u},\C)$. Let $\rho$ be an irreducible representation of $\pi_0 (Z_G (t,u))$
which occurs in $H_* (\mc B^{t,u},\C)$. Then
\[
K_{t,u,\rho} := \Hom_{\pi_0 (Z_G (t,u))} \big( \rho, H_* (\mc B^{t,u},\C)\big)
\]
is a nonzero $K^{G \times \C^\times}(\mc Z)$-module, called standard in \cite[5.12]{KaLu} and
\cite[Definition 8.1.9]{ChGi}. Since the action factors via \eqref{eq:5.3}, $K_{t,u,\rho}$
can be regarded as a\\ $\mc H (\mc R (G,T),\mb q)$-representation with central character $W t$.
In view of their role in \cite{KaLu}, data $(t,u,\rho)$ with the above properties are usually
called Kazhdan--Lusztig triples for $(G,\mb q)$.

\begin{thm}\label{thm:5.1}
\textup{\cite[Theorem 7.12]{KaLu}} \\
Recall that $G_\der$ is simply connected and let $\mb q \in \C^\times$ be of infinite order. 
\enuma{
\item For every Kazhdan--Lusztig triple $(t,u,\rho)$, the $\mc H (\mc R (G,T),\mb q)$-module 
$K_{t,u,\rho}$ has a unique irreducible quotient $L_{t,u,\rho}$.
\item Every irreducible $\mc H (\mc R (G,T),\mb q)$-module is of the form $L_{t,u,\rho}$, for 
a suitable Kazhdan--Lusztig triple.
\item Let $(t',u',\rho')$ be another Kazhdan--Lusztig triple. Then
$L_{t,u,\rho} \cong L_{t',u',\rho'}$ if and only if there exists a $g \in G$ such that
$t' = g t g^{-1}, u' = g u g^{-1}$ and $\rho' = \rho \circ \mr{Ad}(g^{-1})$.
}
\end{thm}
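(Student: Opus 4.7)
\medskip

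\noindent\textbf{Proof proposal.}
The plan is to classify irreducible $\mc H(\mc R(G,T),\mb q)$-modules via their central character, then use the geometric realization to decompose the relevant fibers.

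First I would use that $\mc H(\mc R(G,T),\mb q)$ has finite rank as a module over its centre $R(G\times\C^\times)\otimes_{\C[\mb z,\mb z^{-1}]}\C_{\mb q} \cong \mc O(T/W)$, so every irreducible module has finite dimension and admits a central character $Wt\in T/W$. For a given semisimple $t\in G$, the specialisation isomorphism \eqref{eq:5.3} identifies the category of $\mc H(\mc R(G,T),\mb q)$-modules with central character $Wt$ with the category of modules over the convolution algebra $H_*(\mc Z^{t,\mb q},\C)$. This reduces the whole theorem to the study of this latter algebra for each semisimple $t\in G$ and each $\mb q$ of infinite order.

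Next I would analyse $\mc Z^{t,\mb q}$ geometrically. The fixed-point locus decomposes according to the unipotent elements $u$ of $Z_G(t)^\circ$ (and its disconnected cousins) with $tut^{-1}=u^{\mb q}$; such $u$ form finitely many $Z_G(t)$-conjugacy classes because $\mb q$ has infinite order (this uses a Jacobson--Morozov style argument to pass to an $\mf{sl}_2$-triple in $Z_G(t)$, where the eigenvalue condition pins $u$ down up to conjugacy). Over each such class the fibres are the varieties $\mc B^{t,u}$, and $H_*(\mc Z^{t,\mb q},\C)$ acts on $H_*(\mc B^{t,u},\C)$ by convolution. I would then apply the generalised Springer correspondence inside the reductive group $Z_G(t)^\circ$: the Springer sheaf on the nilpotent cone of $\mr{Lie}(Z_G(t)^\circ)$ decomposes into IC sheaves indexed by pairs $(u,\rho)$, and decomposing fibrewise gives the isotypic decomposition
\[
H_*(\mc B^{t,u},\C) \;=\; \bigoplus_\rho \rho \otimes K_{t,u,\rho},
\]
where $\rho$ runs over $\Irr(\pi_0(Z_G(t,u)))$ appearing in the Springer sheaf.

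For parts (a) and (c) the key tool is a geometric filtration on $H_*(\mc B^{t,u},\C)$ by the dimension (or support) of cycles: the top piece comes from the open stratum corresponding to $(u,\rho)$ and produces, after taking the $\rho$-isotypic component, a canonical irreducible quotient $L_{t,u,\rho}$ of $K_{t,u,\rho}$, while the remaining composition factors $L_{t,u',\rho'}$ only involve unipotent classes $u'$ with $u$ in the closure of the $Z_G(t)$-orbit of $u'$ in the partial order on orbits, combined with the Springer correspondence for $(u',\rho')$. This upper-triangularity, together with the fact that $L_{t,u,\rho}$ occurs with multiplicity one in $K_{t,u,\rho}$ and is characterised by the associated pair $(u,\rho)$, shows both the uniqueness of the irreducible quotient and the injectivity of $(t,u,\rho)\mapsto L_{t,u,\rho}$ modulo $G$-conjugation. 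Exhaustion in part (b) then follows: any irreducible $\pi$ with central character $Wt$ is a composition factor of some $K_{t,u,\rho}$ by completeness of the standard modules at fixed $t$ (the sum of their dimensions equals $\dim H_*(\mc Z^{t,\mb q},\C)$ modulo the radical), hence is isomorphic to $L_{t,u',\rho'}$ for some admissible $(u',\rho')$.

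The main obstacle is the upper-triangularity step, i.e.\ controlling which $L_{t,u',\rho'}$ can occur as subquotients of $K_{t,u,\rho}$. This is where the hypothesis that $\mb q$ has infinite order becomes essential: for $\mb q$ a root of unity there can be extra unipotent classes fitting the equation $tut^{-1}=u^{\mb q}$, and the cleanness of the orbit closure ordering breaks. Handling this requires the full force of the decomposition theorem for the proper map $\mc Z^{t,\mb q}\to$ (a suitable base) and a careful identification of the top composition factor with the IC sheaf of the corresponding stratum, which in the end is the same kind of argument as in the classical Springer correspondence but carried out inside $Z_G(t)^\circ$ rather than in $G$ itself.
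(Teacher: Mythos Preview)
The paper does not prove Theorem~\ref{thm:5.1}; it is quoted from \cite[Theorem 7.12]{KaLu}, with \cite[Theorem 8.1.16]{ChGi} mentioned as an alternative proof. Your sketch is broadly in the spirit of the Chriss--Ginzburg approach (realise the specialised algebra as a convolution algebra on the fixed-point Steinberg variety, then classify its simples via the decomposition theorem applied to a Springer-type proper map), so at the level of strategy you are on a reasonable track.

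There is, however, a genuine gap in your reduction step. The condition $t u t^{-1} = u^{\mb q}$ does \emph{not} say that $u$ lies in $Z_G(t)$; it says that $\log u$ is a $\mb q$-eigenvector for $\mathrm{Ad}(t)$. Consequently the relevant proper map is
\[
\big\{ (B,u) : B \in \mc B^t,\ u \in B \text{ unipotent},\ t u t^{-1} = u^{\mb q} \big\}
\ \longrightarrow\
\big\{ u \text{ unipotent} : t u t^{-1} = u^{\mb q} \big\},
\]
a $Z_G(t)$-equivariant map between varieties built from the $\mb q$-eigenspace of $\mathrm{Ad}(t)$, not the Springer resolution of the nilpotent cone of $\mathrm{Lie}(Z_G(t)^\circ)$. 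Your appeal to ``the generalised Springer correspondence inside $Z_G(t)^\circ$'' and to ``an $\mf{sl}_2$-triple in $Z_G(t)$'' is therefore misplaced as stated: the Jacobson--Morozov triple for $u$ lives in $\mf g$, not in $\mathrm{Lie}(Z_G(t))$, and the Springer-type analysis must be carried out for this graded/twisted situation directly. What the $\mf{sl}_2$-triple does buy you is the passage from $t$ to the commuting element $t_1 = t\,\phi\!\left(\begin{smallmatrix}\mb q^{-1/2}&0\\0&\mb q^{1/2}\end{smallmatrix}\right)$ (the manoeuvre used just before Corollary~\ref{cor:5.3}); only after that replacement does one land in the nilpotent cone of $Z_G(t_1)$ and make contact with ordinary Springer theory. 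Once the correct geometric input is in place, your upper-triangularity and exhaustion arguments go through essentially as you describe, matching the structure of the proofs in \cite{KaLu} and \cite{ChGi}.
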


This major result, also shown later in a somewhat different way in \cite[Theorem 8.1.16]{ChGi},
comes with a lot of extras. 
Firstly, suppose that $L$ is a standard Levi subgroup of $G$ and that it contains $\{t,u\}$. Then 
\[
K^{L \times \C^\times} (\mc Z_L) \otimes_{\C [\mb z,\mb z^{-1}]} \C_{\mb q} 
\cong \mc H (\mc R (L,T),\mb q)
\]
embeds naturally in $\mc H (\mc R(G,T),\mb q)$ and, by \cite[Theorem 6.2]{KaLu}:
\[
H_* (\mc B^{t,u},\C) \cong 
\ind_{\mc H (\mc R(L,T),\mb q)}^{\mc H (\mc R(G,T),\mb q)} H_* (\mc B_L^{t,u},\C) .
\]
For the second extra we suppose that $\mb q \in \R_{>1}$. By \cite[Theorem 8.3]{KaLu} and
\cite[Proposition 9.3]{ABPSprin} the $\mc H (\mc R(G,T),\mb q)$-module $L_{t,u,\rho}$ is 
essentially discrete series if and only if $\{t,u\}$ is not contained in any Levi subgroup 
of any proper parabolic subgroup of $G$. 

With these two extras at hand, we can compare $K_{t,u,\rho}$ with the more analytic approach from
Sections \ref{sec:rep} and \ref{sec:class}. Let $L$ be a Levi subgroup of $G$ which contains 
$\{t,u\}$ and is minimal for that property. Upon conjugating everything by an element of $G$,
we may assume that $L$ is standard, that $t \in T$ and that $\log |t|$ is as positive as possible
in its $W$-orbit. Let $\rho_L \in \Irr \big( \pi_0 (Z_L (t,u)) \big)$ be an irreducible constituent
of $\rho |_{\pi_0 (Z_L (t,u))}$. Then 
\begin{equation}\label{eq:5.4}
\Hom_{\pi_0 (Z_L (t,u))} \big( \rho_L, H_* (\mc B_L^{t,u},\C) \big) 
\end{equation}
is an irreducible essentially discrete series $\mc H (\mc R (L,T),\mb q)$-representation. By 
Theorem \ref{thm:3.13} it is of the form $\delta \circ \psi_{t'}$, where $\delta$ is discrete
series. Then $\xi = (\Delta_L,\delta,t') \in \Xi_+$ by the assumption on $t$. The induction of
\eqref{eq:5.4} to $\mc H (\mc R(G,T),\mb q)$ contains $K_{t,u,\rho}$ as a direct summand. In view 
of Theorem \ref{thm:3.21}.b, this summand must be picked out by an irreducible representation 
$\rho'$ of $\C [\mf R_\xi,\natural_\xi]$. We conclude that $K_{t,u,\rho} \cong \pi (\xi,\rho')$, 
a standard module in the sense of Definition \ref{def:3.9}.

This argument also works in the opposite direction, and then it shows that the standard modules
from Definition \ref{def:3.9} are precisely the standard modules from \cite{KaLu} and \cite{ChGi}.\\

Recall that in Theorem \ref{thm:5.1} the complex reductive group $G$ has simply connected derived
group $G_\der$. Without that assumption on $G$, $K^{G \times \C^\times}$ behaves less well.
Nevertheless, the parametrization of irreducible $\mc H (\mc R(G,T),\mb q)$-representations obtained
in Theorem \ref{thm:5.1} is valid for any complex reductive group $G$. This was shown by Reeder
\cite[Theorem 3.5.4]{Ree2}, via reduction to the case with simply connected $G_\der$.

When $\mb q$ is a root of unity, Theorem \ref{thm:5.1} can definitely fail, in particular when $\mb q$
is a zero of the Poincar\'e polynomial of $W$. On the other hand, if $\mb q$ is not a zero of the
polynomials \eqref{eq:1.24} for any finite reflection subgroup of $X \rtimes W$, then it seems
likely that large parts of the K-theoretic approach are still valid.

A very special case arises when $\mb q = 1$. Then Theorem \ref{thm:5.1} holds, in a slightly different,
simpler way \cite{Kat2}. In this case the action of 
\[
\mc H (\mc R (G,T),1) = \C [X^* (T)] \rtimes W
\]
on $K_{t,u,\rho}$ preserves the homological degree, so 
$\Hom_{\pi_0 (Z_G (t,u))} \big( \rho, H_d (\mc B^{t,u},\C)\big)$ is a subrepresentation for any
$d \in \Z_{\geq 0}$. Then $K_{t,u,\rho}$ may obviously have many irreducible quotients, so the
previous definition of $L_{t,u,\rho}$ cannot be used anymore. Instead we define
\begin{equation}\label{eq:5.5}
L_{t,u,\rho} = \Hom_{\pi_0 (Z_G (t,u))} \big( \rho, H_{\dim_\R (\mc B^{t,u})} 
(\mc B^{t,u},\C)\big) \qquad \text{when } \mb q = 1, 
\end{equation}
that is, we only use the homology in the largest possible degree. It can be shown that 
\eqref{eq:5.5} is the canonical irreducible quotient of $K_{t,u,\rho}$ \cite[\S 12]{ABPSprin}.
Thus modified, Theorem \ref{thm:5.1} becomes valid for $\mb q = 1$ without any restriction on 
$G$ \cite[Theorem 4.1]{Kat2}. Notice that here the triples
$(t,u,\rho)$ satisfy $t u = u t$, so $u$ is a unipotent element of $Z_G (t)$. This classification
of $\Irr (X \rtimes W)$ can be regarded as a Springer correspondence for affine Weyl groups.
The proof is much shorter than that of Theorem \ref{thm:5.1}, it mainly relies on the Springer
correspondence for finite Weyl groups.\\

As already observed in \cite[\S 2]{KaLu}, there are several equivalent ways to present 
Kazhdan--Lusztig parameters (for arbitrary complex reductive groups $G$), when we consider them 
modulo $G$-conjugation (as in Theorem \ref{thm:5.1}.c). One alternative shows the connection 
between different $\mb q$'s very nicely.

Let $(t_1,u,\rho_1)$ be a Kazhdan--Lusztig triple for $(G,1)$.
Pick an algebraic homomorphism $\phi : SL_2 (\C) \to Z_G (t_1)$ with $\phi ( \matje{1}{1}{0}{1} ) = u$. 
By the Jacobson--Morozov theorem, such a $\phi$ exists and is unique up to conjugation by $Z_G (t_1,u)$. 
Assume that we have a preferred square root of $\mb q$. We put $t_{\mb q} = t_1 \phi ( 
\matje{\mb q^{1/2}}{0}{0}{\mb q^{-1/2}} )$, so that $t_{\mb q} u t_{\mb q}^{-1} = u^{\mb q}$. Then
$\mc B^{t_1,u}$ and $\mc B^{t_{\mb q},u}$ are homotopy equivalent and $\rho_1$ gives rise to a unique
$\rho_{\mb q}$ \cite[Lemma 6.1]{ABPSprin}. This provides a bijection between $G$-conjugacy classes
of Kazhdan--Lusztig tripes for $(G,1)$ and for $(G,{\mb q})$ \cite[Lemma 7.1]{ABPSprin}. 
In combination with Theorem \ref{thm:5.1} we obtain:

\begin{cor}\label{cor:5.3}
Let $G$ be a complex reductive group and let $\mb q \in \C^\times$ be either 1 or not a root of unity.
There exists a canonical bijection
\[
\begin{array}{ccc}
\{ \text{Kazhdan--Lusztig triples for } (G,1) \} / G & \longleftrightarrow &
\Irr \big( \mc H (\mc R (G,T),\mb q) \big) \\
(t_1,u,\rho_1) & \mapsto & L_{t_{\mb q},u,\rho_{\mb q}}
\end{array}
\]
\end{cor}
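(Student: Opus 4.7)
The plan is to compose two bijections: one of the form described in Theorem \ref{thm:5.1} (for the chosen $\mb q$), and one that matches Kazhdan--Lusztig triples at parameter $1$ with those at parameter $\mb q$ via Jacobson--Morozov. Concretely, I would organize the proof in three steps.

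First, I would invoke Theorem \ref{thm:5.1} (for $\mb q$ not a root of unity) or the modified version \eqref{eq:5.5} of Kato for $\mb q = 1$, to obtain a canonical bijection between $G$-conjugacy classes of Kazhdan--Lusztig triples $(t,u,\rho)$ for $(G,\mb q)$ -- i.e.\ triples with $t u t^{-1} = u^{\mb q}$ -- and $\Irr \big( \mc H (\mc R (G,T),\mb q) \big)$. For $\mb q \neq 1$, when $G_\der$ is not simply connected, I would appeal to Reeder's extension cited just after Theorem \ref{thm:5.1}; for $\mb q = 1$ I would cite Kato's version and note that the triples satisfy $t u = u t$.

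Second, I would set up the bijection between $G$-conjugacy classes of KL-triples for $(G,1)$ and those for $(G,\mb q)$. Given a triple $(t_1,u,\rho_1)$ with $u \in Z_G(t_1)$ unipotent, the Jacobson--Morozov theorem produces an algebraic homomorphism $\phi : SL_2 (\C) \to Z_G (t_1)$ with $\phi ( \matje{1}{1}{0}{1} ) = u$, unique up to $Z_G(t_1,u)$-conjugation. Fixing a square root $\mb q^{1/2}$, define $t_{\mb q} = t_1 \, \phi ( \matje{\mb q^{1/2}}{0}{0}{\mb q^{-1/2}} )$, so $t_{\mb q} u t_{\mb q}^{-1} = u^{\mb q}$ and $Z_G(t_{\mb q},u) = Z_G(t_1,u)$. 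One then observes the homotopy equivalence $\mc B^{t_1,u} \simeq \mc B^{t_{\mb q},u}$ (both deformation retract onto the fixed-point variety $\mc B^{\phi(SL_2),t_1,u}$), which yields an isomorphism of homology representations of $\pi_0(Z_G(t_1,u))$, and hence transports $\rho_1$ to a canonical $\rho_{\mb q}$. Well-definedness up to $G$-conjugacy follows from the uniqueness of $\phi$ up to $Z_G(t_1,u)$-conjugation; invertibility is constructed by running the same argument at $\mb q$ after using Jordan decomposition and Jacobson--Morozov in $Z_G(t_s)$, where $t_s = (t_{\mb q})_s$. This is the step I cite as \cite[Lemma 7.1]{ABPSprin}.

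Finally, composing the bijection from step two with the bijection from step one yields the desired canonical bijection $(t_1,u,\rho_1) \mapsto L_{t_{\mb q},u,\rho_{\mb q}}$.

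The main obstacle is step two: checking that the $\pi_0(Z_G(t_1,u))$-equivariant isomorphism between $H_*(\mc B^{t_1,u},\C)$ and $H_*(\mc B^{t_{\mb q},u},\C)$ is compatible with the component group actions, and that the resulting matching of $\rho$'s is independent of the choice of $\phi$ and of $\mb q^{1/2}$ (up to $G$-conjugation). A secondary subtlety is the case $\mb q = 1$ for general $G$: one cannot directly apply Theorem \ref{thm:5.1}, but must rely on the variant \eqref{eq:5.5} and the Springer correspondence for finite Weyl groups, as mentioned in the paragraph following \eqref{eq:5.5}.
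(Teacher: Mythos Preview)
Your proposal is correct and follows essentially the same approach as the paper: the paper's argument (contained in the paragraph immediately preceding the corollary) constructs the bijection between Kazhdan--Lusztig triples for $(G,1)$ and $(G,\mb q)$ via Jacobson--Morozov and the homotopy equivalence $\mc B^{t_1,u} \simeq \mc B^{t_{\mb q},u}$ (citing \cite[Lemmas 6.1 and 7.1]{ABPSprin}), and then composes with Theorem~\ref{thm:5.1} together with Reeder's and Kato's extensions. The subtleties you flag---compatibility with the component-group actions and independence of the choice of $\phi$---are exactly the content of the cited lemmas.
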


One advantage of this parametrization is that the pair $(t_1,u)$ is the Jordan decomposition
of an arbitrary element of $G$, so that $(t_1,u)$ up to $G$-conjugacy parametrizes the
conjugacy classes of $G$.

\begin{ex}
Let $\mc R$ be of type $GL_n$ and consider $\mc H_n (\mb q)$. As $Z_{GL_n (\C)}(t_1,u) =
Z_{GL_n (\C)}(t_1 u)$ is always connected, $\rho$ is necessarily trivial and may be ignored.
Corollary \ref{cor:5.3} recovers the parametrization of $\Irr (\mc H_n (\mb q))$ summarised
in Theorem \ref{thm:2.6}.
\end{ex}

In addition to the already discussed properties of these bijections, we mention that 
temperedness can be detected easily in Corollary \ref{cor:5.3}. Namely, by
\cite[Proposition 9.3]{ABPSprin}, for $\mb q \in \R_{\geq 1}$:
\begin{equation}\label{eq:5.6}
L_{t_{\mb q},u,\rho_{\mb q}} \text{ is tempered} \quad \Longleftrightarrow \quad
t_1 \text{ lies in a compact subgroup of } G.
\end{equation}

Finally, we mention an interesting extension of the method from \cite{ChGi} to an affine 
Hecke algebra $\mc H$ of type $B_n / C_n$ with three unequal parameters \cite{Kat3}.
Like in \cite{KaLu} it is shown that $\mc H$ is isomorphic to the equivariant K-theory of
a certain algebraic variety, and that is used to parametrize $\Irr (\mc H)$ when the 
parameters are generic (in a sense related to Theorem \ref{thm:4.4}). We return to this
in Paragraph \ref{par:BnCn}.

\subsection{Equivariant homology} \
\label{par:homology}

The material in the previous paragraph learns us a lot about equal label affine Hecke
algebras, but very little about the cases with several parameters $q_s$.
Faced with this problem, Lusztig discovered that Hecke algebras with multiple parameters
can still be studied geometrically, if we accept two substantial modifications:
\begin{itemize}
\item replace affine Hecke algebras by their graded versions,
\item replace equivariant K-theory by equivariant homology. 
\end{itemize}
Not all combinations of $q$-parameters can be obtained in this way, but still a considerable
number of them. Our treatment of this method is based on the papers 
\cite{Lus-Int,LusCusp1,LusCusp2,LusCusp3,AMS2}.

Let $G$ be a connected complex reductive group and let $P$ be a parabolic subgroup of $G$
with Levi factor $L$ and unipotent radical $U$. We denote the Lie algebras of these groups
by $\mf g, \mf p, \mf l$ and $\mf u$. Let $v \in \mf l$ be nilpotent and let $\mc C_v^L$
be its adjoint orbit. Let $\mc L$ be an irreducible $L$-equivariant cuspidal local system
on $\mc C_v^L$. We refrain from explaining these notions here, instead we refer to
\cite{Lus-Int}, where cuspidal local systems are introduced and classified.

To the above data we will associate a graded Hecke algebra. We take $T = Z(L)^\circ$,
a (not necessarily maximal) torus in $G$. According to \cite[Proposition 2.5]{LusCusp1},
the cuspidality of $\mc L$ implies that:
\begin{itemize}
\item the set of weights of $T$ acting on $\mf g$ is a (possibly nonreduced) root system
$R(G,T)$ in $X^* (T)$,
\item the Weyl group of $R(G,T)$ is $W_L := N_G (L) / L = N_G (T) / Z_G (T)$.
\end{itemize}
The parabolic subgroup $P$ determines a basis $\Delta_L$ of $R(G,T)$.
Let $\mf t = X_* (T) \otimes_\Z \C$ be the Lie algebra of $T$, so that 
\[
R(G,T) \subset \mf t^* = X^* (T) \otimes_\Z \C.
\]
The action of $W_L$ on $T$ induces actions on $\mf t$ and $\mf t^*$, which stabilize $R(G,T)$.
The definition of the parameter function $k : R(G,T) \to \Z$ involves the
nilpotent element $v$. Let $\mf g_\alpha \subset \mf g$ be the root space and let $s_\alpha \in 
W_L$ be the reflection associated to $\alpha \in R(G,T)$. Since $v \in \mf l$ commutes with
$\mf t$, ad$(v)$ stabilizes each $\mf g_\alpha$. For $\alpha \in \Delta_L$ one defines
$k(\alpha) \in \Z_{\geq 2}$ by
\[
\begin{array}{ccccl}
\mr{ad}(v)^{k(\alpha) - 2} : & \mf g_\alpha \oplus \mf g_{2 \alpha} & \to &
\mf g_\alpha \oplus \mf g_{2 \alpha} & \text{is nonzero,} \\
\mr{ad}(v)^{k(\alpha) - 1} : & \mf g_\alpha \oplus \mf g_{2 \alpha} & \to &
\mf g_\alpha \oplus \mf g_{2 \alpha} & \text{is zero.}
\end{array}
\]
Then $k(\alpha) = k(\beta)$ whenever $\alpha, \beta \in \Delta_L$ are $W_L$-associate. 
Now we can define
\[
\mh H (G,L,\mc L,\mb r) = \mh H (\mf t, W_L, k, \mb r) .
\]
Suppose that $G$ is an almost direct product of connected normal subgroups $G_1$ and $G_2$.
Then $L, \mc C_v^L$ and $\mc L$ decompose accordingly and
\[
\mh H (G,L,\mc L,\mb r) = 
\mh H (G_1,L_1,\mc L_1,\mb r) \otimes_{\C [\mb r]} \mh H (G_2,L_2,\mc L_2,\mb r).
\]
If $G$ is a torus, then necessarily $L = T = G$ and $v = 0$. In that case $\mc L$ is trivial
and $\mh H (T,T,\mc L,\mb r)$ is just $\mc O (\mf t) \otimes_\C \C[\mb r]$. Hence the study 
of $\mh H (G,L,\mc L)$ can be reduced to simple $G$. Then it becomes feasible 
to classify the data, and indeed this has been done in \cite[2.13]{LusCusp1}. 
We tabulate the possibilities for $\mf g, \mf l, R(G,T)$ and $k$:
\begin{equation}\label{table}
\begin{array}{cccc}
\mf g & \mf l & R(G,T) & k \\
\hline 
\text{simple} & \text{Cartan} & \text{irreducible} & k(\alpha) = 2 \\
\mf{sl}_{(d+1)p} & {\mf sl}_p^{d+1} \oplus \C^d & A_d & k(\alpha) = 2p \\
\mf{sp}_{2n + 2d}, 2n = p(p+1) & \mf{sp}_{2n} \oplus \C^d & BC_d & k(\alpha) = 2,
k(\beta) = 2 p + 1\\
\mf{so}_{n + 2d}, n = p^2 & \mf{so}_n \oplus \C^d & B_d & k(\alpha) = 2,
k(\beta) = 2 p \\
\mf{so}_{n + 4d}, n = p (2p-1) & \mf{sl}_2^d \oplus \C^d & BC_d & k(\alpha) = 4,
k(\beta) = 4 p - 1\\
\mf{so}_{n + 4d}, n = p (2p+1) & \mf{sl}_2^d \oplus \C^d & BC_d & k(\alpha) = 4,
k(\beta) = 4 p + 1\\
E_6 & \mf{sl}_3^2 \oplus \C^2 & G_2 & k(\alpha) = 2, k(\beta) = 6\\
E_7 & \mf{sl}_2^3 \oplus \C^4 & F_4 & k(\alpha) = 2, k(\beta) = 4
\end{array}
\end{equation}
In this table $d,p \in \Z_{>0}$ are arbitrary, $\alpha \in R(G,T)$ is a long root and
$\beta \in R(G,T)$ is a short root. (For $R(G,T)$ of type $BC_d$ we mean that 
$\alpha = \pm e_i \pm e_j$ and $\beta = \pm e_i$.)

Recall from \eqref{eq:1.16} that we can simultaneously rescale all the $k(\alpha)$ without
changing the algebra (up to isomorphism). When $R(G,T)$ has roots of different lengths, 
we can also adjust the parameters for roots of one length in a specific way:

\begin{ex} \label{ex:1.1}
Let $R$ be of type $B_n, F_4$ or $G_2$. Assume that $R$ spans $\mf a^*$. 
Any parameter function $k$ for $R$ has two independent values $k_1 = k(\alpha)$ and
$k_2 = k (\beta)$, which can be chosen arbitrarily. We write $k = (k_1,k_2)$ and we
consider the graded Hecke algebra $\mh H (\mf t,W(R),k_1,k_2,\mb r)$.

Take $\epsilon = 2$ for $B_n$ or $F_4$, and $\epsilon = 3$ for $G_2$. The set 
\[
\{ w \alpha : w \in W \} \cup \{ \epsilon w \beta : w \in W \}
\]
is a root system in $\mf a^*$, of type (respectively) $C_n, F_4$ or $G_2$. Notice that now 
$\alpha$ is short and $\epsilon \beta$ is long. The identity map on the vector space 
underlying $\mh H (\mf t,W(R),k_1,k_2, \mb r)$ provides an algebra isomorphism
\begin{align} 
\label{eq:1.23} & 
\mh H (\C^n, W(B_n), k_1,k_2,\mb r) \to \mh H (\C^n, W(C_n), 2 k_2, k_1, \mb r) \\
\label{eq:1.26} & 
\mh H (\C^4, W(F_4), k_1,k_2,\mb r) \to \mh H (\C^4, W(F_4), 2 k_2, k_1, \mb r) \\
\label{eq:1.27} &
\mh H (\C^2, W(G_2), k_1,k_2,\mb r) \to \mh H (\C^2, W(G_2), 3 k_2, k_1, \mb r)
\end{align}
In particular any graded Hecke algebra of type $C_n$ is also 
a graded Hecke algebra of type $B_n$ (but with different parameters). 
\end{ex}
We will call a parameter function obtained from the above table by a composition of the isomorphisms 
\eqref{eq:1.16} and those from Example \ref{ex:1.1} geometric. Thus we have a large supply 
of geometric parameter functions for type B/C root systems.

Next we describe the geometric realization of $\mh H (G,L,\mc L,\mb r)$. We need the varieties
\begin{align*}
& \dot{\mf g} = \{ (x,gP) \in \mf g \times G / P : 
\mr{Ad}(g^{-1})x \in \mc C_v^L + \mf t + \mf u \}, \\
& \ddot{\mf g_N} = \{ (x,gP,g'P) \in \mf g \times (G/P)^2 : (x,gP) \in \dot{\mf g}, (x,g'P) 
\in \dot{\mf g}, x \text{ nilpotent} \} . 
\end{align*}
The first is a variation on the variety $\mc B$ of Borel subgroups of $G$, while the
second has a flavour of the Steinberg variety of $G$. The group $G \times \C^\times$ acts
on $\dot{\mf g}$ by
\[
(g_1,\lambda) (x,gP) = (\lambda^{-2} \mr{Ad}(g_1) x, g_1 g P) ,
\] 
and similarly on $\ddot{\mf g_N}$. The $L \times \C^\times$-equivariant local system $\mc L$
on $\mc C_v^L$ yields a $G \times \C^\times$-equivariant local system $\dot{\mc L}$
on $\dot{\mf g}$. The two projections $\ddot{\mf g_N} \to \dot{\mf g}$ give rise to an
equivariant local system $\ddot{\mc L} = \dot{\mc L} \boxtimes \dot{\mc L}^*$ on 
$\ddot{\mf g_N}$. Equivariant (co)homology with coefficients in a local system is defined in
\cite[\S 1]{LusCusp1}. In special cases this theory admits a convolution product
\cite[\S 2]{LusCusp2}. That makes $H_*^{G \times \C^\times} (\ddot{\mf g_N},\ddot{\mc L})$
into a graded algebra, see the proof of \cite[Theorem 8.11]{LusCusp2}. 

\begin{thm}\label{thm:5.3}
\textup{\cite[Corollary 6.4]{LusCusp1} and \cite[Theorem 8.11]{LusCusp2}} \\
There exists a canonical isomorphism of graded algebras
\[
\mh H (G,L,\mc L,\mb r) \longrightarrow H_*^{G \times \C^\times} (\ddot{\mf g_N},\ddot{\mc L}).
\]
\end{thm}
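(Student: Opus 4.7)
The plan is to construct a homomorphism from $\mh H(G,L,\mc L,\mb r) = \mh H(\mf t, W_L, k, \mb r)$ to $H^{G\times\C^\times}_*(\ddot{\mf g_N}, \ddot{\mc L})$ by producing explicit elements that match the three kinds of generators in Definition \ref{def:1.8}, checking the defining relations, and then upgrading the resulting homomorphism to an isomorphism by a basis/filtration argument.

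First I would construct the three families of generators on the geometric side. For $S(\mf t^*)\otimes\C[\mb r]$: the diagonal embedding of $\dot{\mf g}$ in the ambient space and the $G\times\C^\times$-equivariant structure on $\dot{\mc L}$ give a natural algebra map from $H^*_{G\times\C^\times}(\dot{\mf g},\End(\dot{\mc L}))$ into the convolution algebra; using that $\dot{\mf g}$ retracts $G\times\C^\times$-equivariantly onto $G/P$ and that $\End(\mc L)$ is a (rank one) local system on $\mc C_v^L$, one identifies this subalgebra with $H^*_{L\times\C^\times}(\mr{pt}) \otimes_{H^*_L(\mr{pt})} H^*_T(\mr{pt}) = S(\mf t^*)\otimes\C[\mb r]$, where $\mb r$ is the generator of $H^*_{\C^\times}(\mr{pt})$. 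For $\C[W_L]$: the two projections $\ddot{\mf g_N}\to\dot{\mf g}$ compose with the projection $\dot{\mf g}\to G/P$, and the fibre product $G/P\times_{G/B^{\mr{op}}}G/P$ stratifies by $G$-orbits indexed by $W_L = N_G(L)/L$; the closures $\ddot{\mf g_{N,w}}$ of these strata (in the nilpotent locus, twisted by $\ddot{\mc L}$) have fundamental classes $[T_w]\in H^{G\times\C^\times}_*(\ddot{\mf g_N},\ddot{\mc L})$, and already over the open stratum $w = e$ one sees that these are nonzero and that $[T_e]$ is the unit. Centrality of $\mb r$ is automatic, since the $\C^\times$-action is by scaling and $\mb r$ comes from $H^*_{\C^\times}(\mr{pt})$.

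Next I would verify the cross-relation. The crucial reduction is to the case where $\alpha\in\Delta_L$ generates a rank-one subsystem: restrict the geometric data to the minimal parabolic $P_\alpha\supset P$, replacing $G/P$ by the $\mb P^1$-fibration $G/P\to G/P_\alpha$. Over this $\mb P^1$-fibration the relevant piece of $\ddot{\mf g_N}$ becomes a union of the diagonal and the graph of the nontrivial $\mb P^1$-twist, and the convolution of $T_{s_\alpha}$ with a polynomial $\xi\in\mf t^*$ is computed by a localization argument on $\mb P^1$. The key point is that the local system $\ddot{\mc L}$ contributes a factor determined by the action of $\mr{ad}(v)^{k(\alpha)-2}$ on $\mf g_\alpha\oplus\mf g_{2\alpha}$; cuspidality of $\mc L$ is exactly what ensures the cohomology of the non-diagonal piece, twisted by $\ddot{\mc L}$, is nonzero in precisely the degree producing the factor $k(\alpha)\mb r$. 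Putting these together one obtains
\[
\xi \cdot T_{s_\alpha} - T_{s_\alpha}\cdot s_\alpha(\xi) = k(\alpha)\mb r\,\frac{\xi - s_\alpha(\xi)}{\alpha},
\]
matching Definition \ref{def:1.8}. The braid relations among the $T_{s_\alpha}$ follow from a similar analysis over rank-two parabolics, where only the ambient geometry (not $\mc L$) matters, so they reduce to the Iwahori--Matsumoto relations in the equal-parameter case.

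Finally I would show the map is an isomorphism. Filter both sides by the length function on $W_L$: on the algebraic side this is the standard filtration with associated graded $S(\mf t^*)\otimes\C[W_L]\otimes\C[\mb r]$, and on the geometric side the stratification $\ddot{\mf g_{N,w}}$ by closed pieces gives a compatible filtration whose associated graded is computed stratum by stratum via the Leray spectral sequence for $\ddot{\mf g_{N,w}}\to G$-orbit in $G/P\times G/P$. Each stratum contributes $S(\mf t^*)\otimes\C[\mb r]\cdot T_w$ (using that the fibre is an affine bundle over the diagonal piece and that $\ddot{\mc L}$ restricts trivially along these fibres), so the two associated gradeds are abstractly isomorphic as $S(\mf t^*)^{W_L}\otimes\C[\mb r]$-modules and the map identifies them. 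A standard Nakayama-type argument then promotes this to an isomorphism of filtered algebras.

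The main obstacle will be Step~3: the computation of the commutator $[\xi, T_{s_\alpha}]$ with coefficients in $\ddot{\mc L}$. One must keep precise track of how the nilpotent $v$ enters the localization contribution over the $\mb P^1$-fibre — the integer $k(\alpha)$ is not visible from the ambient geometry and appears only through the specific way $\mc L$ interacts with $\mr{ad}(v)$ on the root-space summands $\mf g_\alpha\oplus\mf g_{2\alpha}$. Correctly matching this contribution with the graded-Hecke cross-relation, including the factor $\mb r$ coming from the $\C^\times$-weight on $\dot{\mf g}$, is the technical heart of Lusztig's construction.
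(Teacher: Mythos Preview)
The paper does not give its own proof of this theorem: it is stated with attribution to \cite[Corollary 6.4]{LusCusp1} and \cite[Theorem 8.11]{LusCusp2} and then used as input, in keeping with the survey nature of the paper. So there is no proof in the paper to compare against.

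That said, your outline is a reasonable caricature of Lusztig's argument in those references: produce $S(\mf t^*)\otimes\C[\mb r]$ from the diagonal, produce elements indexed by $W_L$ from the orbit stratification of $G/P\times G/P$, verify the cross relation by reducing to rank one, and prove bijectivity by a filtration whose associated graded is free of the expected rank. A few details in your write-up are loose or garbled. The expression $H^*_{L\times\C^\times}(\mr{pt})\otimes_{H^*_L(\mr{pt})}H^*_T(\mr{pt})$ is not the right identification; the commutative subalgebra comes directly from $H^*_{T\times\C^\times}(\mr{pt})\cong S(\mf t^*)\otimes\C[\mb r]$ via the equivariant cohomology of the diagonal. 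The fibre product ``$G/P\times_{G/B^{\mr{op}}}G/P$'' does not make sense here --- there is no $B$ in the picture, and the $G$-orbits on $G/P\times G/P$ are indexed by $W_L$ directly. More substantively, the classes for $w\in W_L$ are not plain fundamental classes of orbit closures: one needs the local system $\ddot{\mc L}$ to extend correctly across the closure, and in Lusztig's treatment this is handled via a careful sheaf-theoretic construction rather than a naive cycle class. Your description of the rank-one computation is morally right but hides exactly the hard part: the integer $k(\alpha)$ arises from the $\C^\times$-weight filtration on the cohomology of the relevant affine fibre with coefficients in $\ddot{\mc L}$, and this is where cuspidality of $\mc L$ is genuinely used, not merely to ensure nonvanishing. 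If you wanted to actually carry this out, you would need to follow \cite{LusCusp1,LusCusp2} closely at that step.
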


With equivariant homology one can construct many modules for $H_*^{G \times \C^\times} 
(\ddot{\mf g_N},\ddot{\mc L})$. Let $y \in \mf g$ be nilpotent and define
\[
\mc P_y = \{ g P \in G / P : \mr{Ad}(g^{-1}) y \in \mc C_v^L + \mf u \} .
\]
This is the appropriate analogue of the variety $\mc B^u$ of Borel subgroups containing $u$.
The group
\[
M(y) := \{ (g_1,\lambda) \in G \times \C^\times : \mr{Ad}(g_1) y = \lambda^2 y \}
\] 
acts on $\mc P_y$ by $(g_1,\lambda) gP = g_1 g P$. The inclusion $\{y\} \times \mc P_y
\to \dot{\mf g}$ is $M(y)$-equivariant, which allows us to restrict $\dot{\mc L}$ to an
equivariant local system on $\mc P_y$. With constructions in equivariant (co)homology
\cite[\S 3.1]{AMS2} one can define an action of $\mh H (G,L,\mc L,\mb r)$ on
$H_*^{M(y)^\circ}(\mc P_y,\dot{\mc L})$. It commutes with the natural actions of
$\pi_0 (M(y))$ and of $H^*_{M(y)^\circ}(\{y\})$ on $H_*^{M(y)^\circ}(\mc P_y,\dot{\mc L})$
which enables us to decompose it as\\ $\mh H (G,L,\mc L,\mb r)$-module. 
It is known that $H_*^{M(y)^\circ}(\mc P_y,\dot{\mc L})$ is projective over 
$H^*_{M(y)^\circ}(\{y\})$. One can naturally identify
\begin{align*}
& H^*_{M(y)^\circ}(\{y\}) = \mc O (\mr{Lie}(M(y)^\circ))^{M(y)^\circ},\\
& \mr{Lie}(M(y)^\circ) = \{ (\sigma,r) \in \mf g \oplus \C : [\sigma,y] = 2 r y \} .
\end{align*}
In particular the characters of $H^*_{M(y)^\circ}(\{y\})$ are parametrized by semisimple
adjoint orbits in Lie$(M(y)^\circ)$. For a semisimple element $(\sigma,r) \in \mr{Lie}
(M(y)^\circ)$ we have the $\mh H (G,L,\mc L,\mb r)$-module
\begin{equation}\label{eq:5.12}
E_{y,\sigma,r} = 
\C_{\sigma,r} \otimes_{H^*_{M(y)^\circ}(\{y\})} H^{M(y)^\circ}_* (\mc P_y,\dot{\mc L}) .
\end{equation}
By the projectivity of $H^{M(y)^\circ}_* (\mc P_y,\dot{\mc L})$, $(\sigma,r) \mapsto
E_{y,\sigma,r}$ is an algebraic family of $\mh H (G,L,\mc L,\mb r)$-modules. In particular
the restriction of $E_{y,\sigma,r}$ to  the finite dimensional semisimple algebra $\C [W_L]$ 
does not depend on $(\sigma,r)$.
As usual, the isomorphism class of $E_{y,\sigma,r}$ depends only on $(y,\sigma,r)$ up to
the adjoint action of $G$ (which fixes $r$). From the action of $\pi_0 (M(y))$ on 
$H^{M(y)^\circ}_* (\mc P_y,\dot{\mc L})$, only the operators that stabilize the adjoint
orbit $[\sigma]$ of $\sigma$ in\\ 
Lie$(M(y)^\circ)$ act on $E_{y,\sigma,r}$. Hence irreducible representations $\rho$ of 
$\pi_0 (M)_{[\sigma]}$ can be used to decompose the $\mh H (G,L,\mc L, \mb r)
$-module $E_{y,\sigma,r}$ further. We define the $\mh H (G,L,\mc L,\mb r)$-representation
\[
E_{y,\sigma,r,\rho} = \Hom_{\pi_0 (M(y))_{[\sigma]}}(\rho, E_{y,\sigma,r}).
\]
We call this a standard module if it is nonzero.
 
Let us improve the bookkeeping for the parameters $(y,\sigma,r,\rho)$ just obtained. With
Jacobson--Morozov we pick an algebraic homomorphism $\gamma_y : SL_2 (\C) \to G$ such that
d$\gamma_y (\matje{1}{1}{0}{1}) = y$. Notice that the semisimple element
\[
\sigma_0 = \sigma + \textup{d}\gamma_y ( \matje{-r}{0}{0}{r} )
\]
commutes with $y$. It is not difficult to see that $\pi_0 (M(y))_{[\sigma]}$ is naturally
isomorphic to $\pi_0 (Z_G (\sigma_0,y))$. By \cite[Lemma 3.6]{AMS2} these constructions
provide a bijection between $G$-association classes of data $(y,\sigma,\rho)$ as above
(for a fixed $r \in \C$) and $G$-association classes of triples $(y,\sigma_0,\rho)$, where
\begin{equation}\label{eq:5.7}
y \in \mf g \text{ nilpotent, } \sigma_0 \in \mf g \text{ semisimple, } [\sigma_0,y] = 0,
\rho \in \Irr \big( \pi_0 (Z_G (\sigma_0,y)) \big) .
\end{equation}
In the previous paragraph we encountered a clear condition on the representation $\rho$
of the component group: it should appear in the homology of a particular variety, otherwise
the associated module would be 0. 

In the current setting the condition on $\rho$ is more
subtle, because $\mc P_y$ can be empty and a local system $\mc L$ is involved. To formulate
it we need the cuspidal support map $\Psi_G$ from \cite[6.4]{Lus-Int}. It associates a
cuspidal support $(L',\mc C_{v'}^{L'},\mc L')$ to every pair $(x,\rho')$ with $x$ nilpotent and
$\rho' \in \Irr \big( \pi_0 (Z_G (x)) \big)$. Giving such $(x,\rho)$ is equivalent to giving
a $G$-equivariant cuspidal local system on a nilpotent orbit in $\mf g$ (which is also
an equivariant perverse sheaf). The cuspidal support map can be expressed with a version of
parabolic induction for equivariant perserve sheaves \cite[\S 4.1]{Aub}.
According to \cite[Proposition 3.7]{AMS2}:
\begin{equation}\label{eq:5.8}
E_{y,\sigma,r,\rho} \neq 0 \quad \Longleftrightarrow \quad
\Psi_{Z_G (\sigma_0)}(y,\rho) \text{ is } G\text{-associate to } (L,\mc C_v^L,\mc L).
\end{equation}
When $L = T$ is a maximal torus of $G$ and $v = 0$, we have $\mc P_y = \mc B^{\exp y}$ and 
$\mc L$ is trivial. Then the condition \eqref{eq:5.8} reduces to:
$\rho$ appears in $H_* (\mc B^{\exp y}_{Z_G (\sigma_0)},\C)$. That is equivalent to the condition
on $\rho$ in the Kazhdan--Lusztig triple $(\exp (\sigma_0), \exp (y), \rho)$ for $(G,1)$.

\begin{thm}\label{thm:5.4}
\textup{\cite[Theorem 1.15]{LusCusp3} and \cite[Theorem 3.11]{AMS2}} \\
Let $(y,\sigma_0,\rho)$ be as in \eqref{eq:5.7}, such that $\Psi_{Z_G (\sigma_0)}(y,\rho)$ is 
$G$-associate to $(L,\mc C_v^L,\mc L)$. For $r \in \C$ we write $\sigma_r = \sigma_0 +
\textup{d}\gamma_y ( \matje{r}{0}{0}{-r} )$, where $\gamma_y : SL_2 (\C) \to Z_G (\sigma_0)$
with d$\gamma_y ( \matje{0}{1}{0}{0} ) = y$.
\enuma{
\item For $r \neq 0$, $E_{y,\sigma_r,r,\rho}$ has a unique irreducible quotient, which we call
$M_{y,\sigma_r,r,\rho}$.
\item For $r = 0$, $E_{y,\sigma_0,0,\rho}$ has a canonical irreducible quotient $M_{y,\sigma_0,0,\rho}$
(the direct summand in one specific homological degree).
\item Parts (a) and (b) set up a bijection between
$\Irr \big( \mh H (G,L,\mc L,\mb r) / (\mb r - r) \big)$ and the $G$-association classes of triples
$(y,\sigma_0,\rho)$ as above. 
\item Every irreducible constituent of $E_{y,\sigma_r,r,\rho}$ different from $M_{y,\sigma_r,r,\rho}$
is isomorphic to $M_{y',\sigma',r,\rho'}$, for data $(y',\sigma',\rho')$ as above that satisfy
$\dim \mc{C}_y^G < \dim \mc{C}_{y'}^G$. 
}
\end{thm}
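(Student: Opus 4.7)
The plan is to generalize the equivariant $K$-theoretic approach of Kazhdan--Lusztig (Theorem~\ref{thm:5.1}) to the cuspidal setting, replacing the Steinberg variety $\mc Z$ by $\ddot{\mf g_N}$ and the constant sheaf by $\ddot{\mc L}$. Via Theorem~\ref{thm:5.3}, the algebra $\mh H(G,L,\mc L,\mb r)$ is realised as $H_*^{G\times\C^\times}(\ddot{\mf g_N},\ddot{\mc L})$, and the standard module $E_{y,\sigma_r,r,\rho}$ acquires a concrete geometric meaning as an equivariant homology group $H^{M(y)^\circ}_*(\mc P_y,\dot{\mc L})$ localised at a character and then cut by $\rho$. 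Using the tensor decomposition of graded Hecke algebras (cf.\ \eqref{eq:1.22}) one first reduces to the case where $G$ is simple and $(L,\mc C_v^L,\mc L)$ is one of the items listed in \eqref{table}; the cuspidality of $\mc L$ then guarantees that all central character information is captured by $Z(L)^\circ$ and makes localisation arguments on $\mc P_y$ manageable.

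For part (a), I would first analyse the $\mc O(\mf t)$-weights of $E_{y,\sigma_r,r,\rho}$. The semisimple element $\sigma_r=\sigma_0+\mathrm{d}\gamma_y(\matje{r}{0}{0}{-r})$ contains a "hyperbolic" $SL_2$-piece that scales $y$ by $2r$, so for $r\neq 0$ the associated one-parameter subgroup acts on $\mc P_y$ with isolated fixed components. Equivariant localisation with respect to this cocharacter then exhibits a natural decreasing filtration on $E_{y,\sigma_r,r,\rho}$ whose graded pieces are indexed by $\sigma_r$-fixed components of $\mc P_y$. Combined with the fact that $\sigma_0$ commutes with $y$ (so gives a real, tempered central character) and the $SL_2$-shift is "positive", this places $E_{y,\sigma_r,r,\rho}$ in positive position in the sense of Paragraph~\ref{par:Rgroups}, so it plays the role of a standard Langlands module. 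The analogue of Theorem~\ref{thm:3.10}(a) for graded Hecke algebras then yields a unique irreducible quotient $M_{y,\sigma_r,r,\rho}$. Part (b), where $r=0$, follows by extracting the top-degree component of $E_{y,\sigma_0,0,\rho}$ exactly as in \eqref{eq:5.5}; the homological grading on equivariant homology is preserved by $S(\mf t^*)$, and the quotient by the kernel of the top-degree projection is canonically simple by the decomposition theorem applied to the Springer-type sheaf on $\overline{\mc C_y^G}$.

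For parts (c) and (d), the bijection rests on the cuspidal support map of \cite{Lus-Int}, applied inside the smaller group $Z_G(\sigma_0)$. Surjectivity: given an irreducible $\mh H(G,L,\mc L,\mb r)/(\mb r-r)$-module $\pi$, its central character is a $W_L$-orbit in $\mf t$ and hence yields $\sigma_r$ up to $G$-conjugacy; applying the generalised Springer correspondence inside $Z_G(\sigma_0)$ produces a compatible nilpotent $y\in\mf{z}_{\mf g}(\sigma_0)$ and a representation $\rho\in\Irr(\pi_0 Z_G(\sigma_0,y))$ whose cuspidal support $\Psi_{Z_G(\sigma_0)}(y,\rho)$ is $G$-associate to $(L,\mc C_v^L,\mc L)$, as required by \eqref{eq:5.8}. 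Injectivity follows because the cuspidal support map is well-defined on $G$-association classes. For (d), one uses a support filtration on $\mc P_y$: by the decomposition theorem the semisimplification of $E_{y,\sigma_r,r,\rho}$ is a direct sum of contributions from simple perverse constituents of a pushforward, and any such constituent other than the "smallest" is supported on a strictly larger nilpotent orbit $\overline{\mc C_{y'}^G}$, forcing $\dim\mc C_{y'}^G>\dim\mc C_y^G$.

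The main obstacle will be making the "positive position" argument for $\sigma_r$ with $r\neq 0$ entirely rigorous in the presence of a non-trivial cuspidal local system and a twist by $\rho$, because the standard Langlands machinery of Paragraph~\ref{par:discrete} was developed intrinsically for $\mh H$, whereas here one reconstructs the $\mc O(\mf t)$-weights out of geometric localisation data on $\mc P_y$. Concretely, one must show that the cuspidal support of the simple perverse constituent associated to $M_{y,\sigma_r,r,\rho}$ is again $(L,\mc C_v^L,\mc L)$ and that its orbit datum is genuinely $(y,\rho)$, which requires tracking cuspidal supports through parabolic induction of perverse sheaves inside $Z_G(\sigma_0)$ and through the Fourier--Deligne transform. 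Everything else is then a matter of assembling the homological degree bookkeeping for the top-degree quotient in (b) and verifying compatibility of the two parametrisations via the Jacobson--Morozov bijection $(y,\sigma,\rho)\leftrightarrow(y,\sigma_0,\rho)$.
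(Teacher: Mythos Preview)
The paper does not supply its own proof of this theorem: it is stated as a citation to \cite[Theorem 1.15]{LusCusp3} and \cite[Theorem 3.11]{AMS2}, in keeping with the survey character of the article. So there is no in-paper argument to compare against; the relevant comparison is with the cited sources.

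Your sketch is broadly aligned with the strategy in those references --- the geometric realisation via Theorem~\ref{thm:5.3}, equivariant localisation, the decomposition theorem, and the cuspidal support map are indeed the main ingredients, and your treatment of parts (b), (c) and (d) is essentially what Lusztig and \cite{AMS2} do. There is, however, a circularity risk in your argument for part (a). You propose to show that $E_{y,\sigma_r,r,\rho}$ is ``in positive position'' and then invoke the graded analogue of Theorem~\ref{thm:3.10}(a) to extract a unique irreducible quotient. But the Langlands classification applies to modules that are parabolically induced from an \emph{irreducible} essentially tempered representation, and establishing that $E_{y,\sigma_r,r,\rho}$ has this shape is not a priori easier than proving (a) directly. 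Indeed, the paper itself notes (just after \eqref{eq:5.11}) that the identification of $\widetilde{E}_{y,\sigma_0,r,\rho}$ as a standard module in the sense of Paragraph~\ref{par:analogues} is \emph{deduced from} Theorem~\ref{thm:5.4} via \cite[Proposition A.3]{AMS2}, not used as input to it. In the cited proofs the uniqueness of the irreducible quotient for $r\neq 0$ is obtained purely geometrically: the decomposition theorem expresses the relevant pushforward as a sum of shifted IC sheaves, and one checks that exactly one summand --- the one supported on $\overline{\mc C_y^G}$ with the local system determined by $\rho$ --- contributes to the head of $E_{y,\sigma_r,r,\rho}$. Your own support-filtration argument for (d) is already close to this, and if carried through it delivers (a) as well, without any appeal to the Langlands machinery.
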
 

Lusztig investigated when the modules $E_{y,\sigma,r,\rho}$ are tempered or discrete series
\cite{LusCusp3}. Unfortunately his notions differ from ours, and as a consequence the resulting
properties are opposite to what we want. To reconcile it, we use the Iwahori--Matsumoto involution
of $\mh H (G,L,\mc L,\mb r)$. It is the algebra automorphism
\begin{align*}
& \mr{IM} : \mh H (G,L,\mc L,\mb r) \to \mh H (G,L,\mc L,\mb r) \\
& \mr{IM}(N_w) = \mr{sign}(w) N_w, \quad \mr{IM}(\mb r) = \mb r, \quad
\mr{IM}(\xi) = - \xi \qquad w \in W_L, \xi \in \mf t^*.
\end{align*}
Clearly composition with IM has the effect $x \mapsto -x$ on $\mc O (\mf t)$-weights of
$\mh H (G,L,\mc L,\mb r)$-representations, and similarly for central characters. Let 
$y,\sigma_0,\rho$ and $\gamma_y$ be as in Theorem \ref{thm:5.4} and \eqref{eq:5.7}. We define
\begin{equation}\label{eq:5.9}
\begin{array}{ccc}
\widetilde{E}_{y,\sigma_0,r,\rho} & = &
\mr{IM}^* E_{y,\textup{d}\gamma_y \matje{r}{0}{0}{-r} - \sigma_0, r,\rho} \\
\widetilde{M}_{y,\sigma_0,r,\rho} & = &
\mr{IM}^* M_{y,\textup{d}\gamma_y \matje{r}{0}{0}{-r} - \sigma_0, r,\rho} 
\end{array}
\end{equation}
The modules \eqref{eq:5.9} enjoy the same properties as their ancestors without tildes in 
Theorem \ref{thm:5.4}. By \cite[Theorem 8.13]{LusCusp2} and \cite[Theorem 3.29]{AMS2} 
all these four modules admit the same central character, namely
\begin{equation}\label{eq:5.10}
(\mr{Ad}(G)(\sigma_r) \cap \mf t,r) = 
(\mr{Ad}(G)(\sigma_0 - \textup{d}\gamma_y \matje{r}{0}{0}{-r}) \cap \mf t, r).
\end{equation}

\begin{thm}\label{thm:5.5}
Let $(y,\sigma_0,\rho)$ be as in Theorem \ref{thm:5.4}.
\enuma{
\item When $\Re (r) \geq 0$, the following are equivalent:
\begin{itemize}
\item $\widetilde{E}_{y,\sigma_0,r,\rho}$ is tempered,
\item $\widetilde{M}_{y,\sigma_0,r,\rho}$ is tempered,
\item $\mr{Ad}(G)\sigma_0$ intersects $i \mf a = i\R \otimes_\Z X_* (T)$.
\end{itemize}
\item When $\Re (r) > 0$, the following are equivalent:
\begin{itemize}
\item $\widetilde{M}_{y,\sigma_0,r,\rho}$ is essentially discrete series,
\item $y$ is distinguished nilpotent in $\mf g$, that is, not contained in any proper
Levi subalgebra of $\mf g$.
\end{itemize}
Moreover in this case $\sigma_0 \in Z(\mf g)$ and $\widetilde{E}_{y,\sigma_0,r,\rho}
= \widetilde{M}_{y,\sigma_0,r,\rho}$.
\item When $r \in \R$, the central character of $\widetilde{E}_{y,\sigma_0,r,\rho}$ lies in
$\mf a / W$ if and only if $\sigma_0 \in \mr{Ad}(G) \mf a$.
}
\end{thm}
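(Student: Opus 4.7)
The plan is to handle part (c) first since it depends only on the central character formula \eqref{eq:5.10}, then to deduce part (b) from Theorem \ref{thm:5.4} combined with standard facts about distinguished nilpotent elements, and finally to tackle part (a) by a careful Iwahori--Matsumoto translation of Lusztig's temperedness criterion.

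For part (c), by \eqref{eq:5.10} the central character of $\widetilde{E}_{y,\sigma_0,r,\rho}$ lies in $\mathfrak{a}/W$ iff $\sigma_r = \sigma_0 + \mathrm{d}\gamma_y \matje{r}{0}{0}{-r}$ is $G$-conjugate to an element of $\mathfrak{a}$. For $r \in \R$ the Jacobson--Morozov homomorphism $\gamma_y$ can be arranged so that $\mathrm{d}\gamma_y \matje{r}{0}{0}{-r}$ is hyperbolic, i.e.\ lies in $\mathrm{Ad}(G)\mathfrak{a}$. Since $\sigma_0$ and this hyperbolic element commute and are both semisimple, they simultaneously diagonalise; consequently $\sigma_r$ has real eigenvalues precisely when $\sigma_0$ does, which gives the claimed equivalence.

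For part (b), I would invoke the standard characterisation that a nilpotent $y \in \mathfrak{g}$ is distinguished iff every semisimple element of $Z_{\mathfrak{g}}(y)$ is central in $\mathfrak{g}$. Thus if $y$ is distinguished, $\sigma_0 \in Z(\mathfrak{g})$ automatically and $Z_G(\sigma_0) = G$, so no proper parabolic subalgebra of $\mh H (G,L,\mc L,\mb r)$ can account for $\widetilde M_{y,\sigma_0,r,\rho}$ via the cuspidal support condition \eqref{eq:5.8}. Conversely, if $y$ is non-distinguished it lies in a proper Levi $L' \subsetneq \mathfrak{g}$, and via \eqref{eq:5.8} the module is parabolically induced from a proper $\mh H_P$, hence cannot be essentially discrete series. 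Combined with the graded-Hecke analogue of Theorem \ref{thm:3.13} (see \S\ref{par:analogues}), this yields the equivalence. For $\widetilde E = \widetilde M$ in this case, Theorem \ref{thm:5.4}(d) excludes larger-dimension nilpotent orbits, since $\mathcal C_y^G$ is maximal among the orbits in $Z_{\mathfrak g}(\sigma_0) = \mathfrak g$ compatible with the cuspidal-support constraint when $\sigma_0$ is central.

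Part (a) I expect to be the main obstacle. The equivalence $\widetilde{E}$ tempered $\Leftrightarrow$ $\widetilde{M}$ tempered proceeds by the trivial direction ($\widetilde{M}$ is a quotient of $\widetilde{E}$) and its converse via Theorem \ref{thm:5.4}(d): all constituents of $\widetilde{E}$ share the central character \eqref{eq:5.10}, and combined with the control of weights via the reductions of Paragraph \ref{par:reduction}, they are tempered together. The harder equivalence $\widetilde{M}$ tempered $\Leftrightarrow$ $\sigma_0 \in \mathrm{Ad}(G)(i\mathfrak{a})$ is the cuspidal-local-system analogue of the K-theoretic statement \eqref{eq:5.6}. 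My route is to translate Lusztig's criterion in \cite{LusCusp3} and \cite{LusCusp2} through the $\mathrm{IM}^*$ twist \eqref{eq:5.9}: Lusztig characterises his tempered $E$-modules in terms of a hyperbolic-positivity condition on the ``$\sigma_0$''-parameter, and after negating signs via $\mathrm{IM}^*$ this becomes the condition that only the elliptic part of our $\sigma_0$ survives, i.e.\ $\sigma_0 \in \mathrm{Ad}(G)(i\mathfrak{a})$. The delicate step, and where I expect to spend most effort, is checking that the hyperbolic contribution $\mathrm{d}\gamma_y \matje{r}{0}{0}{-r}$ to the real part of the weights of $\widetilde{E}_{y,\sigma_0,r,\rho}$ lands in the obtuse cone $\mathfrak{a}^-$ once a suitable $W$-representative of the central character is chosen; this is a Kostant-type positivity assertion aligning the $\mathfrak{sl}_2$-triple of $y$ with the parabolic structure underlying $\mh H (G,L,\mc L,\mb r)$, and it is what lets the argument isolate the obstruction to temperedness in the elliptic part $\sigma_0$.
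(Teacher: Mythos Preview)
Your ordering reverses the paper's logic: there parts (a) and (b) are simply quoted from \cite[(84),(85)]{AMS2}, and only (c) receives an argument in the text---one that \emph{uses} part (b). Your direct attack on (c) has a genuine gap.

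The problem is that here $T=Z(L)^\circ$ is in general \emph{not} a maximal torus of $G$, so $\mathfrak a=X_*(T)\otimes_{\Z}\R$ is a proper subspace of any Cartan subalgebra. Hence ``$\sigma_0\in\mathrm{Ad}(G)\mathfrak a$'' is strictly stronger than ``$\sigma_0$ has real $\mathrm{ad}$-eigenvalues'', and the same goes for the central-character condition on $\sigma_r$. Your simultaneous-diagonalisation step lands $\sigma_0$ and $h:=\mathrm d\gamma_y\matje{1}{0}{0}{-1}$ in some maximal torus, not in $\mathfrak t$; placing both in $\mathfrak t$ already requires the cuspidal-support hypothesis (the paper cites \cite[Proposition~3.5.c]{AMS2} for this). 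And even once $h\in\mathfrak t$, the integrality of the $\mathrm{ad}(h)$-eigenvalues does not by itself force $h\in\mathfrak a$. The paper closes this gap by invoking (b): the restriction of $\widetilde E_{y,0,r}$ to $\mathbb H\big(Z_G(\sigma_0)_{\der},L\cap Z_G(\sigma_0)_{\der},\mathcal L,\mb r\big)$ is a sum of discrete series, whose central characters are real by \cite[Lemma~2.13]{Slo2}; that is precisely what pins $\mathrm d\gamma_y\matje{r}{0}{0}{-r}\in\mathfrak a$ and then $\sigma_r\in\mathfrak a\Leftrightarrow\sigma_0\in\mathfrak a$ is immediate from \eqref{eq:5.10}.

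On (b), your sketch of the equivalence is reasonable, but the justification of $\widetilde E=\widetilde M$ via Theorem~\ref{thm:5.4}(d) is incomplete: a distinguished orbit need not be of maximal dimension among those meeting the cuspidal-support constraint (already for $L=T$ maximal and $\mathcal L$ trivial this fails), so you cannot rule out higher constituents by orbit-dimension alone. That irreducibility is a separate input from \cite{LusCusp2,AMS2}, which the paper does not reprove either. For (a) your outline and your identification of the Kostant-type positivity step as the crux match what is behind the cited references; since the paper also defers to \cite{AMS2} here, your sketch is acceptable as a plan but not yet a proof.
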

\begin{proof}
(a) and (b) See \cite[(84) and (85)]{AMS2}.\\
(c) Upon conjugating the parameters by a suitable element of $G$, we may assume that
$\sigma_0, \textup{d}\gamma_y \matje{1}{0}{0}{-1} \in \mf t$ \cite[Proposition 3.5.c]{AMS2}.
Then $\textup{d}\gamma_y \matje{r}{0}{0}{-r}$ represents the central character of the module
$\widetilde{E}_{y,0,r} = \mr{IM}^* E_{y,\textup{d}\gamma_y \matje{r}{0}{0}{-r},r}$ for
\[
\mh H \big( Z_G (\sigma_0),L,\mc L,\mb r \big) / (\mb r - r) \cong 
\mh H \big( Z_G (\sigma_0)_\der, L \cap Z_G (\sigma_0)_\der, \mc L,\mb r \big) / (\mb r - r) 
\otimes_\C \mc O (Z_{\mf g}(\sigma_0)) .
\] 
Part (b) tells us that the restriction of $\widetilde{E}_{y,0,r}$ to 
$\mh H (Z_G (\sigma_0)_\der, L \cap Z_G (\sigma_0)_\der, \mc L,\mb r)$ is a direct sum of 
discrete series representations. By \cite[Lemma 2.13]{Slo2} the central characters of these 
representations lie in $\Z_{\mf g}(\sigma_0)_\der \cap \mf a / W_{L \cap Z_G (\sigma)_\der}$. 
Hence d$\gamma_y \matje{r}{0}{0}{-r} \in \mf a$, which implies that 
\[
\sigma_0 \in \mf a \quad \Longleftrightarrow \quad
\sigma_0 - \textup{d}\gamma_y \matje{r}{0}{0}{-r} \in \mf a.
\]
Compare that with \eqref{eq:5.10}
\end{proof}

The family of representations $E_{y,\sigma,r}$ is compatible with parabolic induction,
under a mild condition that $(\sigma,r)$ is not a zero of a certain polynomial function 
$\epsilon$ \cite[Corollary 1.18]{LusCusp3}. Namely, let $Q$ be a standard Levi subgroup
of $G$ containing $L$ and suppose that $\{y,\sigma_0\} \subset \mr{Lie}(Q)$. When
$\epsilon (\sigma,r) \neq 0$ (or $r=0$, see \cite[Theorem A.2]{AMS2}), there is a
canonical isomorphism
\begin{equation}\label{eq:5.11}
\ind_{\mh H (Q,L,\mc L,\mb r)}^{\mh H (G,L,\mc L,\mb r)} E^Q_{y,\sigma,r} \to
E_{y,\sigma,r} .
\end{equation}
The Iwahori--Matsumoto involution commutes with parabolic induction, so \eqref{eq:5.11}
also holds for the family of representations $\widetilde{E}_{y,\sigma_0,r,\rho}$. With that 
and Theorem \ref{thm:5.4} one can show that (at least when $\Re (r) > 0$) 
$\widetilde{E}_{y,\sigma_0,r,\rho}$ is a
standard module in the sense of Paragraph \ref{par:analogues}, see \cite[Proposition A.3]{AMS2}.
(This refers to the last arXiv-version of \cite{AMS2}, in which an appendix was added to
deal with a mistake in the published version.) The argument for standardness is analogous
to what we sketched for $K_{t,u,\rho}$ around \eqref{eq:5.4}.

\begin{ex} \label{ex:5.1}
We illustrate the material in this section with an example of rank 1.
Let $G = \mr{Sp}_4 (\C), L = \mr{Sp_2}(\C) \times GL_1 (\C)$ and $v$ regular nilpotent in
$\mf l$. The local system on $\mc C_v^L$ corresponding to the sign representation of
$\pi_0 (Z_L (v)) = \pi_0 (Z(L)) = \{\pm 1\}$ is cuspidal. The root system with respect to
$T = Z(L)^\circ \cong \C^\times$ is $R(G,T) = \{ \pm \alpha, \pm 2 \alpha \}$, of type $BC_1$.
Its Weyl group is $W_L = \{ 1, s_\alpha \}$, with $s_\alpha$ acting on $T$ by inversion.
The parameter function by determined by $\mc L$ satisfies $k(\pm \alpha) = 3$ and
$k(\pm 2 \alpha) = 6$. The associated graded Hecke algebra is 
\[
\mh H (G,L,\mc L,\mb r) = \mh H (\mf t,W_L,k,\mb r) .
\]
The irreducible representations on which $\mb r$ acts by a fixed $r \in \C^\times$ were classified
in Example \ref{ex:3.1} and for $r=0$ in Theorem \ref{thm:2.3}. 

Let us analyse the possibilities for the geometric parameters. It turns out that the condition
on $\rho$ can only be met for nilpotent elements $y$ in two adjoint orbits in $\mf g$: the orbits
of $v$ and of $v + v'$, where $v'$ is regular nilpotent in $Z(\mf l_\der) \cong \mf{sp}_2 (\C)$.
The cuspidal support condition becomes that the subgroup of $\pi_0 (Z_G (\sigma_0,y))$ coming from
$\pi_0 (Z(L))$ must act via the sign representation.

We may assume that $\C v'$ is stable under the adjoint action of $T$. A complete list of 
representatives for the $G$-conjugacy classes of parameters for $\mh H (G,L,\mc L,\mb r)$ is:
\[
\begin{array}{c|c|c|c}
\sigma_r &  \mr{diag}(r,r,-r,-r) & \sigma_0 = 0, r = 0 &  
\mr{diag}(\sigma,r,-r,-\sigma) , \Re(\sigma) \geq 0\\
y & v + v' & v & v \\
\pi_0 (Z_G (\sigma_0,y)) & Z(\mr{Sp}_2 (\C)^2) & Z(\mr{Sp}_2 (\C)) & Z(\mr{Sp}_2 (\C)) \\
\rho & \mr{sign} \boxtimes \triv & \mr{sign} & \mr{sign} \\
E_{y,\sigma_r,r,\rho} & \triv & \mr{St} & \ind_{\mc O (\mf t) \otimes_\C \C[\mb r]}^{
\mh H (G,L,\mc L,\mb r)} (\C_{\mr{diag}(-\sigma,r,-r,\sigma),r}) \\
\widetilde{E}_{y,\sigma_r,r,\rho} & \mr{St} & \triv & \ind_{\mc O (\mf t) \otimes_\C \C[\mb r]}^{
\mh H (G,L,\mc L,\mb r)} (\C_{\mr{diag}(\sigma,r,-r,-\sigma),r})
\end{array}
\]
In the last column we exclude the case $\sigma_0 = 0, r=0$. For almost all $\sigma \in \C$, standard 
module $\ind_{\mc O (\mf t) \otimes_\C \C[\mb r]}^{ \mh H (G,L,\mc L,\mb r)} (\C_{\mr{diag}(\sigma,r,-r,
-\sigma),r})$ is irreducible, and hence equal to both $M_{y,\sigma_r,r,\rho}$ and 
$\widetilde{M}_{y,\sigma_r,r,\rho}$. The exceptions are $\sigma = \pm r$, then 
\[
M_{y,\mr{diag}(r,r,-r,-r),r,\rho} = \mr{St} \quad \text{and} \quad
\widetilde{M}_{y,\mr{diag}(r,r,-r,-r),r,\rho} = \triv.
\]
\end{ex}

\subsection{Affine Hecke algebras from cuspidal local systems} \
\label{par:cusp}

In the previous paragraph we associated a graded Hecke algebra to a cuspidal local system
on a nilpotent orbit in a Levi algebra of $\mf g$. With a process that is more or less
inverse to the reduction theorems in Paragraph \ref{par:reduction}, we can glue a suitable
family of such algebras into one affine Hecke algebra.

Concretely, let $G,P,L,v,T,\mc L$ be as before. Let $R(G,T)_{\mr{red}}^\vee \subset X_* (T)$
be the dual of the reduced root system $R(G,T)_{\mr{red}} \subset X^* (T)$, and consider
the root datum 
\[
\mc R (G,T) = (R(G,T)_{\mr{red}},X^* (T),R(G,T)_{\mr{red}}^\vee,X_* (T),\Delta_L) .
\]
There are unique parameter functions $\lambda,\lambda^* : R(G,T)_{\mr{red}} \to \Z_{\geq 0}$
that meet the requirements sketched above, see \cite[Proposition 2.1 and (28)]{AMS3}. 
With those, we define
\begin{equation}\label{eq:5.13}
\mc H (G,L,\mc L) = \mc H (\mc R(G,T),\lambda,\lambda^*,\mb q) .
\end{equation}
This is slightly different from \cite[\S 2]{AMS3}, where the algebras involved a formal
variable $\mb z$ instead of $\mb q^{1/2}$. To compensate for the square root ($\mb q^{1/2}$
versus $\mb q$) we could replace $\lambda,\lambda^*$ by $2 \lambda,2 \lambda^*$--but that is
not necessary, since we are at liberty to choose $\mb q \in \R_{>0}$ as we like.

When $\lambda,\lambda^* : R \to \R$ arise from a cuspidal local system as in \eqref{eq:5.13},
we call them geometric parameter functions for $\mc R$. As we may choose any $\mb q \in \R_{>0}$,
$\lambda,\lambda^*$ may be scaled by any nonzero real factor, and remain geometric.

For a unitary $t \in T_\un$, we saw in Corollary \ref{cor:3.27} that there is an equivalence
of categories
\[
\Mod_{f,W_L t \exp (\mf a)} \big( \mc H (G,L,\mc L) \big) \quad \cong \quad
\Mod_{f,\mf a} \big( \mh H (\mf t,W(R_t), k_t) \rtimes \Gamma_t \big) .
\]
Let $\widetilde{Z_G}(t)$ be the subgroup of $G$ generated by $Z_G (t)$ and the root subgroups
$U_\alpha$ with $s_\alpha (t) = t$. By \cite[Theorems 2.5 and 2.9]{AMS3} we may identify 
\[
\mh H (\mf t,W(R_t), k_t) \rtimes \Gamma_t = 
\mh H (\widetilde{Z_G}(t), L ,\mc L,\mb r) / (\mb r - \log (\mb q) / 2) .
\]
Here $\widetilde{Z_G}(t)$ has component group $\Gamma_t$, so it can be disconnected. In that 
case these graded Hecke algebras are a little more general than in the previous paragraph,
but that does not matter much.

\begin{ex} \label{ex:5.2}
We continue Example \ref{ex:5.1}. We identify $T$ with $\C^\times$
by means of the map $\mr{diag}(z,1,1,z^{-1}) \mapsto z$. For $t \in T_\un \setminus \{1,-1\}$,
$Z_G (t) = \widetilde{Z_G} (t) = L$ and 
\[
\mh H (\widetilde{Z_G}(t),L,\mc L,\mb r) = \mh H (T,T,\triv,\mb r) = 
\mc O (\mf t) \otimes_\C \C[\mb r] .
\]
The most interesting element of $T_\un$ is $-1 = \mr{diag}(-1,1,1,-1)$. Then $Z_G (-1) = L$
but $\widetilde{Z_G}(-1) = \mr{Sp}_2 (\C) \times \mr{Sp}_2 (\C)$. Now the root system is
$R(\widetilde{Z_G}(-1),T) = \{ \pm 2 \alpha \}$, with parameter $k_{-1} (\pm 2 \alpha) = 2$.
The associated graded Hecke algebra is
\[
\mh H (\widetilde{Z_G}(-1),L,\mc L,\mb r) = \mh H (\mf t, W_L, k_{-1}, \mb r).
\]
These data suffice to determine the parameters for $\mc H (G,L,\mc L)$, they are
\[
\lambda (\alpha) = k (\alpha) + k_{-1}(2 \alpha) / 2 = 4 \quad \text{and} \quad
\lambda^* (\alpha) = k (\alpha) - k_{-1}(2 \alpha) / 2 = 2. 
\]
Thus $\mc H (G,L,\mc L) = \mc H (\mc R (G,T),\lambda,\lambda^*,\mb q)$, an affine Hecke
algebra of type $\widetilde A_1$ with unequal parameters.
\end{ex}

With Theorems \ref{thm:3.25} and \ref{thm:3.26} we can reduce the classification of 
$\Irr (\mc H (G,L,\mc L))$ to Theorem \ref{thm:5.4}. For better additional benefits, we prefer
to use the modules $\widetilde{E}_{y,\sigma_0,r,\rho}$ and $\widetilde{M}_{y,\sigma_0,r,\rho}$. In this 
context it is more natural to use data from $G$ that from $\mf g$. The correct parameters turn
out to be a variation on Kazhdan--Lusztig triples, namely triples $(s,u,\rho)$ where
\begin{itemize}
\item $s \in G$ is semisimple,
\item $u \in Z_G (s)$ is unipotent,
\item $\rho \in \Irr \big( \pi_0 (Z_G (s,u)) \big)$ such that the cuspidal support
$\Psi_{Z_G (s)}(u,\rho)$ is\\ $(L,\exp (\mc C_v^L),\exp_* (\mc L))$ modulo $G$-conjugacy.
\end{itemize}
When $Z_G (s)$ is disconnected, we have to use a generalization of the cuspidal support map,
defined in \cite[\S 4]{AMS1}. By conjugating with a suitable element of $G$, we may always
assume that $s \in T$. Let $\overline{E}_{s,u,\rho}$ (resp. $\overline{M}_{s,u,\rho}$)
be the $\mc H (G,L,\mc L)$-module obtained from 
\[
\widetilde{E}_{\log u, \log |s|, \log (\mb q)/2,\rho} \in 
\Mod \big( \mh H (\widetilde{Z_G}(s |s|^{-1}), L,\mc L,\mb r) / (\mb r - \log (\mb q) /2) \big)
\]
(resp. $\widetilde{M}_{\log u, \log |s|, \log (\mb q)/2,\rho}$) via Theorems \ref{thm:3.26}
and \ref{thm:3.25}, with respect to $s |s|^{-1} \in T$.

\begin{thm}\label{thm:5.6}
\textup{\cite[Theorem 2.11]{AMS3}} \\
Let $\mb q \in \R_{>0}$ and consider triples $(s,u,\rho)$ as above.
\enuma{
\item The maps $(s,u,\rho) \mapsto \overline{E}_{s,u,\rho} \mapsto \overline{M}_{s,u,\rho}$
provide canonical bijections
\[
\{ \text{triples as above} \} / G \longrightarrow \{ \text{standard } 
\mc H (G,L,\mc L)\text{-modules} \} \longrightarrow \Irr (\mc H (G,L,\mc L)) .
\]
\item Suppose that $s \in T$ and let $\gamma_u : SL_2 (\C) \to Z_G (s)$ be an algebraic
homomorphism with $\gamma_u (\matje{1}{1}{0}{1}) = u$. Then $\overline{E}_{s,u,\rho}$ and
$\overline{M}_{s,u,\rho}$ admit the central character $W_L s \gamma_u (\matje{\mb{q}^{1/2}}
{0}{0}{\mb{q}^{-1/2}})$.
\item Suppose that $\mb q \geq 1$. The following are equivalent:
\begin{itemize}
\item $\overline{E}_{s,u,\rho}$ is tempered,
\item $\overline{M}_{s,u,\rho}$ is tempered,
\item $s$ lies in a compact subgroup of $G$.
\end{itemize}
\item Suppose that $\mb q > 1$. Then $\overline{M}_{s,u,\rho}$ is essentially discrete series
if and only if $u$ is distinguished unipotent in $G$ (that is, not contained in any proper
Levi subgroup of $G$).
}
\end{thm}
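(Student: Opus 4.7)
The strategy is to reduce from the affine Hecke algebra $\mc H(G,L,\mc L)$ to the graded Hecke algebras $\mh H(\widetilde{Z_G}(u_0),L,\mc L,\mb r)/(\mb r - \log(\mb q)/2)$ attached to unitary elements $u_0 \in T_\un$, apply Theorems \ref{thm:5.4}--\ref{thm:5.5} there (with the extension to possibly disconnected reductive groups from \cite{AMS1}), and translate everything back through the reduction theorems.

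First I would tackle part (a). For each $u_0 \in T_\un$, Corollary \ref{cor:3.27} yields an equivalence of categories between $\Mod_{f,W_L u_0 \exp(\mf a)}(\mc H(G,L,\mc L))$ and $\Mod_{f,\mf a}(\mh H(\widetilde{Z_G}(u_0),L,\mc L,\mb r)/(\mb r - \log(\mb q)/2))$. Since every central character of $\mc H(G,L,\mc L)$ admits a unique polar decomposition, these equivalences partition $\Irr(\mc H(G,L,\mc L))$ according to the unitary component $u_0 = s|s|^{-1}$. Applied on each piece, Theorem \ref{thm:5.4} parametrizes the irreducible modules by triples $(y,\sigma_0,\rho_0)$ in $\mathrm{Lie}(\widetilde{Z_G}(u_0))$ subject to the cuspidal support condition. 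Setting $u = \exp(y)$ and $|s| = \exp(\sigma_0)$, so that $s = u_0\exp(\sigma_0) \in T$, I recover a triple $(s,u,\rho)$ as required, with $\rho$ obtained from $\rho_0$ via the canonical identification $\pi_0(Z_{\widetilde{Z_G}(u_0)}(\sigma_0,y)) \cong \pi_0(Z_G(s,u))$ (using that $s = u_0 \exp(\sigma_0)$ is the Jordan decomposition of a commuting pair and that $u_0$ is automatically centralized). The cuspidal-support condition on $\rho_0$ inside $Z_{\widetilde{Z_G}(u_0)}(\sigma_0)$ matches the one on $\rho$ inside $Z_G(s)$ via the generalized cuspidal support map from \cite{AMS1}. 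Standardness is preserved by the reduction because the equivalence is compatible with parabolic induction (Lemma \ref{lem:3.30}), so the bijection of standard modules in Theorem \ref{thm:5.4}(a)--(c) lifts verbatim to $\mc H(G,L,\mc L)$.

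Part (b) is a direct computation. The central character of $\widetilde M_{y,\sigma_0,r,\rho_0}$ is represented by $\sigma_r = \sigma_0 + d\gamma_y(\mathrm{diag}(r,-r))$ by \eqref{eq:5.10}. Under the exponential of Theorem \ref{thm:3.26} and the subsequent multiplication by $u_0$ from Theorem \ref{thm:3.25}, this translates to $W_L \cdot u_0 \cdot \exp(\sigma_0)\cdot \exp(d\gamma_y(\mathrm{diag}(r,-r))) = W_L \cdot s \cdot \gamma_u(\mathrm{diag}(\mb q^{1/2},\mb q^{-1/2}))$ as claimed. For (c) and (d) I invoke Lemma \ref{lem:3.28}, which transports temperedness and (essentially) discrete series across the reduction. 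Combined with Theorem \ref{thm:5.5}(a), temperedness of $\overline M_{s,u,\rho}$ becomes equivalent to $\sigma_0$ being $\mathrm{Ad}$-conjugate into $i\mf a$; since $\sigma_0 \in \mf a$ is real this forces $\sigma_0 = 0$, i.e.\ $|s|=1$, i.e.\ $s$ lies in the compact torus $T_\un$, which is the stated criterion (any semisimple $s \in G$ lies in a compact subgroup iff it is $G$-conjugate into $T_\un$). Similarly, Theorem \ref{thm:5.5}(b) gives that $\overline M_{s,u,\rho}$ is essentially discrete series iff $y$ is distinguished in $\mathrm{Lie}(\widetilde{Z_G}(u_0))$; combining with the clause $\sigma_0 \in Z(\mf g)$ from Theorem \ref{thm:5.5}(b) and the Bala--Carter description of distinguished nilpotent orbits, I would verify that this is equivalent to $u = \exp(y)$ being distinguished unipotent in $G$.

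The main obstacle will be the bookkeeping in part (a): one must match the parameters precisely and functorially, in particular checking that the cuspidal support condition on $\rho \in \Irr(\pi_0(Z_G(s,u)))$ inside the possibly disconnected centralizer $Z_G(s)$ is equivalent to that on $\rho_0$ inside $Z_{\widetilde{Z_G}(u_0)}(\sigma_0)$ via the generalized cuspidal support map of \cite{AMS1}, and that $G$-conjugacy of triples $(s,u,\rho)$ corresponds exactly to isomorphism equivalence on the graded-Hecke side. A secondary subtlety, already signaled above, is the translation of the essentially-discrete-series condition (d) through the twisted centralizer $\widetilde{Z_G}(u_0)$ rather than $G$ itself, which requires verifying that the characterization via distinguishedness survives the reduction intact.
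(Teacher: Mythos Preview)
Your proposal is correct and follows precisely the approach that the paper's setup dictates: the paper does not supply its own proof of Theorem~\ref{thm:5.6} but cites \cite[Theorem 2.11]{AMS3}, and the modules $\overline{E}_{s,u,\rho}$, $\overline{M}_{s,u,\rho}$ are defined immediately before the statement as the images of $\widetilde{E}_{\log u,\log|s|,\log(\mb q)/2,\rho}$ and $\widetilde{M}_{\log u,\log|s|,\log(\mb q)/2,\rho}$ under Theorems~\ref{thm:3.25} and~\ref{thm:3.26}, so your reduction-to-graded strategy via Corollary~\ref{cor:3.27} and Theorems~\ref{thm:5.4}--\ref{thm:5.5} is exactly the intended one.

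The two subtleties you flag are real and are handled in \cite{AMS3}. For the cuspidal-support bookkeeping in (a), the key identification is $Z_{\widetilde{Z_G}(u_0)}(\sigma_0) = \widetilde{Z_G}(s)$ (since $s = u_0\exp(\sigma_0)$ with $u_0$ unitary and $\sigma_0$ real), and the generalized cuspidal support map of \cite[\S 4]{AMS1} is designed to be compatible with passage to such centralizers. For (d), the point is that essentially discrete series forces $R_{u_0}$ to have full rank in $X^*(T)$ (else $\mh H(\widetilde{Z_G}(u_0),L,\mc L)$ has no discrete series at all), and then $Z(\mathrm{Lie}\,\widetilde{Z_G}(u_0)) = Z(\mf g)$; so $\sigma_0 \in Z(\mf g)$ and hence $s \in Z(G)^\circ \cdot T_\un$. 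A Levi subgroup of $G$ contains $u$ iff it contains $\{u_0,\exp y\}$ (Jordan decomposition), iff the corresponding Levi of $\widetilde{Z_G}(u_0)$ contains $y$; thus distinguishedness of $y$ in $\widetilde{Z_G}(u_0)$ is equivalent to distinguishedness of $u$ in $G$.
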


Notice that $\mb q = 1$ is allowed here. For $\mb q = 1$, Theorem \ref{thm:5.6} provides 
a parametrization of $\Irr (X^* (T) \rtimes W(G,T))$ with $G$-association classes of 
triples $(s,u,\rho)$ as above. That can be regarded as an affine version of the generalized
Springer correspondence from \cite{Lus-Int}.

\begin{ex}
We work out the parametrization from Theorem \ref{thm:5.6} for $(G,L,\mc L)$ as in Examples
\ref{ex:5.1} and \ref{ex:5.2}. Here $\mc H (G,L,\mc L)$ is of type $\widetilde{A_1}$, with 
parameters $\lambda (\alpha) = 4$ and $\lambda^* (\alpha) = 2$. With the example at the end 
of Paragraph \ref{par:homology} at hand, it is easy to determine all geometric parameters 
$(s,u,\rho)$ for $\mc H (G,L,\mc L)$, and the relevant modules can be found by applying 
Theorems \ref{thm:3.26} and \ref{thm:3.25}.
\[
\begin{array}{c|c|c|c}
s & 1 & -1 = \mr{diag}(-1,1,1,-1) & s \in T \cong \C^\times, |s| \geq 1 \\
u & \exp (v + v') & \exp (v + v') & \exp (v) \\
\pi_0 (Z_G (s,u)) & Z(\mr{Sp}_2 (\C)^2) &  Z(\mr{Sp}_2 (\C)^2) &  Z(\mr{Sp}_2 (\C)) \\
\rho & \mr{sign} \boxtimes \triv & \mr{sign} \boxtimes \triv & \mr{sign} \\
\overline{E}_{s,u,\rho} & \mr{St} & \pi (-1,\mr{St}) & \ind_{\C [X^* (T)]}^{\mc H (G,L,\mc L)}(\C_s)
\end{array}
\]
For almost all $s \in T$ the standard module $\ind_{\C [X^* (T)]}^{\mc H (G,L,\mc L)}(\C_s)$ 
is irreducible, and hence equal to $\overline{M}_{s,1,\triv}$. The exceptions are
\[
\overline{M}_{\mr{diag}(\mb q^{3/2},1,1,\mb{q}^{-3/2}),1,\triv} = \triv \quad \text{and} \quad
\overline{M}_{\mr{diag}(-\mb q^{1/2},1,1,-\mb{q}^{-1/2}),1,\triv} = \pi (-1,\triv) . 
\]
A comparison with Paragraph \ref{par:A1t} shows that we indeed found every irreducible
$\mc H (G,L,\mc L)$-representation once in this way.
\end{ex}

We have associated to every complex reductive group $G$ a family of affine Hecke algebras,
one for every cuspidal local system on a nilpotent orbit for a Levi subgroup of $G$.
Every nilpotent orbit admits only a few inequivalent cuspidal local systems, and $G$-conjugate
data $(L,\mc C_v^L,\mc L)$ yield isomorphic Hecke algebras. Thus we have a finite family of
affine Hecke algebras associated to $G$.

The simplest member of this family arises when $L = T, v = 0$ and $\mc L$ is trivial. Then 
the graded Hecke algebras $\mh H (\widetilde{Z_G}(t),T,\mc L = \triv)$ have parameters 
$k(\alpha) = 2$ for all $\alpha \in R (G,T)$. When we specialize $\mb r$ to $\log (\mb q) / 2$,
\eqref{eq:3.24} shows that $\lambda (\alpha) = \lambda^* (\alpha) = 1$ for all 
$\alpha \in R(G,T)$. In other words:
\[
\mc H (G,T,\mc L = \triv) = \mc H (\mc R (G,T),\mb q) .
\]
For this algebra the cuspidal support condition on the triples $(s,u,\rho)$ reduces to the 
condition on $\rho$ in a Kazhdan--Lusztig triple for $(G,1)$. By \cite[Proposition 2.18]{AMS3}, 
the parametrization of standard and irreducible $\mc H (G,T,\mc L = \triv)$-modules in Theorem 
\ref{thm:5.6} coincides with the Kazhdan--Lusztig paramerization from Theorem \ref{thm:5.1}, 
modified as in Corollary \ref{cor:5.3}. That is less obvious than it might seem though, the twist 
with the Iwahori--Matsumoto involution in \eqref{eq:5.9} is necessary to achieve the agreement.

\section{Comparison between different $q$-parameters}

The aim of this section is a canonical bijection between the set of irreducible representations
of an affine Hecke algebra with arbitrary parameters $q_s \in \R_{\geq 1}$, and the set of 
irreducible representations of the same algebra with parameters $q_s = 1$. This will be achieved
in several steps of increasing generality.

\subsection{$W$-types of irreducible tempered representations} \
\label{par:Wtypes}

Consider any graded Hecke algebra $\mh H = \mh H (\mf t,W,k)$. 
The group algebra $\C [W]$ is embedded in $\mh H$, so every $\mh H$-representation can be 
restricted to a $W$-representation. For $k=0$, the isomorphism
\begin{equation}\label{eq:6.1}
\mh H (\mf t,W,0) / (\mf t^*) \cong \C [W]
\end{equation}
shows that a representation on which $\mc O (\mf t)$ acts via evaluation at $0 \in \mf t$ is the
same as a $\C [W]$-representation. From Example \ref{ex:3.7} we know that the irreducible tempered
representations of $\mc H (\mc R,1)$ with central character in $\exp (\mf a)$ are precisely
the irreducible representations which admit the $\mc O (T)$-character $1 \in T$. Via Corollary
\ref{cor:3.27} this implies that the irreducible representations of \eqref{eq:6.1} are precisely
the irreducible tempered $\mh H (\mf t,W,0)$-representations whose central character is real,
that is, lies in $\mf a / W$.

That and the results of Paragraph \ref{par:reduction} indicate that we should focus on
$\mh H$-representations with $\mc O (\mf t)$-weights in $\mf a = X_* (T) \otimes_\Z \R$. 
We say that those have real weights. Let 
\[
\Irr_0 (\mh H (\mf t,W,k)) = \{ \pi \in \Mod_{f,\mf a}(\mh H (\mf t,W,k)) : \pi \text{ is
irreducible and tempered} \}
\]
be the set of irreducible tempered representations with real central character. The above says that
$\Irr_0 (\mh H (\mf t,W,0))$ can be identified with $\Irr (W)$. Since $\mh H (\mf t,W,k)$ is a 
deformation of $\mh H (\mf t,W,0)$, it can be expected that something similar holds for 
$\mh H (\mf t,W,k)$. On closer inspection the parameters $k(\alpha)$ interact with the notion of 
temperedness, and it is natural to require that $k$ takes real values. 

For later applications to affine Hecke
algebras, it will pay off to increase our generality. Let $\Gamma$ be a finite group which
acts on $(\mf a^*, \mf z^*,R, \Delta)$. That is, $\Gamma$ acts $\R$-linearly on $\mf a^*$, and
that action stabilizes $R,\Delta$ and the decomposition $\mf a^* = \R R \oplus \mf z^*$. Suppose
further that $k : R \to \R$ is constant on $\Gamma$-orbits. Then $\Gamma$ acts on $\mh H (\mf t,W,k)$
by the algebra automorphisms
\[
\xi w \mapsto \gamma (\xi) (\gamma w \gamma^{-1}) \qquad \xi \in \mf t^*, w \in W.
\]
The crossed product algebra $\mh H (\mf t,W,k) \rtimes \Gamma = \mh H \rtimes \Gamma$ is of the
kind already encountered in Corollary \ref{cor:3.27}. The $\Gamma$-action on $\mh H$ preserves all
the available structure, so all the usual notions for $\mh H$ also make sense for 
$\mh H \rtimes \Gamma$. 

We denote the restriction of any $\mh H \rtimes \Gamma$-representation $\pi$ to the subalgebra 
$\C [W \rtimes \Gamma]$ by $\Res_{W \rtimes \Gamma}(\pi)$.
An initial result in the direction sketched above is:

\begin{thm} \label{thm:6.1} 
\textup{\cite[Theorem 6.5]{SolHomGHA}} \\
$\Irr_0 (\mh H \rtimes \Gamma)$ and $\Irr (W \rtimes \Gamma)$ have the same cardinality, and the
set $\Res_{W \rtimes \Gamma} (\Irr_0 (\mh H \rtimes \Gamma))$ is linearly independent in the 
representation ring of $W \rtimes \Gamma$.
\end{thm}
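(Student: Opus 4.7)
I would proceed by a deformation argument based on the scaling isomorphisms of \eqref{eq:1.16}. For $\epsilon \in [0,1]$ set $\mh H_\epsilon = \mh H (\mf t, W, \epsilon k) \rtimes \Gamma$, so that $\mh H_1 = \mh H \rtimes \Gamma$ while $\mh H_0 = \mc O (\mf t) \rtimes (W \rtimes \Gamma)$. Every $\mh H_\epsilon$ has the same underlying vector space $\C [W \rtimes \Gamma] \otimes S(\mf t^*)$, and the subalgebra $\C [W \rtimes \Gamma]$ acts the same way for all $\epsilon$. Consequently the $W \rtimes \Gamma$-character of any continuous family $\epsilon \mapsto \pi_\epsilon$ of $\mh H_\epsilon$-modules realized on a fixed underlying vector space is independent of $\epsilon$.

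First I would show that $\Irr_0 (\mh H_\epsilon)$ varies in continuous families over $\epsilon$. The central character of such a representation is a $W$-orbit of a residual point in $\mf a$ (the graded analogue of Theorem \ref{thm:4.2}), and residual points scale linearly: if $\lambda$ is residual for $k$, then $\epsilon \lambda$ is residual for $\epsilon k$, as the classification in Theorem \ref{thm:4.4} makes explicit. Every irreducible tempered $\mh H_\epsilon$-representation appears as a summand of a parabolically induced module $\ind_{\mh H_\epsilon^P}^{\mh H_\epsilon} (\delta)$ with $\delta$ discrete series (the graded version of Theorem \ref{thm:3.13}), and the summands are indexed by $\Irr (\C [\mf R_{\tilde \xi}, \natural_{\tilde \xi}])$ as in \eqref{eq:4.4}. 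Both the R-group $\mf R_{\tilde \xi}$ and its 2-cocycle are controlled by the zero/pole pattern of the $\tilde c_\alpha$-functions at $\mr{cc}(\delta)$, and this pattern is preserved by the simultaneous scaling $k \mapsto \epsilon k$, $\mr{cc}(\delta) \mapsto \epsilon \mr{cc}(\delta)$. Hence $|\Irr_0 (\mh H_\epsilon)|$ is constant on $(0,1]$, and each irreducible belongs to a continuous one-parameter family of $\mh H_\epsilon$-modules.

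At $\epsilon = 0$ the situation is transparent: real central character together with temperedness forces every $\mc O (\mf t)$-weight to equal $0$, so an element of $\Irr_0 (\mh H_0)$ factors through $\mh H_0 / (\mf t^*) \cong \C [W \rtimes \Gamma]$, and the restriction map $\Res_{W \rtimes \Gamma} : \Irr_0 (\mh H_0) \to \Irr (W \rtimes \Gamma)$ is a bijection, in particular a linearly independent family in $R(W \rtimes \Gamma)$. Combining this with the constancy of the $W \rtimes \Gamma$-character along each deformation family, the multiset $\Res_{W \rtimes \Gamma} (\Irr_0 (\mh H_1))$ equals the multiset $\Irr (W \rtimes \Gamma)$. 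That yields simultaneously the cardinality assertion and the linear independence.

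The main obstacle will be controlling the families at the endpoint $\epsilon = 0$: a priori two distinct members of $\Irr_0 (\mh H_1)$ could deform to the same $\C [W \rtimes \Gamma]$-representation, or an irreducible could degenerate in the limit. To rule this out, one must track the discrete series through Theorem \ref{thm:4.4}.b, whose central characters are residual points depending continuously on $k$ and remain pairwise non-$W$-conjugate throughout the scaling, and check that the R-group decomposition of each $\ind_{\mh H_\epsilon^P}^{\mh H_\epsilon} (\delta)$ stays nondegenerate as $\epsilon \to 0$. Given these inputs, a semicontinuity argument for the dimensions of the irreducible summands closes the gap.
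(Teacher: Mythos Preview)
Your proposal overshoots the target and, as a consequence, is wrong. You claim that ``the multiset $\Res_{W \rtimes \Gamma} (\Irr_0 (\mh H_1))$ equals the multiset $\Irr (W \rtimes \Gamma)$'', i.e.\ that every element of $\Irr_0 (\mh H \rtimes \Gamma)$ restricts to an \emph{irreducible} $W \rtimes \Gamma$-representation. Already the rank-one case of Example~\ref{ex:3.1} disproves this: for $\mh H (\C, S_2, k)$ with $k>0$ the set $\Irr_0 (\mh H)$ consists of the Steinberg representation and $\ind_{\mc O(\mf t)}^{\mh H}(\C_0)$; the latter is $2$-dimensional and restricts to $\triv \oplus \mr{sign}$, not to a single irreducible. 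The theorem only asserts linear independence, and indeed Theorem~\ref{thm:6.2}.b says the matrix of $\Res_{W\rtimes\Gamma}$ is unipotent upper-triangular, not the identity.

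The flaw sits exactly where you flagged it, at $\epsilon = 0$. The scaling isomorphism $m_\epsilon$ of \eqref{eq:1.16} is only available for $\epsilon \neq 0$; at $\epsilon = 0$ it degenerates to the surjection $\mh H \to \C[W\rtimes\Gamma]$. As $\epsilon \to 0$ every residual point $\epsilon\lambda$ collapses to $0$, so all discrete series disappear simultaneously, and the previously irreducible inductions $\ind_{\mh H^P_\epsilon}^{\mh H_\epsilon}(\delta)$ become reducible: their R-groups jump from $\mf R_{\tilde\xi}$ to the full isotropy group $W_0 \rtimes \Gamma$. Thus several distinct families limit to representations that share irreducible constituents, and your ``semicontinuity of dimensions'' cannot separate them. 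What survives the limit is only that the \emph{span} of $\Res_{W\rtimes\Gamma}(\Irr_0(\mh H))$ inside the representation ring equals that of $\Irr(W\rtimes\Gamma)$; getting linear independence requires an additional argument (in \cite{SolHomGHA} this goes through the Euler--Poincar\'e pairing and elliptic representation theory, matching the elliptic classes on both sides and then inducting over parabolics), not a direct identification of multisets.
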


In particular it is possible to choose a bijection $\Irr_0 (\mh H \rtimes \Gamma) \to 
\Irr (W \rtimes \Gamma)$ such that the image of $\pi \in \Irr_0 (\mh H \rtimes \Gamma)$ is always
a constituent of $\Res_{W \rtimes \Gamma} (\pi)$. We will establish a much more precise 
version of Theorem \ref{thm:6.1}, for almost all positive parameter functions $k$.

\begin{thm}\label{thm:6.2} 
Let $k : R \to \R_{\geq 0}$ be a $\Gamma$-invariant parameter function whose restriction to any type 
$F_4$ component of $R$ is geometric or has $k(\alpha) = 0$ for a root $\alpha$ in that component.
\enuma{
\item The set $\Res_{W \rtimes \Gamma} (\Irr_0 (\mh H \rtimes \Gamma))$ is a $\Z$-basis of
$\Z \, \Irr (W \rtimes \Gamma)$.
\item There exist total orders on $\Irr_0 (\mh H \rtimes \Gamma)$ and on $\Irr (W \rtimes \Gamma)$
such that the matrix of the $\Z$-linear map
\[
\Res_{W \rtimes \Gamma} : \Z \, \Irr_0 (\mh H \rtimes \Gamma) \to \Z \, \Irr (W \rtimes \Gamma)
\]
is upper triangular and unipotent.
\item There exists a unique bijection
\[
\zeta_{\mh H \rtimes \Gamma} : \Irr_0 (\mh H \rtimes \Gamma) \to \Irr (W \rtimes \Gamma)
\]
such that, for any $\pi \in \Irr_0 (\mh H \rtimes \Gamma)$, $\zeta_{\mh H \rtimes \Gamma} (\pi)$ 
occurs in $\Res_{W \rtimes \Gamma} (\pi)$.
}
\end{thm}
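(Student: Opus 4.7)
Once part (b) is known, the others follow formally: a unipotent upper triangular integer matrix has an integer inverse, so its columns give a $\Z$-basis of the ambient lattice, yielding (a); and the diagonal entries give the unique bijection characterized by the constituent condition, yielding (c). Combined with Theorem \ref{thm:6.1}, which supplies equinumerosity and linear independence in the representation ring, the work reduces to identifying, for each $\pi \in \Irr_0(\mh H \rtimes \Gamma)$, a canonical ``leading'' constituent of $\Res_{W \rtimes \Gamma}(\pi)$ of multiplicity one, together with a partial order under which all other constituents are strictly smaller.

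First I would treat the case that $k$ is geometric, arising from a cuspidal datum $(G,L,\mc L)$ as in Paragraph \ref{par:homology}, and suppress $\Gamma$ (to be reinstated at the end by Clifford theory applied to the $\Gamma$-action on the geometric data). By Theorem \ref{thm:5.5}(a),(c), the irreducible tempered $\mh H$-representations with real central character are precisely the modules $\widetilde M_{y,0,r,\rho}$, parametrized by $G$-orbits of pairs $(y,\rho)$ satisfying the cuspidal support condition. Because $H^{M(y)^\circ}_*(\mc P_y,\dot{\mc L})$ is projective over $H^*_{M(y)^\circ}(\{y\})$, the $W$-module $\Res_W \widetilde E_{y,0,r,\rho}$ is independent of $r$ and, up to the sign twist introduced by the Iwahori--Matsumoto involution, coincides with the $W$-module attached to $(y,\rho)$ by Lusztig's generalized Springer correspondence \cite{LusCusp2}. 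That correspondence distinguishes a unique irreducible $W$-constituent $\mathrm{Spr}(y,\rho)$ of multiplicity one, concentrated in the top Borel--Moore degree, while every other constituent of $\Res_W \widetilde E_{y,0,r,\rho}$ has the form $\mathrm{Spr}(y',\rho')$ with $\mc C^G_{y'} \subsetneq \overline{\mc C^G_y}$. By Theorem \ref{thm:5.4}(d) the kernel of $\widetilde E_{y,0,r,\rho} \twoheadrightarrow \widetilde M_{y,0,r,\rho}$ is filtered by modules $\widetilde M_{y',\sigma',r,\rho'}$ with $\dim \mc C^G_{y'} > \dim \mc C^G_y$; Springer theory attaches $\mathrm{Spr}(y,\rho)$ rigidly to the orbit $\mc C^G_y$, so such modules cannot contain $\mathrm{Spr}(y,\rho)$ as a $W$-constituent. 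Consequently $\mathrm{Spr}(y,\rho)$ appears with multiplicity one in $\Res_W \widetilde M_{y,0,r,\rho}$, and the remaining constituents correspond to pairs attached to the strict boundary of $\mc C^G_y$. Extending the closure order on nilpotent orbits (breaking ties within an orbit arbitrarily) to a total order yields the claimed triangularity.

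For arbitrary admissible $k : R \to \R_{\geq 0}$ I would proceed by deformation. Theorem \ref{thm:4.4}(b) embeds every irreducible discrete series representation in a continuous family as $k$ varies in a neighbourhood of a given value; along such a family the central character varies algebraically and the restriction to $W$ is locally constant. Via Theorem \ref{thm:3.13}, Proposition \ref{prop:3.6}, and Lemma \ref{lem:3.28}, this rigidity propagates from discrete series to all of $\Irr_0(\mh H \rtimes \Gamma)$. Under the $F_4$-hypothesis, the space of admissible parameter functions is path-connected and every connected component meets the geometric locus, so the triangular structure established in the geometric case transports continuously to every admissible $k$, completing (b).

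The main obstacle will be the justification, inside the geometric step, that $\mathrm{Spr}(y,\rho)$ genuinely survives the quotient map $\widetilde E_{y,0,r,\rho} \twoheadrightarrow \widetilde M_{y,0,r,\rho}$ with multiplicity one. This requires coupling Theorem \ref{thm:5.4}(d) to the orbit-support filtration on equivariant Borel--Moore homology and verifying that Lusztig's generalized Springer correspondence respects that filtration---i.e., that $\mathrm{Spr}(y,\rho)$ can arise only from the single orbit $\mc C^G_y$ and not from any strictly larger orbit lying in the kernel. A secondary delicate point is the $F_4$ exclusion: for non-geometric parameters on an $F_4$-component, the residual-point parametrization of Theorem \ref{thm:4.4}(a) has a two-element fibre, which breaks the continuous-family transport; the stated hypothesis is precisely what is needed to keep deformation paths within a locus where this pathology is absent.
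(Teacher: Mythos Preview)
Your overall architecture (geometric case, then deformation, then Clifford theory for $\Gamma$) matches the paper's, but there are two genuine gaps.

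In the geometric step you have the orbit direction backwards: by Theorem \ref{thm:5.4}(d) the irreducible constituents of $\widetilde E_{y,0,r,\rho}$ other than $\widetilde M_{y,0,r,\rho}$ come from orbits with $\dim \mc C^G_{y'} > \dim \mc C^G_y$, not from $\overline{\mc C^G_y}$. More importantly, the paper bypasses what you call the ``main obstacle'' entirely. Instead of tracking $W$-types through the quotient $\widetilde E \twoheadrightarrow \widetilde M$ at $r>0$, it uses that $\Res_W \widetilde E_{y,0,r,\rho} = \Res_W \widetilde E_{y,0,0,\rho}$ (projectivity over $H^*_{M(y)^\circ}(\{y\})$) and applies Theorem \ref{thm:5.4}(d) at $r=0$, where the $\widetilde M_{y,0,0,\rho}$ \emph{are} the irreducible $W$-representations. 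Since $\Res_W \widetilde M_{y,0,r,\rho}$ is a $W$-summand of $\Res_W \widetilde E_{y,0,r,\rho}$, its constituents lie among $\widetilde M_{y,0,0,\rho}$ (once) and terms from strictly larger orbits; combined with the equinumerosity from Theorem \ref{thm:6.1} this forces the diagonal entry to be $1$. No filtration argument on the kernel is needed.

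Your deformation step is too coarse to close. For $R=B_n$ with $k_2/k_1 \in (0,1/2)$ there is no geometric parameter in that generic chamber (the smallest geometric ratio is $1/2$, which is itself a non-generic wall), so one cannot reach the geometric locus without crossing a wall where residual points coalesce and Theorem \ref{thm:4.4} ceases to give a continuous family. The paper treats this interval by a separate argument (Lemma \ref{lem:6.7}): a direct Clifford-theoretic comparison with the limit $k_2=0$, where $\mh H(\C^n,W(B_n),k) \cong \mh H(\C^n,W(D_n),k_1)\rtimes\langle s_{e_n}\rangle$. Likewise the algebras with one parameter equal to $0$ in \eqref{eq:6.3} are handled via Lemma \ref{lem:6.6} applied to a smaller root system, and non-geometric strictly positive $G_2$ parameters require the explicit case analysis of \cite{Slo1}. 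Finally, the passage to $\mh H\rtimes\Gamma$ needs the orderings to be $\Gamma$-compatible (Lemma \ref{lem:6.5}); this is not automatic and is what allows the 2-cocycles in Clifford theory to trivialize via \eqref{eq:6.9}.
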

Part (a) is known from \cite[Proposition 1.7]{SolK} and part (c) is a direct consequence of part (b).
Part (b) was already conjectured by Slooten \cite[\S 1.4.5]{Slo1}.

Recall from Table \eqref{table} that the geometric parameter functions for
$R = F_4$ are given by $(k(\alpha),k(\beta))$ equal to
\[
(k,k), (2 k,k), (k,2 k) \text{ or } (4 k,k)
\]
for any $k \in \C^\times$, where $\alpha$ is a short root and $\beta$ is a long root. We expect that
Theorem \ref{thm:6.2}.b also holds for non-geometric parameter functions $k : F_4 \to \R_{>0}$,
but found it too cumbersome to check.

The discussion after \eqref{eq:6.1} shows that the theorem is trivial for $k = 0$, so we assume from 
now on that $k \neq 0$. Our proof of Theorem \ref{thm:6.2} will occupy the entire paragraph.

\begin{lem}\label{lem:6.3}
Theorem \ref{thm:6.2} holds for $\mh H (\mf t,W,k)$ with $k : R \to \R_{>0}$ geometric.
\end{lem}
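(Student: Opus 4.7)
Since $k : R \to \R_{>0}$ is geometric, there exist a connected complex reductive group $G$, a Levi subgroup $L$ with $T = Z(L)^\circ$, a nilpotent $v \in \mf l$, a cuspidal local system $\mc L$ on $\mc C_v^L$, and a real number $r > 0$ such that $\mh H = \mh H(G,L,\mc L,\mb r)/(\mb r - r)$ and $W = W_L$. The plan is to use this realization together with Theorem \ref{thm:5.4} and Lusztig's generalized Springer correspondence, applied to the twisted standard modules $\widetilde{E}_{y,\sigma_0,r,\rho}$ and $\widetilde{M}_{y,\sigma_0,r,\rho}$ of \eqref{eq:5.9}.

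First I would pin down $\Irr_0(\mh H)$ geometrically. By Theorem \ref{thm:5.5}(a) and (c), $\widetilde{M}_{y,\sigma_0,r,\rho}$ is tempered with real central character precisely when $\mr{Ad}(G)\sigma_0$ meets both $i\mf a$ and $\mf a$. Since $\sigma_0$ is semisimple, the only element of $\mf t$ that is simultaneously $G$-conjugate to both a purely imaginary and a purely real element is $0$, so this forces $\sigma_0 = 0$ up to $G$-conjugation. Consequently $\Irr_0(\mh H)$ is in canonical bijection with $G$-orbits of pairs $(y,\rho)$, where $y \in \mf g$ is nilpotent and $\rho \in \Irr(\pi_0(Z_G(y)))$ has cuspidal support $(L,\mc C_v^L,\mc L)$. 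By Lusztig's generalized Springer correspondence \cite{LusCusp1}, the set of such pairs is canonically in bijection with $\Irr(W_L) = \Irr(W)$, yielding the candidate bijection $\zeta_{\mh H}$.

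Next I would analyze the restriction of the standard modules to $W$. Since the family $(\sigma_0,r) \mapsto \widetilde{E}_{y,\sigma_0,r,\rho}$ is algebraic and its underlying $W$-representation is independent of $(\sigma_0,r)$, one has $\Res_W \widetilde{E}_{y,0,r,\rho} = \Res_W \widetilde{E}_{y,0,0,\rho}$. The latter is a module over $\mh H(\mf t,W,0) = \mc O(\mf t) \rtimes W$ whose underlying $W$-representation is the generalized Springer representation attached to $(y,\rho)$, computed via the decomposition theorem applied to $H_*^{M(y)^\circ}(\mc P_y,\dot{\mc L})$. By Lusztig's theorem, $\zeta_{\mh H}(y,\rho)$ occurs in $\Res_W \widetilde{M}_{y,0,r,\rho}$ with multiplicity one; by Theorem \ref{thm:5.4}(d) every other irreducible $W$-constituent of $\widetilde{E}_{y,0,r,\rho}$ is of the form $\zeta_{\mh H}(y',\rho')$ with $\dim \mc{C}_{y'}^G > \dim \mc{C}_y^G$, and the same triangularity therefore holds for $\Res_W \widetilde{M}_{y,0,r,\rho}$ itself.

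To conclude, I would pick any total order on $\Irr_0(\mh H) \cong \Irr(W)$ refining the partial order by decreasing $\dim \mc{C}_y^G$. With respect to this order, the matrix expressing $\{\Res_W \widetilde{M}_{y,0,r,\rho}\}$ in the basis $\{\zeta_{\mh H}(y,\rho)\}$ of $\Z\,\Irr(W)$ is upper triangular and unipotent, which gives (b) immediately. Part (a) is then the invertibility of this matrix over $\Z$, and part (c) follows because the multiplicity-one diagonal constituent uniquely characterizes $\zeta_{\mh H}(\pi)$. The main obstacle will be the precise matching of the top $W$-constituent of $\widetilde{M}_{y,0,r,\rho}$ with Lusztig's generalized Springer representation $\zeta_{\mh H}(y,\rho)$: this requires tracing the decomposition theorem through the Iwahori--Matsumoto twist used to define $\widetilde{M}$, which tensors $W$-representations by the sign character, and verifying that the resulting convention still singles out the generalized Springer representation. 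The $F_4$ condition in Theorem \ref{thm:6.2} reflects the known ambiguity in Lusztig's correspondence for $F_4$; in the present geometric situation this is resolved by the specific choice of $(G,L,\mc L)$, so no additional hypothesis is needed here.
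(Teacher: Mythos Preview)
Your approach is essentially the same as the paper's: both pin down $\Irr_0(\mh H)$ as the $\widetilde{M}_{y,0,r,\rho}$ via Theorem \ref{thm:5.5}, use the $(\sigma,r)$-independence of $\Res_W \widetilde{E}_{y,\sigma,r,\rho}$ from \eqref{eq:5.12}, and read off the unipotent-triangular structure from Theorem \ref{thm:5.4}(d) and the orbit-dimension order.

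The one notable difference is that you invoke Lusztig's generalized Springer correspondence as an external input to define $\zeta_{\mh H}$ and obtain multiplicity one. The paper stays internal: it applies Theorem \ref{thm:5.4}(b,d) at $r=0$, where the $\widetilde{M}_{y,0,0,\rho}$ are already irreducible $W$-representations via \eqref{eq:6.1}, and simply sets $\zeta_{\mh H}(\widetilde{M}_{y,0,r,\rho}) := \widetilde{M}_{y,0,0,\rho}$. This dissolves your stated ``main obstacle'' about tracking the Iwahori--Matsumoto twist through the decomposition theorem, since there is no external correspondence to match. It also makes the passage from triangularity for $\widetilde{E}$ to triangularity for $\widetilde{M}$ transparent: the transition matrices $\widetilde{E} \leftrightarrow \widetilde{M}$ at $r$ and at $r=0$ are both unipotent upper triangular for the same orbit-dimension order (Theorem \ref{thm:5.4}(d) at each value of $r$), so their composite gives the desired matrix for $\Res_W$ on $\Irr_0(\mh H)$. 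The paper also begins with the routine reduction, via \eqref{eq:1.22} and the isomorphisms \eqref{eq:1.16}, \eqref{eq:1.23}--\eqref{eq:1.27}, to $R$ irreducible spanning $\mf a^*$ and $k$ literally from table \eqref{table}; you should include that.
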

\begin{proof}
The only irreducible tempered representation with real central character of 
$\mh H (\mf t, R = \emptyset, k) = \mc O (\mf t)$ is $\C_0$, so the result is trivially true
for that algebra. In view of the decomposition of $\mh H$ according to the irreducible components 
of $R$ \eqref{eq:1.22}, we may assume that $R$ is irreducible and spans $\mf a^*$. 

The algebra isomorphisms \eqref{eq:1.23}--\eqref{eq:1.27} are the identity on $\mc O (\mf t)$, so they 
preserve the set $\Irr_0 (\mh H)$. The same holds for $m_z$ from \eqref{eq:1.16}, when 
$z \in \R_{>0}$. Therefore we may just as well suppose that $k$ is one of the parameter functions in 
the table \eqref{table}, and that 
\[
\mh H = \mh H (G,L,\mc L,\mb r) / (\mb r - r) \text{ for some } r \in \R_{>0}.
\] 
By Theorem \ref{thm:5.5} $\Irr_0 (\mh H)$ consists of the representations $\widetilde{M}_{y,0,r,\rho}$
with $y \in \mf g$ nilpotent, $\sigma_0 = 0$, $\rho \in \Irr \big( \pi_0 (Z_G (y)) \big)$ and $
\Psi_G (y,\rho) = (L,\mc C_v^L,\mc L)$ up to $G$-conjugacy. By Lemma \ref{lem:3.29} all the
irreducible constituents of $\widetilde{E}_{y,0,r,\rho}$ are tempered and have central character in
$\mf a / W$. For $r = 0$ all these representations admit the $\mc O (\mf t)$-character 0, so they
can be identified with $W$-representations via \eqref{eq:6.1}. 

Recall from \eqref{eq:5.12} that 
\[
\Res_W \widetilde{E}_{y,0,r,\rho} = \Res_W \widetilde{E}_{y,0,0,\rho} .
\]
Theorem \ref{thm:5.4}.d for $r = 0$ says that all constituents of $\widetilde{E}_{y,0,0,\rho}$
different from $\widetilde{M}_{y,0,0,\rho}$ are of the form $\widetilde{M}_{y',0,0,\rho'}$ with
$\dim \mc C_y^G < \dim \mc C_{y'}^G$. The numbers $\dim \mc C_y^G$ define a partial order on
the set of eligible pairs $(y,\rho)$, considered modulo $G$-conjugation. Refine that to a total order.
We transfer that to a total ordering on $\Irr_0 (\mh H)$ (resp. $\Irr (W)$) via $(y,\rho) \mapsto
\widetilde{M}_{y,0,r,\rho}$ (resp. $\widetilde{M}_{y,0,0,\rho}$). With respect to these orders,
the matrix of
\[
\Res_W : \Z \, \Irr_0 (\mh H ) \to \Z \, \Irr (W)
\]
is unipotent and upper triangular. It follows that  
\[
\zeta_{\mh H} : \widetilde{M}_{y,0,r,\rho} \mapsto \widetilde{M}_{y,0,0,\rho}
\]
is the unique map $\Irr_0 (\mh H ) \to \Irr (W)$ with the required properties. 
\end{proof}

We remark that Lemma \ref{lem:6.3} with respect to Lusztig's alternative version of temperedness
was proven in \cite[\S 3]{Ciu}.

When $R$ is irreducible and all roots have the same length, $k$ is determined by the single number 
$k(\alpha) > 0$, and it is geometric. So we just dealt with $R$ of type $A_n, D_n, E_6, E_7$ or $E_8$.
For $R$ of type $B_n, C_n, F_4$ or $G_2$, let $k_1$ be the $k$-parameter of a long root 
and $k_2$ the $k$-parameter of a short root. We will consider the algebras 
$\mh H (\mf t,W(R),k)$ with $k_1 = 0$ or $k_2 = 0$ in and after \eqref{eq:6.3}.

That settles $R = F_4$ for the moment, because we excluded non-geometric strictly positive parameters.
For $R = G_2$ with strictly positive $k$, Theorem \ref{thm:6.2} was proven in \cite[\S 1.4.4]{Slo1}, 
by working it out completely for all possible cases. 

Amongst the $\mh H$ with $R$ irreducible, that leaves type $B_n$ or $C_n$. In view of the isomorphism 
\eqref{eq:1.23}, it suffices to consider $R = B_n$. Lemma \ref{lem:6.3} proves Theorem \ref{thm:6.2} 
for the following geometric $k$ (and any $p \in \Z_{>0}$):
\[
\begin{array}{ccc}
k_2 / k_1 & \mf g & \mf l \\
\hline
1/2 & \mf{sp}_{2n} & \text{Cartan} \\
p & \mf{so}_{p^2 + 2n} & \mf{so}_{p^2} \oplus \C^n \\
p + 1/2 & \mf{sp}_{p(p+1) + 2n} & \mf{sp}_{p(p+1)} \oplus \C^n \\
p - 1/4 & \mf{sp}_{p(2p-1) + 4n} & \mf{sl}_2^n \oplus \C^n \\
p + 1/4 & \mf{sp}_{2(2p+1) + 4n} & \mf{sl}_2^n \oplus \C^n 
\end{array}
\]
Recall from \eqref{eq:4.2} that a strictly positive $k$ is generic if $\prod_{j=1}^{2(n-1)} (j k_1 - 2 k_2)$ 
is nonzero. In particular we already covered all strictly positive non-generic parameters for $B_n$.

\begin{lem}\label{lem:6.4}
Theorem \ref{thm:6.2} holds for $\mh H (\C^n,W(B_n),k)$ when\\
$k_2 / k_1 \in (p - 1/2, p) \cup (p,p+1/2)$ for a $p \in \Z_{>0}$.
\end{lem}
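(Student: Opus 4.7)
The plan is to deform from a geometric parameter sitting inside each interval. For $R = B_n$, table \eqref{table} shows that $k_2/k_1 = p - 1/4$ and $k_2/k_1 = p + 1/4$ are geometric, arising from $\mf g = \mf{sp}_N$ with Levi $\mf l = \mf{sl}_2^n \oplus \C^n$. Since $p - 1/4 \in (p-1/2, p)$ and $p + 1/4 \in (p, p + 1/2)$, each of the two intervals contains a geometric parameter $k^{(0)}$ at which Lemma \ref{lem:6.3} already establishes Theorem \ref{thm:6.2}. I would then propagate the conclusion from $k^{(0)}$ to any $k$ in the same interval by a deformation argument.

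First I would check that every $k$ in the interval is generic in the sense of \eqref{eq:4.2}: by construction the ratio $k_2/k_1$ avoids all half-integers, so $\prod_{j=1}^{2(n-1)}(j k_1 - 2 k_2) \neq 0$. Theorem \ref{thm:4.4}(a) then gives, for every parabolic subquotient $\mh H_P$, a bijection from its irreducible discrete series to the $W(R_P)$-orbits of residual points; the latter are the expressions $\lambda(\vec n, k)$ of \eqref{eq:4.3}, which depend linearly on $k$ and, by genericity, remain pairwise non-associate for every $k$ in the interval. Together with Theorem \ref{thm:3.13}, Corollary \ref{cor:3.23} and the R-group theory in Paragraph \ref{par:Rgroups} (transplanted to graded Hecke algebras via \eqref{eq:4.4} and \eqref{eq:3.19}), this organizes $\Irr_0(\mh H (\C^n, W(B_n), k))$ into finitely many continuous algebraic families of representations, parametrized by $k$ over the open interval, each realized on a $k$-independent vector space.

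The decisive observation is that the restriction to $\C[W]$ is constant along each such family. Indeed, the action of any $w \in W$ depends analytically on $k$, so the character $k \mapsto \mr{tr}\bigl(w \mid \pi_k\bigr)$ is a continuous $\Z$-valued function and therefore constant on the interval. Consequently $\Res_W(\pi_k)$ is independent of $k$ within each family. At the geometric point $k^{(0)}$ Lemma \ref{lem:6.3} provides total orders on $\Irr_0\bigl(\mh H(\C^n, W(B_n), k^{(0)})\bigr)$ and on $\Irr(W(B_n))$ making the restriction matrix unipotent and upper triangular; transporting these orders along the families yields the same matrix for every $k$ in the interval, which is Theorem \ref{thm:6.2}(b), whence also parts (a) and (c).

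The main obstacle I anticipate is verifying that the families are really continuous and do not split, merge or disappear along the interval. For the discrete series of the $\mh H_P$ this is Theorem \ref{thm:4.4}(b), which explicitly produces the continuous families and shows they are defined on a neighborhood of the geometric point (using genericity, no residual points collide). For the remaining tempered representations one has to track the intertwining operators of \eqref{eq:3.33} as $k$ varies: the $c$-functions $\tilde c_\alpha$ have no new zeros or poles on the interval, so the R-groups $\mf R_{\tilde\xi}$ and 2-cocycles $\natural_{\tilde\xi}$ are locally constant, and the decomposition \eqref{eq:3.31} deforms continuously from $k^{(0)}$ to any $k$ in the interval. Genericity of the interval is the key feature that makes this rigidity argument work.
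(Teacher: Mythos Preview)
Your proposal is correct and follows essentially the same approach as the paper: pick the geometric base point $k_2/k_1 = p \mp 1/4$ inside each interval, use genericity to track the discrete series of all $\mh H_P$ via Theorem \ref{thm:4.4}, argue that the R-groups $\mf R_{P,\delta,0}$ and their twisted group algebras are constant along the interval (since the pole orders of the $\tilde c_\alpha$ do not change), and then conclude that the resulting bijection $\Irr_0(\mh H_k) \to \Irr_0(\mh H_{k^{(0)}})$ preserves $W$-types by rigidity of finite-dimensional $W$-representations. The paper's proof is organized in exactly this way; your anticipated obstacle and its resolution match the paper's treatment of the intertwining operators and R-groups almost verbatim.
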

\begin{proof}
We only consider $k_2 / k_1 \in (p-1/2,p)$, the case $k_2 / k_1 \in (p,p+1/2)$ is completely analogous.
Define $k'$ by $k'_1 = 1$ and $k'_2 = p - 1/4$.

Notice that all $k$ with $k_2 / k_1 \in (p-1/2,p)$ are generic for $B_n$ and for all its parabolic 
root subsystems. Hence the residual subspaces of $\mf a$ for $k$ (Definition \ref{def:4.1}) are canonically 
in bijection with those for $k'$. More precisely, every residual subspace has coordinates that are linear
functions of $k$, see Proposition \ref{prop:4.3} and \eqref{eq:4.3}. If such a linear function gives a
residual subspace for $k'$, then it also gives a residual subspace for $k$, and conversely.
From Theorem \ref{thm:4.4} we obtain a canonical bijection between the sets of irreducible discrete series 
representations of $\mh H_P (\C^n,W(B_n),k')$ and of $\mh H_P (\C^n,W(B_n),k)$, say $\delta' \mapsto \delta$,
where the image depends continuously on $k$. 

By Lemma \ref{lem:3.29} and \eqref{eq:3.19}, $\Irr_0 (\mh H (\C^n, W(B_n),k))$ consists of the 
representations $\pi (P,\delta,0,\rho)$ with $\rho \in \Irr (\C [\mf R_{P,\delta,0},
\natural_{P,\delta,0}])$. 
Recall from Paragraph \ref{par:Rgroups} that the analytic R-group $\mf R_{P,\delta,\lambda}$ is defined in 
terms of the functions $\tilde{c_\alpha}(\lambda)$.  Since $k$ is only allowed to vary among generic 
parameters, the pole order of $\tilde{c_\alpha}|_{\mf a^{P*}}$ at $\lambda = 0$ does not depend on $k$. 
Hence $\mf R_{P,\delta,0}$ does not depend on $k$ either. The intertwining operators $\pi (w,P,\delta,0)
\; (w \in \mf R_{P,\delta,0})$ that span the twisted group
algebra can be constructed so that they depend continuously on $k$. Then $\C [\mf R_{P,\delta,0},
\natural_{P,\delta,0}])$ becomes a continuous family of finite dimensional semisimple $\C$-algebras.
Such algebras cannot be deformed continuously, so the family is isomorphic to a constant family. That 
provides a canonical bijection 
\[
\Irr (\C [\mf R_{P,\delta',0},\natural_{P,\delta',0}]) \to 
\Irr (\C [\mf R_{P,\delta,0},\natural_{P,\delta,0}]) ,
\]
We plug it into \eqref{eq:3.19} and we obtain a bijection
\begin{equation}\label{eq:6.5}
\Irr_0 \big( \mh H (\C^n,W(B_n),k') \big) \to \Irr_0 \big( \mh H (\C^n,W(B_n),k) \big) ,
\end{equation}
where the image depends continuously on $k$. Finite dimensional representations of the finite group $W$
do not admit continuous deformations, so \eqref{eq:6.5} preserves $W$-types.
Knowing that, Theorem \ref{thm:6.2} for $\mh H (\C^n, W(B_n),k')$, as shown in
Lemma \ref{lem:6.3}, immediately implies Theorem \ref{thm:6.2} for $\mh H (\C^n,W(B_n),k)$.
\end{proof}

Now we involve the group $\Gamma$ that acts on $\mh H$ via automorphisms of $(\mf a^*, \mf z^*,R, \Delta)$.

\begin{lem}\label{lem:6.5}
Let $\mh H$ be one of the graded Hecke algebras for which we already proved Theorem \ref{thm:6.2}.
We can choose the total orders on $\Irr_0 (\mh H)$ and on $\Irr (W)$ such that, for any $\pi \in 
\Irr_0 (\mh H)$ and any irreducible constituent $\pi'$ of $\Res_W (\pi)$ different from
$\zeta_{\mh H}(\pi)$, $\gamma^* (\pi') > \zeta_{\mh H} (\pi)$ for all $\gamma \in \Gamma$.
\end{lem}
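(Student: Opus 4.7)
The plan is to choose the total orders on $\Irr_0(\mh H)$ and $\Irr(W)$ as refinements of a $\Gamma$-invariant partial order that already strictly separates $\zeta_{\mh H}(\pi)$ from every other constituent of $\Res_W(\pi)$. Once this is arranged, for any $\pi' \neq \zeta_{\mh H}(\pi)$ appearing in $\Res_W(\pi)$ we have $\pi' > \zeta_{\mh H}(\pi)$ already in the partial order, hence by $\Gamma$-invariance $\gamma^*(\pi') > \zeta_{\mh H}(\pi)$ in the partial order as well, and therefore in any total refinement.

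I would argue componentwise. Via \eqref{eq:1.22}, $\mh H \cong \bigotimes_i \mh H_i$, where each $\mh H_i$ corresponds to an irreducible component $R_i$ of $R$, and $\Gamma$ acts by permuting isomorphic components together with a diagram automorphism inside each factor. For a component $R_i$ of type $B_n$, $C_n$, $F_4$ or $G_2$, the Dynkin diagram admits no nontrivial symmetry, so the $\Gamma$-action on $W_i$ and hence on $\Irr(W_i)$ is trivial; any ordering furnished by Theorem \ref{thm:6.2}(b) (proved via Lemma \ref{lem:6.3}, Lemma \ref{lem:6.4}, or Slooten's analysis) is $\Gamma$-invariant in the trivial sense. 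For a simply laced component $R_i$ of type $A_n$, $D_n$ or $E_6$, the proof of Lemma \ref{lem:6.3} identifies both $\Irr_0(\mh H_i)$ and $\Irr(W_i)$ with $G_i$-conjugacy classes of admissible pairs $(y_i,\rho_i)$, and I take the partial order $(y_i,\rho_i) \prec (y_i',\rho_i')$ iff $\dim \mc C_{y_i}^{G_i} < \dim \mc C_{y_i'}^{G_i}$. The induced $\Gamma$-action on $\mh H_i$ comes from a diagram automorphism of $R_i$; this lifts to an outer automorphism of $G_i$ preserving the triple $(L_i,\mc C_{v_i}^{L_i},\mc L_i)$ up to $G_i$-conjugacy, and therefore preserves nilpotent $G_i$-orbit dimensions, making the partial order $\Gamma$-invariant.

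On the product $\prod_i \Irr(W_i) = \Irr(W)$ I then take the componentwise partial order: $(\rho_1,\ldots) \prec (\rho_1',\ldots)$ iff $\rho_i \preceq \rho_i'$ in every factor with strict inequality in at least one. This is $\Gamma$-invariant because $\Gamma$ only permutes factors of the same type. The required strict separation is established componentwise: for a simply laced factor, Theorem \ref{thm:5.4}(d) combined with the Iwahori--Matsumoto twist \eqref{eq:5.9} ensures that any constituent of $\Res_{W_i} \widetilde M_{y_i,0,r,\rho_i}$ different from $\widetilde M_{y_i,0,0,\rho_i}$ corresponds to a pair $(y_i',\rho_i')$ with $\dim \mc C_{y_i'}^{G_i} > \dim \mc C_{y_i}^{G_i}$; for a non-simply-laced factor the analogous strict separation is built into the orderings produced by Lemma \ref{lem:6.3}, Lemma \ref{lem:6.4}, or Slooten. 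I then refine the product partial order to any total order and use it on $\Irr(W)$ and, via the tensor-product parametrization and Lemma \ref{lem:6.3}, on $\Irr_0(\mh H)$.

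The hard part is verifying the geometric lift: that the $\Gamma$-action on $\mh H_i$ (specified a priori only through its action on $R_i$) actually arises from an outer automorphism of the triple $(G_i, L_i, \mc L_i)$ listed in table \eqref{table}. For the row with $L_i$ a Cartan subgroup and $\mc L_i$ trivial, this is the classical fact that diagram automorphisms of simply laced simple root systems extend to algebraic outer automorphisms of the associated semisimple group fixing a chosen maximal torus. For the row $\mf g_i = \mf{sl}_{(d+1)p}$ with $L_i = \mf{sl}_p^{d+1} \oplus \C^d$, the $A_d$ diagram flip is implemented by $X \mapsto -X^T$ (up to $G_i$-conjugation), which permutes the $d+1$ blocks of $L_i$ and preserves the associated cuspidal local system. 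Once these liftings are in place, invariance of nilpotent orbit dimensions under $\Gamma$ is automatic and the remainder of the argument is purely combinatorial.
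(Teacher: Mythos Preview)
Your overall strategy—using nilpotent orbit dimensions for the simply laced factors, noting that non-simply-laced factors have no nontrivial diagram automorphisms, and lifting $\Gamma$ to outer automorphisms of $G$—is exactly what the paper does. The problem is in how you pass from irreducible components to the general case.

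The inference ``$\pi' > \zeta_{\mh H}(\pi)$ in a $\Gamma$-invariant partial order, hence $\gamma^*(\pi') > \zeta_{\mh H}(\pi)$'' is not valid. $\Gamma$-invariance of a partial order means $a>b \Leftrightarrow \gamma a > \gamma b$; it only yields $\gamma^*(\pi') > \gamma^*(\zeta_{\mh H}(\pi))$, which is the wrong inequality. When $\Gamma$ permutes irreducible components of $R$ this really fails for the componentwise product order. For instance, take $R = R_1 \sqcup R_2$ with $R_1 \cong R_2 \cong A_2$ and $\Gamma = \Z/2$ swapping them. Write the orbit-dimension levels in each factor as $a \prec b \prec c$. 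Choose $\pi = \pi_1 \otimes \pi_2$ with $\zeta_{\mh H_1}(\pi_1)=a$, $\zeta_{\mh H_2}(\pi_2)=c$, and such that $b$ occurs in $\Res_{W_1}(\pi_1)$. Then $\pi' = (b,c)$ satisfies $\pi' > \zeta_{\mh H}(\pi) = (a,c)$ componentwise, but $\gamma^*(\pi') = (c,b)$ is \emph{incomparable} to $(a,c)$ in the product order (since $b \not\geq c$ in the second slot). No total refinement can rescue this.

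The paper avoids the issue by replacing the product partial order with a single real-valued function $f_{R,k} = \sum_i f_{R_i,k}$, where each $f_{R_i,k}$ is a small perturbation of $\dim \mc C_y^{G_i}$ (bounded within $(8d)^{-1}$ of an integer). The point is that a \emph{sum} is invariant under permutation of the summands, so when $\gamma$ permutes components the integer part of $f_{R,k}(\gamma^*\pi')$ still equals $\sum_i \dim \mc C_{y'_i}^{G_i}$, which exceeds $\sum_i \dim \mc C_{y_i}^{G_i}$ by at least $1$; the perturbations contribute at most $2d/(8d)=1/4$, giving the margin $f_{R,k}(\gamma^*\pi') > f_{R,k}(\zeta_{\mh H}(\pi)) + 3/4$ of \eqref{eq:6.8}. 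Your argument becomes correct if you replace the componentwise product order by the preorder induced by such a sum function.
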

\begin{proof}
First we assume that $R$ is irreducible and spans $\mf a^*$.

We consider a geometric $k$ and we revisit the proof of Lemma \ref{lem:6.3}. Replacing $\mh H$ by 
an isomorphic algebra, we may assume that $k$ comes from table \eqref{table}. Inspection of the table 
shows that every automorphism of $(R,\Delta)$ can be lifted to an automorphism of $(G,T)$ 
(Recall that there are no automorphisms of $(R,\Delta)$ when $R$ has type $B_d, BC_d, G_2$ or $F_4$, 
apart from the identity.) Hence the function 
\[
\Irr (W) \to \R : \widetilde{M}_{y,0,0,\rho} \mapsto \dim \mc C_y^G
\]
is $\Gamma$-invariant. We replace this function by a function $f_{R,k} : \Irr (W) \to \R$, whose
images differ only slightly from $\dim \mc C_y^G$ and which induces the total order on $\Irr (W)$ 
claimed in Theorem \ref{thm:6.2}.b and exhibited in the proof of Lemma \ref{lem:6.3}.
Via $\zeta_{\mh H}$, we also regard $f_{R,k}$ as a function $\Irr_0 (\mh H)$. Then Theorem
\ref{thm:6.2}.c implies that every constituent of $\Res_{W}(\widetilde{M}_{y,0,r,\rho})$ different
from $\widetilde{M}_{y,0,0,\rho}$ is isomorphic to a $\widetilde{M}_{y',0,0,\rho'}$ with
\begin{equation}\label{eq:6.6}
f_{R,k} \big( \gamma^* (\widetilde{M}_{y',0,0,\rho'}) \big) > f_{R,k}(\widetilde{M}_{y,0,0,\rho})
\qquad \text{for all } \gamma \in \Gamma .
\end{equation}
When $R = G_2$ and $k$ is not geometric, we can use the analysis from \cite[\S 1.4.4]{Slo1} to find
a function $f_{R,k} : \Irr (W) \to \R$ with analogous properties. For other non-geometric $k$, we
may assume that $R = B_n$ (recall we imposed that $k$ is geometric for $R = F_4$). Then $k$ is 
one of the parameter functions considered in Lemma \ref{lem:6.4}. In the proof of that lemma we saw
that $k$ can be deformed continuously to a geometric parameter function $k'$, while staying generic. 
That led to a canonical bijection 
\begin{equation}\label{eq:6.7}
\Irr_0 (\mh H) \to \Irr_0 (\mh H (\C^n,W(B_n),k')),
\end{equation} 
which preserves $W$-types. We define $f_{R,k}$ to be the composition of $f_{R,k'}$ with that
bijection, and we transfer it to a function on $\Irr (W)$ via $\zeta_{\mh H}$. The bijection 
\eqref{eq:6.7} and $\zeta_{\mh H}$ are $\Gamma$-equivariant, if nothing else because $(B_n,\Delta)$
does not admit nontrivial automorphisms. Hence all the properties of $f_{R,k'}$ transfer to
$f_{R,k}$. 

So far we proved the lemma in all cases where $R$ is irreducible and we already had Theorem 
\ref{thm:6.2}, and we made Theorem \ref{thm:6.2}.b more explicit by associating the total order 
to a real-valued function $f_{R,k}$. For a general $R$ we use the decomposition \eqref{eq:1.22}
of $\mh H$. It provides a natural bijection
\[
\begin{array}{ccc}
\Irr_0 (\mh H(\mf t_1,W(R_1),k) \times \cdots \times \Irr_0 (\mh H (\mf t_d,W(R_d),k)) & \to &
\Irr_0 (\mh H (\mf t,W(R),k) \\
(V_1, \ldots, V_d) & \mapsto & V_1 \otimes \cdots \otimes V_d \otimes \C_0 
\end{array}
\]
where $\{ \C_0 \} = \Irr (\mc O (\mf z^*))$. We may assume that all values of the $f_{R_i,k}$
constructed above are algebraically independent and differ from an integer by at most $(8d)^{-1}$
(if not, we can adjust them a bit). Now we define $f_{R,k} : \Irr_0 (\mh H) \to \R$ by
\[
f_{R,k} (V_1 \otimes \cdots V_d \otimes \C_0) = \sum\nolimits_{i=1}^d f_{R_i,k}(V_i).
\] 
and we order $\Irr_0 (\mh H)$ accordingly. Via $\zeta_{\mh H}$ we transfer $f_{R,k}$ and the total 
order to $\Irr (W)$. Let $\pi'$ be a constituent of $\Res_W (V_1 \otimes \cdots \otimes V_d \otimes \C_0)$
different from $\zeta_{\mh H}(V_1 \otimes \cdots \otimes V_d \otimes \C_0)$ and let 
$\pi'_i \in \Irr (W(R_i)), i = 1,\ldots,d$ be its tensor components. 
At least one of the $\pi'_i$ is not isomorphic to 
$\zeta_{\mh H (\mf t_i,W(R_i),k)}(V_i)$. By \eqref{eq:6.6} and its analogues for other irreducible $R$:
\begin{equation}\label{eq:6.8}
f_{R,k}(\gamma^* (\pi')) > f_{R,k}(\pi) + 1 - 2d /8d = f_{R,k}(\zeta_{\mh H}(\pi)) + 3/4
\qquad \text{for all } \gamma \in \Gamma. \qedhere
\end{equation}
\end{proof}

Before we continue, we quickly recall how Clifford theory relates the irreducible representations of 
$\mh H$ and of $\mh H \rtimes \Gamma$. For $(\pi,V_\pi) \in \Irr (\mh H))$ we write
\[
\Gamma_\pi = \{ \gamma \in \Gamma : \gamma^* (\pi) \cong \pi \}.
\]
For every $\gamma \in \Gamma_\pi$ we pick a nonzero intertwining operator 
$I_\gamma : \pi \to \gamma^* (\pi)$. By Schur's lemma $I_\gamma$ is unique up to scalars, 
so there exist $\natural_\pi \in \C^\times$ such that
\begin{equation}\label{eq:6.13}
I_{\gamma \gamma'} = \natural_\pi (\gamma,\gamma') I_\gamma I_{\gamma'}
\qquad \text{for all } \gamma, \gamma' \in \Gamma_\pi .
\end{equation}
Then $\natural_\pi^{\pm 1}$ is a 2-cocycle $\Gamma_\pi \times \Gamma_\pi \to \C^\times$ and the 
twisted group algebra $\C [\Gamma_\pi, \natural_\pi^{-1}]$ 
acts on $V_\pi$ via the $I_\gamma$. For every representation $(\sigma,V_\sigma)$ of 
$\C [\Gamma_\pi, \natural_\pi]$, the vector space
$V_\pi \otimes_\C V_\sigma$ becomes a representation of $\mh H \rtimes \Gamma_\pi$ by
\[
h \gamma \cdot (v_\pi \otimes v_\sigma) = \pi (h) I_\gamma (v_\pi) \otimes \sigma (\gamma) v_\sigma .
\]
When $\sigma$ is irreducible, $V_\pi \otimes V_\sigma$ is also irreducible. Moreover
\[
\pi \rtimes \sigma := \ind_{\mh H \rtimes \Gamma_\pi}^{\mh H \rtimes \Gamma} (V_\pi \otimes V_\sigma)
\]
is an irreducible $\mh H \rtimes \Gamma$-representation. By \cite[Appendix]{RaRa} every irreducible
$\mh H \rtimes \Gamma$-representation is of the form $\pi \rtimes \sigma$, for a pair 
$(\pi,\sigma)$ that is unique up to the $\Gamma$-action. 

The restriction of $\pi \rtimes \sigma$ to $\mh H$ has constituents $\gamma^* (\pi)$ for
$\gamma \in \Gamma / \Gamma_\pi$, each appearing with multiplicity $\dim V_\sigma$. Since $\Gamma$
stabilizes $\mf a$, $\pi \rtimes \sigma$ has all $\mc O (\mf t)$-weights in $\mf a$ if and only if
that holds for $\pi$. As $\Gamma$ stabilizes $\Delta$, it preserves temperedness of 
$\mh H$-representations. Consequently $\pi \rtimes \sigma$ is tempered if and only if $\pi$ is
tempered. In particular $\Irr_0 (\mh H \rtimes \Gamma)$ consists of the representations
$\pi \rtimes \sigma$ with $\pi \in \Irr_0 (\mh H)$ and $\sigma \in \Irr (\C [\Gamma_\pi, \natural_\pi])$.

\begin{lem}\label{lem:6.6}
Let $\mh H$ be one of the graded Hecke algebras for which we already proved Theorem \ref{thm:6.2}
and Lemma \ref{lem:6.5}. Then Theorem \ref{thm:6.2} holds for $\mh H \rtimes \Gamma$.
\end{lem}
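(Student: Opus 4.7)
The plan is to lift $\zeta_{\mh H}$ to $\zeta_{\mh H \rtimes \Gamma}$ via Clifford theory, using that $\Irr_0 (\mh H \rtimes \Gamma)$ and $\Irr (W \rtimes \Gamma)$ are parametrized by $\Gamma$-orbits of pairs $(\pi,\sigma)$ and $(\rho,\tau)$ respectively, as recalled just before the lemma. First I would show that $\zeta_{\mh H}$ is $\Gamma$-equivariant: for $\gamma \in \Gamma$ the bijection $\gamma^* \circ \zeta_{\mh H} \circ (\gamma^{-1})^*$ still sends each $\pi$ to a constituent of $\Res_W (\pi)$, and because the function $f_{R,k}$ from Lemma \ref{lem:6.5} is $\Gamma$-invariant it remains upper triangular for the same ordering; the uniqueness asserted in Theorem \ref{thm:6.2}(c) (already proved for $\mh H$) then forces $\gamma^* \circ \zeta_{\mh H} \circ (\gamma^{-1})^* = \zeta_{\mh H}$. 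This immediately gives $\Gamma_\pi = \Gamma_{\zeta_{\mh H}(\pi)}$.

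Next I would match the 2-cocycles $\natural_\pi$ and $\natural_{\zeta_{\mh H}(\pi)}$. The key point is that $\zeta_{\mh H}(\pi)$ occurs with multiplicity one in $\Res_W (\pi)$, since the diagonal entries of the unipotent matrix in Theorem \ref{thm:6.2}(b) are $1$. Hence any $\mh H$-intertwiner $I_\gamma : V_\pi \to \gamma^* V_\pi$ restricts to the $\zeta_{\mh H}(\pi)$-isotypic line as a nonzero scalar multiple of a fixed $W$-intertwiner $I'_\gamma : V_{\zeta_{\mh H}(\pi)} \to \gamma^* V_{\zeta_{\mh H}(\pi)}$. After rescaling each $I_\gamma$ so that $I_\gamma|_{V_{\zeta_{\mh H}(\pi)}} = I'_\gamma$, the cocycle identity \eqref{eq:6.13} restricted to this line forces $\natural_\pi = \natural_{\zeta_{\mh H}(\pi)}$ as functions, not just cohomologically. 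Consequently $\C[\Gamma_\pi,\natural_\pi] = \C[\Gamma_{\zeta_{\mh H}(\pi)}, \natural_{\zeta_{\mh H}(\pi)}]$ and I define
\[
\zeta_{\mh H \rtimes \Gamma}(\pi \rtimes \sigma) := \zeta_{\mh H}(\pi) \rtimes \sigma .
\]

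To verify (a)-(c) of Theorem \ref{thm:6.2} for $\mh H \rtimes \Gamma$ I would apply Mackey: since $(W \rtimes \Gamma)(\mh H \rtimes \Gamma_\pi) = \mh H \rtimes \Gamma$ with intersection $W \rtimes \Gamma_\pi$, there is a single double coset and
\[
\Res_{W \rtimes \Gamma}(\pi \rtimes \sigma) = \ind_{W \rtimes \Gamma_\pi}^{W \rtimes \Gamma} \Res_{W \rtimes \Gamma_\pi}(V_\pi \otimes V_\sigma) .
\]
Decomposing $V_\pi = V_{\zeta_{\mh H}(\pi)} \oplus U$ into $\Gamma_\pi$-stable summands, the first summand induces precisely to $\zeta_{\mh H}(\pi) \rtimes \sigma$ (with multiplicity one, by the cocycle matching), while the second produces $W \rtimes \Gamma$-modules whose $W$-types lie in $\bigcup_\gamma \gamma^*(\text{constituents of } U)$; by Lemma \ref{lem:6.5} every such $W$-type $\rho$ satisfies $f_{R,k}(\rho) > f_{R,k}(\zeta_{\mh H}(\pi))$. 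Transferring $f_{R,k}$ to $\Gamma$-invariant functions $f^\Gamma$ on $\Irr (W \rtimes \Gamma)$ and $\Irr_0 (\mh H \rtimes \Gamma)$ via $(\rho,\tau) \mapsto f_{R,k}(\rho)$ and $(\pi,\sigma) \mapsto f_{R,k}(\zeta_{\mh H}(\pi))$, and refining with any fixed tiebreaker on each finite set $\Irr (\C[\Gamma_\rho,\natural_\rho])$, yields total orders for which $\Res_{W \rtimes \Gamma}$ is upper triangular unipotent. This proves (b); combined with Theorem \ref{thm:6.1} (equal cardinality) it gives (a), and (c) is immediate.

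The main obstacle I anticipate is the equality $\natural_\pi = \natural_{\zeta_{\mh H}(\pi)}$ as genuine 2-cocycles rather than merely as cohomology classes: without it, passing between $\C[\Gamma_\pi,\natural_\pi]$ and $\C[\Gamma_{\zeta_{\mh H}(\pi)}, \natural_{\zeta_{\mh H}(\pi)}]$ would require non-canonical choices, breaking both the uniqueness and the $\Gamma$-equivariance of the lift. It is precisely the multiplicity-one property of $\zeta_{\mh H}(\pi)$ in $\Res_W (\pi)$ that makes the rescaling argument canonical and makes Clifford theory on the two sides line up cleanly.
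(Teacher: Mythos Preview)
Your proof is correct and follows essentially the same Clifford-theoretic route as the paper: $\Gamma$-equivariance of $\zeta_{\mh H}$ via uniqueness, matching of cocycles via the multiplicity-one occurrence of $\zeta_{\mh H}(\pi)$ in $\Res_W(\pi)$, the bijection $\pi\rtimes\sigma\mapsto\zeta_{\mh H}(\pi)\rtimes\sigma$, and upper triangularity from Lemma~\ref{lem:6.5}. The only difference is that the paper first invokes \cite[Proposition 4.3]{ABPSprin} to trivialize the $2$-cocycle on the $W$-side (since $\Gamma$ acts by Coxeter automorphisms) and then pulls this back to trivialize $\natural_\pi$, whereas you work directly with the possibly nontrivial cocycles and only use that they become \emph{equal} after your rescaling; both arguments hinge on exactly the same multiplicity-one restriction step.
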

\begin{proof}
The same Clifford theory as above can also be used to relate the irreducible representations of
$W$ and of $W \rtimes \Gamma$. Recall that $\Gamma$ acts on $W$ by automorphisms of the Coxeter system
$(W,S)$. In this setting it is known from \cite[Proposition 4.3]{ABPSprin} that the 2-cocycle 
$\natural_{\pi_W}$ associated to any $\pi_W \in \Irr (W)$ is trivial in $H^2 (\Gamma_{\pi_W},\C^\times)$.
Hence we can find $I_\gamma$ for $\pi_W$ such that
\begin{equation}\label{eq:6.9}
\Gamma_{\pi_W} \to \mr{Aut}_\C (V_{\pi_W}) : \gamma \mapsto I_\gamma
\end{equation}
is a group homomorphism. Then $\Irr (W \rtimes W)$ can be parametrized by $\Gamma$-orbits of pairs
$(\pi_W,\sigma_W)$ with $\pi_W \in \Irr (W)$ and $\sigma_W \in \Irr (\Gamma_{\pi_W})$.

We consider any $\pi \in \Irr_0 (\mh H)$. By the uniqueness in Theorem \ref{thm:6.2}.c, $\zeta_{\mh H}$
is $\Gamma$-equivariant and $\Gamma_\pi = \Gamma_{\zeta_{\mh H} (\pi)}$. The intertwiners 
$I_\gamma : \pi \to \gamma^* (\pi)$ also qualify as intertwiners $I_\gamma : \zeta_{\mh H}(\pi) \to 
\gamma^* (\zeta_{\mh H} (\pi))$, because $\zeta_{\mh H}(\pi)$ is contained in $\Res_W (\pi)$ with
multiplicity one. Therefore we can specify a unique $I_\gamma : \pi \to \gamma^* (\pi)$ by the 
requirement that its restriction to $\zeta_{\mh H}(\pi)$ equals the $I_\gamma$ from \eqref{eq:6.9}. Then
\begin{equation}\label{eq:6.27}
\Gamma_\pi \to \mr{Aut}_\C (V_\pi) : \gamma \mapsto I_\gamma
\end{equation}
is a group homomorphism. Now Clifford theory parametrizes $\Irr_0 (\mh H \rtimes \Gamma)$ via
$\Gamma$-orbits of pairs $(\pi,\sigma)$ with $\pi \in \Irr_0 (\mh H)$ and $\sigma \in \Irr (\Gamma_\pi)$.
In particular we obtain a bijection (which will be $\zeta_{\mh H \rtimes \Gamma}$)
\begin{equation}\label{eq:6.10}
\Irr_0 (\mh H \rtimes \Gamma) \to \Irr (W \rtimes \Gamma) : 
\pi \rtimes \sigma \mapsto \zeta_{\mh H} \rtimes \sigma .
\end{equation}
Let $f_{R,k} : \Irr_0 (\mh H) \to \R$ be as in the proof of Lemma \ref{lem:6.5}. We define
\begin{equation}\label{eq:6.12}
f_{R,k} (\pi \rtimes \sigma) = \min \{ f_{R,k}(\gamma^* (\pi)) : \gamma \in \Gamma \} .
\end{equation}
Notice that the irreducible $W \rtimes \Gamma$-representation $\zeta_{\mh H} \rtimes \sigma$ appears
in $\Res_{W \rtimes \Gamma}(\pi \rtimes \sigma)$. For any other irreducible constituent 
$\pi'_{W \rtimes \Gamma}$ of $\Res_{W \rtimes \Gamma}(\pi \rtimes \sigma)$, every irreducible 
$W$-subrepresentations of $\pi'_{W \rtimes \Gamma}$ is contained in $\Res_W (\gamma^* (\pi))$ for some
$\gamma \in \Gamma$. Since $\zeta_{\mh H }(\pi)$ appears with multiplicity one in $\Res_W (\pi)$,
the subspace $\zeta_{\mh H}(\pi) \rtimes \sigma$ of $\Res_{W \rtimes \Gamma}(\pi \rtimes \sigma)$
exhausts the $W$-subrepresentations $\Res_W \big(\gamma^* (\zeta_{\mh H}(\pi)) \big)$ in
$\Res_{W \rtimes \Gamma}(\pi \rtimes \sigma)$. Hence $\pi'_{W \rtimes \Gamma}$ has $W$-constituents
$\gamma^* (\pi'_W)$ with $\pi'_W \subset \Res_W (\pi)$ but $\pi'_W \not\cong \zeta_{\mh H}(\pi)$.
Lemma \ref{lem:6.5} and \eqref{eq:6.8} say that
\begin{equation}\label{eq:6.11}
f_{R,k} (\gamma^* (\pi'_W)) > 3/4 + f_{R,k} (\zeta_{\mh H} (\pi)) \qquad \text{for all } \gamma \in \Gamma.
\end{equation}
Take a total order on $\Irr_0 (\mh H \rtimes \Gamma)$ that refines the partial order defined by $f_{R,k}$.
We transfer $f_{R,k}$ and this total order to $\Irr (W \rtimes \Gamma)$ via the bijection \eqref{eq:6.10}.
Then the above verifies parts (a) and (b) of Theorem \ref{thm:6.2} for $\mh H \rtimes \Gamma$.
It follows this there is a unique $\zeta_{\mh H \rtimes \Gamma}$ that fulfills the requirements,
namely \eqref{eq:6.10}
\end{proof}

Notice that in Lemma \ref{lem:6.4} we did not allow $k_2 / k_1 \in (0,1/2)$. 
Other special cases that we skipped in Lemmas \ref{lem:6.3} and \ref{lem:6.4} are:
\begin{equation}\label{eq:6.3}
\begin{array}{lll@{\qquad}l}
\mh H (\C^n ,W(B_n) ,k) & = & \mh H (\C^n ,W(D_n) ,k_1) \rtimes \langle s_{e_n} \rangle & 
\text{when } k_2 = 0,\\
\mh H (\C^n ,W(B_n) ,k) & = & \mh H (\C^n ,W(A_1)^n ,k_2) \rtimes S_n & \text{when } k_1 = 0, \\
\mh H (\C^4 ,W(F_4), k) & \cong & \mh H (\C^4, W(D_4), k_i) \rtimes S_3 & \text{when } k_{3-i} = 0, \\
\mh H (\C^2, W(G_2), k) & \cong & \mh H (\C^2, W(A_2), k_i) \rtimes S_2 & \text{when } k_{3-i} = 0. 
\end{array}
\end{equation}
Let us relate some of these cases.

\begin{lem}\label{lem:6.7}
Let $k : B_n \to \R_{>0}$ be a parameter function with $k_2 / k_1 \in (0,1/2)$. There exists a canonical
bijection 
\[
\Irr_0 \big( \mh H (\C^n,W(B_n),k) \big) \to 
\Irr_0 \big( \mh H (\C^n,W(D_n),k_1) \rtimes \langle s_{e_n} \rangle \big)
\]
which preserves $W(B_n)$-types.
\end{lem}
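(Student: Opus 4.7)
The plan is to proceed by continuous deformation of the parameter $k_2$ from its given value down to $0$, while keeping $k_1$ fixed. At $k_2 = 0$ the cross relation in Definition \ref{def:1.8} for the short root $e_n$ becomes $\xi s_{e_n} - s_{e_n} \cdot s_{e_n}(\xi) = 0$, so the short reflection commutes with $S(\mf t^*)$ and one obtains the decomposition from \eqref{eq:6.3},
\[
\mh H(\C^n, W(B_n), k_1, 0) \;=\; \mh H(\C^n, W(D_n), k_1) \rtimes \langle s_{e_n} \rangle .
\]
The aim is to construct a canonical, $W(B_n)$-type-preserving bijection $\Irr_0(\mh H(\C^n, W(B_n), k_1, k_2)) \to \Irr_0(\mh H(\C^n, W(B_n), k_1, 0))$, obtained as the ``limit'' at $k_2 \to 0^+$ of the bijections furnished by Lemma~\ref{lem:6.4}.

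The first step is to notice that any $k = (k_1, k_2)$ with $k_2/k_1 \in (0, 1/2)$ satisfies the genericity condition \eqref{eq:4.2}, since then $j k_1 \pm 2 k_2 \neq 0$ for $j = 1, \dots, 2(n-1)$. The argument in Lemma~\ref{lem:6.4} therefore applies verbatim along the entire segment $k_2 \in (0, k_2^0]$: residual points, discrete series via Theorem~\ref{thm:4.4}(a), analytic R-groups, and their twisted group algebras all depend continuously on $k_2 > 0$, yielding canonical $W(B_n)$-type-preserving bijections among the $\Irr_0\bigl(\mh H(\C^n, W(B_n), k_1, k_2)\bigr)$ for $k_2$ in this range.

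The second step is to extend to $k_2 = 0$. Since $\tilde c_{\pm e_i}\big|_{k_2=0} = 1$, the function $\tilde c_R$ for $B_n$ at $k_2 = 0$ reduces to $\tilde c_R$ for $D_n$ with parameter $k_1$; hence the residual subspaces of $\mh H(\C^n, W(B_n), k_1, 0)$ coincide with those of $\mh H(\C^n, W(D_n), k_1)$. For each such residual point $\lambda$, apply Theorem~\ref{thm:4.4}(b) at the non-generic parameter $k' = (k_1, 0)$: the generic residual points $f_i(k_1, k_2)$ specializing to $\lambda$ as $k_2 \to 0$ are in natural bijection with the discrete series of $\mh H(\C^n, W(B_n), k_1, 0)$ having central character $W(B_n)\lambda$. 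Combined with the continuous deformation of intertwiners and the fact that twisted group algebras cannot deform continuously (Lemma~\ref{lem:6.4}), the limit of the bijections from the first step gives a canonical bijection at $k_2 = 0$. That this bijection preserves $W(B_n)$-types is automatic, because $W(B_n) \cong W(D_n) \rtimes \langle s_{e_n} \rangle$ is finite and its representation ring admits no continuous deformations.

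The main obstacle is the final step: one must verify that the correspondence from Theorem~\ref{thm:4.4}(b) at $k' = (k_1, 0)$ is compatible with the parabolic induction from discrete series to all of $\Irr_0$, i.e. that R-groups $\mf R_{P, \delta, 0}$ on the $B_n$-side for small $k_2 > 0$ limit correctly to the R-groups appearing in the analysis of $\mh H(\C^n, W(D_n), k_1) \rtimes \langle s_{e_n} \rangle$. Concretely, when a pair of $B_n$-residual points merges into a single $D_n$-residual point at $k_2 = 0$, the corresponding families of $\mh H$-discrete series must assemble into a single $W(D_n)$-discrete series together with its $\langle s_{e_n}\rangle$-extension; this has to be checked against the Clifford-theoretic parametrization used in Lemma~\ref{lem:6.6}. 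Once that matching is in place, the uniqueness in Theorem~\ref{thm:6.2}(c) (applied to the algebra $\mh H(\C^n, W(D_n), k_1) \rtimes \langle s_{e_n} \rangle$ which is already covered by Lemmas~\ref{lem:6.3} and~\ref{lem:6.6}) ensures that the resulting bijection is canonical and independent of the chosen deformation path.
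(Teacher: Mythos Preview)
Your deformation strategy is natural, and you correctly identify that the interval $(0,1/2)$ is generic in the sense of \eqref{eq:4.2}, so the continuity argument from Lemma~\ref{lem:6.4} does give canonical $W(B_n)$-type-preserving bijections among the $\Irr_0$ for different $k_2$ strictly inside this interval. The problem is the limit $k_2 \to 0$. You flag the ``main obstacle'' honestly --- that the R-groups $\mf R_{P,\delta,0}$ for small $k_2 > 0$ must be matched with the Clifford-theoretic structure of $\mh H(\C^n,W(D_n),k_1) \rtimes \langle s_{e_n}\rangle$ --- but you do not carry out this matching; you only say ``once that matching is in place''. That is exactly the nontrivial content of the lemma. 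At $k_2 = 0$ the short-root intertwiners degenerate, pairs of $B_n$-residual points can collide, and the R-group/Clifford bookkeeping changes character; Theorem~\ref{thm:4.4}(b) controls only the discrete series, not the full packaging into $\Irr_0$ via R-groups. So as written the proposal has a genuine gap at the step you yourself single out.

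The paper sidesteps this limiting analysis entirely. Instead of tracking R-groups through a degeneration, it works with the common subalgebra $\mh H(\C^n,W(D_n),k_1)$ sitting inside both $\mh H$ and $\mh H'$, and compares the induced modules $\pi_2 = \ind_{\mh H(D_n)}^{\mh H}(\pi)$ and $\pi'_2 = \ind_{\mh H(D_n)}^{\mh H'}(\pi)$ for $\pi \in \Irr_0(\mh H(\C^n,W(D_n),k_1))$. When $s_{e_n}^*(\pi) \not\cong \pi$, a direct perturbation argument shows $\pi_2$ stays irreducible. When $s_{e_n}^*(\pi) \cong \pi$, reducibility of $\pi_2$ is forced by the counting identity $|\Irr_0(\mh H)| = |\Irr(W(B_n))| = |\Irr_0(\mh H')|$ from Theorem~\ref{thm:6.1}: if some $\pi_2$ were irreducible in this case, the total count on the $\mh H$-side would fall short. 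This yields $\Res_{W(B_n)}(\Irr_0(\mh H)) = \Res_{W(B_n)}(\Irr_0(\mh H'))$ as multisets, and hence a unique $W(B_n)$-type-preserving bijection. The key device you are missing is this cardinality argument, which replaces the delicate R-group limit by a global constraint.
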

\begin{proof}
We abbreviate $\mh H = \mh H (\C^n,W(B_n),k)$ and $\mh H' = \mh H (\C^n,W(D_n),k_1) \rtimes 
\langle s_{e_n} \rangle$. As explained in the proof of Lemma \ref{lem:6.4}, for $k$ within the range
of parameters considered in this lemma, $\Irr_0 (\mh H)$ is essentially independent of $k$. By varying
$k_2$ continuously, we can reach the algebra $\mh H'$, which however may behave differently.
By Clifford theory $\Irr_0 (\mh H')$ consists of representations of the following kinds:

\textbf{(i)} $\ind_{\mh H (\C^n,W(D_n),k_1)}^{\mh H'}(\pi)$, where $\pi \in \Irr_0 (\mh H (\C^n,W(D_n),k_1))$
is not equivalent with $s_{e_n}^* (\pi)$,

\textbf{(ii)} $V_\pi \otimes V_\sigma$, where $\pi \in \Irr_0 (\mh H (\C^n,W(D_n),k_1))$ is fixed by
$s_{e_n}^*$ and $\sigma \in \Irr (\langle s_{e_n} \rangle) = \{ \triv, \mr{sign} \}$.

Let us investigate what happens when we deform $k_2 = 0$ to a positive but very small real number.
Accordingly we replace 
\[
\pi'_2 = \ind_{\mh H (\C^n,W(D_n),k_1)}^{\mh H'}(\pi) \quad \text{by} \quad 
\pi_2 = \ind_{\mh H (\C^n,W(D_n),k_1)}^{\mh H}(\pi). 
\]
The map
\[
w f \mapsto w f \qquad f \in \mc O (\C^n), w \in W(B_n)
\]
is a linear bijection $\mh H' \to \mh H$, so $\Res_{W(B_n)} \pi'_2 = \Res_{W(B_n)} \pi_2$.

\textbf{(i')} We claim that in case (i) $\pi_2$ is still irreducible.

As vector spaces $V_{\pi_2} = V_\pi \oplus s_{e_n} V_\pi$. For any nonzero linear subspace $V$ of $V_{\pi_2}$,
the irreducibility of $\pi'_2$ tells us that there exists an $h \in \mh H'$ such that $\pi'_2 (h) V
\not\subset V$. For $k_2 > 0$ very small, the corresponding element of $\mh H$ still satisfies
$\pi_2 (h) V \not\subset V$. This verifies the claim (i').

\textbf{(ii')} In case (ii), we claim that $\pi_2$ is reducible. 

From Theorem \ref{thm:6.1} we know that
\begin{equation}\label{eq:6.4}
| \Irr_0 (\mh H)| = |\Irr (W(B_n))| = |\Irr_0 (\mh H')|
\end{equation}
(i) and (ii) provide a way to count the right hand side:
\begin{itemize}
\item every $\langle s_{e_n} \rangle$-orbit of length two in $\Irr_0 (\mh H (\C^n,W(D_n),k_1))$
contributes one,
\item every $s_{e_n}^*$-fixed element of $\Irr_0 (\mh H (\C^n,W(D_n),k_1))$ contributes two
\end{itemize}
The restriction of any element of $\Irr_0 (\mh H)$ to $\mh H (\C^n,W(D_n),k_1)$ has all irreducible
constituents in $\Irr_0 (\mh H (\C^n,W(D_n),k_1))$. By Frobenius reciprocity, this implies that it is
a constituent of $\pi_2$ for some $\pi \in \Irr_0 (\mh H (\C^n,W(D_n),k_1))$. 

When $s_{e_n}^* (\pi) \not\cong \pi$, we saw in (i') that $\{ \pi, s_{e_n}^* (\pi) \}$ contributes
just one representation to $\Irr_0 (\mh H)$. 
In case $s_{e_n}^* (\pi) \cong \pi$, $\pi_2$ can be reducible. It has length at most two, 
because that is its length as $\mh H (\C^n,W(D_n),k_1)$-module. When $\pi_2$ would contribute only one 
representation to $\Irr_0 (\mh H)$, the sum of the contributions from the cases (i') and (ii') would be 
strictly smaller than the corresponding sum of the contributions from (i) and (ii) to $\Irr (\mh H')$.
However, that would contradict \eqref{eq:6.4}. We conclude that (ii') holds.

A $\pi_2$ as in (ii') has length 2, and both its irreducible constituents become isomorphic to
$\pi$ upon restriction to $\mh H (\C^n,W(D_n),k_1)$. As 
\[
\Res_{W(B_n)}(\pi_2) = \ind_{W(D_n)}^{W(B_n)} \Res_{W(D_n)}(\pi),
\]
the restriction to $W(B_n)$ of the two constituents of $\pi_2$ must be 
$\Res_{W(D_n)} (\pi) \otimes \triv$ and $\Res_{W(D_n)}(\pi) \otimes \mr{sign}$. 
Here $\Res_{W(D_n)}(\pi)$ extends to a representation of $W(B_n)$, while triv and sign are 
representations of $W(B_n) / W(D_n)$. In combination with case (i') we see that
\[
\Res_{W(B_n)}(\Irr_0 (\mh H')) = \Res_{W(B_n)} (\Irr_0 (\mh H)) .
\]
Hence there is a unique bijection $\Irr_0 (\mh H) \to \Irr_0 (\mh H')$ that preserves $W$-types.
\end{proof}

Finally, we settle the remaining cases of Theorem \ref{thm:6.2}.

\begin{proof}
Lemma \ref{lem:6.6} establishes Theorem \ref{thm:6.2} for the algebras in \eqref{eq:6.3}. Moreover
\eqref{eq:6.12} gives us a function $f_{R,k}$ that defines a useful partial order on
$\Irr_0 (\mh H)$ and $\Irr (W)$. With Lemma \ref{lem:6.7} we transfer all that to $\mh H (\C^n,W(B_n),k)$ 
with $k_2 / k_1 \in (0,1/2)$. Then we have Theorem \ref{thm:6.2} whenever $R$ is irreducible and
spans $\mf a^*$. As noted in the proof of Lemma \ref{lem:6.3}, that implies Theorem \ref{thm:6.2}
for all $\mh H$ (still with the condition on the parameters for type $F_4$ components).  
We finish the proof by applying Lemmas \ref{lem:6.5} and \ref{lem:6.6} another time.
\end{proof}

Suppose that $\mh H = \mh H (G,L,\mc L,\mb r) / (\mb r - r)$ for a cuspidal local system $\mc L$
on a nilpotent orbit for $L$ (as in Paragraph \ref{par:homology}). In terms of Theorem \ref{thm:5.5},
$\zeta_{\mh H}$ from Theorem \ref{thm:6.2} is just the map
$\widetilde{M}_{y,0,r,\rho} \mapsto \widetilde{M}_{y,0,0,\rho}$.
Here $(y,\rho) \mapsto \widetilde{M}_{y,0,0,\rho}$ is the generalized Springer correspondence from
\cite{Lus-Int}, twisted by the sign character of $W$. So, for a graded Hecke algebra that can be
constructed with equivariant homology, Theorem \ref{thm:6.2} recovers a generalized Springer 
correspondence for $W$.

Let us be more flexible, and call any nice parametrization of $\Irr (W)$ a generalized Springer
correspondence. Then Theorem \ref{thm:6.2} qualifies as such, and we can regard
$\zeta_{\mh H} : \Irr_0 (\mh H) \to \Irr (W)$ as a "generalized Springer correspondence with
graded Hecke algebras". This point of view has been pursued in \cite{Slo1}, where 
$\Irr_0 (\mh H (\C^n,W(B_n),k))$ has been parametrized with combinatorial data that mimic
the above pairs $(y,\rho)$.

\subsection{A generalized Springer correspondence with affine Hecke algebras} \

With Corollary \ref{cor:3.27} we can translate Theorem \ref{thm:6.2} into a statement about
all tempered irreducible representations of $\mc H = \mc H (\mc R, \lambda, \lambda^*,\mb q)$,
in relation with tempered $\C [X \rtimes W]$-representations. We want to generalize that
to all irreducible $\mc H$-representations, at least when $q_s \geq 1$ for all $s \in S_\af$.

Our main tool will be the Langlands classification. We need a version for graded Hecke algebras
extended with automorphism groups $\Gamma$ as in Theorem \ref{thm:6.2}. It can be obtained by
combining Theorem \ref{thm:3.10} for $\mh H$ with Clifford theory. Given a Langlands datum
$(P,\tau,\lambda)$ for $\mh H$, let $\Gamma_{P,\tau,\lambda}$ be its stabilizer in $\Gamma$,
and recall the 2-cocycle $\natural_{P,\tau,\lambda}$ from \eqref{eq:6.13}. We define a Langlands
datum for $\mh H \rtimes \Gamma$ to be a quadruple $(P,\tau,\lambda,\rho)$, where 
\begin{itemize}
\item $(P,\tau,\lambda)$ is a Langlands datum for $\mh H$ (so $\tau \in \Irr (\mh H_P)$
is tempered and $\lambda \in \mf a^{P++} + i \mf a^P$),
\item $\rho \in \Irr (\C [\Gamma_{P,\tau,\lambda},\natural_{P,\tau,\lambda}])$.
\end{itemize}
To such a quadruple we associate the irreducible $\mh H^P$-representation $((\tau \otimes \lambda) 
\otimes \rho, V_\tau \otimes_\C V_\rho)$ and the $\mh H \rtimes \Gamma$-representation
\begin{equation}\label{eq:6.29}
\pi (P,\tau,\lambda,\rho) = \ind_{\mh H^P \rtimes \Gamma_{P,\tau,\lambda}}^{\mh H \rtimes \Gamma}
(\tau \otimes \lambda \otimes \rho ) .
\end{equation}
As a consequence of \cite[Corollary 2.2.5]{SolAHA} and Paragraph \ref{par:analogues}, we find
an extended Langlands classification:

\begin{cor}\label{cor:6.8}
Let $(P,\tau,\lambda,\rho)$ be a Langlands datum for $\mh H \rtimes \Gamma$.
\enuma{
\item The $\mh H \rtimes \Gamma$-representation $\pi (P,\tau,\lambda,\rho)$ has a unique
irreducible quotient, which we call $L(P,\tau,\lambda,\rho)$.
\item For every irreducible $\mh H \rtimes \Gamma$-representation $\pi$, there exists a 
Langlands datum $(P',\tau',\lambda',\rho')$, unique up to the canonical $\Gamma$-action,
such that $\pi \cong \pi (P',\tau',\lambda',\rho')$.
\item $L (P,\tau,\lambda,\rho)$ and $\pi (P,\tau,\lambda,\rho)$ are tempered if and onlu if
$P = \Delta$ and $\lambda \in i \mf a^\Delta$.
}
\end{cor}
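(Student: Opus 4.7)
The plan is to deduce this corollary by combining the graded-Hecke-algebra Langlands classification, stated for $\mh H$ in Paragraph \ref{par:analogues} (the analogue of Theorem \ref{thm:3.10}), with the Clifford-theoretic description of $\Irr(\mh H \rtimes \Gamma)$ recalled just before Lemma \ref{lem:6.6}. The latter provides a bijection between $\Irr(\mh H \rtimes \Gamma)$ and $\Gamma$-orbits of pairs $(\pi,\sigma)$ with $\pi \in \Irr(\mh H)$ and $\sigma \in \Irr(\C[\Gamma_\pi, \natural_\pi])$, while the former expresses every $\pi \in \Irr(\mh H)$ canonically as $L(P,\tau,\lambda)$ for a unique Langlands datum. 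Composition of these two bijections, applied to $\pi = L(P,\tau,\lambda)$, is what should establish part (b) once one checks that $\Gamma_\pi = \Gamma_{P,\tau,\lambda}$ and $\natural_\pi = \natural_{P,\tau,\lambda}$; this identification is forced because both the Langlands quotient and the normalizing intertwiners $I_\gamma$ are canonical, so the $\Gamma$-action on $\Irr(\mh H)$ transports through $L$ to the obvious $\Gamma$-action on Langlands data.

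For (a) I would argue by induction in stages. Writing
\[
\pi(P,\tau,\lambda,\rho) \;=\; \ind_{\mh H \rtimes \Gamma_{P,\tau,\lambda}}^{\mh H \rtimes \Gamma} \Bigl( \ind_{\mh H^P \rtimes \Gamma_{P,\tau,\lambda}}^{\mh H \rtimes \Gamma_{P,\tau,\lambda}}(\tau \otimes \lambda \otimes \rho) \Bigr),
\]
I first analyse the inner induction. As an $\mh H$-module it equals $\pi(P,\tau,\lambda) \otimes_\C V_\rho$, which by the Langlands classification for $\mh H$ has $L(P,\tau,\lambda) \otimes V_\rho$ as unique $\mh H$-quotient. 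Because $L(P,\tau,\lambda)$ is the unique irreducible quotient, the operators $I_\gamma$ implementing the projective $\Gamma_{P,\tau,\lambda}$-action on $\tau$ descend canonically to intertwiners on $L(P,\tau,\lambda)$, so by the Clifford construction recalled before Lemma \ref{lem:6.6} the representation $L(P,\tau,\lambda) \rtimes \rho$ is an irreducible $\mh H \rtimes \Gamma_{P,\tau,\lambda}$-module, and it is the unique irreducible quotient of the inner parabolic induction. Since $\Gamma_{P,\tau,\lambda}$ is by construction the full stabilizer in $\Gamma$ of the $\mh H$-isomorphism class of $L(P,\tau,\lambda)$, Clifford theory then guarantees that the outer induction $\ind_{\mh H \rtimes \Gamma_{P,\tau,\lambda}}^{\mh H \rtimes \Gamma}$ remains irreducible. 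Combining these two steps yields the unique irreducible quotient $L(P,\tau,\lambda,\rho)$ of $\pi(P,\tau,\lambda,\rho)$.

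For (c), the key observation is that the $\mc O(\mf t)$-weights of $\pi(P,\tau,\lambda,\rho)$ (or $L(P,\tau,\lambda,\rho)$) form a union of $\Gamma$-orbits of the weights of $\pi(P,\tau,\lambda)$ (resp.\ $L(P,\tau,\lambda)$). Since $\Gamma$ stabilizes both $\Delta$ and $\mf a$ by hypothesis, the negative-cone condition of Definition \ref{def:3.5} is preserved and reflected under passage to $\Gamma$-orbits; thus temperedness of an $\mh H \rtimes \Gamma$-representation coincides with temperedness of its restriction to $\mh H$. The statement then follows directly from the corresponding claim for $\mh H$ in the Langlands classification.

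The main obstacle I expect is the cocycle bookkeeping at the Langlands-quotient stage: one must verify that the projective $\Gamma_{P,\tau,\lambda}$-representation defined by intertwiners of the reducible module $\pi(P,\tau,\lambda)$ descends to precisely the projective $\Gamma_{P,\tau,\lambda}$-representation on the Langlands quotient, so that the cocycle $\natural_{P,\tau,\lambda}$ built from $\tau$ matches the cocycle $\natural_{L(P,\tau,\lambda)}$ arising in Clifford theory for $\mh H \subset \mh H \rtimes \Gamma$. This should follow from the uniqueness of the Langlands quotient and functoriality of the quotient operation under intertwining maps, but it is the point at which a reader would rightfully want a careful check (and for which \cite[Corollary 2.2.5]{SolAHA} can simply be cited).
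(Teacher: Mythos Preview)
Your proposal is correct and follows essentially the same route as the paper, which simply records the corollary as ``a consequence of \cite[Corollary 2.2.5]{SolAHA} and Paragraph \ref{par:analogues}'': that cited result is precisely the Langlands classification for $\mh H$ combined with Clifford theory for the extension by $\Gamma$, which is exactly the argument you have unpacked in detail. Your flagged obstacle (matching $\natural_{P,\tau,\lambda}$ with $\natural_{L(P,\tau,\lambda)}$) is indeed the one nontrivial bookkeeping point, and it is handled by the uniqueness of the Langlands quotient together with functoriality of the intertwiners $I_\gamma$, as you indicate; the paper sidesteps this by citing \cite{SolAHA} directly.
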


In this context we call $\pi (P,\tau,\lambda,\rho)$ a standard $\mh H \rtimes \Gamma$-module.
When $\tau$ has real central character, it follows from Theorem \ref{thm:6.2} that 
the representation $\Res_{W_P}(\tau) \otimes \lambda$ of $\mh H (\mf t, W_P, 0) = 
\mc O (\mf t) \rtimes W_P$ has stabilizer $\Gamma_{P,\tau,\lambda}$ in $\{ \gamma \in \Gamma :
\gamma (P) = P \}$. The $\mh H^P$-intertwining operators $I_\gamma \; (\gamma \in 
\Gamma_{P,\tau,\lambda})$ from \eqref{eq:6.3} are also $\mc O (\mf t) \rtimes W_P$-intertwining
operators, so $(\Res_{W_P}(\tau) \otimes \lambda \otimes \rho, V_\tau \otimes_\C V_\rho)$
is a well-defined representation of 
\[
\mh H (\mf t,W_P,0) \rtimes \Gamma_{P,\tau,\lambda} = 
\mc O (\mf t) \rtimes W_P \Gamma_{P,\tau,\lambda}.
\]
Its parabolic induction is 
\begin{equation}\label{eq:6.28}
\pi (P, \Res_{W_P}(\tau), \lambda, \rho) = \ind_{\mc O (\mf t) \rtimes W_P \Gamma_{P,\tau,\lambda}}^{
\mc O (\mf t) \rtimes W \Gamma} (\Res_{W_P}(\tau) \otimes \lambda \otimes \rho) .
\end{equation}
This representation has central character $W \Gamma \lambda$ and the construction mimics that in 
\eqref{eq:6.29}, so 
\[
\pi (P, \Res_{W_P}(\tau), \lambda, \rho) = \pi (P, \tau, \lambda, \rho) \quad \text{as} \quad
\C [W \Gamma]\text{-representations}.
\] 
We note that in general $\pi (P, \Res_{W_P}(\tau), \lambda, \rho)$ differs from 
$\pi (P, \Res_{W_P \rtimes \Gamma_{P,\tau,\lambda}}(\tau \otimes \lambda \otimes \rho)$, because
the latter has central character $0 \in \mf t / W \Gamma$. 
Of course the $\mh H \rtimes \Gamma$-representation \eqref{eq:6.28} may have more than one 
irreducible quotient, because $\Res_{W_P}(\tau)$ usually is reducible. 
Recall from the proof of Lemma \ref{lem:6.6} that we can arrange that 
\[
\Gamma_{P,\tau,\lambda} \to \mr{Aut}_\C (V_\pi) : \gamma \mapsto I_\gamma
\]
is a group homomorphism. Then $\natural_{P,\tau,\lambda} = 1$ and $\rho$ becomes simply an 
irreducible representation of $\Gamma_{P,\tau,\lambda}$. This construction yields a canonical map
\begin{equation}\label{eq:6.14}
\begin{array}{cccc}
\Res_{\mc O (\mf t) \rtimes W \Gamma} : & \{ \text{standard } \mh H \rtimes \Gamma\text{-modules}\} &
\longrightarrow & \Mod_f (\mc O (\mf t) \rtimes W \Gamma ) \\
 & \pi (P,\tau,\lambda,\rho) & \mapsto & \pi (P, \Res_{W_P}(\tau), \lambda, \rho) .
\end{array}
\end{equation}
For $\lambda = 0$, this just the restriction map $\Res_{W \rtimes \Gamma}$, in combination with
\eqref{eq:6.1}. In terms of \eqref{eq:3.31} and \eqref{eq:3.19}, we can express \eqref{eq:6.14} as 
\[
\Res_{\mc O (\mf t) \rtimes W \Gamma} \big( \Hom_{\mf R_{P,\delta,\lambda}} \big( \rho,
\ind_{\mh H^P}^{\mh H \rtimes \Gamma} (\delta \otimes \lambda) \big) \big) =
\Hom_{\mf R_{P,\delta,\lambda}} \big( \rho, \ind_{\mc O (\mf t) \rtimes W_P}^{\mc O (\mf t) 
\rtimes W \Gamma} \big( \Res_{W_P}(\delta) \otimes \lambda \big) \big) .
\]
Here we used \cite[Theorem 9.2]{SolGHA} to extend the notion of R-groups to $\mh H \rtimes \Gamma$.

\begin{lem}\label{lem:6.9}
Let $k : R \to \R$ be a parameter function as in Theorem \ref{thm:6.2}, so that in particular Theorem
\ref{thm:6.2}.b provides a total order $>$ on $\Irr (W_P)$. 
Let $(P,\tau,\lambda,\rho)$ be a Langlands datum for $\mh H \rtimes \Gamma$.

All irreducible constituents of $\pi (P,\Res_{W_P}(\tau),\lambda,\rho)$ different from 
$\pi (P,\zeta_{\mh H_P}(\tau),\lambda,\rho)$ are of the form
$\pi (P,\tau'_W ,\lambda,\rho'_W )$, where $\tau'_W > \zeta_{\mh H_P}(\tau)$ and
$(P,\tau'_W ,\lambda,\rho'_W )$ is a Langlands datum for $\mh H (\mf t,W,0) \rtimes \Gamma =
\mc O (\mf t) \rtimes W \Gamma$.
\end{lem}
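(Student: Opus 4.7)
The plan is to prove Lemma \ref{lem:6.9} by combining the transitivity of parabolic induction with the decomposition of $\Res_{W_P}(\tau)$ afforded by Theorem \ref{thm:6.2} applied to $\mh H_P$, and then tracking the extra $\Gamma_{P,\tau,\lambda}$-action by Clifford theory.

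First I would rewrite the module of interest as an induction in stages. Since $\pi(P,\tau,\lambda,\rho) = \ind_{\mh H^P \rtimes \Gamma_{P,\tau,\lambda}}^{\mh H \rtimes \Gamma}(\tau \otimes \lambda \otimes \rho)$ and $\mc O(\mf t) \rtimes W\Gamma$ meets $\mh H^P \rtimes \Gamma_{P,\tau,\lambda}$ in $\mc O(\mf t) \rtimes W_P \Gamma_{P,\tau,\lambda}$, transitivity together with the linear-bijection identification of $\mh H^P$ with $\mc O(\mf t^*) \otimes_\C \C[W_P]$ as in \eqref{eq:6.14} gives
\[
\Res_{\mc O(\mf t) \rtimes W\Gamma}\bigl(\pi(P,\tau,\lambda,\rho)\bigr) \;=\; \ind_{\mc O(\mf t) \rtimes W_P \Gamma_{P,\tau,\lambda}}^{\mc O(\mf t) \rtimes W\Gamma}\bigl(\Res_{W_P \rtimes \Gamma_{P,\tau,\lambda}}(\tau \otimes \rho) \otimes \lambda\bigr) .
\]
So the question reduces to decomposing $\Res_{W_P \rtimes \Gamma_{P,\tau,\lambda}}(\tau \otimes \rho)$ and then inducing summand by summand.

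Next I would apply Theorem \ref{thm:6.2} to $\mh H_P$, with the subgroup of $\Gamma$ stabilizing $P$ playing the role of the extending group. Since $\tau$ is tempered but need not have real central character, I would first use the graded analogue of Lemma \ref{lem:3.8} to write $\tau \cong \tau_0 \circ \psi_\mu$ for some $\mu \in i\mf a_P$ (viewed inside $\mf t_P$) and $\tau_0 \in \Irr_0(\mh H_P)$; twisting by $\mu$ leaves the $\C[W_P]$-restriction untouched, so $\Res_{W_P}(\tau) = \Res_{W_P}(\tau_0)$ and $\zeta_{\mh H_P}(\tau) := \zeta_{\mh H_P}(\tau_0)$ is well-defined. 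Theorem \ref{thm:6.2}(b) then produces a distinguished irreducible summand $\zeta_{\mh H_P}(\tau)$ of $\Res_{W_P}(\tau)$, with all other irreducible constituents strictly larger in the total order on $\Irr(W_P)$.

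Then I would promote this decomposition to $W_P \rtimes \Gamma_{P,\tau,\lambda}$-modules by the very same Clifford theory recipe used in the proof of Lemma \ref{lem:6.6}: normalize the intertwiners $I_\gamma$ on $\zeta_{\mh H_P}(\tau)$ first via \eqref{eq:6.9}, extend to $V_\tau$ via \eqref{eq:6.27} to trivialize the 2-cocycle $\natural_{P,\tau,\lambda}$, and read off
\[
\Res_{W_P \rtimes \Gamma_{P,\tau,\lambda}}(\tau \otimes \rho) \;\cong\; \bigl(\zeta_{\mh H_P}(\tau) \otimes \rho\bigr) \;\oplus\; \bigoplus_j \bigl(\tau^{(j)}_W \otimes \rho^{(j)}_W\bigr),
\]
where each $\tau^{(j)}_W \in \Irr(W_P)$ satisfies $\tau^{(j)}_W > \zeta_{\mh H_P}(\tau)$ and each $\rho^{(j)}_W$ is an irreducible representation of the stabilizer $\Gamma_{P,\tau^{(j)}_W,\lambda} \supseteq \Gamma_{P,\tau,\lambda}$. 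Inducing each direct summand to $\mc O(\mf t) \rtimes W\Gamma$ yields
\[
\pi(P,\Res_{W_P}(\tau),\lambda,\rho) \;=\; \pi(P,\zeta_{\mh H_P}(\tau),\lambda,\rho) \;\oplus\; \bigoplus_j \pi(P,\tau^{(j)}_W,\lambda,\rho^{(j)}_W),
\]
and it is routine to check that each $(P,\tau^{(j)}_W,\lambda,\rho^{(j)}_W)$ is a Langlands datum for $\mc O(\mf t) \rtimes W\Gamma$: the positivity condition on $\lambda$ is inherited from the hypothesis, and $\tau^{(j)}_W$ (a pure $W_P$-representation) has $\mc O(\mf t_P)$-weight $0 \in i\mf a_P$, hence is automatically tempered for $\mc O(\mf t_P) \rtimes W_P$.

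The main obstacle I anticipate is the bookkeeping around the possibly non-real central character of $\tau$: extending Theorem \ref{thm:6.2} in a canonical $\Gamma$-equivariant way to all tempered $\mh H_P$-representations (not just those in $\Irr_0$), and ensuring that the stabilizer groups match up after twisting by $\mu \in i\mf a_P$ so that the Clifford-theoretic decomposition in the third paragraph is truly compatible with the $\Gamma_{P,\tau,\lambda}$-action. The twisting $\tau = \tau_0 \circ \psi_\mu$ does not in general commute with an arbitrary $\gamma \in \Gamma_{P,\lambda}$ unless $\gamma \mu = \mu$, but since $\Gamma_{P,\tau,\lambda}$ by definition stabilizes $\tau$, a careful comparison of stabilizers shows $\Gamma_{P,\tau,\lambda} = \Gamma_{P,\tau_0,\lambda}$, at which point the argument of Lemma \ref{lem:6.6} applies verbatim.
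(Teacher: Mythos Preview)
Your overall strategy---induction in stages, then Theorem~\ref{thm:6.2} for the $W_P$-decomposition of $\tau$, then Clifford theory for the $\Gamma$-action---is exactly the paper's approach. However, there is a genuine gap in your Clifford-theoretic bookkeeping.

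You claim that each $\rho^{(j)}_W$ is an irreducible representation of $\Gamma_{P,\tau^{(j)}_W,\lambda}$ with $\Gamma_{P,\tau^{(j)}_W,\lambda} \supseteq \Gamma_{P,\tau,\lambda}$. This containment is false in general: an element $\gamma \in \Gamma_{P,\tau,\lambda}$ stabilizes $\tau$ as a whole, but it may permute the $W_P$-constituents of $\Res_{W_P}(\tau)$ nontrivially, so it need not fix a given $\tau^{(j)}_W$. Conversely, $\Gamma_{P,\tau^{(j)}_W,\lambda}$ may contain elements outside $\Gamma_{P,\tau,\lambda}$. Your decomposition of $\Res_{W_P \rtimes \Gamma_{P,\tau,\lambda}}(\tau \otimes \rho)$ produces irreducibles indexed by $W_P$-constituents together with representations of their stabilizers \emph{inside} $\Gamma_{P,\tau,\lambda}$; these are not yet Langlands data for $\mc O(\mf t) \rtimes W\Gamma$, which require $\rho'_W \in \Irr(\Gamma_{P,\tau'_W,\lambda})$ with the stabilizer taken in the larger group $\Gamma_{P,\lambda}$.

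The paper addresses exactly this point: it first induces $\tau \otimes \rho$ from $\mh H_P \rtimes \Gamma_{P,\tau,\lambda}$ up to $\mh H_P \rtimes \Gamma_{P,\lambda}$, calling the result $\tau'$, and carries out the analysis at that level. Then the $W_P\Gamma_{P,\lambda}$-constituents of $\Res_{W_P \rtimes \Gamma_{P,\lambda}}(\tau')$ are, by Clifford theory, of the form $\ind_{W_P\Gamma_{P,\tau'_W,\lambda}}^{W_P\Gamma_{P,\lambda}}(\tau'_W \otimes \rho'_W)$ with $\rho'_W$ living on the correct stabilizer, and after inducing to $W\Gamma$ these are precisely the $\pi(P,\tau'_W,\lambda,\rho'_W)$. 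Your argument can be repaired by inserting this intermediate induction step. Also, your concern about the central character of $\tau$ is unnecessary: in context $\zeta_{\mh H_P}(\tau)$ is only defined for $\tau \in \Irr_0(\mh H_P)$, so $\tau$ has real central character throughout.
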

\begin{proof}
By Theorem \ref{thm:6.2} every irreducible constituent $\tau'_W$ of $\Res_{W_P}(\tau)$ 
different from $\zeta_{\mh H_P}(\tau)$ is strictly larger than $\zeta_{\mh H_P}(\tau)$. 
Although the $\mc O (\mf t) \rtimes W_P$-representation $\tau'_W \otimes \lambda$ is irreducible,
its stabilizer in 
\[
\Gamma_{P,\lambda} = \{ \gamma \in \Gamma : \gamma (P) = P, \gamma (\lambda) = \lambda \}
\]
need not be $\Gamma_{P,\tau,\lambda}$. To overcome that, we rather work with $\Gamma_{P,\lambda}$.
Put $\tau' = \ind_{\mh H_P \rtimes \Gamma_{P,\tau,\lambda}}^{\mh H_P \rtimes \Gamma_{P,\lambda}}
(\tau \otimes \rho)$, so that 
\[
\pi (P,\tau,\lambda,\rho) = \ind_{\mh H_P \rtimes \Gamma_{P,\lambda}}^{\mh H \rtimes \Gamma}
(\tau' \otimes \lambda) .
\]
Take $\rho'_W \in \Irr (\Gamma_{P,\tau'_W,\lambda})$ such that 
\begin{equation}\label{eq:6.17}
\ind_{\mc O (\mf t) \rtimes W_P 
\Gamma_{P,\tau'_W,\lambda}}^{\mc O (\mf t) \rtimes W_P \Gamma_{P,\lambda}}(\tau'_W \otimes \rho'_W)
\end{equation} 
is a subrepresentation of
\begin{equation}\label{eq:6.15}
\Res_{W_P \rtimes \Gamma_{P,\lambda}} (\tau') = \ind_{\mc O (\mf t) \times W_P \Gamma_{P,\tau,
\lambda}}^{\mc O (\mf t) \rtimes W_P \Gamma_{P,\lambda}} (\Res_{W_P}(\tau) \otimes \rho) .
\end{equation}
Then $\pi (P,\tau'_W,\lambda,\rho'_W)$ is a subrepresentation of $\pi (P,\Res_{W_P}(\tau),\lambda,
\rho)$, and by Corollary \ref{cor:6.8} it is irreducible. With this construction we can obtain any
subrepresentation of \eqref{eq:6.15} whose $W_P$-constituents are not $\Gamma_{P,\lambda}$-associate 
to $\zeta_{\mh H_P}(\tau)$.

Since $\zeta_{\mh H_P}(\tau)$ appears with multiplicity one in $\Res_{W_P}(\tau)$, 
$\zeta_{\mh H_P}(\tau) \otimes \lambda \otimes \rho$ exhausts all $\Gamma_{P,\tau,\lambda}$-associates
of $\zeta_{\mh H_P}(\tau)$ in $\Res_{W_P}(\tau) \otimes \lambda \otimes \rho$. Then 
\begin{equation}\label{eq:6.16}
\ind_{\mc O (\mf t) \times W_P \Gamma_{P,\tau,\lambda}}^{\mc O (\mf t) \rtimes W_P \Gamma_{P,\lambda}}
(\zeta_{\mh H_P}(\tau) \otimes \lambda \otimes \rho)
\end{equation} 
exhausts all $\Gamma_{P,\lambda}$-associates of $\zeta_{\mh H_P}(\tau)$ in
\begin{equation}\label{eq:6.18} 
\ind_{\mc O (\mf t) \times W_P \Gamma_{P,\tau,\lambda}}^{\mc O (\mf t) 
\rtimes W_P \Gamma_{P,\lambda}} (\Res_{W_P}(\tau) \otimes \lambda \otimes \rho). 
\end{equation}
Consequently \eqref{eq:6.16} and the modules \eqref{eq:6.17} exhaust the whole of \eqref{eq:6.18}. 
This remains the case after inducing everything to $\mc O (\mf t) \rtimes W \Gamma$.\\ 
Therefore
$\pi (P,\Res_{W_P}(\tau),\lambda,\rho)$ does not have any other subrepresentations besides
$\pi (P,\zeta_{\mh H_P}(\tau),\lambda,\rho)$ and the $\pi (P,\tau'_W,\lambda,\rho'_W)$.
\end{proof}

From \eqref{eq:6.14} we will deduce a map whose image consists of irreducible representations of
$\mc O (\mf t) \rtimes W \Gamma$. By Corollary \ref{cor:2.1}, those are the same as standard
$\mc O (\mf t) \rtimes W \Gamma$-modules. We call a central character for $\mh H \rtimes \Gamma$
real if it lies in $\mf a / W\Gamma$.

\begin{prop}\label{prop:6.10}
Let $\mh H \rtimes \Gamma$ be as in Theorem \ref{thm:6.2}. 
\enuma{
\item There exists a unique bijection $\zeta_{\mh H \rtimes \Gamma}$ between:
\begin{itemize}
\item the set of standard $\mh H \rtimes \Gamma$-modules with real central character,
\item the set of irreducible $\mc O (\mf t) \rtimes W \Gamma$-representations with real central character,
\end{itemize}
such that $\zeta_{\mh H \rtimes \Gamma}(\pi)$ is always a constituent of 
$\Res_{\mc O (\mf t) \rtimes W \Gamma}(\pi)$.

For suitable total orders on these two sets, the matrix of (the linear extension of) 
$\zeta_{\mh H \rtimes \Gamma}$ is the identity while the matrix of $\Res_{\mc O (\mf t) \rtimes W \Gamma}$
is upper triangular and unipotent. 
\item There exists a natural bijection $\zeta'_{\mh H \rtimes \Gamma}$ from the set of irreducible 
$\mh H \rtimes \Gamma$-representa\-tions with real central character to the analogous set for 
$\mc O (\mf t) \rtimes W\Gamma$, such that:
\begin{itemize}
\item for any standard module $\pi$, $\zeta_{\mh H \rtimes \Gamma}(\pi)$ equals 
$\zeta'_{\mh H \rtimes \Gamma}$ of the irreducible quotient of $\pi$,
\item for tempered representations it coincides with part (a).
\end{itemize}
}
\end{prop}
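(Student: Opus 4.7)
The plan is to combine the Langlands classification for $\mh H \rtimes \Gamma$ of Corollary \ref{cor:6.8} with the bijection $\zeta_{\mh H_P}$ of Theorem \ref{thm:6.2} applied to each parabolic subquotient algebra $\mh H_P$. A standard module $\pi(P,\tau,\lambda,\rho)$ has real central character precisely when $\tau \in \Irr_0(\mh H_P)$ (so that $\mr{cc}(\tau) \in \mf a_P/W_P$ by Lemma \ref{lem:3.29}) and $\lambda \in \mf a^{P++}$, since the imaginary parts of $\mr{cc}(\tau)$ and $\lambda$ must cancel. For such a datum I would set
\[
\zeta_{\mh H \rtimes \Gamma}\big( \pi(P,\tau,\lambda,\rho) \big) \; := \; \pi\big( P, \zeta_{\mh H_P}(\tau), \lambda, \rho \big),
\]
the right-hand side being formed as in \eqref{eq:6.28}. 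The uniqueness part of Theorem \ref{thm:6.2}(c) implies that $\zeta_{\mh H_P}$ is $\Gamma_{P,\lambda}$-equivariant, whence $\Gamma_{P,\zeta_{\mh H_P}(\tau),\lambda} = \Gamma_{P,\tau,\lambda}$; moreover, because $\zeta_{\mh H_P}(\tau)$ occurs with multiplicity one in $\Res_{W_P}(\tau)$, the intertwiners chosen in \eqref{eq:6.27} for $\tau$ restrict to intertwiners of $\zeta_{\mh H_P}(\tau)$, so the 2-cocycles $\natural_{P,\tau,\lambda}$ and $\natural_{P,\zeta_{\mh H_P}(\tau),\lambda}$ are simultaneously trivialized and the parameter $\rho$ may be reused on both sides. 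The resulting standard $\mc O(\mf t) \rtimes W\Gamma$-module is irreducible because standards coincide with irreducibles for $q=1$ (third bullet after Theorem \ref{thm:3.10}).

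Next I would verify the constituent property, bijectivity, and triangularity. The image $\pi(P, \zeta_{\mh H_P}(\tau), \lambda, \rho)$ occurs in $\pi(P, \Res_{W_P}(\tau), \lambda, \rho) = \Res_{\mc O(\mf t) \rtimes W\Gamma}\big( \pi(P,\tau,\lambda,\rho) \big)$ by Lemma \ref{lem:6.9}, and the same lemma identifies all other constituents as modules $\pi(P, \tau'_W, \lambda, \rho'_W)$ with $\tau'_W$ strictly larger than $\zeta_{\mh H_P}(\tau)$ in the total order of Theorem \ref{thm:6.2}(b). Injectivity of $\zeta_{\mh H \rtimes \Gamma}$ follows from the uniqueness of Langlands data on both sides together with the injectivity of each $\zeta_{\mh H_P}$; surjectivity uses that an arbitrary irreducible $\mc O(\mf t) \rtimes W\Gamma$-representation with real central character arises via Langlands as $\pi(P, \tau_W, \lambda, \rho_W)$ with $\tau_W \in \Irr(W_P)$, so that $\tau := \zeta_{\mh H_P}^{-1}(\tau_W)$ together with $(P, \lambda, \rho_W)$ provides a preimage. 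For the triangularity statement I would refine the orders from Theorem \ref{thm:6.2}(b) on the $\Irr_0(\mh H_P)$ to a total order on the set of Langlands data (lexicographically by the discrete data $(P,\rho)$, then $\lambda$, then $\tau$) and transport it through $\zeta_{\mh H_P}$ to the $\mc O(\mf t) \rtimes W\Gamma$-side: in these orders $\zeta_{\mh H \rtimes \Gamma}$ is the identity while Lemma \ref{lem:6.9} makes the matrix of $\Res_{\mc O(\mf t) \rtimes W\Gamma}$ unipotent and upper triangular.

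For part (b), Corollary \ref{cor:6.8} associates to each irreducible $\mh H \rtimes \Gamma$-representation a unique standard module of which it is the irreducible quotient, while on the $\mc O(\mf t) \rtimes W\Gamma$-side standards and irreducibles coincide. Composing this Langlands-quotient bijection with $\zeta_{\mh H \rtimes \Gamma}$ from part (a) gives the required $\zeta'_{\mh H \rtimes \Gamma}$, and the first compatibility bullet holds by construction. For tempered $\pi$ the Langlands datum has $P = \Delta$ and $\lambda = 0$, so the associated standard module is $\pi$ itself; unraveling the Clifford-theoretic formula \eqref{eq:6.10} from the proof of Lemma \ref{lem:6.6} then identifies $\zeta'_{\mh H \rtimes \Gamma}(\pi)$ with the tempered $\zeta_{\mh H \rtimes \Gamma}(\pi)$ of Theorem \ref{thm:6.2}, yielding the second bullet.

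The main obstacle I anticipate is the bookkeeping of the stabilizers $\Gamma_{P,\tau,\lambda}$ and the 2-cocycles $\natural_{P,\tau,\lambda}$ across the two algebras: one must ensure that the trivialization of these cocycles and the identification of stabilizers are compatible in a canonical way, so that the same $\rho$ legitimately parametrizes Langlands data on both sides. As noted above, both points fall out of the multiplicity-one statement in Theorem \ref{thm:6.2}(c), which is what makes the assignment $\tau \leftrightarrow \zeta_{\mh H_P}(\tau)$ genuinely canonical and not merely a choice.
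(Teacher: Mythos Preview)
Your proposal is correct and follows essentially the same route as the paper: define $\zeta_{\mh H \rtimes \Gamma}\big(\pi(P,\tau,\lambda,\rho)\big)=\pi(P,\zeta_{\mh H_P}(\tau),\lambda,\rho)$, use Lemma~\ref{lem:6.9} to see that $\Res_{\mc O(\mf t)\rtimes W\Gamma}$ preserves $(P,\lambda)$ and is unipotent upper triangular within each $(P,\lambda)$-block via Theorem~\ref{thm:6.2}, and then obtain $\zeta'_{\mh H\rtimes\Gamma}$ by composing with the Langlands-quotient bijection of Corollary~\ref{cor:6.8}. The paper packages the same argument slightly more compactly by applying Theorem~\ref{thm:6.2} directly to $\mh H_P\rtimes\Gamma_{P,\lambda}$ (which absorbs your separate equivariance and cocycle checks into one step); note also that in your lexicographic order $\rho$ depends on $(\tau,\lambda)$, so one should order by $(P,\lambda)$ first and then by $(\tau,\rho)$, exactly as the block structure dictates.
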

\begin{proof}
(a) By Lemma \ref{lem:6.9}, any candidate for such a map must preserve the $P$ and the $\lambda$ 
in the Langlands datum of $\pi$ (from Corollary \ref{cor:6.8}). By Lemma \ref{lem:3.3} the condition 
on the central character of $\pi$ means that $\lambda \in \mf a^{P++}$ and the ingredient $\tau$ 
lies in $\Irr_0 (\mh H_P)$. Hence we can specialize to a map
\begin{multline*}
\{ (\tau,\rho) : \tau \in \Irr_0 (\mh H_P), \rho \in \Irr (\Gamma_{P,\tau,\lambda}) \} \longrightarrow \\
\{ (\tau_W, \rho_W) : \tau_W \in \Irr (W_P), \rho_W \in \Irr (\Gamma_{P,\tau_W,\lambda}) \} . 
\end{multline*}
Here the left hand side parametrizes $\Irr_0 (\mh H_P \rtimes W_{P,\lambda})$ and the right hand side
parametrizes $\Irr (W_P \rtimes \Gamma_{P,\lambda})$. For fixed $(P,\lambda)$ there is a commutative
diagram 
\begin{equation}\label{eq:6.26}
\begin{array}{ccc}
\left\{ \begin{array}{c} \text{standard } \mh H \rtimes \Gamma\text{-modules}\\
\text{with real central character} \end{array} \right\} &
\xrightarrow{\Res_{\mc O (\mf t) \rtimes W \Gamma}} & \Mod_f (\mc O (\mf t) \rtimes W \Gamma) \\
\uparrow \ind_{\mh H^P \rtimes \Gamma_{P,\lambda}}^{\mh H \rtimes \Gamma} \otimes \lambda& & 
\uparrow \ind_{\mc O (\mf t) \rtimes W_P \Gamma_{P,\lambda}}^{\mc O (\mf t) 
\rtimes W \Gamma} \otimes \lambda\\
\Irr_0 (\mh H_P \rtimes \Gamma_{P,\lambda}) & \xrightarrow{\Res_{W_P \rtimes \Gamma_{P,\lambda}}} &
\Mod_f ( \mc O (\mf t_P) \rtimes W_P \Gamma_{P,\lambda})
\end{array}
\end{equation}
where the vertical arrows send $\tau \rtimes \rho$ to $\pi (P,\tau,\lambda,\rho)$ and
$\tau_W \rtimes \rho_W$ to $\pi (P,\tau_W,\lambda,\rho_W)$. By Theorem \ref{thm:6.2} for
$\mh H_P \rtimes \Gamma_{P,\lambda}$, the matrix of $\Res_{W_P \rtimes \Gamma_{P,\lambda}}$ (with
respect to suitable total orders) is upper triangular and unipotent. Hence there is a unique map 
$\zeta_{\mh H \rtimes \Gamma}$ that fulfills the requirements for fixed $(P,\lambda)$, namely
\begin{equation}\label{eq:6.22}
\ind_{\mh H^P \rtimes \Gamma_{P,\lambda}}^{\mh H \rtimes \Gamma} (\tau \rtimes \rho \otimes \lambda)
\; \mapsto \; \ind_{\mc O (\mf t) \rtimes W_P \Gamma_{P,\lambda}}^{\mc O (\mf t) \rtimes W \Gamma} 
(\zeta_{\mh H_P \rtimes \Gamma_{P,\lambda}}(\tau \rtimes \rho) \otimes \lambda ) .
\end{equation}
This translates to 
\begin{equation}\label{eq:6.19}
\zeta_{\mh H \rtimes \Gamma} \pi (P,\tau,\lambda,\rho) = \pi (P,\zeta_{\mh H_P}(\tau), \lambda,\rho).
\end{equation}
Theorem \ref{thm:6.2} and the commutative diagram \eqref{eq:6.26} entail the required properties of
the matrices of $\zeta_{\mh H \rtimes \Gamma}$ and $\Res_{\mc O (\mf t) \rtimes W \Gamma}$.\\
(b) By Corollary \ref{cor:6.8} taking the irreducible quotient of a standard module provides a
natural bijection
\begin{equation}\label{eq:6.21}
\{\text{standard } \mh H \rtimes \Gamma\text{-modules}\} \longrightarrow \Irr (\mh H \rtimes \Gamma).
\end{equation}
Define $\zeta'_{\mh H \rtimes \Gamma}$ as the composition of the inverse of \eqref{eq:6.21} with
$\zeta_{\mh H \rtimes \Gamma}$ from part (a). By Corollary \ref{cor:6.8} every tempered irreducible
module is also standard, so for tempered representations the properties of 
$\zeta_{\mh H \rtimes \Gamma}$ remain valid for $\zeta'_{\mh H \rtimes \Gamma}$. 
\end{proof}

With Corollary \ref{cor:3.27} we will transfer Proposition \ref{prop:6.10} to affine Hecke 
algebras with parameters in $\R_{\geq 1}$. We may and will take 
$\mc H = \mc H (\mc R,\lambda,\lambda^*,\mb q)$ as in Lemma \ref{lem:6.11}.
Let $u \in T_\un$. By \eqref{eq:3.24} the parameter function $k_u$ for 
$\mh H (\mf t, W(R_u),k_u) \rtimes \Gamma_u$ takes values in $\R_{\geq 0}$. Recall from 
Lemma \ref{lem:3.30} that Theorems \ref{thm:3.25} and \ref{thm:3.26} are compatible with
temperedness and parabolic induction. Let $\pi (P,\tau,\lambda,\rho)$ be a standard module for 
$\mh H (\mf t, W(R_u),k_u) \rtimes \Gamma_u$, with real central character. Via Theorems 
\ref{thm:3.25} and \ref{thm:3.26} it corresponds naturally to a standard $\mc H$-module 
$\pi (P',\tau',t)$ with $\tau' \in \Irr (\mc H_{P'})$ tempered and $|t| = \exp (\lambda)$. 
We define
\[
\Res_{\mc O (T) \rtimes W} : \{ \text{standard } \mc H\text{-modules} \} \longrightarrow
\Mod_f (\mc O (T) \rtimes W) 
\]
by commutativity of the following diagram (for central characters in $W u \exp (\mf a)$):
\[
\begin{array}{ccc}
\{ \text{standard } \mc H\text{-modules} \} & \xrightarrow{\Res_{\mc O (T) \rtimes W}} &
\Mod_f (\mc O (T) \rtimes W) \\
\uparrow \text{Corollary \ref{cor:3.27}} & & \uparrow  \text{Corollary \ref{cor:3.27}} \\
\! \left\{ \!\! \begin{array}{c} \text{standard } \mh H (\mf t, W(R_u),k_u) \rtimes \Gamma_u -\\
\text{modules with real central character} \end{array} \!\! \right\} & \!\! 
\xrightarrow{\Res_{\mc O (\mf t) \rtimes W(R_u) \Gamma_u}} \!\! & 
\Mod_f (\mc O (\mf t) \rtimes W(R_u) \Gamma_u )
\end{array}
\]
The map $\Res_{\mc O (T) \rtimes W}$ can be made more explicit with \eqref{eq:3.15}
and Corollary \ref{cor:3.23}. In those terms
\begin{equation}\label{eq:6.20}
\Res_{\mc O (T) \rtimes W} \big( \pi (P,\delta,t,\rho) \big) = 
\Hom_{\C [\mf R_{P,\delta,t},\natural_{P,\delta,t}]} \big( \rho , \pi (P,\Res_{W_P}(\delta),t) \big).
\end{equation}
For tempered standard $\mc H$-modules (i.e. with $t \in T^P_\un$), $\Res_{\mc O (T) \rtimes W}$ can
really be considered as a restriction, see \cite[\S 4.4]{SolAHA}. The above map is the natural
generalization to all standard $\mc H$-modules. However, it is not a restriction (along some 
injective algebra homomorphism) because it can happen that $\pi (P,\delta,t,\rho)$ is reducible but
its image \eqref{eq:6.20} is irreducible.

We do not know how to extend $\Res_{\mc O (T) \rtimes W}$ to arbitrary $\mc H$-representations.
The best we can do is to define the $W$-type of any finite dimensional 
$\mc H$-representation, in the following way:
\begin{itemize}
\item By decomposing it as in \eqref{eq:3.32}, we may assume that all its $\mc O (T)$-weights
lie in a single $W$-orbit, say in $W u \exp (\mf a)$ with $u \in T_\un$.
\item Apply Theorems \ref{thm:3.25} and \ref{thm:3.26} to produce a representation of\\
$\mh H (\mf t,W(R_u),k_u) \rtimes \Gamma_u$.
\item Restrict to $\C [W(R_u) \rtimes \Gamma_u]$ and then induce to $\C [W]$.
\end{itemize}
Of course this mimics the earlier $W$-type maps for graded Hecke algebras. In 
\cite[\S 4.1--4.2]{SolAHA} it was shown that the $W$-type of an $\mc H$-representation can also be
obtained via a continuous deformation of $\mb q$ to 1. 

We are ready transfer Proposition \ref{prop:6.10} to affine Hecke algebras:

\begin{thm}\label{thm:6.12}
Let $\mc H = \mc H (\mc R,\lambda,\lambda^*,\mb q)$ be an affine Hecke algebra with parameter functions
$\lambda,\lambda^* : R \to \R_{\geq 0}$. Suppose that the restrictions $\lambda_i = \lambda_i^*$ to any 
type $F_4$ component $R_i$ of $R$ satisfy: either $\lambda_i$ is geometric or $\lambda_i (\alpha) = 0$
for an $\alpha \in R_i$.
\enuma{
\item There exists a unique bijection
\[
\zeta_{\mc H} : \{ \text{standard } \mc H\text{-modules} \} \longrightarrow \Irr (\mc O (T) \rtimes W)
\]
such that $\zeta_{\mc H}(\pi)$ is always a constituent of $\Res_{\mc O (T) \rtimes W}(\pi)$.

There exists a total order on $\Irr (\mc O (T) \rtimes W)$ such that, if we transfer it via $\zeta_{\mc H}$,
the matrix of the $\Z$-linear map 
\[
\Res_{\mc O (T) \rtimes W} : \Z \, \{ \text{standard } \mc H\text{-modules} \} 
\longrightarrow \Z \, \Irr (\mc O (T) \rtimes W)
\]
becomes upper triangular and unipotent.
\item There exists a natural bijection
\[
\zeta'_{\mc H} : \Irr (\mc H) \longrightarrow \Irr ( X \rtimes W) 
\]
such that:
\begin{itemize}
\item for standard module $\pi$, $\zeta_{\mc H}(\pi)$ equals $\zeta'_{\mh H}$ of the irreducible
quotient of $\pi$,
\item for irreducible tempered $\mc H$-representations it coincides with part (a).
\end{itemize}
}
\end{thm}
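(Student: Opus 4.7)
The plan is to reduce Theorem~\ref{thm:6.12} to Proposition~\ref{prop:6.10} by patching together Lusztig's reduction equivalences over all choices of unitary basepoint $u \in T_\un$. By Lemma~\ref{lem:6.11} we may present $\mc H$ so that $\lambda,\lambda^*,\lambda-\lambda^* \in \R_{\geq 0}$ on $R$; then every parameter function $k_u$ produced by \eqref{eq:3.24} takes values in $\R_{\geq 0}$. Moreover if $R_i$ is a type $F_4$ component of $R$, the hypothesis forces $\lambda_i = \lambda_i^*$ together with the geometric-or-vanishing condition, and inspection of \eqref{eq:3.24} together with $\alpha(u) \in \{\pm 1\}$ shows that every graded parameter $k_u$ on the corresponding components of $R_u$ is either $0$ on some root or again geometric. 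Hence the hypotheses of Proposition~\ref{prop:6.10} are satisfied by each of the algebras $\mh H(\mf t,W(R_u),k_u) \rtimes \Gamma_u$ that arise.

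The first step is to decompose. Every finite dimensional $\mc H$-module splits over its generalized central character, so it suffices to classify standard and irreducible modules whose $\mc O(T)$-weights lie in a single $W$-orbit $W u \exp(\mf a)$ with $u \in T_\un$. For such $u$, Corollary~\ref{cor:3.27} and Lemma~\ref{lem:3.30} yield an equivalence of categories
\[
\Phi_u : \Mod_{f, W u \exp(\mf a)}(\mc H) \;\xrightarrow{\sim}\;
\Mod_{f,\mf a}\bigl( \mh H(\mf t,W(R_u),k_u) \rtimes \Gamma_u \bigr),
\]
which preserves parabolic induction, temperedness and (essentially) discrete series; in particular it takes standard $\mc H$-modules with weights in $Wu\exp(\mf a)$ to standard $\mh H \rtimes \Gamma_u$-modules with real central character, in the sense of Corollary~\ref{cor:6.8}. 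By the very definition of $\Res_{\mc O(T) \rtimes W}$ given after \eqref{eq:6.20}, the diagram
\[
\begin{array}{ccc}
\{\text{standard } \mc H\text{-modules, weights in } W u \exp(\mf a)\} & \xrightarrow{\Res_{\mc O(T) \rtimes W}} & \Mod_f(\mc O(T) \rtimes W) \\
\downarrow \Phi_u & & \downarrow \\
\{\text{standard } \mh H \rtimes \Gamma_u\text{-modules, real c.c.}\} & \xrightarrow{\Res_{\mc O(\mf t) \rtimes W(R_u)\Gamma_u}} & \Mod_f(\mc O(\mf t) \rtimes W(R_u)\Gamma_u)
\end{array}
\]
commutes, where the right-hand vertical arrow is induction from $\mc O(\mf t) \rtimes W(R_u)\Gamma_u$ to $\mc O(T) \rtimes W$ (in the Clifford-theoretic sense of Paragraph~\ref{par:q=1}), again by Lemma~\ref{lem:3.30} applied to the parabolic induction that builds $\pi(P,\delta,t,\rho)$ from its cuspidal pieces.

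Now for part (a), apply Proposition~\ref{prop:6.10}(a) at each $u \in T_\un$. This produces, for each $u$, a unique bijection $\zeta_{u}$ between standard $\mh H \rtimes \Gamma_u$-modules with real central character and irreducible $\mc O(\mf t) \rtimes W(R_u)\Gamma_u$-modules with real central character, having the constituent property and making the restriction matrix upper triangular unipotent in a compatible total order. Transporting through $\Phi_u$ and then inducing with the Clifford bijection of Paragraph~\ref{par:q=1} (which matches $\Irr(\mc O(\mf t) \rtimes W(R_u)\Gamma_u)$ with the subset of $\Irr(\mc O(T) \rtimes W)$ whose weights lie in $Wu\exp(\mf a)$) gives the sought bijection $\zeta_{\mc H}$ on the corresponding block. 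Taking the disjoint union over a set of representatives $u$ for $T_\un/W$ assembles $\zeta_{\mc H}$, and concatenating the block total orders yields the required total order; uniqueness propagates blockwise from Proposition~\ref{prop:6.10}(a). For part (b), combine $\zeta_{\mc H}$ with the Langlands bijection of Theorem~\ref{thm:3.10} (standard modules $\leftrightarrow$ their unique irreducible quotients) to obtain $\zeta'_{\mc H}$; on tempered representations every standard module is irreducible, so $\zeta_{\mc H}$ and $\zeta'_{\mc H}$ coincide there.

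The main obstacle I anticipate is bookkeeping rather than a deep new idea: one must check that the commutative diagram above genuinely carries the Langlands-type decomposition of $\Res_{\mc O(T) \rtimes W}(\pi(P,\delta,t,\rho))$ in \eqref{eq:6.20} over to the graded-algebra identity in Lemma~\ref{lem:6.9}, and that the Clifford-theoretic lifting from $W(R_u)\Gamma_u$-orbits to $W$-orbits is compatible with the choice of intertwiners \eqref{eq:6.27}. A secondary point is the verification, already sketched above, that the geometric/vanishing hypothesis on $F_4$-components of $R$ really does transfer to every relevant $R_u$; this is where the assumption $\lambda_i = \lambda_i^*$ on $F_4$-components is used essentially, since otherwise $k_u$ might leave the regime covered by Theorem~\ref{thm:6.2}.
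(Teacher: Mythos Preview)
Your approach is essentially the same as the paper's: reduce to the graded Hecke algebras $\mh H(\mf t,W(R_u),k_u)\rtimes\Gamma_u$ via Lemma~\ref{lem:6.11} and Corollary~\ref{cor:3.27}, then invoke Proposition~\ref{prop:6.10}. The diagram you write down is literally the \emph{definition} of $\Res_{\mc O(T)\rtimes W}$ given just before the theorem, so there is no bookkeeping obstacle there.

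However, there is a genuine gap in your verification that Proposition~\ref{prop:6.10} applies. You only argue that the $F_4$ components of $R$ themselves give rise, under $k_u$, to parameters that are geometric or vanish on a root. What you do not check is whether $R_u$ can acquire an $F_4$ component coming from a component $R_i$ of $R$ that is \emph{not} of type $F_4$. If that happened, the hypothesis of Theorem~\ref{thm:6.2} could fail for $\mh H(\mf t,W(R_u),k_u)\rtimes\Gamma_u$ and your reduction would break down. The paper closes this gap with a short case-by-case argument: an irreducible $R_i$ containing an $F_4$ subsystem must have roots of two lengths, hence be of type $B_n$, $C_n$, $F_4$ or $G_2$; rank excludes $G_2$, and in $B_n$ (resp.\ $C_n$) the short (resp.\ long) roots form a system of type $(A_1)^n$, which does not contain $D_4$, whereas both long and short roots of $F_4$ form a $D_4$. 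Hence only $R_i\cong F_4$ is possible. You should insert this argument (or an equivalent one) before applying Proposition~\ref{prop:6.10}.
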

The allowed parameter functions for a type $F_4$ component of $R$ are
\[
(\lambda (\alpha), \lambda (\beta)) \in \{ (k,k), (2k,k), (k,2k), (4k,k), (k,0), (0,k), (0,0) \},
\]
where $k \in \R_{>0}, \alpha \in F_4$ is a short root and $\beta \in F_4$ is a long root. Like we 
mentioned after Theorem \ref{thm:6.2}, we believe that Theorem \ref{thm:6.12} is valid for all
parameter functions $\lambda,\lambda^* : R \to \R_{\geq 0}$.

We point out that the uniqueness/naturality is essential in Theorem \ref{thm:6.12}. Without that
condition, it would be much easier to derive it from \cite[\S 2.3]{SolAHA}.
\begin{proof}
By Lemma \ref{lem:6.11} and the asserted naturality of $\zeta_{\mc H}$ and $\zeta'_{\mc H}$, 
we may assume that $\lambda (\alpha) \geq \lambda^* (\alpha)$ for all $\alpha \in R$. 
For any $u \in T_\un$, the parameter function $k_u$ from \eqref{eq:3.24} takes values in $\R_{\geq 0}$.

For any irreducible component $R_i$ of $R$ not of type $F_4$, we claim that $R_i$ does not possess
a root subsystem isomorphic to $F_4$. This can be seen with a case-by-case consideration of 
irreducible root systems. To get a root subsystem of type $F_4$,
$R_i$ needs to possess roots of different lengths, so it has type $B_n,C_n,F_4$ or $G_2$. The rank
of $G_2$ is too low, so $R \not\cong G_2$. In type $B_n$ (resp. $C_n$) the short (resp. long) roots
form a subsystem of type $(A_1)^n$, and that does not contain $D_4$. As both the long and the short
roots in $F_4$ form a root system of type $D_4$, we can conclude that $B_n \not\cong R_i \not\cong C_n$.

Now we apply Corollary \ref{cor:3.27} and reduce the theorem to the graded Hecke algebras
$\mh H (\mf t,W(R_u),k_u) \rtimes \Gamma_u$, for all $u \in T_\un$. By the above, the parameter 
function $k_u$ fulfills the requirements of Theorem \ref{thm:6.2}. Finally, we apply 
Proposition \ref{prop:6.10}. 
\end{proof}

We note that Theorem \ref{thm:6.12} provides a natural bijection between the irreducible or standard
modules of two affine Hecke algebras with the same root datum but different parameters $q_s \geq 1$.

When $\mc H$ arises from a cuspidal local system $\mc L$ on a nilpotent orbit for a Levi subgroup
$L$ of $G$, Theorem \ref{thm:6.12} is related to Theorem \ref{thm:5.6}. We claim that in this case
\begin{equation}\label{eq:6.25}
\begin{array}{rll}
\Res_{\mc O (T) \rtimes W}(\overline{E}_{s,u,\rho}) & = & \overline{E}_{s,u,\rho} ,\\
\zeta_{\mc H (G,L,\mc L)} (\overline{E}_{s,u,\rho}) \; = \;
\zeta'_{\mc H (G,L,\mc L)} (\overline{M}_{s,u,\rho}) & = & \overline{M}_{s,u,\rho} ,
\end{array}
\end{equation}
where the terms on the right are representations of $\mc O (T) \rtimes W$, the version of
$\mc H (G,L,\mc L)$ with $\mb q = 1$. The reasons for the compatibility between these two ways to
go from $\mc H$-modules to $\mc O (T) \rtimes W$-modules are: 
\begin{itemize}
\item the constructions that led to $\zeta_{\mc H}$ are analogous to those behind Theorem \ref{thm:5.6},
\item for graded Hecke algebras from cuspidal local systems we imposed such compatibility in the
proof of Lemma \ref{lem:6.3}. 
\end{itemize}
Theorem \ref{thm:5.6} with $\mb q = 1$ can be considered as a generalized Springer correspondence
for the (extended) affine Weyl group $X \rtimes \Gamma = W_\af \rtimes \Omega$, with geometric
data $(s,u,\rho)$. Consequently Theorem \ref{thm:6.12} can also be regarded as a generalized
Springer correspondence of sorts, where the geometric data have been replaced by standard or
irreducible modules of an affine Hecke algebra with (nearly arbitrary) parameters 
$q_s \in \R_{\geq 1}$.

\subsection{Consequences for type $B_n / C_n$ Hecke algebras} \
\label{par:BnCn}

We illustrate the power of Theorem \ref{thm:6.2} with two results that rely on techniques
developed in Paragraph \ref{par:Wtypes}. Recall $\tilde{\Xi}^+$ from \eqref{eq:3.32}.

\begin{lem}\label{lem:6.13}
Let $\mh H$ be as in Theorem \ref{thm:6.2} and let $\tilde \xi \in \tilde \Xi^+$. Then
the 2-cocycle $\natural_{\tilde \xi}$ of $\mf R_{\tilde \xi}$ is trivial and
$\End_{\mh H}( \pi (\tilde \xi)) \cong \C [\mf R_{\tilde \xi}]$.
\end{lem}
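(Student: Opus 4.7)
The plan is to canonically normalize the R-group intertwining operators so that $w \mapsto \pi(w,\tilde{\xi})$ becomes a genuine group homomorphism, which then forces $\natural_{\tilde{\xi}}$ to be cohomologically trivial. Write $\tilde{\xi} = (P,\delta,\lambda) \in \tilde{\Xi}^+$. By Lemma \ref{lem:3.29}(b), the discrete series representation $\delta$ of $\mh H_P$ is tempered with real central character, so $\delta \in \Irr_0(\mh H_P)$. The restriction of the parameter function $k$ to $R_P$ still satisfies the hypotheses of Theorem \ref{thm:6.2}, so Theorem \ref{thm:6.2}(b,c) applied to $\mh H_P$ supplies the $W_P$-type $\zeta_{\mh H_P}(\delta) \in \Irr(W_P)$, which appears in $\Res_{W_P}(\delta)$ with multiplicity one.

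Every $w \in \mf R_{\tilde{\xi}}$ satisfies $w(P) = P$ and therefore acts as an automorphism of the Coxeter system $(W_P,S_P)$, and the algebra automorphism $\psi_w$ of $\mh H_P$ preserves $k|_{R_P}$. Because the bijection $\zeta_{\mh H_P}$ in Theorem \ref{thm:6.2}(c) is uniquely characterised by its defining property, it is automatically $\mf R_{\tilde{\xi}}$-equivariant, so $\zeta_{\mh H_P}(\delta) \circ \psi_w^{-1} \cong \zeta_{\mh H_P}(\delta)$ in $\Irr(W_P)$ for every $w \in \mf R_{\tilde{\xi}}$. Now I apply \cite[Proposition 4.3]{ABPSprin}: since $\mf R_{\tilde{\xi}}$ acts by automorphisms of the Coxeter system $(W_P,S_P)$, the 2-cocycle associated with its action on any irreducible $W_P$-representation is a coboundary. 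Hence we can fix $W_P$-intertwiners $J_w : \zeta_{\mh H_P}(\delta) \to \zeta_{\mh H_P}(\delta) \circ \psi_w^{-1}$ with $w \mapsto J_w$ a genuine group homomorphism.

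The next step is to lift each $J_w$ to an $\mh H_P$-intertwiner $I_w^P : \delta \to \delta \circ \psi_w^{-1}$. Because $\zeta_{\mh H_P}(\delta)$ occurs in $\Res_{W_P}(\delta)$ with multiplicity one, Schur's lemma forces any $\mh H_P$-intertwiner $\delta \to \delta \circ \psi_w^{-1}$ to be determined by its restriction to $\zeta_{\mh H_P}(\delta)$, so the lift exists and is unique. Then $I_{w_1 w_2}^P$ and $I_{w_1}^P \circ I_{w_2}^P$ restrict to $J_{w_1 w_2}$ and $J_{w_1} \circ J_{w_2}$ respectively, and these coincide by the previous step, so $w \mapsto I_w^P$ is a group homomorphism. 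Finally, construct $\pi(w,\tilde{\xi}) : \pi(\tilde{\xi}) \to \pi(\tilde{\xi})$ as the composition of $\ind_{\mh H^P}^{\mh H}(I_w^P \otimes \mr{id}_\lambda)$ with the rational intertwiner coming from $\tilde{\imath}_w$ in Proposition \ref{prop:1.10}, which is regular and invertible at $\lambda$ for $\tilde{\xi} \in \tilde{\Xi}^+$ by the graded-Hecke-algebra analogue of Theorem \ref{thm:3.18}. Both factors are multiplicative in $w$ (the first by the previous paragraph, the second because $w \mapsto \tilde{\imath}_w$ is a homomorphism by Proposition \ref{prop:1.10}(a)), so $w \mapsto \pi(w,\tilde{\xi})$ is a homomorphism, and therefore $\natural_{\tilde{\xi}} = 1$ in $H^2(\mf R_{\tilde{\xi}},\C^\times)$. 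Combined with \eqref{eq:4.4}, this gives $\End_{\mh H}(\pi(\tilde{\xi})) \cong \C[\mf R_{\tilde{\xi}}]$.

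The main obstacle is controlling the two independent sources of the cocycle. The geometric factor $\tilde{\imath}_w$ is multiplicative in the generic algebra $Q(S(\mf t^*))^W \otimes_{S(\mf t^*)^W} \mh H$, but at $\lambda$ on the boundary of the positive chamber it could in principle pick up singularities that spoil this multiplicativity after specialization; invoking regularity results for $\tilde{\xi} \in \tilde{\Xi}^+$ (the graded analogue of Theorem \ref{thm:3.18}, as in \cite{DeOp2}) is what ensures the two factors combine cleanly. The argument above does not obviously apply to $F_4$-parameters outside the geometric range, which is consistent with the parenthetical caveat in the statement.
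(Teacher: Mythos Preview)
Your argument is essentially correct and uses the same core idea as the paper's proof: normalize the $\mh H_P$-intertwiners $I_w^P : \delta \to \delta \circ \psi_w^{-1}$ via the multiplicity-one occurrence of $\zeta_{\mh H_P}(\delta)$ in $\Res_{W_P}(\delta)$, together with \cite[Proposition 4.3]{ABPSprin}, so that $w \mapsto I_w^P$ becomes a genuine group homomorphism. This is precisely what the paper does by ``applying the proof of Lemma~\ref{lem:6.6} to $\mh H^P \rtimes \mr{Stab}_W(P)$'', i.e.\ invoking \eqref{eq:6.27}.

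The tactical difference is in how the $\tilde\imath_w$-part is handled. You work directly at the given $\lambda$, claiming regularity of the raw intertwiners from Proposition~\ref{prop:1.10} via the graded analogue of Theorem~\ref{thm:3.18}, and multiplicativity from Proposition~\ref{prop:1.10}(a). The paper instead first reduces to $\lambda=0$ by a rigidity argument: the family $z \mapsto \pi(P,\delta,z\lambda)$ for $z \in \R_{\geq 0}$ gives a continuous family of projective $\mf R_{\tilde\xi}$-representations of fixed dimension, hence constant up to isomorphism; so it suffices to treat $\lambda=0$, where the claim that $\ind_{\mh H^P}^{\mh H}(I_w)$ is a scalar multiple of $\pi(w,P,\delta,0)$ is immediate.

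Your direct route works, but the sentence ``both factors are multiplicative, so the composition is'' skips a check: one must verify that the two pieces commute in the right sense, namely that $B_{w_1} \circ \ind(\phi_{w_2}) = \ind(\phi_{w_2}) \circ B_{w_1}$ on the appropriate induced spaces (where $B_w$ is the $\tilde\imath_{w^{-1}}$-operator and $\phi_w = (I_w^P)^{-1}$). This holds because $B_w$ acts by right multiplication on the $\mh H$-tensorand while $\ind(\phi_{w'})$ acts only on the $V_\delta$-tensorand, but it should be said. The paper's reduction to $\lambda = 0$ sidesteps this bookkeeping entirely.
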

\begin{proof}
Write $\tilde \xi = (P,\delta,\lambda)$, so $\delta \in \Irr (\mh H^P), \lambda \in \mf t^P$
and $\mf R_{\tilde \xi} \subset \mr{Stab}_W (P)$. Then $\mf R_{\tilde \xi}$ also
stabilizes $(P,\delta,z \lambda)$ for any $z \in \C$. The operators $\pi (w,P,\delta,z\lambda)$
with $w \in \mf R_{\tilde \xi}$ and $z \in \C$ come from \eqref{eq:3.33}, they are rational in $z$ 
and by Theorem \ref{thm:3.18} they are regular for $z \in \R_{\geq 0}$. Thus the family of
projective $\mf R_{\tilde \xi}$-representations $\pi (P,\delta,z \lambda)$ depends continuously
on $z \in \R_{\geq 0}$. Since a finite group has only finitely many isomorphism classes of 
projective representations of a given dimension, it follows that our family is constant (up to
isomorphism). Hence it suffices to prove the lemma in the case $\lambda = 0$.

We apply the proof of Lemma \ref{lem:6.6} to the algebra $\mh H^P \rtimes \mr{Stab}_W (P)$.
From \eqref{eq:6.27} we get a group homomorphism 
\[
\mf R_{\tilde \xi} \to \mr{Aut}_\C (V_\delta) : w \mapsto I_w
\]
such that $\ind_{\mh H^P}^{\mh H} (I_w)$ is a scalar multiple of $\pi (w,P,\delta,0)$. This 
shows that $\natural_{\tilde \xi}$ is trivial. Now the second claim follows from \eqref{eq:4.4}.
\end{proof}

Recall the notion of genericity for parameter functions $k : B_n \to \R$ from \eqref{eq:4.2}.

\begin{thm}\label{thm:6.14}
Let $k$ be a generic parameter function for $\mh H (\C^n,W(B_n),k)$.
\enuma{
\item All R-groups $\mf R_{\xi}$ with $\xi \in \tilde \Xi^+$ are trivial.
\item The map \eqref{eq:3.19} provides a bijection $\tilde \Xi^+ \to \Irr (\mh H (\C^n,W(B_n),k))$.
}
\end{thm}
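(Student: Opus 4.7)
The plan is to prove part (a) via a counting argument and deduce (b) immediately. The main subtlety will be extending triviality of the R-groups from the tempered point $\lambda = 0$ to all of $\tilde\Xi^+$.

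By Theorem \ref{thm:6.2}(a), applicable since $B_n$ has no $F_4$ component, one has $|\Irr_0(\mh H)| = |\Irr(W(B_n))| = b(n)$, where $b(n) = \sum_{s=0}^n p(s) p(n-s)$ is the number of bipartitions of $n$. On the other hand, the parametrization \eqref{eq:3.19} combined with Lemma \ref{lem:3.29} (a tempered representation with real central character corresponds to an induction datum $(P, \delta, 0)$ with $\delta$ a discrete series of $\mh H_P$) gives
\[
|\Irr_0(\mh H)| \; = \; \sum\nolimits_{[(P,\delta)]} |\Irr (\C [\mf R_{(P,\delta,0)}, \natural_{(P,\delta,0)}])|,
\]
where $[(P,\delta)]$ ranges over $\mc W_{\tilde\Xi}$-orbits. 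By Lemma \ref{lem:6.13} the 2-cocycles are trivial, so the summands simplify to $|\Irr(\mf R_{(P,\delta,0)})|$.

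To handle the case $\lambda = 0$, I would count the orbits $[(P, \delta)]$ combinatorially. Orbits of standard parabolics in $B_n$ are classified by pairs $(\rho, s)$ where $\rho$ is the (unordered) partition recording the type-$A$ block ranks and $s \geq 0$ is the type-$B$ block rank, subject to the feasibility constraint $|\rho| + \ell(\rho) + s \leq n$ imposed by the Dynkin diagram. Each type-$A_r$ factor has the Steinberg as its unique discrete series, and by Theorem \ref{thm:4.4}(a) the algebra $\mh H(\C^s, W(B_s), k)$ admits exactly $p(s)$ discrete series for generic $k$. A standard bijection (padding each $\rho$ with $m - |\rho| - \ell(\rho)$ additional parts equal to $1$) identifies the partitions satisfying $|\rho| + \ell(\rho) \leq m$ with all partitions of $m$, giving total
\[
\sum\nolimits_{s=0}^n p(s) \, p(n-s) \; = \; b(n).
\]
Together with $|\Irr(\mf R_{(P,\delta,0)})| \geq 1$, this forces each $\mf R_{(P,\delta,0)} = 1$.

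The main obstacle is extending the triviality to arbitrary $\lambda \in i\mf a^P + \mf a^{P+}$. Since $\mc W_{(P,\delta,\lambda)} \subset \mc W_{(P,\delta,0)} = W(R_{(P,\delta,0)}) \rtimes \mf R_{(P,\delta,0)} = W(R_{(P,\delta,0)})$ by \eqref{eq:3.14} and the case just done, any $w \in \mf R_{(P,\delta,\lambda)}$ is a product of reflections in $R_{(P,\delta,0)}$ fixing $\lambda$. To deduce $w = 1$ I would show each such reflection actually lies in $W(R_{(P,\delta,\lambda)})$, so that $w$ belongs to $\mf R_\xi \cap W(R_\xi) = 1$; this requires a careful comparison of the poles of $c_\alpha^P$ at $\mr{cc}(\delta)$ versus at $\mr{cc}(\delta) + \lambda$, which should be tractable for generic $k$ but combinatorially intricate. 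A plausible alternative is a deformation argument along $\xi_z = (P, \delta, z\lambda)$ for $z \in [0,1]$, using continuity of projective representations of $\mf R_{\xi_z}$ (as in the proof of Lemma \ref{lem:6.13}) and the discreteness of such representation types for finite groups.

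For part (b), once (a) is established and all $\natural_{\tilde\xi}$ are trivial (Lemma \ref{lem:6.13}), \eqref{eq:3.19} reduces to a bijection between $\mc W_{\tilde\Xi}$-orbits on $\tilde\Xi^+$ and $\Irr(\mh H)$, sending each $\tilde\xi$ to the unique irreducible quotient of $\pi(\tilde\xi)$. Interpreting $\tilde\Xi^+$ as this orbit set gives the asserted bijection.
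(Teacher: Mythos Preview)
Your counting argument for the case $\lambda = 0$ is essentially the same as the paper's: both show that the number of $\mc W_{\tilde\Xi}$-orbits $[(P,\delta,0)]$ equals $|\Irr(W(B_n))|$, whence $\sum |\Irr(\mf R_{(P,\delta,0)})| = |\Irr(W(B_n))|$ forces each $\mf R_{(P,\delta,0)}$ to be trivial. The paper reaches this count via elliptic representation theory (the discrete series of $\mh H_P$ give a basis of the elliptic representation space of $W_P$, by \cite{OpSo1} and Theorem \ref{thm:6.2}), while you do the combinatorics directly; both routes are sound.

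The genuine gap is the passage from $\lambda = 0$ to arbitrary $\lambda$. Your observation that $\mc W_{(P,\delta,\lambda)} \subset \mc W_{(P,\delta,0)} = W(R_{(P,\delta,0)})$ is correct, but it does not by itself yield $\mf R_{(P,\delta,\lambda)} = 1$: an element of $W(R_{(P,\delta,0)})$ fixing $\lambda$ could in principle permute $R^+_{(P,\delta,\lambda)}$ nontrivially while preserving it setwise, since $R_{(P,\delta,\lambda)} \subsetneq R_{(P,\delta,0)}$ in general. Neither of your two suggested completions is carried out, and the pole-comparison one is genuinely delicate; the deformation idea from Lemma \ref{lem:6.13} controls only the cocycle on a \emph{fixed} group, not the size of a varying $\mf R_{\tilde\xi}$.

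The paper bypasses this entirely by quoting \cite[Theorem 9.1]{SolGHA}, which gives an inclusion $\End_{\mh H}(\pi(P,\delta,\lambda)) \subset \End_{\mh H}(\pi(P,\delta,0))$. Once $\mf R_{(P,\delta,0)} = 1$, the right-hand side is $\C$, hence so is the left-hand side, and \eqref{eq:4.4} then forces $\mf R_{(P,\delta,\lambda)} = 1$. This single inclusion is exactly the missing ingredient in your argument; with it, your proof is complete and structurally the same as the paper's.
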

\begin{proof}
Note that every parabolic subalgebra of $\mh H (\C^n,W(B_n),k)$ is the tensor product of an algebra 
of the same kind (but of smaller rank) and some graded Hecke algebras of type $A$ (for which we
saw in Example \ref{ex:Rgroups} that all R-groups are trivial).\\
(a) For discrete series representations, this follows from \cite[Proposition 3.9 and subsequent 
remark]{OpSo1}. That also implies part (a) for the discrete series of parabolic subalgebras of 
$\mh H (\C^n,W(B_n),k)$. By \cite[Theorem 9.1]{SolGHA} 
\[
\End_{\mh H} (\pi (P,\delta,\lambda)) \subset \End_{\mh H} (\pi (P,\delta,0)) .
\]
Together with \eqref{eq:4.4} this implies that $\mf R_{(P,\delta,\lambda)}$ is trivial whenever
$\mf R_{(P,\delta,0)}$ is trivial. Therefore it suffices to consider the case $\lambda = 0$. 

For every fixed $P$, Theorem \ref{thm:6.2} and \cite[Proposition 3.9]{OpSo1} tell us that 
the irreducible discrete representations of $\mh H^P = \mh H (\C^n,W_P,k)$ naturally give rise
to a basis of the space of elliptic representations $W_P$. By the latter we mean the representation 
ring of $W_P$ modulo the span of the representations induced from proper parabolic subgroups.
The dimension of this space equals the number of conjugacy classes of $W_P$ that are elliptic,
that is, do not intersect any proper parabolic subgroup of $W_P$ \cite[\S 2]{Ree}. The union
over $P$ of these sets of elliptic conjugacy classes, altogether up to $W(B_n)$-equivalence,
is precisely the set of conjugacy classes of $W(B_n)$. That is also the dimension of the
representation ring of $W(B_n)$. It follows that the set 
\[
\{ \ind_{W_P}^W (\zeta_{\mh H^P}(\delta)) : (P,\delta,0) \in \tilde \Xi \} / W(B_n) 
\]
is a basis of the representation ring of $W(B_n)$. In particular it has $|\Irr (W(B_n))|$ elements. 
By Theorem \ref{thm:6.2}.c the bijection $\zeta_{\mh H^P}$ is equivariant for $\mr{Stab}_W (P) / W_P$. 
Hence the number of $W(B_n)$-orbits of elements $(P,\delta,0) \in \tilde \Xi$ is also $|\Irr (W (B_n))|$. 

Suppose that $\mf R_{(P,\delta,0)}$ is nontrivial for some $(P,\delta,0)$. By Lemma \ref{lem:6.13}
there is more than one $\rho \in \Irr (\C [\mf R_{(P,\delta,0)}, \natural_{(P,\delta,0)}])$, so the
forgetful map $\tilde{\Xi}^+_e \to \tilde{\Xi}^+$ is not injective on such elements. 
With \eqref{eq:3.19} we see that $\Irr_0 (\mh H)$ has more elements than $\Irr (W)$. 
That contradicts Theorem \ref{thm:6.2}.\\
(b) This follows directly from (a) and \eqref{eq:3.19}.
\end{proof}

We want to use Corollary \ref{cor:3.27} to obtain a version of Theorem \ref{thm:6.14} 
for affine Hecke algebras. Let $T$ be the ``diagonal" maximal torus of $SO_{2n+1}$ and consider 
the root datum 
\[
\mc R (B_n) := \mc R (SO_{2n+1},T)
\] 
Let $\mc H (\mc R (B_n),\lambda,\lambda^*,\mb q)$ be an affine Hecke algebra as in Lemma 
\ref{lem:6.11}. As in Example \ref{ex:4.1} there are three independent parameters
\[
q_1 = \mb{q}^{\lambda (e_i \pm e_j)}, q_2 = \mb{q}^{\lambda (e_i)}, 
q_0 = \mb{q}^{\lambda^* (e_i)} 
\]
while Lemma \ref{lem:6.11} ensures that $q_1 ,q_2 ,q_0 \in \R_{\geq 1}$ and $q_2 \geq q_0$.
For $u \in T_\un$, the parameter function $k_u$ from \eqref{eq:3.24} satisfies 
\begin{equation}\label{eq:6.31}
\begin{aligned}
& k_u (e_i \pm e_j) = \lambda (e_i \pm e_j) \log (\mb q) =: k_1 , \\
& k_u (e_i) = (\lambda (e_i) \pm \lambda^* (e_i)) \log (\mb q) /2 =: k_2^\pm .
\end{aligned}
\end{equation}
Motivated by \eqref{eq:4.2} and \eqref{eq:6.31}, we call $q$ and $(\lambda, \lambda^*)$ generic if
\[
q_1 \neq 1 \text{ and } 
q_2 q_0^{\pm 1} \neq q_1^j \text{ for any } j \in \Z \text{ with } |j| \leq 2 (n-1) .
\]

\begin{prop}\label{prop:6.15}
Assume that $(\lambda,\lambda^*)$ is generic and as in Lemma \ref{lem:6.11}.
Then all R-groups for $\mc H (\mc R (B_n),\lambda,\lambda^*,\mb q)$ are trivial
and Corollary \ref{cor:3.23}.d provides a bijection
\[
\Xi^+ \to \Irr \big( \mc H (\mc R (B_n),\lambda,\lambda^*,\mb q) \big) .
\]
\end{prop}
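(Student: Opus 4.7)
The plan is to reduce the statement to Theorem \ref{thm:6.14} via the reduction to graded Hecke algebras. By Corollary \ref{cor:3.27} and Lemma \ref{lem:3.30}, for every $u \in T_\un$ the module category $\Mod_{f, W u \exp(\mf a)}(\mc H)$ is naturally equivalent to $\Mod_{f, \mf a}(\mh H_u)$, where $\mh H_u = \mh H(\mf t, W(R_u), k_u) \rtimes \Gamma_u$, and this equivalence is compatible with parabolic induction. In particular the R-group attached to an induction datum $\xi \in \Xi^+$ with central character in $W u \exp(\mf a)$ matches, via Clifford theory and \cite[Theorem 9.2]{SolGHA}, the R-group of the corresponding induction datum for $\mh H_u$. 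So it suffices to show that all R-groups for every $\mh H_u$ are trivial.

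First I will analyze $R_u \subset B_n$ under genericity. For $u = (u_1,\ldots,u_n) \in T_\un$ a direct computation with \eqref{eq:3.13} shows that the hypotheses $q_1 \neq 1$ and $q_2 q_0^{\pm 1} \neq q_1^j$ for $|j| \leq 2(n-1)$ make the conditions $c_\alpha(u) \neq 0$ automatic. Grouping coordinates of $u$ into blocks of equal value (with blocks $S_z$ and $S_{z^{-1}}$ paired when $z \neq \pm 1$), one finds that each irreducible component of $R_u$ is either of type $A$ (from blocks with $u_i \notin \{\pm 1\}$, giving $A_{m-1}$ or, after pairing, $A_{2m-1}$) or of type $B_m$ (from a block with common value $+1$ or $-1$, where all short roots contribute). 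No type $D$ component appears.

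Next I will verify that $k_u$ is generic on each $B$-component. By \eqref{eq:6.31}, a $B_m$ component coming from indices with $u_i = \varepsilon \in \{\pm 1\}$ carries parameters $(k_1,k_2^\varepsilon)$ with $k_1 = \lambda(e_i \pm e_j)\log\mb q$ and $2k_2^\varepsilon = (\lambda(e_i) \pm \lambda^*(e_i))\log\mb q$. Translating the genericity conditions on $q$ through the exponential yields $k_1 \neq 0$ and $2k_2^\varepsilon \neq jk_1$ for $|j| \leq 2(n-1) \geq 2(m-1)$, which is precisely the genericity condition \eqref{eq:4.2}. Theorem \ref{thm:6.14} then gives triviality of all R-groups of $\mh H(\mf t,W(R_u),k_u)$ coming from $B$-components, and Example \ref{ex:Rgroups} handles the $A$-components.

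The main obstacle is the passage from $\mh H(\mf t, W(R_u), k_u)$ to the crossed product $\mh H_u$: a priori $\mf R_{\tilde \xi}$ for $\mh H_u$ could be larger than the corresponding R-group for the underlying graded Hecke algebra, because $\Gamma_u$ can permute isomorphic components of $R_u$. I will handle this via Clifford theory in the spirit of the proof of Lemma \ref{lem:6.6}: an irreducible $\mh H_u$-representation is of the form $\pi \rtimes \sigma$ with $\pi \in \Irr(\mh H(\mf t, W(R_u), k_u))$ and $\sigma \in \Irr(\C[\Gamma_{u,\pi},\natural])$, and the endomorphism algebra of a parabolically induced $\mh H_u$-module is built from that of the underlying graded parabolic induction together with a twist by a subgroup of $\Gamma_u$ that already acts by $\mh H_u$-intertwiners. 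Since the underlying $\mf R$ is trivial by the above, the extra contribution from $\Gamma_u$ simply accounts for the Clifford decomposition and produces no new R-group. Combined with Lemma \ref{lem:6.13} (which excludes nontrivial $2$-cocycles), this forces $\mf R_{\tilde \xi} = 1$ for every $\tilde \xi$, hence $\mf R_\xi = 1$ for every $\xi \in \Xi^+$ for $\mc H(\mc R(B_n),\lambda,\lambda^*,\mb q)$. The bijection $\Xi^+ \to \Irr(\mc H(\mc R(B_n),\lambda,\lambda^*,\mb q))$ then follows immediately from Corollary \ref{cor:3.23}.d, since in the absence of nontrivial R-groups the set $\Xi^+_e$ in that corollary coincides with $\Xi^+$.
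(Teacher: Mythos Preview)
Your overall strategy matches the paper's: reduce via Corollary \ref{cor:3.27} to graded Hecke algebras $\mh H_u = \mh H(\mf t,W(R_u),k_u)\rtimes\Gamma_u$, identify the components of $R_u$, check that $k_u$ is generic on each type-$B$ factor, and invoke Theorem \ref{thm:6.14} together with Example \ref{ex:Rgroups}. That part is fine.

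The gap is your treatment of $\Gamma_u$. Your paragraph ``the extra contribution from $\Gamma_u$ simply accounts for the Clifford decomposition and produces no new R-group'' is not an argument: in general, for a crossed product $\mh H\rtimes\Gamma$ the R-group of an induction datum can genuinely pick up elements of $\Gamma$ (these correspond, via the reduction theorems, to elements of $\mf R_\xi\subset W$ that lie outside $W(R_u)$). Triviality of the R-groups of the underlying $\mh H(\mf t,W(R_u),k_u)$ does not by itself force triviality for $\mh H_u$, and nothing in Lemma \ref{lem:6.13} or \cite[Theorem 9.2]{SolGHA} gives you that conclusion.

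The paper sidesteps this issue completely by a one-line observation you missed: one may replace $u$ by a $W(B_n)$-conjugate in the normal form
\[
u=(1)^{m_+}(-1)^{m_-}(\lambda_1)^{m_1}\cdots(\lambda_d)^{m_d},\qquad \lambda_i\in S^1\setminus\{\pm1\},\ \lambda_i\neq\lambda_j^{\pm1},
\]
obtained by inverting coordinates so that for each pair $\{z,z^{-1}\}$ only one representative occurs. For this $u$ one checks directly that $W(B_n)_u = W(B_{m_+})\times W(B_{m_-})\times S_{m_1}\times\cdots\times S_{m_d}=W(R_u)$, hence $\Gamma_u=1$. Then $\mh H_u$ is literally the tensor product \eqref{eq:6.30}, no crossed product appears, and Theorem \ref{thm:6.14} plus Example \ref{ex:Rgroups} finish the proof immediately. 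Your ``pairing'' of blocks $S_z$ and $S_{z^{-1}}$ into an $A_{2m-1}$ component is exactly the situation the normal form eliminates; once you normalize, each non-$\{\pm1\}$ block gives a single $A_{m_k-1}$ and there is nothing for a putative $\Gamma_u$ to permute. Replace your Clifford-theory paragraph by this observation and the proof is complete.
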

\begin{proof}
Consider any $u \in T_\un \cong (S^1)^n$. Replacing it by a $W(B_n)$-conjugate, we may assume that
it is of the form
\[
u = (1)^{m_+} (-1)^{m_-} (\lambda_1)^{m_1} \cdots (\lambda_d )^{m_d} ,
\]
where $\lambda_i \in S^1 \setminus \{1,-1\}$ and $\lambda_i \neq \lambda_j^{\pm 1}$ for
any $i,j$. The notation $(\lambda)^m$ means that $m$ consecutive coordinates of $u$ are equal
to $\lambda$. Then 
\[
R_u \cong B_{m_+} \times B_{m_-} \times A_{m_1 - 1} \times \cdots \times A_{m_d - 1} 
\]
and $W(B_n)_u = W(R_u), \Gamma_u = 1$. Hence the algebra $\mc H (\mc R_t, \lambda, \lambda^*,\mb q)$
appearing in Theorem \ref{thm:3.25} is isomorphic to
\[
\mc H (\mc R (B_{m_+}),\lambda,\lambda^*,\mb q) \otimes 
\mc H (\mc R (B_{m_-}),\lambda,\lambda^*,\mb q) \otimes
\mc H_{m_1} (q_1) \otimes \cdots \otimes \mc H_{m_d}(q_1) .
\]
Next we apply Theorem \ref{thm:3.26} and we end up with the graded Hecke algebra
\begin{multline}\label{eq:6.30}
\mh H (\C^{m_+},W(B_{m_+}),k_1,k_2^+) \otimes \mh H (\C^{m_-},W(B_{m_-}),k_1,k_2^-) \, \otimes \\
\mh H (\C^{m_1},S_{m_1},k_1) \otimes \cdots \otimes \mh H (\C^{m_d},S_{m_d},k_1).
\end{multline}
The genericity of $(\lambda,\lambda^*)$ implies that the parameters for every tensor factor
of \eqref{eq:6.30} are generic and Lemma \ref{lem:6.11} ensures that they are positive.
Corollary \ref{cor:3.27} says that the relevant part of the module category of 
$\mc H (\mc R (B_n),\lambda,\lambda^*,\mb q)$ is equivalent with the relevant part
of the module category of \eqref{eq:6.30}. We know from Theorem \ref{thm:6.14} and Example
\ref{ex:Rgroups} that all R-groups of \eqref{eq:6.30} are trivial. With a view on the construction 
of R-groups in Paragraph \ref{par:Rgroups}, we conclude that the R-groups
for $\mc H (\mc R (B_n),\lambda,\lambda^*,\mb q)$ are also trivial.

We conclude with an application of Corollary \ref{cor:3.23}.
\end{proof}

We point out that Theorem \ref{thm:6.14}.b and Proposition \ref{prop:6.15} give an effective 
classification of the irreducible representations of the involved Hecke algebras, because all 
discrete series representations of parabolic subalgebras have been classified entirely in terms
of residual points, see Theorem \ref{thm:4.4} and the subsequent discussion.

It is interesting to compare this with \cite[Theorem D]{Kat3}. Kato provides a geometric 
construction and classification of the irreducible representations of \\
$\mc H (\mc R (B_n),\lambda,\lambda^*,\mb q))$ for all generic parameters $(\lambda,\lambda^*)$. 
We stress that this is a much wider range of $q$-parameters 
than in Proposition \ref{prop:6.15}, most of them are complex. The indexing set for 
$\Irr (\mc H (\mc R (B_n),\lambda,\lambda^*,\mb q))$ in \cite{Kat3} is a generalization of 
Kazhdan--Lusztig parameters, only with an exotic version of the nilpotent cone. Unfortunately
the results of \cite{Kat3} do not extend to non-generic parameters.

\end{document}